\newcommand{\leqnomode}{\tagsleft@true\let\veqno\@@leqno}
\newcommand{\reqnomode}{\tagsleft@false\let\veqno\@@eqno}
\newcommand{\eps}{\varepsilon}
\newcommand{\RR}{\mathbb{R}}
\newcommand{\Ss}{\mathscr{S}}
\newcommand{\dd}{\mathrm{d}}
\newcommand{\norm}[1]{\left\lVert#1\right\rVert}
\newcommand{\abs}[1]{\left\lvert#1\right\rvert}
\newcommand{\del}{\partial}
\newcommand{\supp}{\mathrm{supp}}
\newcommand{\td}[1]{\tilde{#1}}
\newcommand{\an}{\quad\text{and}\quad}
\newcommand{\id}{\mathrm{id}}
\newcommand{\itdo}{1_{\tilde{\Omega}}}
\newcommand{\bdry}{(h\td{\del}_\nu+(\td{\del}_\nu\phi))}
\NewDocumentCommand{\op}{O{h}m}{\mathrm{Op}_{#1}\left(#2\right)}
\NewDocumentCommand{\tbdry}{O{}}{\tau_{#1}\bdry}
\begin{document}

\title{\LARGE An Inverse Problem with Partial Neumann Data and $L^{n/2}$ Potentials}

\author{Leonard Busch \and Leo Tzou}

\date{}

\maketitle

\vspace{-2.5\baselineskip}
\begin{abstract}
	\noindent
	We consider a partial data inverse problem with unbounded potentials. Rather than rely on functional analytic arguments or Carleman estimates, we construct an explicit Green's function with which we construct complex geometric optics (CGO) solutions and show unique determinability of potentials in $L^{n/2}$ for the Schr\"odinger equation with partial Neumann data.

	\medskip

	{\noindent\textbf{Keywords:} Schr\"odinger equation, Calder\'on problem, inverse problems, pseudo-differential operators, partial data, Green's function}
\end{abstract}

\section{Introduction}\label{sec:intro}

Let $\Omega \subset \RR^n$ be a fixed, bounded, domain with smooth boundary in $\RR^n$ with $n \geq 3$, let $q \in L^{n/2}(\Omega)$ be a potential.  

For $\Gamma\subset \del\Omega$ and $f\in H^{-1/2}(\Gamma)$, consider the forward problem concerned with finding a solution $u \in H^1(\Omega)$ to 
\begin{equation}\label{eq:neumannproblem}
	(\Delta+q)u = 0 \text{ in } \Omega\,, \an \del_\nu u\vert_\Gamma = f\,,\quad \del_\nu u\vert_{\del\Omega\setminus\Gamma} = 0\,.
\end{equation}
In the inverse problem associated to \cref{eq:neumannproblem}, we seek to address whether the potential $q$ is uniquely determined by the set of boundary values $u\vert_{\Gamma}$ for solutions $u$ of \cref{eq:neumannproblem} with 
$f$ ranging in $H^{-1/2}(\Gamma)$. 

When the Neumann-to-Dirichlet map $N_q\colon H^{-1/2}(\del\Omega) \to H^{1/2}(\del\Omega), f\mapsto u\vert_{\del\Omega}$ is well-defined for \cref{eq:neumannproblem}, \cite{chungpartial} showed unique determination of potentials $q\in L^\infty$ in dimensions $n\geq 3$ under some restrictions on $\Gamma$. In dimension $2$, \cite{imanuvilov2012inverse} establish the same uniqueness for potentials $q \in W^{1,p}, p>2$.  

The problem is motivated by what is now called electrical impedance tomography or the Calder\'on problem from the eponymous \cite{MR590275}. Here, one attempts to recover the electrical conductivity of a medium by taking current and voltage measurements only at the medium's boundary. After a change of variables, the associated conductivity equation becomes the Schr\"odinger equation as it is in \cref{eq:neumannproblem}. In this setting, the boundary data $u\vert_{\del\Gamma}$ and $\del_\nu u\vert_{\del\Gamma}$ are interpreted as voltage and current flux multiplied by surface conductivity, respectively. See also a survey of recent developments regarding electrical impedance tomography \cite{zbMATH05655833}.

Also motivated by electrical impedance tomography is the closely related Dirichlet boundary formulation of the inverse problem arising from \cref{eq:neumannproblem}, which has been studied much more extensively. In this formulation, relying on work from \cite{zbMATH03939884}, \cite{zbMATH04015323} were the first to show uniqueness of potentials in dimensions $\geq 3$ given full boundary data, and \cite{zbMATH04105476} gave a reconstruction algorithm. The first such results in dimension $2$ for the Schr\"odinger equation came in \cite{zbMATH05254953}.

In practical settings, measurements of the full data cannot be guaranteed, and one may ask whether the potential is uniquely determined when $\Gamma \neq \del\Omega$, possibly with some restrictions upon $\Gamma$. For the Dirichlet formulation in dimension $2$ in the case of partial data, uniqueness was shown in \cite{zbMATH05775680} and was generalized to Riemann surfaces in \cite{zbMATH05904312}. In dimensions $\geq 3$, \cite{recoveringCauchydata} proved uniqueness of potentials in $L^\infty$. In partial data results, the restrictions upon the partial boundary are generally non-mutually inclusive, for a survey regarding other partial data results with the Dirichlet boundary formulation see \cite{recentProg} and the references therein.

Of particular interest to us is \cite{CT20}, which proved the uniqueness of potentials $q\in L^{n/2}(\Omega)$ in the partial data case where $n \geq 3$. The reason the exponent $n/2$ may not come entirely out of nowhere is the following. The inverse problem for the Schr\"odinger equation may be considered related to the unique continuation property with Cauchy data, and according to \cite{zbMATH03953442}, the value $n/2$ is the optimal exponent of Lebesgue spaces for the strong unique continuation property to hold.

The goal of this paper is to reproduce \cite{CT20} with partial Neumann data, that is, show uniqueness of potentials in $L^{n/2}(\Omega)$ in the formulation from \cref{eq:neumannproblem}.

The motivation for studying the Neumann data case instead of the Dirichlet case is the following: in physical implementations of electrical impedance tomography, a set of electrodes is attached to a medium. A subset of these electrodes apply currents and the remaining ones measure voltages, see \cite{Adler_2021} for an overview. This corresponds to prescribed data $\del_\nu u\vert_{\del\Omega}$ and measured data $u\vert_{\del\Omega}$, which is the Neumann formulation of the inverse problem.

\subsection{Statement of Results}

Let $\Omega \subset \RR^n$ be a fixed bounded smooth domain with $n \geq 3$, let $\omega \in \mathbb{S}^n$, and denote by $(y',y_n)$ the coordinate system spanned by $\omega^\perp\oplus\RR\omega$. Define $p' = \frac{2n}{n+2}$ and $p = \frac{2n}{n-2}$.

Let $\partial\Omega^\pm \coloneqq \{y\in \partial\Omega \colon \pm \nu_n(y)\geq 0\}$, where $\nu$ is the outward pointing normal unit vector and $\nu_n$ its $n$-th component.  Let the front and back faces $F, B \subset \partial \Omega$ satisfy $\partial\Omega^+ \Subset F \subset \partial\Omega$ and $\partial\Omega^- \Subset B \subset \partial\Omega$. 
Assume that both $\del\Omega\setminus B$ and $\del\Omega\setminus F$ are the union of open, positively separated sets $\Gamma_j \subset \partial\Omega$ so that for each $j$, there is a neighbourhood of $\Gamma_j$ in $\partial \Omega$ that is the graph of a function $g_j\in C_c^\infty(\RR^{n-1})$ (see \cref{eq:gdescribesboundary} for a precise definition).

\begin{figure}
\begin{tikzpicture}
	\draw[->] (-65pt,0) -- (45pt,0);
	\draw[->] (0,-50pt) -- (0,60pt);
	\node[below] at (45pt,0) {\footnotesize$y'$};
	\node[left] at (0,60pt) {\footnotesize$y_n$};
	\draw (10pt,10pt) to [out=90,in=-90, looseness=0.7] (20pt,30pt);
	\draw (20pt,30pt) to [out=90,in=0] (10pt,50pt);
	\draw (10pt,50pt) to [out=180,in=90] (-40pt,20pt);
	\draw (-40pt,20pt) to [out=-90,in=90] (-40pt,-20pt);
	\draw (-40pt,-20pt) to [out=-90,in=180] (0pt,-40pt);
	\draw (0pt,-40pt) to [out=0,in=-90] (30pt,-20pt);
	\draw (30pt,-20pt) to [out=90,in=-90] (10pt,10pt);
\end{tikzpicture}
\hspace{1cm}
\begin{tikzpicture}
	\draw[->] (-65pt,0) -- (45pt,0);
	\draw[->] (0,-50pt) -- (0,60pt);
	\node[below] at (45pt,0) {\footnotesize$y'$};
	\node[left] at (0,60pt) {\footnotesize$y_n$};
	\draw[dashed] (10pt,10pt) to [out=90,in=-90, looseness=0.7] (20pt,30pt);
	\draw[dotted] (20pt,30pt) to [out=90,in=0] (10pt,50pt);
	\draw[dotted] (10pt,50pt) to [out=180,in=90] (-40pt,20pt);
	\draw (-40pt,20pt) to [out=-90,in=90] (-40pt,-20pt);
	\draw[dashed] (-40pt,-20pt) to [out=-90,in=180] (0pt,-40pt);
	\draw[dashed] (0pt,-40pt) to [out=0,in=-90] (30pt,-20pt);
	\draw[dotted] (30pt,-20pt) to [out=90,in=-90] (10pt,10pt);
	\draw (10pt,11pt) node {\footnotesize$\rightarrow$} (20pt,30pt) node {\footnotesize$\rightarrow$} (11pt,50pt) node {\footnotesize$\uparrow$} (-40pt,20pt) node {\footnotesize$\leftarrow$} (-40pt,-20pt) node {\footnotesize$\leftarrow$} (30pt,-20pt) node {\footnotesize$\rightarrow$} (-2pt,-40pt) node {\footnotesize$\downarrow$};
\end{tikzpicture}
\hspace{1cm}
\begin{tikzpicture}
	\draw[->] (-65pt,0) -- (45pt,0);
	\draw[->] (0,-50pt) -- (0,60pt);
	\node[below] at (45pt,0) {\footnotesize$y'$};
	\node[left] at (0,60pt) {\footnotesize$y_n$};
	\draw (10pt,10pt) to [out=90,in=-90, looseness=0.7] (20pt,30pt);
	\draw (20pt,30pt) to [out=90,in=0] (10pt,50pt);
	\draw (10pt,50pt) to [out=180,in=90] (-40pt,20pt);
	\draw (-40pt,-20pt) to [out=-90,in=180] (0pt,-40pt);
	\draw (0pt,-40pt) to [out=0,in=-90] (30pt,-20pt);
	\draw (30pt,-20pt) to [out=90,in=-90] (10pt,10pt);
	\node[circle,draw=white, fill=white, inner sep=0pt,minimum size=9pt] at (10pt,11pt) {};
	\node[circle,draw=white, fill=white, inner sep=0pt,minimum size=9pt] at  (20pt,30pt) {};
	\node[circle,draw=white, fill=white, inner sep=0pt,minimum size=9pt] at (-40pt,20pt) {};
	\node[circle,draw=white, fill=white, inner sep=0pt,minimum size=9pt] at (-40pt,-20pt) {};
	\node[circle,draw=white, fill=white, inner sep=0pt,minimum size=9pt] at  (30pt,-20pt) {};
	\node at (-30pt,47pt) {\footnotesize$\Gamma_1$};
	\node at (15pt,-10pt) {\footnotesize$\Gamma_3$};
	\node at (25pt,15pt) {\footnotesize$\Gamma_2$};
	\node at (-25pt,-27pt) {\footnotesize$\Gamma_4$};
\end{tikzpicture}
\caption{An example of a valid set $\Omega$ meant as the interior of the drawn line. The arrows in the middle picture are select points on which the outward pointing normal vector has been sketched. Furthermore, the part of the boundary marked by dotted lines is where $\nu(y) \cdot y_n >0$, those marked by dashed lines is where $\nu(y) \cdot y_n <0$, and the part marked by a full line is where $\nu(y) \cdot y_n =0$. Finally, the right-most sketch depicts $\del\Omega\setminus F = \Gamma_2\cup\Gamma_4$ and $\del\Omega\setminus B = \Gamma_1\cup\Gamma_3$ where the $\Gamma_j$ are positively separated from each other. Note that the perfectly vertical part of the boundary on the left side is contained in both $F$ and $B$.}
\end{figure}
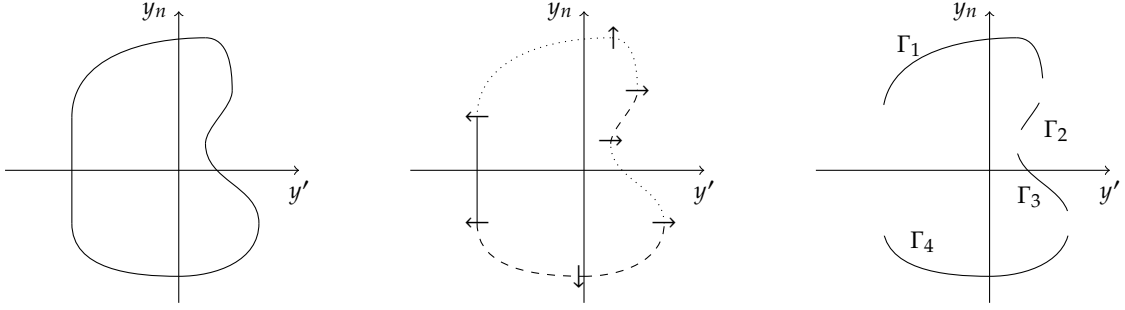

Following \cite{recentProg}, for sets $\Gamma_D, \Gamma_N \subset \del\Omega$ and $q\in L^{n/2}(\Omega)$, define the partial Cauchy data set to be 
\begin{equation}\label{eq:defofcauchydata}
	C_{q}^{\Gamma_D,\Gamma_N} \coloneqq \left\{(u\vert_{\Gamma_D}\,,\,\del_\nu u\vert_{\partial\Omega}) \colon (\Delta+q)u=0 \text{ in } \Omega\,,\, u\in H_{\Delta}^\sharp(\Omega)\,,\, \supp(\del_\nu u\vert_{\partial\Omega}) \subset \Gamma_N\right\}\,,
\end{equation}
where
\[
	H_{\Delta}^\sharp(\Omega) = \left\{u \in \mathcal{D}'\colon u\in H^1(\Omega); \Delta u\in L^{p'}(\Omega)\right\}
\]
is a Banach space with norm $\norm{u}_{H^\sharp_\Delta} = \norm{u}_{H^1}+\norm{\Delta u}_{L^{p'}}$.

\begin{theorem}\label{thm:uniqueness}
	If $q_\pm \in L^{n/2}(\Omega)$, and
	\[
		C_{q_+}^{B,F} = C_{q_-}^{B,F}\,,
	\]
	then $q_+ = q_-$.
\end{theorem}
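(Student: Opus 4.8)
\medskip
\noindent\textbf{Proof strategy.}
The plan is to follow the classical three-step route for Calder\'on-type problems — deriving a boundary integral identity, constructing complex geometric optics (CGO) solutions, and running a limiting argument in Fourier space — but with the modifications forced by the partial \emph{Neumann} data and by the low regularity $q_\pm \in L^{n/2}$.

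\emph{Step 1: the integral identity.} First I would take any $u_+ \in H^\sharp_\Delta(\Omega)$ solving $(\Delta+q_+)u_+ = 0$ with $\supp(\del_\nu u_+|_{\del\Omega}) \subset F$. The hypothesis $C^{B,F}_{q_+} = C^{B,F}_{q_-}$ then produces $u_- \in H^\sharp_\Delta(\Omega)$ solving $(\Delta+q_-)u_- = 0$ with $u_-|_B = u_+|_B$ and $\del_\nu u_-|_{\del\Omega} = \del_\nu u_+|_{\del\Omega}$, so the difference $w = u_+ - u_-$ satisfies $(\Delta+q_-)w = (q_--q_+)u_+$ in $\Omega$ with $w|_B = 0$ and $\del_\nu w|_{\del\Omega} = 0$. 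Pairing $w$ against any $v \in H^\sharp_\Delta(\Omega)$ solving $(\Delta+q_-)v = 0$ with $\supp(\del_\nu v|_{\del\Omega}) \subset B$, via the generalized Green's identity (legitimate on $H^\sharp_\Delta$ once the boundary traces are interpreted through the trace map on that space), every boundary contribution vanishes — the term with $\del_\nu w$ is zero, and the term with $w\,\del_\nu v$ is supported in $B$ where $w = 0$ — leaving
\[
	\int_\Omega (q_+ - q_-)\, u_+\, v \, \dd x = 0
\]
for all such pairs, with $u_+$ carrying Neumann data in $F$ and $v$ carrying Neumann data in $B$.

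\emph{Step 2: CGO solutions with prescribed Neumann support.} Next I would feed into this identity CGO solutions $u_+ = e^{x\cdot\zeta_+}(1+r_+)$ and $v = e^{x\cdot\zeta_-}(1+r_-)$ with $\zeta_\pm\cdot\zeta_\pm = 0$, $\Re\zeta_+ = -\Re\zeta_- = h^{-1}\omega$, so that the real exponential weights cancel in the product $u_+v = e^{i(\cdot)}(1+r_+)(1+r_-)$. Conjugating, $r_\pm$ must solve $P_{\zeta_\pm}r_\pm = -q_\pm(1+r_\pm)$ in $\Omega$, where $P_{\zeta_\pm} = e^{-x\cdot\zeta_\pm}\Delta\,e^{x\cdot\zeta_\pm}$, together with the conjugated Neumann condition $h\del_\nu r_\pm + (h\,\nu\cdot\zeta_\pm)(1+r_\pm) = 0$ on $\del\Omega\setminus F$ (respectively $\del\Omega\setminus B$), the data on the complementary face being free. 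Here the front/back decomposition puts the real part of the phase on the favourable side of the face where the condition is enforced, and the hypothesis that $\del\Omega\setminus F$ and $\del\Omega\setminus B$ are disjoint unions of positively separated graph pieces $\Gamma_j$ is what makes the construction possible: flattening the boundary near each $\Gamma_j$ with the chart $g_j$ and reflecting across it converts the homogeneous conjugated Neumann condition there into an interior condition, and because the $\Gamma_j$ are positively separated these local reflections can be glued into a single \emph{explicit Green's function} for the conjugated Laplacian with the conjugated Neumann condition on $\del\Omega\setminus F$ (resp.\ $\del\Omega\setminus B$) — this is precisely the object constructed earlier in the paper via semiclassical pseudodifferential calculus. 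Writing $r_\pm$ as the fixed point of the associated integral equation and using the mapping properties of this Green's function in the scale $L^{p'}\to L^p$ — which is exactly what absorbs $q_\pm \in L^{n/2}$, in place of the $L^2$-based bounds a Carleman estimate would yield — I would obtain solutions with $\norm{r_\pm}_{L^p(\Omega)} = o(1)$ as $h\to 0$ and with $\del_\nu u_+|_{\del\Omega}$ supported in $F$, $\del_\nu v|_{\del\Omega}$ supported in $B$.

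\emph{Step 3: passing to the limit and concluding.} Substituting these solutions into the identity of Step 1 and separating the constant amplitude from the remainders, I would get $\int_\Omega (q_+-q_-)\,e^{i(\cdot)}\,\dd x = -\int_\Omega (q_+-q_-)\,e^{i(\cdot)}\,(r_+ + r_- + r_+ r_-)\,\dd x$, and Hölder's inequality with the exponents $\tfrac2n + \tfrac1p + \tfrac1p = 1$ bounds the right-hand side by $C\norm{q_+-q_-}_{L^{n/2}}\bigl(\norm{r_+}_{L^p} + \norm{r_-}_{L^p} + \norm{r_+}_{L^p}\norm{r_-}_{L^p}\bigr) \to 0$. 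Tracking the oscillatory factor (via stationary phase in the limit $h\to 0$, and unwinding the reflections used to build the Green's function) recovers $\widehat{q_+-q_-}$ on an open set of frequencies; since $q_+ - q_-$, extended by zero, lies in $L^{n/2}$ with compact support, its Fourier transform is real-analytic, so this forces $\widehat{q_+-q_-} \equiv 0$ and hence $q_+ = q_-$. I expect Step 2 to be the crux — producing a Green's function that simultaneously encodes the homogeneous conjugated Neumann condition on the graph pieces of $\del\Omega\setminus F$ (resp.\ $\del\Omega\setminus B$) and satisfies $L^{p'}\to L^p$ bounds strong enough to close the fixed-point argument for potentials merely in $L^{n/2}$, while keeping the boundary-source term it generates under control uniformly as $h\to 0$.
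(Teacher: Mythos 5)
Your overall route is the same as the paper's: the integral identity from the equality of partial Cauchy data (using the extension of $u\mapsto \del_\nu u\vert_{\del\Omega}$ to $H^\sharp_\Delta$), CGO solutions with Neumann data supported in $F$ resp.\ $B$ built from the conjugated Green's function, and a Fourier/analyticity argument after perturbing $\omega$. Step 1 matches the paper's computation, and Step 2 correctly identifies the Green's function of \cref{thm:G} and its $L^{p'}\to L^p$ bounds as the mechanism that absorbs $q_\pm\in L^{n/2}$.

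There is, however, a concrete gap in the way you close the limit. You assert that the CGO remainders satisfy $\norm{r_\pm}_{L^p(\Omega)}=o(1)$ as $h\to 0$ and then conclude in Step 3 by a single H\"older estimate with $\tfrac2n+\tfrac1p+\tfrac1p=1$. That smallness in $L^p$ is not available from this construction: in the fixed-point equation the source is of size $h^2$ (e.g.\ $h^2 q a + L$ with $\norm{L}_{L^2}\lesssim h^2$ and boundary data of size $h$), and the Green's function costs exactly $h^{-2}$ on $L^{p'}\to L^p$ (and $h^{-1}$ on the boundary term), so one only obtains $\norm{r_\pm}_{L^p}\leq C$ uniformly in $h$; what can be made small is the $L^2$ norm, $\limsup_{h\to0}\norm{r_\pm}_{L^2}\leq\varepsilon$ for any prescribed $\varepsilon$, alongside a bounded amplitude $a_h$ tending to $0$ pointwise (cf.\ \cref{cgosolsprop}). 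With only these bounds, your estimate $C\norm{q_+-q_-}_{L^{n/2}}\norm{r_\pm}_{L^p}$ does not tend to zero. The missing idea is the decomposition $q_+-q_-=q^\flat+q^\sharp$ with $q^\flat\in L^\infty$ and $\norm{q^\sharp}_{L^{n/2}}<\varepsilon$: the terms involving $r_\pm$ are then bounded by $\norm{q^\flat}_{L^\infty}\norm{r_\pm}_{L^2}+\norm{q^\sharp}_{L^{n/2}}\norm{r_\pm}_{L^p}\leq C\varepsilon$, the $a_h$ terms are handled by dominated convergence, and letting $\varepsilon\to0$ kills all error terms — this is how the paper concludes. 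Finally, note that for fixed $\omega$ you only get $\mathcal{F}(q_+-q_-)=0$ on the hyperplane $\xi\perp\omega$; the open set of frequencies comes from perturbing $\omega$, which is legitimate precisely because $\del\Omega^\pm\Subset F,B$, and then compact support plus analyticity of the Fourier transform finishes the argument as you say.
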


After a change of variables coming from \cite{zbMATH04015323}, 
	we will also be able to make a deduction about the Calder\'on problem for partial Neumann data, see also \cite[\S~1]{chungpartial}.
	\begin{corollary}\label{cor:calderon}
	Let $\gamma_\pm \in W^{2,n/2}(\Omega)$ be strictly positive $\gamma_\pm \geq \eps > 0$. Moreover, assume $\del_\nu \gamma_+\vert_{\del\Omega} = \del_\nu \gamma_-\vert_{\del\Omega} = 0$ and $\gamma_+\vert_{\del\Omega} = \gamma_-\vert_{\del\Omega}$, and define
	\[
		C_{\gamma_\pm}^{B,F} \coloneqq \left\{(u\vert_{B}\,,\,\gamma_\pm\del_\nu u\vert_{\partial\Omega}) \colon \mathrm{div}(\gamma_\pm\nabla u)=0 \text{ in } \Omega\,,\, u\in H^1(\Omega)\,,\, \supp(\del_\nu u\vert_{\partial\Omega}) \subset F\right\}\,.
	\]
	If
	\[
		C_{\gamma_+}^{B,F} = C_{\gamma_-}^{B,F}\,,
	\]
	then $\gamma_+ = \gamma_-$.
\end{corollary}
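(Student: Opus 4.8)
The plan is to reduce \cref{cor:calderon} to \cref{thm:uniqueness} through the substitution of \cite{zbMATH04015323}. For $\gamma\in W^{2,n/2}(\Omega)$ with $\gamma\geq\eps>0$ set $q_\gamma\coloneqq -\frac{\Delta\sqrt\gamma}{\sqrt\gamma}=-\frac{\Delta\gamma}{2\gamma}+\frac{\abs{\nabla\gamma}^2}{4\gamma^2}$; the elementary identity
\[
	\gamma^{-1/2}\,\mathrm{div}(\gamma\nabla(\gamma^{-1/2}v))=\Delta v+q_\gamma v
\]
shows that $u\mapsto v=\sqrt\gamma\,u$ and $v\mapsto u=\gamma^{-1/2}v$ are mutually inverse bijections between the $H^1(\Omega)$-solutions of $\mathrm{div}(\gamma\nabla u)=0$ and the solutions of $(\Delta+q_\gamma)v=0$. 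First I would record the regularity facts: $\gamma\geq\eps$ makes $\gamma^{-1},\gamma^{-2}$ bounded, $\Delta\gamma\in L^{n/2}$ by hypothesis, and the critical embedding $W^{1,n/2}(\Omega)\hookrightarrow L^{n}(\Omega)$ applied to $\nabla\gamma$ gives $\abs{\nabla\gamma}^2\in L^{n/2}$, so that $q_\gamma\in L^{n/2}(\Omega)$ and likewise $\sqrt\gamma\in W^{2,n/2}(\Omega)$. Moreover, if $(\Delta+q_\gamma)v=0$ with $v\in H^1(\Omega)$, then by H\"older and $H^1(\Omega)\hookrightarrow L^p(\Omega)$ one gets $\Delta v=-q_\gamma v\in L^{p'}(\Omega)$, since $\tfrac2n+\tfrac1p=\tfrac1{p'}$, whence $v\in H_\Delta^\sharp(\Omega)$. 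Checking that the two substitutions respect these function spaces at the borderline regularity $W^{2,n/2}$ is routine but uses exactly $\gamma\geq\eps$ and the embeddings just mentioned.

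Next I would translate the boundary data. Put $c\coloneqq\sqrt{\gamma_+}\vert_{\del\Omega}=\sqrt{\gamma_-}\vert_{\del\Omega}$, which is common to $\gamma_\pm$ by hypothesis, and note $\del_\nu\sqrt{\gamma_\pm}\vert_{\del\Omega}=\tfrac{1}{2\sqrt{\gamma_\pm}}\,\del_\nu\gamma_\pm\vert_{\del\Omega}=0$. For $v=\sqrt\gamma\,u$ this yields $v\vert_{\del\Omega}=c\,u\vert_{\del\Omega}$ and, for the conormal derivative, $\del_\nu v\vert_{\del\Omega}=c\,\del_\nu u\vert_{\del\Omega}=c^{-1}(\gamma\,\del_\nu u\vert_{\del\Omega})$; in particular $\supp(\del_\nu v\vert_{\del\Omega})=\supp(\gamma\,\del_\nu u\vert_{\del\Omega})$, so the support condition is preserved. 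Hence $(\alpha,\beta)\mapsto(c\,\alpha,\,c^{-1}\beta)$ is a bijection $C_{\gamma_\pm}^{B,F}\to C_{q_{\gamma_\pm}}^{B,F}$, and since it involves only the common trace $c$, the assumption $C_{\gamma_+}^{B,F}=C_{\gamma_-}^{B,F}$ forces $C_{q_{\gamma_+}}^{B,F}=C_{q_{\gamma_-}}^{B,F}$. \cref{thm:uniqueness} then gives $q_{\gamma_+}=q_{\gamma_-}$ in $\Omega$.

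Finally I would recover the conductivities from their common potential $q\coloneqq q_{\gamma_+}=q_{\gamma_-}$. Write $W\coloneqq\sqrt{\gamma_+}-\sqrt{\gamma_-}$. By construction $(\Delta+q)\sqrt{\gamma_\pm}=0$, so $(\Delta+q)W=0$ in $\Omega$, while the boundary hypotheses give $W\vert_{\del\Omega}=0$ and $\del_\nu W\vert_{\del\Omega}=0$. Since $W\in W^{2,n/2}(\Omega)\subset W^{2,p'}(\Omega)$ and its Cauchy data vanish, the extension $\td W$ of $W$ by zero to a ball $\tdo$ with $\bar\Omega\Subset\tdo$ lies in $W^{2,p'}_{\mathrm{loc}}(\tdo)$, solves $(\Delta+\td q)\td W=0$ with $\td q\coloneqq q\,1_\Omega\in L^{n/2}(\tdo)$, and vanishes on the nonempty open set $\tdo\setminus\bar\Omega$. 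The weak unique continuation property for Schr\"odinger operators with $L^{n/2}$ potentials --- the endpoint case of \cite{zbMATH03953442}, which is precisely why $n/2$ is the natural exponent throughout --- then forces $\td W\equiv0$, hence $\sqrt{\gamma_+}=\sqrt{\gamma_-}$ and $\gamma_+=\gamma_-$.

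The step I expect to be the main obstacle is the second paragraph: pinning down the conormal-derivative trace of $u$ and verifying that $v=\sqrt\gamma\,u$ genuinely lands in $H_\Delta^\sharp(\Omega)$ with the boundary support unchanged, all at the regularity $\gamma_\pm\in W^{2,n/2}(\Omega)$. The concluding unique continuation input is quoted rather than proved, but it is there --- and in the choice of the exponent $n/2$ --- that the optimality of the hypotheses really resides.
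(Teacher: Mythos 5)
Your proposal is correct, and its first half is essentially the paper's argument: the Liouville substitution $q_{\gamma_\pm}=-\gamma_\pm^{-1/2}\Delta(\gamma_\pm^{1/2})$, the verification that $q_{\gamma_\pm}\in L^{n/2}(\Omega)$ and that Schr\"odinger solutions land in $H^\sharp_\Delta(\Omega)$, and the identification of $C_{\gamma_\pm}^{B,F}$ with $C_{q_{\gamma_\pm}}^{B,F}$ through a boundary map built from the common traces (the paper's $M_{\gamma_\pm}$, which agree for $+$ and $-$ precisely because $\gamma_+\vert_{\del\Omega}=\gamma_-\vert_{\del\Omega}$ and $\del_\nu\gamma_\pm\vert_{\del\Omega}=0$), followed by \cref{thm:uniqueness} to get $q_{\gamma_+}=q_{\gamma_-}$. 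Where you genuinely diverge is the last step. You note that $W=\sqrt{\gamma_+}-\sqrt{\gamma_-}$ solves $(\Delta+q)W=0$ with vanishing Cauchy data, extend by zero, and invoke weak unique continuation for $L^{n/2}$ potentials (Jerison--Kenig, the paper's \cite{zbMATH03953442}); this is a valid route, and the zero-extension of a $W^{2,n/2}$ function with vanishing Cauchy data is indeed $W^{2,n/2}\subset W^{2,p'}_{\mathrm{loc}}$, which is exactly the class in which that UCP theorem applies. The paper instead subtracts the two identities $q_\pm=-\gamma_\pm^{-1/2}\Delta(\gamma_\pm^{1/2})$ to get a second-order equation for $\log\gamma_+-\log\gamma_-$ with drift $\tfrac12\nabla(\log\gamma_++\log\gamma_-)$ and zero Dirichlet data, and concludes by the weak maximum principle (\cite[Cor.~8.2]{zbMATH03561752}). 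The trade-off: your argument uses the full Cauchy data (both boundary hypotheses) and imports the heavy unique continuation theorem, which the paper only cites for motivation; the paper's route is more elementary and needs only the Dirichlet equality at that stage, at the price of checking the drift is admissible for the maximum principle. One shared elision worth flagging: since $W^{2,n/2}(\Omega)\not\hookrightarrow L^\infty(\Omega)$, the direction of the substitution that multiplies by $\gamma_\pm^{1/2}$ (needed for the reverse inclusion between the Cauchy data sets) is not entirely routine; the paper waves at ``a Sobolev embedding'' and you call it routine, so you are no worse off, but it is the one borderline point in the reduction, exactly as you anticipated.
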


The main ingredient necessary to prove \cref{thm:uniqueness} comes from finding so-called complex geometric optics (CGO) solutions. Let $\Xi = \del\Omega\setminus B$. 

\begin{proposition}\label{cgosolsprop} 
	For any $\omega' \in \mathbb{S}^n$ with $\omega \perp \omega'$, and any $q\in L^{n/2}(\Omega)$ there exists $a_h \in C^\infty(\RR^n)$ so that for any $\varepsilon >0$, there are $r \in H^1(\Omega)$ and linear $\psi_h$ so that
	\[
	u(y) = e^{\frac{\omega+i\omega'+ih\psi_h}{h}\cdot y}(1+a_h(y)+r(y)) \in H^1(\Omega)
	\]
	is a solution to
	\[
	(\Delta + q)u = 0\,,\an \del_\nu u\vert_\Xi = 0\,,
	\]
	with $\norm{a_h}_{L^\infty}\leq C$, $\lim_{h\to 0}a_h(y) = 0$ pointwise for all $y\in \Omega$, $\lim_{h\to 0}\norm{r}_{L^2} \leq \varepsilon$ and $\norm{r}_{L^p} \leq C$ for $h$ small enough.
\end{proposition}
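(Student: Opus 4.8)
The plan is to reduce the construction to inverting a conjugated operator by means of a Green's operator that already encodes the boundary condition on $\Xi$. Put $\phi(y)=(\omega+i\omega'+ih\psi_h)\cdot y$ and $\rho=\rho_h=h^{-1}(\omega+i\omega')+i\psi_h$, where the real linear form $\psi_h$ is taken orthogonal to $\omega$ and $\omega'$, its remaining freedom to be fixed by the boundary reduction below. Since $\phi$ is linear, $e^{-\phi/h}(\Delta+q)(e^{\phi/h}\,\cdot\,)=\Delta+2\rho\cdot\nabla+\rho\cdot\rho+q$, and $(\omega+i\omega')\cdot(\omega+i\omega')=\abs{\omega}^2-\abs{\omega'}^2+2i\,\omega\cdot\omega'=0$ together with $\psi_h\perp\omega,\omega'$ gives $\rho\cdot\rho=-\abs{\psi_h}^2$, a bounded constant absorbed into $q$. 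The Neumann condition $\del_\nu u\vert_\Xi=0$ becomes, after the same conjugation, $(h\del_\nu+\del_\nu\phi)v\vert_\Xi=0$. So it suffices to produce $v=1+a_h+r$ with $a_h\in C^\infty(\RR^n)$, $r\in H^1(\Omega)$, solving the conjugated equation together with this conjugated Neumann condition on $\Xi$, and with the claimed size and decay.

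The heart of the matter is a Green's operator $G_\rho\colon L^{p'}(\Omega)\to H^1(\Omega)$ for $\Delta+2\rho\cdot\nabla$ carrying homogeneous conjugated-Neumann data on $\Xi$, with the $h$-uniform bound $\norm{G_\rho f}_{L^p}+\norm{\nabla G_\rho f}_{L^2}\le C\norm{f}_{L^{p'}}$ and with $\norm{G_\rho f}_{L^\infty}+\norm{\nabla G_\rho f}_{L^\infty}\to 0$ as $h\to0$ for fixed $f\in\Cc{\RR^n}$. I would build it from the Faddeev-type fundamental solution of $\Delta+2\rho\cdot\nabla$ on $\RR^n$, whose $h$-uniform $L^{p'}\to L^p$ bound is the uniform Sobolev inequality of Kenig--Ruiz--Sogge type. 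Since $\Xi=\del\Omega\setminus B$ is the union of the positively separated graph pieces $\Gamma_j$ ($y_n=g_j(y')$), near each $\Gamma_j$ one flattens the boundary, imposes homogeneous Neumann on the resulting flat piece by the even reflection across it, patches the local reflected parametrices to the free-space fundamental solution with a partition of unity subordinate to the $\Gamma_j$, and conjugates back; the lower-order phase $\psi_h$ is precisely the correction that makes the CGO phase compatible with these flattened, reflected charts. The errors generated by the change of variables and the cutoffs are first and zeroth order and are absorbed in the next step.

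Granting $G_\rho$, the remaining argument is a perturbative (Neumann series) one in $L^p(\Omega)$. Setting $a_h:=-G_\rho q_1$ for a smooth compactly supported approximation $q_1$ of $q$ makes $a_h$ smooth, bounded in $L^\infty$, and pointwise vanishing as $h\to0$ by the decay estimate. Inserting $v=1+a_h+r$ and using $(\Delta+2\rho\cdot\nabla)1=0$ and $(\Delta+2\rho\cdot\nabla)a_h=-q_1$ reduces the conjugated problem to $(I+G_\rho q)r=-G_\rho(q-q_1+q a_h)$, with the conjugated Neumann condition built into $G_\rho$. Because $\frac1{p'}=\frac1p+\frac2n$, H\"older shows $q\mapsto G_\rho(q\,\cdot\,)$ acts boundedly $L^p\to L^p$ with norm $\le C\norm{q}_{L^{n/2}(\Omega)}$; splitting $q=q_1+q_2$ so that $\norm{q_2}_{L^{n/2}}$ is small, the $q_2$-part is a small perturbation while the smooth part $G_\rho(q_1\,\cdot\,)$ is controlled as in the $L^{n/2}$ theory of \cite{CT20}, yielding invertibility of $I+G_\rho q$ with uniformly bounded inverse for $h$ small. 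The right-hand side has $L^p$-norm $\le C\norm{q_2}_{L^{n/2}}+o(1)$ (using $\norm{a_h}_{L^p}\to0$) and $H^1$-norm $\le C$, so $r\in H^1(\Omega)$ with $\norm{r}_{L^p}\le C$ and $\norm{r}_{L^2}\le C\norm{r}_{L^p}\le C\norm{q_2}_{L^{n/2}}+o(1)$ (H\"older on bounded $\Omega$); given $\eps>0$ one chooses the splitting, then $h$ small, to force $\lim_{h\to0}\norm{r}_{L^2}\le\eps$. Finally $u=e^{\phi/h}(1+a_h+r)\in H^1(\Omega)$ for each fixed $h$ since $e^{\phi/h}\in C^\infty(\overline{\Omega})$.

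The main obstacle is the Green's operator $G_\rho$: getting a parametrix explicit enough to carry the homogeneous Neumann condition on the \emph{union} of curved graph pieces $\Gamma_j$ — which is what forces the flattening, the reflection, the partition-of-unity bookkeeping and the phase correction $\psi_h$ — while retaining the $h$-uniform endpoint bound $L^{p'}\to L^p$ needed for a potential only in $L^{n/2}$. Controlling, uniformly in $h$ and in these endpoint Lebesgue spaces, the second-order cross terms and lower-order errors that the flattening produces, and keeping them compatible with the reflection, is the delicate point; once that is done, together with the solvability of $I+G_\rho q$ borrowed from the $L^{n/2}$ theory, the construction of $a_h$ and $r$ is routine.
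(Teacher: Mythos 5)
There is a genuine gap, and it sits exactly where the proposition's $a_h$ is supposed to do its work: at the boundary. After your conjugation the condition $\del_\nu u\vert_\Xi=0$ becomes $(h\del_\nu+\del_\nu\phi)(1+a_h+r)\vert_\Xi=0$, and the leading term $1$ contributes $(h\del_\nu+\del_\nu\phi)1\vert_\Xi=\del_\nu\phi\vert_\Xi$, which is uniformly of order one and nonvanishing on $\Xi$ (since $\del\Omega^-\Subset B$, the component $\nu_n=\del_\nu y_n$ is bounded away from zero on $\del\Omega\setminus B$); the $ih\psi_h$ part of the phase only produces an $\mathcal{O}(h)$ contribution and cannot cancel it. Your $a_h=-G_\rho q_1$ and your $r$ are both outputs of a Green's operator carrying \emph{homogeneous} conjugated Neumann data, so neither touches this $\mathcal{O}(1)$ term and the ansatz $v=1+a_h+r$ simply fails the boundary condition. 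Nor can you dump $-\del_\nu\phi\vert_\Xi$ into $r$ as an inhomogeneous boundary datum: the available bounds (\cref{eq:hG1}, or the ones you claim for $G_\rho$) give $\norm{r}_{L^p}\lesssim h^{-1}\norm{f}_{L^2}$ and no decay of $\norm{r}_{L^2}$, so with $\norm{f}_{L^2}=\mathcal{O}(1)$ you lose both $\norm{r}_{L^p}\leq C$ and $\lim_{h\to 0}\norm{r}_{L^2}\leq\eps$. In the paper $a_h$ is not an interior corrector for the smooth part of $q$ at all; it is the WKB/Borel-lemma object $a_h=e^{(t-\phi-i\psi)/h}b$ of \cref{lem:constructlb}, with $\tau_\Xi t=\tau_\Xi(\phi+i\psi)$, $\tau_\Xi\del_\nu t=-\tau_\Xi\del_\nu(\phi+i\psi)$ and $\tau_\Xi b=1$, engineered so that $(h\del_\nu+\del_\nu\phi)(1+a_h)\vert_\Xi=\mathcal{O}_{L^\infty}(h)$ while $h^2\Delta_\phi$ of the corresponding term is $\mathcal{O}_{L^\infty}(h^2)$ in $\Omega$; only with boundary datum $\norm{f}_{L^2}\leq Ch$ does \cref{getr0lemm} produce an $r$ with the stated smallness. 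Handling the rough potential by splitting $q=q^\flat+q^\sharp$ happens inside \cref{getr0lemm}, not through $a_h$.

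A secondary point: you propose to rebuild the Green's operator from Faddeev kernels plus even reflection across flattened boundary pieces. Even reflection enforces $\del_\nu v=0$, not the Robin-type condition $(h\del_\nu+\del_\nu\phi)v=0$ whose zeroth-order coefficient $\del_\nu\phi$ is nonvanishing on $\Xi$, so the reflected parametrix would not carry the boundary condition you need; and the $h$-uniform $L^{p'}\to L^p$ bound across the curved, glued pieces is precisely the content of \cref{thm:G}, which the paper has already established and which your argument should invoke rather than re-derive.
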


Obtaining these CGO solutions begins by conjugating the Laplacian $h^2\Delta_\phi \coloneqq e^{-\phi/h}h^2\Delta e^{\phi/h}$, $\phi = y_n$, and constructing a Green's function for this conjugated Laplacian in a distributional sense. 

\begin{theorem}\label{thm:G}
	There is a linear operator $G_{\Xi}$ so that for small $h>0$, all $v\in L^{p'}(\Omega)$ and all $f\in L^2(\Xi)$,
	\begin{align}\label{eq:inverseG}
		\langle u, h^2\Delta_\phi G_{\Xi} (v,f)\rangle &= \langle u,v\rangle\,,\quad \forall u\in C_c^\infty(\Omega)\,,
	\shortintertext{and}
		(h\del_\nu + \del_\nu\phi)G_{\Xi}(v,f)\vert_\Xi &= -f\,.\label{eq:boundaryG}
	\end{align}
	Furthermore,
	\begin{equation}\label{eq:Gshorth}
	G_\Xi\colon L^{p'}(\Omega)\times L^2(\RR^{n-1}) \to_{h^{-2}} L^p(\Omega)\cap H^1(\Omega)\,,\an G_\Xi\colon L^{2}(\Omega)\times L^2(\RR^{n-1}) \to_{h^{-1}} H^1(\Omega)\,,
	\end{equation}
	with bounds
	\begin{alignat}{5}
		&\norm{G_{\Xi}(v,f)}_{L^p}&&\lesssim h^{-2}\norm{v}_{L^{p'}} &+& h^{-1}&&\norm{f}_{L^2}\label{eq:hG1} \\
		&\norm{G_{\Xi}(v,f)}_{H^1}&&\lesssim h^{-1}\norm{v}_{L^{2}} &+& &&\norm{f}_{L^2}\,,\label{eq:hG2}
	\end{alignat}
	where the constant implied in $\lesssim$ is independent of $h$.
\end{theorem}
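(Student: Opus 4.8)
The plan is to construct $G_\Xi$ by the method of images / reflection across the boundary pieces, combined with an explicit fundamental solution for the conjugated operator $h^2\Delta_\phi = e^{-\phi/h}h^2\Delta e^{\phi/h}$. First I would compute $h^2\Delta_\phi$ explicitly: with $\phi=y_n$ one gets $h^2\Delta_\phi = h^2\Delta + 2h\del_{y_n} + 1$, so after a Fourier transform in all variables the symbol is $-|\xi|^2 + 2ih\xi_n + 1$ (up to normalization of $h$), whose zero set is a smooth hypersurface — this is why a genuine two-sided inverse exists only in a semiclassical sense and why the natural gain is $h^{-2}$ on $L^{p'}\to L^p$. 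I would first build a right inverse $E_h$ on all of $\RR^n$ by dividing by this symbol; the $L^{p'}\to L^p$ bound with the factor $h^{-2}$ follows from a Hardy–Littlewood–Sobolev / Kenig–Ruiz–Sogge-type estimate for the operator with symbol $(-|\xi|^2+2ih\xi_n+1)^{-1}$, exactly as in the classical CGO construction (this is where $p'=\frac{2n}{n+2}$, $p=\frac{2n}{n-2}$ enter), and the $H^1$ bound with factor $h^{-1}$ on $L^2$ data is an elementary Plancherel computation since $\langle\xi\rangle/|{-|\xi|^2+2ih\xi_n+1}| \lesssim h^{-1}$ uniformly.

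Next I would deal with the Neumann-type boundary condition \eqref{eq:boundaryG} on $\Xi = \del\Omega\setminus B$. Here I would use the hypothesis that a neighbourhood of each component $\Gamma_j$ of $\Xi$ is a graph of some $g_j \in C_c^\infty(\RR^{n-1})$: after flattening that piece of the boundary by a local diffeomorphism, the condition $(h\del_\nu+\del_\nu\phi)u|_{\Gamma_j}=-f$ becomes (to leading order) a prescribed-flux condition on a hyperplane, which I can solve by the classical reflection trick — extend $v$ by an appropriate even/odd reflection and add a single/double layer potential supported on the flattened boundary to absorb $f$. Patching these local constructions together with a partition of unity subordinate to the $\Gamma_j$ (which are positively separated, so the patches don't interact) and correcting the resulting $O(h)$ errors by a Neumann series — each application of the approximate inverse gains a power of $h$ against the $h^{-2}$ loss — yields the operator $G_\Xi$ satisfying \eqref{eq:inverseG} exactly on $C_c^\infty(\Omega)$ and \eqref{eq:boundaryG} on $\Xi$. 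The interior identity \eqref{eq:inverseG} is only tested against $u\in C_c^\infty(\Omega)$, so boundary terms from integration by parts never appear there and only the right-inverse property of $E_h$ plus the fact that the layer-potential corrections are harmonic (hence annihilated by $h^2\Delta_\phi$) away from $\del\Omega$ is needed.

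The main obstacle I anticipate is making the boundary parametrix genuinely \emph{semiclassical} and uniform in $h$: the reflection across a curved boundary produces commutator terms involving the curvature of $g_j$ times $h\del_\nu$, and one must show these are bounded by $O(h^{0})$ as operators $L^{p'}\to L^{p'}$ and $L^2(\Xi)\to L^2(\Xi)$ after composing with $E_h$, so that the Neumann series converges. This is essentially a semiclassical pseudodifferential calculus estimate near the boundary — one writes the correction operator as $\op{b}$ for a symbol $b$ supported near $\Xi$ with the right decay, and controls it by Calderón–Vaillancourt together with the mapping properties of $E_h$ already established. I would also need to check that the two mapping statements in \eqref{eq:Gshorth}–\eqref{eq:hG2} are compatible, i.e. that the \emph{same} operator $G_\Xi$ works for both $L^{p'}$ and $L^2$ data with the stated (different) powers of $h$; this follows because the construction of $E_h$ and the layer potentials is a single $h$-dependent formula and one simply reads off the two bounds from Plancherel (for $H^1$) and from the oscillatory-integral estimate (for $L^p$) applied to that one formula. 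Everything else — linearity, the precise form of the trace in \eqref{eq:boundaryG} with the $\del_\nu\phi$ term — is bookkeeping once the flattening and reflection are set up.
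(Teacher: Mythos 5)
Your outline for the interior parametrix (division by the symbol of $h^2\Delta_\phi$, Kenig--Ruiz--Sogge for the $h^{-2}$ bound $L^{p'}\to L^p$, Plancherel for the $h^{-1}$ bound on $L^2$) is the standard route and matches what the paper imports from \cite{CT20} as $G_\phi$. The gap is in the boundary correction, where the method of images/layer potentials you propose does not work for this operator. First, $h^2\Delta_\phi = h^2\Delta + 2h\del_{y_n}+1$ is not invariant under reflection across the (flattened) boundary: the drift term $2h\del_{y_n}$ is odd under $x_n\mapsto -x_n$, so an even/odd extension of $v$ does not produce a solution of the same equation, and a classical (harmonic) layer potential is \emph{not} annihilated by $h^2\Delta_\phi$ — one needs exact exponential solutions $e^{-x_n q_\pm}$ of the conjugated ODE in $x_n$, which is precisely what the paper's Poisson kernels ($\ell$, $B_l$, $B_c$) are built from. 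Second, the condition $(h\del_\nu+\del_\nu\phi)u = -f$ is not "to leading order a prescribed-flux condition": since $\del_\nu\phi = -(1+\abs{K}^2)^{-1/2}$ is bounded away from zero on $\Xi$, the zeroth-order multiplication term competes with $h\del_\nu$ at semiclassical frequencies $\abs{\xi'}\sim 1$, and one must verify an ellipticity (Lopatinskii-type) condition for the boundary symbol. In the paper this is the invertibility of $Z$ (the lower bound on $\mathrm{Re}\,z_0$ in the proof of \cref{lemm:Zinverse}), and nothing in your sketch addresses it. Third, $h^2\Delta_\phi$ is not elliptic: its characteristic variety meets $\{\abs{\xi'}^2<1\}$, which forces the small/large frequency splitting ($P_s$ versus $P_l$ plus the explicit $\cosh$-integral symbol $\ell$ of \cref{mappingPropsoft}); a single reflection-based construction cannot be uniform across both regimes.

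A smaller but real issue is the Neumann series. The remainders in \cref{eq:PrinvertsT} are only $\mathcal{O}(h^0)$ from $L^{p'}$ to $L^2$; smallness in $h$ is available only on $L^2$ data. Convergence therefore relies on the two-space structure (one application lands you in $L^2$, subsequent ones gain $h$), as in \cref{eq:defofS}--\cref{neumanninversetwostep}, not on each iterate "gaining a power of $h$ against the $h^{-2}$ loss" — the inversion happens on the data side, so the $h^{-2}$ of the parametrix never enters the series, but the $L^{p'}$ remainder genuinely fails to be $o(1)$.
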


As we rely heavily on tools developed in \cite{CT20}, in \cref{sec:setup} we recap the most relevant results from \cite{CT20} and introduce a change of variables. In \cref{sec:parametrices} we construct a parametrix that will be instrumental in showing \cref{thm:G}, and in \cref{sec:nonlinearR} we prove \cref{thm:G}. \cref{sec:CGO} is dedicated to finding the CGO solution in \cref{cgosolsprop} and \cref{sec:uniq} establishes the uniqueness of potentials and conductivities, namely \cref{thm:uniqueness} and \cref{cor:calderon}.

\section{Setup}\label{sec:setup}

The strategy in our proofs will be to flatten out the boundary of $\Omega$ along each of its boundary defining functions, proving results there and transforming back to the original coordinates of $\Omega$. 

\subsection{Change of Coordinates}\label{changeofVars}

Fix some open $\Gamma \subset \partial\Omega$ and $g \in C_c^\infty(\RR^{n-1})$ so that $g$ defines the boundary of $\Omega$ in a neighborhood of $\Gamma$, by which we mean that
\begin{equation}\label{eq:gdescribesboundary}
	\Omega \cap U = \{y\in U \colon y_n - g(y') > 0\}\,,\an  \partial\Omega\cap U = \{y\in U \colon y_n -g(y')= 0\}\,,
\end{equation}
for some open, bounded neighborhood $U\subset \RR^n$ of $\Gamma$ with $\Gamma\Subset \partial\Omega\cap U$.

Note that, given the boundary defining function $y_n-g$, the outward unit normal vector $\nu$ is defined as 
\begin{equation}\label{normalDer}
\nu(y) = -\frac{\nabla (y_n-g(y'))}{\abs{\nabla (y_n-g(y'))}} = (1+\abs{K(y')}^2)^{-1/2} \begin{pmatrix}
	\nabla'g(y') \\ -1
\end{pmatrix}\,,
\end{equation}
where we put 
\begin{equation}\label{eq:defofK}
	K(y') \coloneqq \nabla' g(y')\,.
\end{equation}

In order to flatten out the boundary we define 
\begin{equation}\label{eq:defofgamma}
	\gamma \colon (y',y_n) \mapsto (x',x_n) = (y', y_n - g(y'))\,,
\end{equation}
so that $\td{\Omega} \coloneqq \gamma(\Omega) \subset \RR^n_+$, and $\td{\Gamma} \coloneqq \gamma(\Gamma) \subset \{x_n = 0\}$, where $\RR^n_+ = \{(x',x_n) \in \RR^n\colon x_n > 0\}$.

From here on out, expressions marked with a tilde generally denote objects in these transformed variables, and as a convention we will use $x$ as a variable when working on objects defined in the upper half space $\RR^n_+$.

In particular, we will now calculate the normal derivative in transformed coordinates. Let $\gamma^\ast, (\gamma^{-1})^\ast$ be pullbacks by $\gamma, \gamma^{-1}$ respectively. If $u(y) = u\in C^\infty(\bar\Omega)$, where $\bar\Omega$ is the closure of $\Omega$, then the transformed function is $(\gamma^{-1})^\ast \circ u(x) \eqqcolon \td{u}(x) = u(x',x_n+g(x')) \in C^\infty(\bar{\RR}^n_+)$, so that $u(y)=\td{u}(y',y_n-g(y'))$.

Using \cref{normalDer}, we see that 
\begin{equation*}
	\del_{\nu}u(y) 
	= (1+\abs{K}^2)^{-1/2}(K\cdot (\nabla_{x'}\td{u})(\gamma (y)) - (1+\abs{K}^2)(\del_{x_n}\td{u})(\gamma (y))) = (\gamma^\ast \circ  \td{\del}_\nu \td{u})(y)\,, 
\end{equation*}
where
\begin{equation}\label{defofboundary}
	\td{\del}_\nu \coloneqq (1+\abs{K(x')}^2)^{-1/2}(K(x')\cdot \nabla_{x'} - (1+\abs{K(x')}^2)\del_{x_n})\,,
\end{equation}
so that if $\phi(y)=y_n$, then $\td{\del}_\nu \phi = \del_\nu \phi = -(1+\abs{K}^2)^{-1/2}$, and
\begin{equation}\label{eq:boundarycalcwithgamma}
	(h\del_{\nu}+(\del_\nu\phi)) u = \gamma^\ast \circ \bdry \circ (\gamma^{-1})^\ast u  = \gamma^\ast \circ \left[ (1+\abs{K}^2)^{-1/2}\left(hK\cdot\nabla_{x'}-h(1+\abs{K}^2)\del_{x_n}-1\right) \td{u}\right] \,.
\end{equation}

Now if $A$ is any operator, then 
\[
	A u = \gamma^\ast \circ (\gamma^{-1})^\ast \circ A \circ \gamma^\ast \circ (\gamma^{-1})^\ast u = \gamma^\ast \circ \td{A} \circ (\gamma^{-1})^\ast u = \gamma^\ast \circ \td{A}\td{u}\,,
\]
where $\td{A} = (\gamma^{-1})^\ast \circ A \circ \gamma^\ast$, and $\gamma^\ast \circ \td{A}\td{u}(y) = \td{A}\td{u}(x)\vert_{x = \gamma(y)}$.

Thus our strategy will be to construct an operator $\td{A}$ that has the properties we want on functions defined on $\RR^n_+$ and transform this operator back to the original coordinates.

\subsection{Recap of \cite{CT20}}\label{subsec:recap}

As this paper uses the same strategies as \cite{CT20}, we will make extensive use of some of their results. For the reader's convenience, we will state some of the most relevant ones here. Statements made in this section will not be proven aside from \cref{lem:Jsandwich}.

Recall first the definition of mixed semiclassical Sobolev spaces,
\begin{equation}\label{eq:defofmixedsemiclasssobo}
W^{k,l,r}(\RR^{n-1},\RR^n) \coloneqq W^{k,l,r} \coloneqq\{u\in \Ss'(\RR^n) \colon \langle hD'\rangle^k\langle hD\rangle^l u \in L^r(\RR^n)\}\,,
\end{equation}
where $k,l \in \RR$, $1<r<\infty$, $D' = (D_1,\dots,D_{n-1})$ and $h$ denotes the semi-classical parameter.

The pseudo-differential operators in this paper are all meant semiclassically as well. For an introduction we refer to \cite{zworski}. When we write $S^{m}_0, S^m_1$ we mean the usual semiclassical symbol classes of order $m$. For mapping properties of semiclassical pseudo-differential operators on weighted $H^k$ spaces, we refer to \cite[Prop.~2.2]{salo}.

Following \cite{CT20}, for $r,s\in \RR, j\in\{0,1\}$ introduce the mixed symbol classes
\[
	S^r_1S^s_j \coloneqq \{ b(x',\xi') c(x,\xi) \colon  b \in S^r_1(\RR^{n-1}), c \in S^s_j(\RR^n) \} \subset S_0^{r+s}\,,
\]
which obeys the calculus 
\begin{lemma}[{\cite[Prop.~2.2,Prop.~2.3]{CT20}}]\label{lem:calc}
	There is $k(n) \in \mathbb{N}$ depending only on the dimension $n$ so that the following holds.

	For $a\in S^{k_1}_1S^{s_1}_1 \cup S^{k_1}_1S^{-k(n)+s_1}_0$ and $b\in S^{k_2}_1S^{s_2}_1 \cup S^{k_2}_1S^{-k(n)+s_2}_0$ we have
	\[
		b(x',hD)\circ a(x',hD) = ab(x',hD) + h\sum_{\abs{\alpha}=1}(\del_{\xi}^\alpha b \del^\alpha_{x'} a)(x',hD) + h^2m(x',hD)
	\]
	for some $m(x',hD) \colon W^{k,s,r} \to W^{k-k_1-k_2,s-s_1-s_2,r}$. 

	Furthermore, for $c(x',\xi') \in S^{k_1}_1(\RR^{n-1}), d(x',\xi) \in S^{s_1}_1(\RR^n) \cup S_0^{-k(n)+s_1}(\RR^n)$ we have
	\[
		cd(x',hD) \colon W^{k,s,r} \to W^{k-k_1,s-s_1,r}\,,
	\]
	with norm bounded in $h >0$ uniformly. 

	Analogous statements hold for the weighted $H^k$ Sobolev spaces $H^k_\delta$ by using $S^k_1S^s_0 \subset S^{k+s}_0(\RR^n)$ and the calculus for these symbols, see for example \cite[Prop.~2.2]{salo}.
\end{lemma}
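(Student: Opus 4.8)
The plan is to separate \cref{lem:calc} into an asymptotic composition formula, which comes from the standard semiclassical symbol calculus, and the $h$-uniform $L^r$-continuity of the operators that arise, which is where the smoothing order $k(n)$ and the two symbol-class alternatives earn their keep. For the composition formula I would start from the usual semiclassical expansion: viewing $a,b$ merely as elements of $S^{k_1+s_1}_0,S^{k_2+s_2}_0$, the composition $b(x',hD)\circ a(x',hD)$ has a full symbol with asymptotic expansion $\sum_\alpha (-ih)^{|\alpha|}(\del_\xi^\alpha b)(\del_x^\alpha a)/\alpha!$, and Taylor-expanding the phase to second order writes it as $ba + h\sum_{|\alpha|=1}c_\alpha(\del_\xi^\alpha b)(\del_x^\alpha a) + h^2 m$ with universal constants $c_\alpha$ (suppressed in the statement) and $m$ an explicit oscillatory-integral second-order Taylor remainder. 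The content specific to the mixed classes is then twofold: writing $a=a_0(x',\xi')a_1(x,\xi)$ and $b=b_0(x',\xi')b_1(x,\xi)$ with $a_0,b_0$ tangential, one checks, by tracking which factor absorbs which derivative, that the $|\alpha|=1$ term reduces to the displayed tangential sum over $\del^\alpha_{x'}a$, and that $m$ lies in the mixed class with the expected orders --- $S^{k_1+k_2}_1 S^{s_1+s_2}_1$ when the $S_1$-alternative holds for both $a$ and $b$, and $S^{k_1+k_2}_1 S^{-k(n)+s_1+s_2}_0$ when an $S_0$-alternative is used, the smoothing index $-k(n)$ being inherited from the hypothesis. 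This order tracking, cleanest in the oscillatory-integral form of $m$ together with repeated integration by parts, is the first delicate point.

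Given this symbol information, the remainder bound $m(x',hD)\colon W^{k,s,r}\to W^{k-k_1-k_2,s-s_1-s_2,r}$ and the second assertion $cd(x',hD)\colon W^{k,s,r}\to W^{k-k_1,s-s_1,r}$ both follow once one knows that a mixed operator with symbol $b_0(x',\xi')c_0(x,\xi)$, where $b_0\in S^0_1(\RR^{n-1})$ and $c_0\in S^0_1(\RR^n)$, resp.\ $c_0\in S^{-k(n)}_0(\RR^n)$, is bounded on $L^r(\RR^n)$ for $1<r<\infty$ with norm independent of $h$. One conjugates by suitable powers of $\langle hD'\rangle$ and $\langle hD\rangle$ --- whose symbols are explicit, so no circular appeal to \cref{lem:calc} is incurred --- to bring all orders down to zero, and then factors the operator into a tangential piece $b_0(x',hD')$ and a full piece $c_0(x,hD)$ (up to a lower-order commutator term, treated identically). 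The tangential piece is bounded on $L^r(\RR^n)=L^r(\RR_{x_n};L^r(\RR^{n-1}))$ fibrewise in $x_n$, since $b_0\in S^0_1(\RR^{n-1})$ is a Calder\'on--Zygmund symbol and the semiclassical rescaling keeps the bound $h$-uniform. The full piece is handled by the same Calder\'on--Zygmund fact when $c_0\in S^0_1(\RR^n)$, and when $c_0\in S^{-k(n)}_0(\RR^n)$ by taking $k(n)=n+1$ and estimating the Schwartz kernel directly: integrating by parts arbitrarily often in $\xi$ (an $S_0$-symbol neither gains nor loses order under $\del_\xi$) gives $\abs{K_h(x,y)}\lesssim h^{-n}\langle (x-y)/h\rangle^{-n-1}$, whence $\sup_x\norm{K_h(x,\cdot)}_{L^1}+\sup_y\norm{K_h(\cdot,y)}_{L^1}\lesssim 1$ uniformly in $h$, and Schur's lemma with interpolation gives boundedness on every $L^r$. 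The weighted $H^k_\delta$ variant follows by the same reductions with $L^r$ replaced by $L^2$, where Calder\'on--Vaillancourt even handles $S^0_0$ directly and one merely cites \cite[Prop.~2.2]{salo}.

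The step I expect to be the genuine obstacle is precisely this $h$-uniform $L^r$-estimate for $r\neq 2$. Calder\'on--Vaillancourt only gives $L^2$, so for general $r$ one is forced either into Calder\'on--Zygmund theory --- which requires the $(1,0)$-type symbols, hence the alternative $S^{k_i}_1 S^{s_i}_1$ --- or into a Schur-test kernel estimate --- which requires a genuine gain of $k(n)$ derivatives, hence the alternative $S^{k_i}_1 S^{-k(n)+s_i}_0$. Carrying both possibilities through the composition calculus is exactly why \cref{lem:calc} is phrased with the union of these two classes, and keeping every constant uniform in the semiclassical parameter $h$ throughout is the recurring technical burden.
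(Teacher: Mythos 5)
You should note at the outset that the paper does not prove this statement at all: Lemma~\ref{lem:calc} is imported verbatim from \cite[Prop.~2.2, Prop.~2.3]{CT20}, and the surrounding recap section explicitly says that statements there are not proven (apart from Lemma~\ref{lem:Jsandwich}). So there is no in-paper argument to compare yours against; the relevant benchmark is the cited proposition in \cite{CT20}, and your sketch follows essentially that route: the standard semiclassical composition expansion to second order, plus $h$-uniform $L^r$ boundedness of the mixed-class operators obtained by splitting into a tangential $S^0_1(\RR^{n-1})$ piece (fibrewise Calder\'on--Zygmund in $x'$) and a full piece treated either by Calder\'on--Zygmund theory (the $S^s_1(\RR^n)$ alternative) or by a kernel/Schur estimate exploiting the gain of $k(n)$ derivatives (the $S^{-k(n)+s}_0(\RR^n)$ alternative), with conjugation by $\langle hD'\rangle$ and $\langle hD\rangle$ to reduce to order zero. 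Two points are glossed over and deserve a sentence each if you were to write this out: the second-order remainder $m$ and the commutator generated by factoring $b_0(x',\xi')c_0(x,\xi)$ into a product of two quantizations are not of pure product form, so you need to verify (via the explicit oscillatory-integral representation) that they still admit the same kernel or Calder\'on--Zygmund estimates, possibly at the cost of enlarging $k(n)$ beyond $n+1$ --- which is harmless since the lemma only asserts the existence of some dimensional $k(n)$; and the conjugation step itself uses composition with the weights $\langle\xi'\rangle^k\langle\xi\rangle^s$, which is legitimate because these are explicit elliptic symbols, but should be stated as such to dispel the apparent circularity. With those caveats your proposal is a faithful reconstruction of the argument the paper delegates to \cite{CT20}.
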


Fix $\phi(y) = y_n$, and let
\[
	h^2\Delta_\phi \coloneqq e^{-\phi/h}h^2\Delta e^{\phi/h}
\]
be the conjugated Laplacian. 

Let $c < 1$ so that $\frac{\abs{K}^2}{1+\abs{K}^2} < c$ for all $x'$, where $K$ is from \cref{eq:defofK}. Define two cut-off functions $\td{\rho},\td{\rho}_0 \in C_c^\infty(\RR^{n-1})$ so that 
\begin{alignat}{3}
	\td{\rho}_0 &\equiv 1 \text{ on } \{\abs{\xi'}^2 \leq c\}\,, \quad && \text{ and } \quad \supp \td{\rho}_0 && \Subset \{\abs{\xi'}^2 < 1\}\,,\label{defofrho0} \\ 
	\td{\rho} &\equiv 1 \text{ on } \supp\td{\rho}_0\,, \quad && \text{ and } \quad \supp \td{\rho} && \Subset \{\abs{\xi'}^2 < 1\}\,.\label{defofrho}
\end{alignat}

Denoting by $\td{\Delta}_\phi \coloneqq (\gamma^{-1})^\ast\circ \Delta_\phi \circ \gamma^\ast$ the transformed conjugated Laplacian, we may write
\begin{equation}\label{splitupofLaplace}
	\frac{1}{1+\abs{K}^2}h^2 \tilde{\Delta}_\phi = QJ + \tilde{A}_0 - h\tilde{E}_1+h^2\tilde{E}_0\,.
\end{equation}
In this decomposition we have
\begin{align}
	J = \mathrm{Op}_h\left(i\xi_n - i\tilde{a}_+-ihm_0\right)\,, \qquad
	Q = -\mathrm{Op}_h\left(i\xi_n - i\tilde{a}_- + ihm_0\right)\label{eq:defofJQ}
\end{align}
and 
\begin{equation}\label{eq:defofA0E1E0}
\tilde{A}_0  = \op[h,x']{\tilde{a}_0}\,,\quad \tilde{E}_1 = \op[h,x']{m_0 \tilde{a}_-}\,,\quad \tilde{E}_0 = \op[h,x']{\td{e}_0}\,,
\end{equation}
where $e_0 \in S^0_1(\RR^{n-1})$, $1_h = 1+ \frac{h\Delta_{x'}g}{2}$,
\begin{alignat}{2}
	r_0 &= (1-\tilde{\rho}_0)\sqrt{(1_h+iK\cdot\xi')^2 -(1-\abs{\xi'}^2)(1+\abs{K}^2)} && \in S^1_1(\RR^{n-1})\,, \label{eq:defofr} \\
	\tilde{a}_0 &= \frac{(1_h+iK\cdot\xi')^2-(1-\abs{\xi'}^2)(1+\abs{K}^2)-r_0^2}{(1+\abs{K}^2)^2} &&\in S^{-\infty}_1(\RR^{n-1})\,,\label{eq:defofa0}
\end{alignat}
and
\begin{align}\label{herem0isdefined}
	\tilde{a}_\pm = i\left(\frac{1_h+iK\cdot\xi' \pm r_0}{1+\abs{K}^2}\right) \in S^1_1(\RR^{n-1})\,, \an m_0 = \tilde{a}_+^{-1} \sum_{\abs{\alpha}=1}(-i)\del_{\xi'}^\alpha\tilde{a}_-\del_{x'}\tilde{a}_+ \in S_1^0(\RR^{n-1})\,.
\end{align}
Finally, we introduce 
\begin{equation}\label{splitupJintoF}
F_+ \coloneqq \op[h,x']{-i\tilde{a}_+-ihm_0}\,,\quad\text{so that}\quad J = h\del_{x_n} + F_+\,.
\end{equation}

\begin{lemma}[{\cite[Prop.~3.3]{CT20}}]\label{Jinverse}
	For $h>0$ small enough, there is an inverse $J^{-1}$ of $J$ so that 
	\[
		J^{-1} \colon L^r(\RR^n) \to W^{1,r}(\RR^n)\,,\qquad 1<r<\infty\,,
	\]
	and for all $u \in L^r(\RR^n)$ with $\supp u\subset \RR^n_+$, $\supp J^{-1}u \subset \RR^n_+$ and $\tau J^{-1}u = 0$, where $\tau$ is the trace onto $x_n = 0$.
\end{lemma}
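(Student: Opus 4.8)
\emph{Proof idea.} Write $J=h\del_{x_n}+F_+$ as in \cref{splitupJintoF}, and observe that $F_+=\op[h,x']{-i\td{a}_+-ihm_0}$ is a \emph{tangential} operator whose symbol does not depend on $x_n$ (the coefficients $K$, $1_h$, $r_0$, $m_0$ in \cref{eq:defofr,eq:defofa0,herem0isdefined} all depend on $x'$ only). From \cref{herem0isdefined} its principal symbol is
\[
	-i\td{a}_+(x',\xi')=\frac{1_h+iK(x')\cdot\xi'+r_0(x',\xi')}{1+\abs{K(x')}^2}\,.
\]
Splitting into $\abs{\xi'}$ large and $\abs{\xi'}$ bounded, one checks using \cref{eq:defofr} that the quantity under the square root defining $r_0$ lies in a fixed subsector of the right half-plane for large $\abs{\xi'}$, so that $\Re r_0\gtrsim\langle\xi'\rangle$ there, while everywhere $\Re r_0\ge 0$ and (for small $h$) $1_h=1+\tfrac h2\Delta_{x'}g\ge\tfrac12$; together with the bound on $\abs{K}$ coming from $c<1$ this gives
\[
	\Re\bigl(-i\td{a}_+(x',\xi')\bigr)\gtrsim\langle\xi'\rangle\,,\qquad \abs{\Im\bigl(-i\td{a}_+(x',\xi')\bigr)}\lesssim\langle\xi'\rangle\,,
\]
uniformly in $x'$ and in small $h$. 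Thus the symbol of $F_+$ lies in a fixed subsector of the open right half-plane, and, after rescaling $x_n$ by $h$, $J$ is an evolution operator of (degenerate) parabolic type in the ``time'' variable $x_n$.

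The plan is then to invert $J$ by Duhamel's formula. Since $-h^{-1}F_+$ has sectorial symbol with real part bounded below by $c_0h^{-1}>0$, it generates a bounded analytic semigroup $S(x_n)\coloneqq e^{-x_nF_+/h}$, $x_n\ge 0$, on $L^r(\RR^{n-1})$, with norms bounded uniformly in small $h$, with exponential decay $\norm{S(x_n)}_{L^r\to L^r}\lesssim e^{-c_0x_n/h}$, and with parabolic smoothing $\norm{F_+S(x_n)}_{L^r\to L^r}\lesssim (h/x_n)\,e^{-c_0x_n/h}$. One defines
\[
	J^{-1}f(\cdot,x_n)\coloneqq\frac1h\int_{-\infty}^{x_n}S(x_n-s)\,f(\cdot,s)\,\dd s\,.
\]
Differentiating under the integral sign gives $h\del_{x_n}(J^{-1}f)=f-F_+(J^{-1}f)$, i.e.\ $JJ^{-1}=\id$, and an integration by parts in $s$ (using $\norm{S(s)}\to0$ as $s\to\infty$) gives $J^{-1}J=\id$ on the natural domain; the lower bound on $\abs{-i\td a_+}$ also shows $J$ is injective, so $J^{-1}$ is a genuine two-sided inverse. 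The support and trace statements are immediate: if $\supp u\subset\RR^n_+$ then for $x_n\le 0$ the integrand above vanishes for every $s\le x_n$, so $J^{-1}u\equiv 0$ for $x_n\le 0$; hence $\supp J^{-1}u\subset\RR^n_+$ and $\tau J^{-1}u=J^{-1}u\vert_{\{x_n=0\}}=0$.

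For the mapping property $J^{-1}\colon L^r(\RR^n)\to W^{1,r}(\RR^n)$, the $L^r$ bound on $J^{-1}f$ follows from Young's inequality in $x_n$ using $\norm{S(\cdot)}_{L^1((0,\infty);\,L^r\to L^r)}\lesssim h$. For the tangential derivative one shows $F_+J^{-1}f\in L^r$ by writing $\langle D'\rangle J^{-1}f$ as an $x'$-Fourier multiplier with operator-valued symbol $\xi'\mapsto\langle\xi'\rangle\,\bigl(g\mapsto\tfrac1h\int_{-\infty}^{x_n}e^{-(x_n-s)(-i\td a_+)(x',\xi')/h}g(s)\,\dd s\bigr)$ and invoking an operator-valued Mikhlin multiplier theorem (valid since $L^r(\RR)$, $1<r<\infty$, is a UMD space), or equivalently by realizing $S(x_n)$ as a semiclassical pseudodifferential operator and using the calculus recalled in \cref{lem:calc} to control its $L^r$ operator norm and that of $\langle D'\rangle S(x_n)$; the variable coefficient $K(x')$ and the subprincipal term $hm_0$ enter only as lower-order perturbations. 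The normal derivative is then handled by $h\del_{x_n}J^{-1}f=f-F_+J^{-1}f\in L^r$.

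The main obstacle is precisely this last point: constructing the semigroup $e^{-x_nF_+/h}$ and proving its boundedness and smoothing estimates \emph{on $L^r$ for $r\ne 2$ and uniformly in $h$}. On $L^2$ this is the classical fact that a sectorial operator generates a bounded analytic semigroup, but for general $1<r<\infty$ Plancherel is unavailable, so one must either appeal to operator-valued multiplier theory or construct the symbol of the semigroup directly via the pseudodifferential calculus, and then carefully track both the $x'$-dependence of the coefficients and the dependence on the semiclassical parameter $h$.
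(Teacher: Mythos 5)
Your overall strategy is sound and in fact closely parallels what actually happens in \cite{CT20} (the paper itself does not prove \cref{Jinverse}; it imports it, and the structure of $J^{-1}$ is visible in the proof of \cref{lem:Jsandwich}): your Duhamel formula $\frac1h\int_{-\infty}^{x_n}e^{-(x_n-s)F_+/h}\,\dd s$ is nothing but the $\xi_n$-inverse Fourier representation of the quantization of $(i\xi_n-i\td{a}_+-ihm_0)^{-1}$, your causality argument for $\supp J^{-1}u\subset\RR^n_+$ and $\tau J^{-1}u=0$ is the same Paley--Wiener-type reasoning (possible because $\mathrm{Re}(-i\td{a}_+)>0$ makes the symbol holomorphic and decaying in a half-plane in $\xi_n$), and your ellipticity computation for $-i\td{a}_+$ is correct. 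The verification $JJ^{-1}=\id$ and the small-frequency/large-frequency discussion are fine.

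However, there is a genuine gap, and you have correctly located it yourself: the uniform-in-$h$ bounds $J^{-1}\colon L^r(\RR^n)\to W^{1,r}(\RR^n)$ for $r\neq 2$ are asserted, not proven, and neither of the two routes you sketch closes it as stated. Operator-valued Fourier multiplier theorems (Weis/Mikhlin on UMD spaces) do not apply directly because your ``symbol'' depends on $x'$ through $K(x')$, $1_h$ and $m_0$, so $S(x_n)$ is not a multiplier; freezing coefficients and perturbing is not obviously lower order here since $\td{a}_+\in S^1_1(\RR^{n-1})$ is of full order $1$ in $\xi'$. Moreover, the analytic-semigroup smoothing bound $\norm{F_+S(t)}\lesssim (h/t)e^{-c_0t/h}$ is not integrable against $h^{-1}\dd t$ at $t=0$, so Duhamel plus parabolic smoothing alone cannot give the tangential-derivative estimate; one needs a genuinely quantitative $L^r$ theory. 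Your alternative suggestion --- realizing $S(x_n)$, or directly $J^{-1}$, as a semiclassical pseudodifferential operator --- is exactly what \cite{CT20} does: one checks that $(i\xi_n-i\td{a}_+-ihm_0)^{-1}$ lies in the mixed classes $\mathrm{span}(S^0_1S^{-1}_1+S^{-\infty}S^{-1-k(n)}_0+S^1_1S^{-2}_1)$ (this is the reason those mixed classes are introduced at all), obtains the $L^r\to W^{1,r}$ and weighted $L^2$ bounds from the calculus recalled in \cref{lem:calc}, and removes the $\mathcal{O}(h)$ composition errors by a Neumann series, giving the representation $J^{-1}=j^{-1}(x',hD)(1+hm_1+h^2m_2)^{-1}$ quoted in \cref{eq:jinvform}. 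So to complete your argument you would have to carry out this symbol-class membership and calculus step (including the treatment of the $hm_0$ term and the $x'$-dependence), at which point your semigroup formulation and the parametrix construction coincide; as written, the central mapping property of the lemma remains unestablished.
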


Furthermore,
\begin{lemma}\label{lem:Jsandwich}
	If $a,b \in S^0_1(\RR^{n-1})$ are symbols with $\supp a \cap \supp b = \emptyset$, then for $A = \op[h,x']{a}$ and $B = \op[h,x']{b}$ we have
	\begin{equation}\label{eq:jsandwich}
		A J^{-1}B  \colon L^r(\RR^n) \to_{h^2} W^{1,r}(\RR^n)\,,\an A J^{-1} B \colon L^2_\delta(\RR^n) \to_{h^2} H^1_\delta(\RR^n)
	\end{equation}
	for all $1<r<\infty, \delta\in\RR$.
\end{lemma}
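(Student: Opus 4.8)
The plan is to exploit the structure $J = h\del_{x_n} + F_+$ with $F_+ = \op[h,x']{-i\tilde a_+ - ihm_0}$ purely tangential: then $A$ and $B$, being tangential, commute with $h\del_{x_n}$, so $[J,A] = [F_+,A]$, and since $J^{-1}$ is a two-sided inverse of $J$ on the relevant spaces (\cref{Jinverse}) one has the algebraic identity
\[
	AJ^{-1}B = J^{-1}(AB) + J^{-1}[F_+,A]J^{-1}B\,.
\]
(Applying $J$ to both sides produces $AB + [F_+,A]J^{-1}B$ either way, and $J$ is injective.) From the mixed-class calculus of \cref{lem:calc} I would then extract two facts. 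First, because $\supp a \cap \supp b = \emptyset$, the symbol $ab$ and every first-order term $\del_{\xi'}^\alpha a\,\del_{x'}^\alpha b$ vanish identically, so $AB = h^2 M$ with $M$ bounded on $L^r$ uniformly in $h$. Second, $[F_+,A] = h\op[h,x']{c} + h^2 M_1$, where $c \in S^0_1(\RR^{n-1})$ satisfies $\supp c \subseteq \supp\del a \subseteq \supp a$ — this support containment is the crucial point — and $M_1$ is an order-one remainder, bounded $W^{1,r}\to L^r$ uniformly in $h$.

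Substituting, $AJ^{-1}B = h^2 J^{-1}M + h\,J^{-1}\op[h,x']{c}\,J^{-1}B + h^2 J^{-1}M_1 J^{-1}B$. Using the (uniform-in-$h$) bound $J^{-1}\colon L^r \to W^{1,r}$ from \cref{Jinverse}, together with $W^{1,r}\hookrightarrow L^r$ to absorb intermediate compositions, the first and third summands already map $L^r \to W^{1,r}$ with a gain of $h^2$. The middle summand carries only one explicit power of $h$, so I would apply the same identity once more, now with $\op[h,x']{c}$ in place of $A$ — legitimate since $\supp c \cap \supp b = \emptyset$. This gives $\op[h,x']{c}\,J^{-1}B = h^2 J^{-1}M' + h\,J^{-1}\op[h,x']{c'}\,J^{-1}B + h^2 J^{-1}M_1' J^{-1}B$ with $c'$ again supported in $\supp a$; multiplying by the outer $hJ^{-1}$ turns every term into $h^2$ or $h^3$ times a composition of copies of $J^{-1}$ with uniformly bounded zeroth- or first-order operators, each of which maps $L^r \to W^{1,r}$. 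Summing the pieces yields $AJ^{-1}B\colon L^r \to_{h^2} W^{1,r}$. The weighted assertion $AJ^{-1}B\colon L^2_\delta \to_{h^2} H^1_\delta$ follows from the identical computation using the weighted analogues of \cref{lem:calc} and \cref{Jinverse} (equivalently, conjugating the whole argument by $\langle x\rangle^\delta$ and absorbing the resulting lower-order commutators).

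I expect the main obstacle to be two pieces of bookkeeping. One is that a single commutation gains only one power of $h$, so the claimed $h^2$ requires the second iteration, and that iteration is available only because the commutator symbol $c$ inherits the support of $a$ and hence stays disjoint from $\supp b$; this support containment must be tracked through each step. The other is verifying that every remainder operator, once inserted between two copies of $J^{-1}$, still lands in $W^{1,r}$ rather than merely $L^r$: this forces repeated use of the uniform boundedness of $J^{-1}\colon L^r \to W^{1,r}$ and of $W^{1,r}\hookrightarrow L^r$ to absorb the order-one remainders $M_1, M_1'$ and the order-zero pieces. Everything else is a routine application of \cref{lem:calc}.
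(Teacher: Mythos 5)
Your argument is correct in its core and genuinely different from the paper's. You work purely at the operator level: the identity $AJ^{-1}B = J^{-1}AB + J^{-1}[F_+,A]J^{-1}B$ (legitimate because $A$ commutes with $h\del_{x_n}$ and $J^{-1}$ is a two-sided inverse), the observation that disjointness of $\supp a$ and $\supp b$ kills both the product symbol and the first-order terms in \cref{lem:calc} so that $AB=h^2M$, the support-tracking $\supp c\subseteq\supp a$ for the commutator symbol, and one further iteration to upgrade the single power of $h$ in the middle term to $h^2$. The paper instead opens up $J^{-1}$ itself, using the explicit representation $J^{-1}=j^{-1}(x',hD)(1+hm_1(x',hD)+h^2m_2(x',hD))^{-1}$ from \cite[Prop.~3.3]{CT20}, expands part of the Neumann series, and sandwiches the resulting mixed-class symbols of $j^{-1}$ between $A$ and $B$, invoking disjoint-support properties term by term. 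Your route buys independence from the internal structure of $J^{-1}$ (you only need that it is a uniformly bounded two-sided inverse $L^r\to W^{1,r}$ together with the tangential calculus), at the cost of a commutator bookkeeping step; the paper's route is more mechanical once the CT20 representation is quoted, and it hands you the weighted estimates for free.

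The one point you should shore up is precisely the weighted half of \cref{eq:jsandwich}. Your chain of mapping properties needs $J^{-1}\colon L^2_\delta(\RR^n)\to H^1_\delta(\RR^n)$ uniformly in $h$, but \cref{Jinverse} as stated only records the unweighted bound, and conjugating by $\langle x\rangle^\delta$ is not harmless: controlling $\langle x\rangle^\delta J^{-1}\langle x\rangle^{-\delta}$ requires structural information about $J^{-1}$, which is exactly what you were trying to avoid. The fix is to quote the weighted boundedness of $j^{-1}(x',hD)$ and of $m_1,m_2$ from \cite[Prop.~3.3, Eq.~(3.6)]{CT20}, which yields $J^{-1}\colon L^2_\delta\to_{h^0} H^1_\delta$ by the same Neumann-series argument; with that in hand your iteration goes through verbatim on the weighted spaces, using the weighted calculus noted at the end of \cref{lem:calc}. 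Everything else in your proposal is sound.
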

\begin{proof}
	This argument follows the proof of \cite[Lemm.~3.4]{CT20}, to which we refer for more details. Throughout this proof we use the calculus given in \cref{lem:calc} without explicit reference. 
	
	According to \cite[Prop.~3.3]{CT20} we can write
	\begin{equation}\label{eq:jinvform}
		J^{-1} = j^{-1}(x',hD)(1+hm_1(x',hD) + h^2 m_2(x',hD))^{-1}\,,
	\end{equation}
	where	
	\begin{align*}
		m_1, m_2 &\colon L^r(\RR^n) \to L^r(\RR^n)\,,&\an& &m_1,m_2 &\colon L^2_\delta(\RR^n) \to L^2_\delta(\RR^n) \\ 
		 j^{-1}(x',hD) &\colon L^r(\RR^n)\to W^{1,r}(\RR^n)\,,&\an& & j^{-1}(x',hD)&\colon L^2_{\delta}(\RR^n) \to H^1_{\delta}(\RR^n) 
	\end{align*}
	the part about $j^{-1}$ coming from \cite[Eq.~(3.6)]{CT20}.
	Additionally, from \cite[Eq.~(3.5)]{CT20}, 
	\[
		j^{-1} \in \mathrm{span}(S^0_1 S^{-1}_1 + S^{-\infty} S^{-1-k(n)}_0 + S^1_1 S^{-2}_1)\,,
	\]
	and writing out part of the Neumann inversion in \cref{eq:jinvform}, we have
	\[
		J^{-1} = j^{-1}(x',hD)(1+hm_1(x',hD)) + h^2M\,,
	\]
	where
	\[
		M \colon L^r(\RR^n)\to W^{1,r}(\RR^n)\,,\an M \colon L^2_{\delta}(\RR^n) \to H^1_{\delta}(\RR^n)\,,
	\]
	so that $A h^2 M B$ satisfies the estimates $A h^2 M B \colon L^r(\RR^n) \to_h W^{1,r}(\RR^n)$ and $A h^2 M B \colon L^2_\delta(\RR^n) \to_{h^2} H^1_\delta(\RR^n),\delta\in\RR$. 

	 We now perform the explicit estimate for the $S^0_1 S^{-1}_1$ term of $j^{-1}$, where all other terms (and those from $j^{-1}(x',hD) m_1(x',hD)$) can be dealt with similarly. Let $c_1 \in S_1^0(\RR^{n-1}), c_2 \in S^{-1}_1(\RR^n)$. Then
	\begin{equation}\label{eq:writeout}
		A \op{c_1c_2} B = A\op{c_1}\op{c_2} B + h A \sum_{\abs{\alpha}=1} \op{\del_{\xi'}^\alpha c_1} \op{\del_{x'}^\alpha c_2} B + h^2 A m B\,,
	\end{equation}
	where $m \colon L^r(\RR^n) \to W^{1,r}(\RR^n)$ and $m\colon L^2_\delta(\RR^n)\to H^1_\delta(\RR^n)$. 

	According to the usual pseudo-differential disjoint support properties, the first two terms in \cref{eq:writeout} satisfy \cref{eq:jsandwich}, whereas the term $h^2 A m B$ does so due to the mapping properties of $m$.
\end{proof}

Here, and from now onward, we will use the notation: for an operator $B$ and spaces $X,Y$ we write $B\colon X\to_{h^k} Y$ if the operator norm of $B: X\to  Y$ is bounded by $\mathcal{O}(h^k)$. 

\begin{lemma}[{\cite[Prop.~4.3]{CT20}}]\label{prop43}
	There are operators $G_\phi$ and $\td{G}_\phi \coloneqq (\gamma^{-1})^\ast \circ G_\phi \circ \gamma^\ast  = (\td{G}_\phi-\td{G}_\phi^c)+\td{G}_\phi^c$ so that for all $0 < \delta < 1$,
	\[
		h^2\td{\Delta}_\phi \td{G}_\phi = \id\,,\an h^2\Delta_\phi G_\phi = \id\,,
	\]
	and 
	\[
	G_\phi, \td{G}_\phi \colon L^2_\delta(\RR^n) \to_{h^{-1}} H^1_{\delta-1}(\RR^n)\,,\quad  G_\phi, \td{G}_\phi \colon L^{p'}(\RR^n) \to_{h^{-2}} L^p(\RR^n)\,.
	\]
	Furthermore, 
	\[
	\td{G}_\phi^c \colon L^2_\delta(\RR^n) \to_{h^{-1}} H^k_{\delta-1}(\RR^n)\,,\quad \td{G}_\phi^c \colon L^{p'}(\RR^n) \to_{h^{-2}} W^{k,p}(\RR^n) \quad\forall k\in\mathbb{N}\,,
	\]
	and $\td{G}_\phi - \td{G}_\phi^c$ is a pseudo-differential operator with symbol in $S^{-2}_1(\RR^n)$.
\end{lemma}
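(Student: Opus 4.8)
The plan is to construct $\tilde G_\phi$ on all of $\RR^n$ and then set $G_\phi:=\gamma^\ast\circ\tilde G_\phi\circ(\gamma^{-1})^\ast$, which recovers the $\Omega$-side statement automatically: the map $\gamma$ of \cref{eq:defofgamma} is a global diffeomorphism of $\RR^n$ with $\det D\gamma\equiv 1$ and $\gamma(y)-y$ bounded together with all derivatives, so $\gamma^\ast,(\gamma^{-1})^\ast$ are isomorphisms of $L^r(\RR^n)$, $W^{k,r}(\RR^n)$, $L^2_\delta(\RR^n)$, $H^k_\delta(\RR^n)$ with $h$-independent norms, they intertwine $h^2\Delta_\phi$ with $h^2\tilde\Delta_\phi$, and they preserve the semiclassical symbol classes; hence $h^2\Delta_\phi G_\phi=\gamma^\ast(h^2\tilde\Delta_\phi\tilde G_\phi)(\gamma^{-1})^\ast$ and it suffices to build $\tilde G_\phi$ with $h^2\tilde\Delta_\phi\tilde G_\phi=\id$, the stated mapping bounds, and the splitting $\tilde G_\phi=(\tilde G_\phi-\tilde G_\phi^c)+\tilde G_\phi^c$. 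By the decomposition \cref{splitupofLaplace}, writing $h^2\tilde\Delta_\phi=(1+|K|^2)\big(QJ+\tilde A_0-h\tilde E_1+h^2\tilde E_0\big)$, the task reduces to producing a right parametrix for $QJ+\tilde A_0$ and removing the remainder $-h\tilde E_1+h^2\tilde E_0$, which carries an explicit power of $h$, by a Neumann series at the end.

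For the core construction I would invert $J$ exactly with \cref{Jinverse}, factor $QJ+\tilde A_0=(Q+\tilde A_0J^{-1})J$, and build a right parametrix $R_{Q'}$ for the first-order operator $Q':=Q+\tilde A_0J^{-1}$, so that $\tilde G_\phi^{(0)}:=J^{-1}R_{Q'}(1+|K|^2)^{-1}$ is a right parametrix for $h^2\tilde\Delta_\phi$ modulo a controllable error. The correction $\tilde A_0J^{-1}$ is harmless: $\tilde A_0=\op[h,x']{\tilde{a}_0}$ has $\tilde{a}_0\in S^{-\infty}_1(\RR^{n-1})$, compactly supported in $\xi'$ by \cref{eq:defofa0}, so $\tilde A_0J^{-1}$ is smoothing in $x'$ and $Q'$ differs from $Q$ by a lower-order, essentially compact perturbation. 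The delicate object is $R_{Q'}$. The characteristic set of $h^2\tilde\Delta_\phi$ lies, in physical frequency, at $|\xi'|\sim h^{-1}$, $\xi_n\approx 0$: away from a conic neighbourhood of it $Q'$ is semiclassically elliptic and a right parametrix with error $O(h^\infty)$ is produced by the mixed symbol calculus of \cref{lem:calc}; near it, where $\tilde{a}_-$ of \cref{herem0isdefined} is nearly real, I solve the ordinary differential equation $Q'u=f$ in the $x_n$-variable with $\xi'$ frozen, obtaining an explicit Cauchy/Faddeev-type integral operator in $x_n$ (with an $O(h)$ error from the $x'$-dependence of the symbol). For this near-characteristic piece the two essential estimates are $L^{p'}\to_{h^{-2}}L^p$, which is the endpoint Kenig–Ruiz–Sogge uniform Sobolev (resolvent) estimate rescaled to the parameter $h^{-1}$ and adapted to the conjugated operator, and the weighted bound $L^2_\delta\to_{h^{-1}}L^2_{\delta-1}$ for $0<\delta<1$, of Sylvester–Uhlmann type; the extra $H^1$-gain is recovered through \cref{splitupJintoF}, writing $h\del_{x_n}$ of the construction as $J-F_+$ with $J^{-1}$ already in hand and tangential derivatives handled by the tangential parametrix. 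I then glue the elliptic and near-characteristic pieces by a tangential partition of unity: the cross and commutator terms have disjoint frequency supports and gain a factor $h^2$ by \cref{lem:Jsandwich}, so the total error (this gluing error plus the $(-h\tilde E_1+h^2\tilde E_0)J^{-1}R_{Q'}$-term, conjugated by multiplication by $(1+|K|^2)^{\pm1}$) carries in every summand a power of $h$ or is $O(h^\infty)$; hence $\id+(\text{error})$ is Neumann-invertible in the relevant norms and $\tilde G_\phi:=\tilde G_\phi^{(0)}\big(\id+(\text{error})\big)^{-1}$ satisfies $h^2\tilde\Delta_\phi\tilde G_\phi=\id$ with the stated bounds.

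Finally, I would take $\tilde G_\phi^c$ to be precisely the near-characteristic (Faddeev-type) summand of this construction: being microlocalised at tangential frequency $|\xi'|\sim h^{-1}$ and built from the $x_n$-integral operator whose kernel concentrates at the semiclassical scale, it maps $L^{p'}\to_{h^{-2}}W^{k,p}$ and $L^2_\delta\to_{h^{-1}}H^k_{\delta-1}$ for every $k$, whereas its complement $\tilde G_\phi-\tilde G_\phi^c$, being the elliptic pseudodifferential part, is a genuine semiclassical pseudodifferential operator whose symbol, on tracking orders through the parametrix, lies in $S^{-2}_1(\RR^n)$; pulling back by $G_\phi:=\gamma^\ast\tilde G_\phi(\gamma^{-1})^\ast$ gives the $\Omega$-side assertions and the analogous splitting. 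I expect the main obstacle to be the uniform-in-$h$ $L^{p'}\to L^p$ bound through the characteristic variety — the endpoint Kenig–Ruiz–Sogge / Kenig–Sjöstrand–Uhlmann estimate for the variable-coefficient conjugated operator — and, closely tied to it, arranging every remainder ($\tilde A_0J^{-1}$, $\tilde E_1=\op[h,x']{m_0\tilde{a}_-}$, $\tilde E_0$, and the gluing commutators) so that the Neumann iteration converges in all the listed norms without losing powers of $h$; this is exactly where the symbol bookkeeping of \cref{lem:calc} and the disjoint-support gain of \cref{lem:Jsandwich} are indispensable.
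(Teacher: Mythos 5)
Note first that the paper does not prove this lemma at all: it is imported verbatim from \cite[Prop.~4.3]{CT20}, and \cref{subsec:recap} states explicitly that nothing in that section is proven except \cref{lem:Jsandwich}. So the comparison is with the construction behind \cref{prop43} in \cite{CT20}, whose order is the reverse of yours: $G_\phi$ is built first, for the \emph{constant-coefficient} operator $h^2\Delta_\phi$ (recall $\phi(y)=y_n$ is linear, so $h^2\Delta_\phi=h^2\Delta+2h\del_{y_n}+1$ up to the sign convention of \cref{eq:deltatildeop}), as the explicit Faddeev-type Fourier multiplier $1/p_\phi(\xi)$ with $p_\phi$ the constant-coefficient symbol. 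For this explicit multiplier the identity $h^2\Delta_\phi G_\phi=\id$ is immediate, the bound $L^{p'}\to_{h^{-2}}L^p$ is precisely the Kenig--Ruiz--Sogge uniform Sobolev estimate, and $L^2_\delta\to_{h^{-1}}H^1_{\delta-1}$ is the Sylvester--Uhlmann weighted estimate. The operator $\td{G}_\phi$ is then \emph{defined} as the pullback $(\gamma^{-1})^\ast\circ G_\phi\circ\gamma^\ast$ — so its invertibility and mapping bounds transfer for free (your observation that $\gamma^\ast,(\gamma^{-1})^\ast$ are uniformly bounded on all the spaces involved is correct and is exactly what makes this transfer work) — and the only genuinely new content is the decomposition of the transformed operator into a pseudodifferential part with symbol in $S^{-2}_1(\RR^n)$ plus the characteristic piece $\td{G}_\phi^c$, obtained by cutting the multiplier off near its characteristic set $\{\abs{\xi'}=1,\ \xi_n=0\}$ and estimating that piece separately.

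Your plan reverses this order and thereby turns the quotable ingredients into unproven ones; this is a genuine gap, not a stylistic difference. You construct $\td{G}_\phi$ on the flattened side from the factorization \cref{splitupofLaplace}, and for the near-characteristic piece $R_{Q'}$ you need an endpoint $L^{p'}\to_{h^{-2}}L^p$ bound and a weighted $L^2_\delta\to_{h^{-1}}L^2_{\delta-1}$ bound for a \emph{variable-coefficient} operator. These two bounds are the entire analytic content of the lemma; they cannot be obtained by ``rescaling KRS'' (that only gives the constant-coefficient case, which is precisely what the pullback construction exploits), and you flag but do not close this point. A second concrete failure is the Neumann-series step: it is not true that every remainder carries a positive power of $h$. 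The term $h\td{E}_1$ composed with the Green's function is only $O(h^0)$ from $L^{p'}$ into $L^2$-based spaces — this is exactly what \cref{lem:EGestimate} records — so a single Neumann inversion in one norm does not converge; in \cite{CT20} and in this paper such remainders are removed by a two-space bookkeeping (an $O(h)$ piece on $L^2$ together with an $O(1)$ piece that merely improves $L^{p'}$ to $L^2$, cf.\ the inversion around \cref{neumanninversetwostep}), which your sketch does not set up. If you want a self-contained argument, the efficient route is the one above: explicit multiplier, KRS and Sylvester--Uhlmann for it, pullback by $\gamma$, and then the multiplier splitting that produces $\td{G}_\phi^c$ with the stated $h$-weights and the $S^{-2}_1(\RR^n)$ remainder.
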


Finally, we will require
\begin{lemma}[{\cite[Lem.~4.5]{CT20}}]
\label{lem:EGestimate}
For $\td{E}_1$ from \cref{eq:defofA0E1E0} and $\td{G}_\phi$ from \cref{prop43}, we have
\[
\td{E}_1\td{G}_\phi = (\td{E}_1\td{G}_\phi)^c + \op{S^1_1S^{-2}_1} + h\op{S^0_1(\RR^n)}\td{G}_\phi
\]
where for all $k\in\mathbb{N}_0$,
\[
	(\td{E}_1\td{G}_\phi)^c \colon L^2(\RR^n) \to_{h^0} H^k(\RR^n)\,,\an (\td{E}_1\td{G}_\phi)^c \colon L^{p'}(\RR^n) \to_{h^{-1}} H^k(\RR^n)\,.
\]
\end{lemma}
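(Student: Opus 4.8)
The plan is to exploit the structural decomposition of $\td G_\phi$ from \cref{prop43}: write $\td G_\phi = (\td G_\phi - \td G_\phi^c) + \td G_\phi^c$, where $\td G_\phi - \td G_\phi^c = \op{g}$ for some $g \in S^{-2}_1(\RR^n)$ and $\td G_\phi^c$ enjoys the improved mapping into $H^k_{\delta-1}$ resp.\ $W^{k,p}$ for every $k$. Accordingly,
\[
	\tilde E_1 \td G_\phi = \tilde E_1(\td G_\phi - \td G_\phi^c) + \tilde E_1 \td G_\phi^c\,,
\]
and I would show that the first summand is entirely of the form $\op{S^1_1 S^{-2}_1} + h\op{S^0_1(\RR^n)}\td G_\phi$, so that one is forced to set $(\tilde E_1 \td G_\phi)^c \coloneqq \tilde E_1 \td G_\phi^c$ and the remaining content of the lemma is the pair of mapping bounds on this operator.

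For the first summand, recall $\tilde E_1 = \op[h,x']{m_0 \tilde a_-}$ with $m_0 \tilde a_- \in S^1_1(\RR^{n-1}) \subset S^1_1 S^0_1$, while $g \in S^{-2}_1(\RR^n) \subset S^0_1 S^{-2}_1$, so the composition calculus of \cref{lem:calc} gives
\[
	\tilde E_1(\td G_\phi - \td G_\phi^c) = (m_0 \tilde a_- \, g)(x',hD) + h\sum_{\abs\alpha=1}\big(\del_\xi^\alpha(m_0\tilde a_-)\,\del^\alpha_{x'}g\big)(x',hD) + h^2 m(x',hD)\,.
\]
The leading symbol $m_0\tilde a_- g$ lies in $S^1_1(\RR^{n-1})\cdot S^{-2}_1(\RR^n) = S^1_1 S^{-2}_1$, producing the $\op{S^1_1 S^{-2}_1}$ term. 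Each of the two remaining terms carries a spare factor of $h$ and maps like a semiclassical operator $T$ of symbol order at most $-2$ (for $h^2 m(x',hD)$ one uses the mapping property of $m$ furnished by \cref{lem:calc}); writing such a $T$ as $T = (T\,h^2\td\Delta_\phi)\td G_\phi$ via $h^2\td\Delta_\phi\td G_\phi = \id$ and noting that by \cref{splitupofLaplace} the operator $h^2\td\Delta_\phi$ is semiclassical of order $2$, so that $T\,h^2\td\Delta_\phi \in \op{S^0_1(\RR^n)}$, exhibits each of these terms as an element of $h\op{S^0_1(\RR^n)}\td G_\phi$. (On the bounded domain on which these operators are eventually tested the polynomial weights are comparable, so the weight shifts may be ignored here.)

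The substantive point is the bound on $(\tilde E_1 \td G_\phi)^c = \tilde E_1 \td G_\phi^c$: one must show $\tilde E_1 \td G_\phi^c \colon L^2 \to_{h^0} H^k$ and $L^{p'} \to_{h^{-1}} H^k$ for every $k$, which is a whole power of $h$ better than the raw bounds $\td G_\phi^c \colon L^2_\delta \to_{h^{-1}} H^k_{\delta-1}$, $L^{p'} \to_{h^{-2}} W^{k,p}$ of \cref{prop43}, despite $\tilde E_1$ having order $+1$. The preliminary reduction is cheap: $\td G_\phi^c$ is localised in the tangential frequency, $\td G_\phi^c = \td\rho'(x',hD)\td G_\phi^c$ for a cutoff $\td\rho'$ with $\td\rho \equiv 1$ near $\supp\td\rho'$ (cf.\ \cref{defofrho0}, \cref{defofrho}); splitting $\tilde E_1 = \tilde E_1\td\rho(x',hD) + \tilde E_1(1-\td\rho)(x',hD)$, the term with $(1-\td\rho)$ pairs with $\td\rho'$ in disjoint tangential support and is $O(h^\infty)$, while $\tilde E_1\td\rho(x',hD) = \op[h,x']{(m_0\tilde a_-)\td\rho} \in \op[h,x']{S^{-\infty}_1(\RR^{n-1})}$ is $O(1)$ on every $L^r$ and tangentially smoothing. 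The genuine gain of one power of $h$ — which I expect to be the main obstacle — must be extracted from the internal structure of the Neumann-series construction of $\td G_\phi^c$ in \cite[Prop.~4.3]{CT20}, which is assembled from $J^{-1}$, $Q^{-1}$ and the above cutoffs: commuting $\tilde E_1$ past the $J^{-1}$ (resp.\ $Q^{-1}$) factors via \cref{Jinverse}, one uses that the symbol $m_0\tilde a_-$ of $\tilde E_1$ is, through \cref{herem0isdefined}, built from the very object $m_0$ that already sits — at order $h$ — inside $J = h\del_{x_n} + F_+$ and $Q$ (see \cref{eq:defofJQ}, \cref{splitupJintoF}), so that the relevant commutator is $O(h)$-smoothing; combining this with the disjoint-support gain of \cref{lem:Jsandwich} supplies the missing $h$ and the full $H^k$ regularity for all $k$. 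Everything else is bookkeeping with \cref{lem:calc} and the mapping properties already recorded.
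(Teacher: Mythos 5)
This lemma is not proved in the paper at all: it sits in the recap section (\cref{subsec:recap}), where it is stated explicitly that results quoted there are taken from \cite{CT20} without proof (only \cref{lem:Jsandwich} is reproved), so the expected treatment is a citation to \cite[Lem.~4.5]{CT20}. Your proposal is therefore an attempt at a self-contained proof, and as such it has a genuine gap at exactly the point you yourself flag as ``the main obstacle''. Setting $(\td{E}_1\td{G}_\phi)^c \coloneqq \td{E}_1\td{G}_\phi^c$, the whole content of the lemma is the pair of bounds $L^2 \to_{h^0} H^k$ and $L^{p'} \to_{h^{-1}} H^k$, which are one full power of $h$ better than what composition with \cref{prop43} gives, are \emph{unweighted}, and land in $H^k$ even though from $L^{p'}$ the proposition only yields $W^{k,p}$ (and $W^{k,p}\not\hookrightarrow H^k$ on all of $\RR^n$ since $p>2$). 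None of this is derivable from the mapping properties recapped in this paper; it requires the explicit construction of $\td{G}_\phi^c$ in \cite[Prop.~4.3]{CT20}. Your reduction via $\td{G}_\phi^c = \td{\rho}'(x',hD)\td{G}_\phi^c$ (tangential frequency localization of $\td{G}_\phi^c$) and your heuristic that the commutators of $\td{E}_1$ with the $J^{-1}$, $Q^{-1}$ factors are ``$O(h)$-smoothing'' because $m_0$ appears at order $h$ inside $J$ and $Q$ are structural assumptions and hopes, not arguments: as written, the crucial gain of $h$ and the disappearance of the weights are asserted, not proved, and cannot be proved from the ingredients available in this paper.

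There are also two smaller but real defects in the part you do carry out. First, the term $h^2m(x',hD)$ produced by \cref{lem:calc} is only an operator with stated mapping properties, not a pseudo-differential operator with a symbol, so you cannot write it as $(h^2 m(x',hD)\,h^2\td{\Delta}_\phi)\td{G}_\phi$ and claim membership in $h\op{S^0_1(\RR^n)}\td{G}_\phi$. Second, even for the symbolic correction term, composing an operator of mixed class $S^0_1S^{-2}_1$ with $h^2\td{\Delta}_\phi$ lands (modulo lower order) in the mixed class $S^0_1S^0_1$, which is contained in $S^0_0(\RR^n)$ but not in $S^0_1(\RR^n)$; so the identification with the class $h\op{S^0_1(\RR^n)}\td{G}_\phi$ demanded by the statement is not justified by the recapped calculus. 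In short: the decomposition step is plausible in spirit but imprecise, and the substantive mapping bounds for $(\td{E}_1\td{G}_\phi)^c$ are missing; the correct course here is simply to cite \cite[Lem.~4.5]{CT20}, or else to reproduce the construction of $\td{G}_\phi^c$ from that paper and prove the bounds from it.
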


\section{Parametrices}\label{sec:parametrices}

The aim of this section is to construct a parametrix for the operator
	\[
		\begin{pmatrix}
			\itdo h^2\td{\Delta}_\phi \\
			\tbdry[\td{\Gamma}]
		\end{pmatrix}\,.
	\]
	As the parametrix will be based on those constructed in \cite{CT20}, we distinguish between small and large frequencies: in \cref{sec:small} we construct a parametrix for small frequencies, and in \cref{sec:large} for large frequencies. Finally, in \cref{sec:combine} we will combine these two into a global parametrix. 

\subsection{Small Frequency Parametrix}\label{sec:small}

To invert $h^2\td{\Delta}_\phi$, we would like to use the small frequency parametrix defined in \cite{CT20}, but it will not have the boundary conditions we desire. Nevertheless, our strategy will be to determine its boundary properties (see \cref{lemmaOpNoBoundary}) and define a new operator which can correct this to what we want (see \cref{mappingPropsoft}) in such a way that this new operator is, up to lower order in $h>0$, in the null-space of $h^2\td{\Delta}_\phi$. 

Recall from \cite[p.~117]{CT20} that the symbol $p$ of $h^2\td{\Delta}_\phi$ is given by
\begin{equation}\label{eq:defofsymbolp}
	p(x',\xi) \coloneqq (1+\abs{K}^2)\xi_n^2-2i\xi_n (1_h+iK\cdot\xi') -(1-\abs{\xi'})\,,
\end{equation}
and define
\begin{equation}\label{eq:defofPs}
	 P_s\coloneqq \op{\frac{\td{\rho}}{p}}\,,
\end{equation}
where $\td{\rho}$ is from \cref{defofrho}. The expression $P_s$ is denoted by the same symbol and called the small frequency parametrix in \cite{CT20}.

We introduce the notation $\tau$ for the trace onto $x_n = 0$, where if $\tau$ is written with a subset of $\RR^n$ in the subscript, then the trace is meant onto that set instead.

\begin{lemma}\label{lemmaOpNoBoundary}
	If $v\in L^r(\RR^n)$ with $1<r<\infty$ is supported on the closure of $\RR^n_+$, then 
	\begin{equation}\label{eq:Psprops}
		h^2\tilde\Delta_\phi P_sv = \tilde \rho(hD') + hR_s\,,\an \tbdry P_sv = 0\,.
	\end{equation}
	where $R_s : L^r(\RR^n) \to_{h^0} L^r(\RR^n)$ is bounded uniformly in $h>0$.
\end{lemma}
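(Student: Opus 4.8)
The statement claims two things about $P_s = \op{\tilde\rho/p}$: first, that $h^2\tilde\Delta_\phi P_s v = \tilde\rho(hD') + hR_s$ with $R_s$ uniformly bounded on $L^r$, and second, that the conjugated normal-derivative boundary operator $\tbdry$ annihilates $P_s v$ when $v$ is supported in $\overline{\RR^n_+}$.

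For the first identity I would argue purely symbolically. The operator $h^2\tilde\Delta_\phi$ is, up to the smooth positive factor $1+|K|^2$, the quantization of the symbol $p$ from \cref{eq:defofsymbolp} (together with the lower-order pieces already isolated in \cref{splitupofLaplace}); composing its symbol with $\tilde\rho/p$ and using the semiclassical symbol calculus in \cref{lem:calc}, the principal term is $p\cdot(\tilde\rho/p) = \tilde\rho$, understood as $\tilde\rho(hD')$ since $\tilde\rho$ depends only on $\xi'$. Every term arising from the subprincipal expansion carries at least one factor of $h$ and maps $L^r\to L^r$ uniformly in $h$ because the relevant symbols, after multiplication by $\tilde\rho$, are compactly supported in $\xi'$ and hence of order zero — this is exactly the kind of bookkeeping the mixed symbol classes $S^r_1 S^s_j$ were set up to handle. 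One must be slightly careful that $1/p$ is only a good symbol on the support of $\tilde\rho$, where $p$ is bounded below away from the characteristic set; but $\tilde\rho/p$ is genuinely in a nice symbol class precisely because $\tilde\rho$ cuts off to $\{|\xi'|^2<1\}$ away from where $p$ degenerates. Collecting all the $O(h)$ remainders into $R_s$ finishes this part.

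For the second identity — the boundary condition $\tbdry P_s v = 0$ — the key point is that $P_s v$ should have vanishing trace and vanishing normal trace on $\{x_n=0\}$ whenever $v$ is supported in $\overline{\RR^n_+}$, because then $\bdry$ (a first-order operator in $x_n$ plus tangential terms, see \cref{eq:boundarycalcwithgamma}) applied to $P_s v$ and restricted to the boundary vanishes. Concretely, $P_s v(x) = \op{\tilde\rho/p}v(x)$ can be computed by the partial Fourier transform in $x'$ and then, in the $x_n$ variable, by contour integration / an explicit ODE Green's kernel for the second-order symbol $p$ in $\xi_n$: since $v$ is supported in $x_n\geq 0$ and the kernel built from $1/p$ decays into $x_n<0$ on the appropriate side, the resulting function and its $x_n$-derivative both vanish at $x_n=0$. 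This is the standard mechanism by which the small-frequency parametrix in \cite{CT20} is designed to respect the half-space; I would quote or reproduce that computation, noting that $\tbdry = \tau\circ\bdry$ and that $\bdry$ is, up to the tangential first-order part killed by the vanishing of the trace, just $-h(1+|K|^2)\del_{x_n} - 1$ times a smooth factor, so both summands vanish on the trace of $P_s v$.

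The main obstacle I anticipate is the second identity — making the "vanishing trace and normal trace" claim rigorous for general $v\in L^r$ supported in the closed half-space, since $P_s v$ need not a priori be smooth up to the boundary and one must justify that the traces make sense and vanish. The clean way is to realize $P_s$ via its Schwartz kernel in $x_n$: after Fourier transform in $x'$, $P_s$ acts as convolution against an explicit $L^1_{x_n}$ (in fact exponentially decaying, by the lower bound on $|p|$ on $\supp\tilde\rho$) kernel that is one-sided, i.e. supported in one half-line, which forces the boundary values of $P_s v$ and $h\del_{x_n}P_s v$ to vanish when $v$ lives in $x_n\geq 0$. Once that kernel computation is in hand — and it is essentially the same as the one underpinning \cref{Jinverse}, where $J^{-1}u$ is shown to preserve support in $\RR^n_+$ and have vanishing trace — both assertions of the lemma follow, and the uniform-in-$h$ bound on $R_s$ is then just an application of the calculus of \cref{lem:calc}.
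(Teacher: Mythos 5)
Your proposal is correct and follows essentially the same route as the paper: the exact identity $h^2\tilde\Delta_\phi P_s = \tilde\rho(hD')+hR_s$ is the content of the cited result \cite[Prop.~5.6]{CT20} (which is the symbol-calculus computation you sketch), the vanishing of $\tau P_sv$ and $\tau h\del_{x_n}P_sv$ rests on the one-sided support property of $P_s$ from \cite[Prop.~5.7]{CT20} combined with the $L^r\to W^{2,r}$ bound (the paper passes to general $v\in L^r$ by a density/trace-theorem argument, which is the same mechanism as your one-sided-kernel argument), and the final step killing the tangential part of $\bdry$ because $\tau$ commutes with $\nabla_{x'}$ on trace-zero functions is identical.
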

\begin{proof}
	Fix $v\in L^r(\RR^n)$, $1<r<\infty$ with $\supp v\subset \bar{\RR}^n_+$. 
	Note first that \cite[Prop.~5.6]{CT20} gives the first part of \cref{eq:Psprops}, and by \cite[Prop.~5.7]{CT20} we have $\tau P_sv = 0$.
	
	We will establish that $\tau h\del_{x_n}P_sv = 0$. To see this, recall that by the proof of \cite[Prop.~5.7]{CT20}, for $u\in C_c^\infty(\RR^n)$ supported in $\RR^n_+$,
	\begin{equation}\label{eq:normsmallopboundr}
		P_su(x',x_n) = 0 \quad\text{on}\quad x_n \leq 0\,,\qquad \text{and so}\quad \tau h\del_{x_n}P_su = 0\,,
	\end{equation}
	because $P_su \in C^\infty(\RR^n)$.

	Furthermore, for every $w\in L^r(\RR^n)$, the trace theorem together with \cite[Prop.~5.6]{CT20} give
	\begin{equation}\label{eq:normsmallopwaway}
		h\norm{\tau h\del_{x_n}P_s w}_{L^r(\RR^{n-1})} \leq \norm{h\del_{x_n}P_s w}_{W^{1,r}(\RR^{n})} \leq\norm{P_s w}_{W^{2,r}(\RR^{n})} \lesssim \norm{w}_{L^r(\RR^{n})}\,.
	\end{equation}
	Thus, in the topology of $L^r(\RR^{n})$, approximating $v$ by $u_m \in C_c^\infty(\RR^n)$ supported in $\RR^n_+$, by \cref{eq:normsmallopboundr} and \cref{eq:normsmallopwaway} we see that 
	\[
		\tau h\del_{x_n}P_sv = \lim_{m\to\infty}\tau h\del_{x_n}P_s(v-u_m) + \lim_{m\to\infty}\tau h\del_{x_n}P_su_m = 0\,.
	\]
			
Finally, using \cref{eq:boundarycalcwithgamma}, we invoke the representation
\[
	\bdry = (1+\abs{K}^2)^{-1/2}(hK\cdot\nabla_{x'} -(1+\abs{K}^2)h\del_{x_n}-1)\,,
\]
where because $\tau$ and $\nabla_{x'}$ commute when applied to an object with trace $0$, 
\[
	\tbdry P_sv =0\,.
\]
which concludes the proof.
\end{proof}
 Lemma \ref{lemmaOpNoBoundary} states that while the operator $P_s$ provides us with a parametrix in the small frequency, it does not allow us to prescribe non-homogeneous boundary conditions. In order to remedy this, we will construct an operator which acts as a Poisson kernel which (up to first order) gets killed by $h^2\tilde \Delta_\phi$ but allows us to prescribe non-homogeneous boundary condition:
\begin{proposition}\label{mappingPropsoft}
Denoting by $I_+$ the indicator of $\RR^n_+$. There is a smooth function $\ell (x,\xi')$ satisfying that $(x',\xi')\mapsto \ell(x',x_n,\xi') $ is a symbol in $S^{-\infty}_1(\RR^{n-1})$ for every fixed value of $x_n \geq 0$ such that

	\begin{equation}\label{eq:remainderforsmall}
		I_+h^2\td{\Delta}_\phi \ell(x',x_n, hD') = hR_{\ell}
	\end{equation}
	for some $R_{\ell} \colon L^r(\RR^{n-1}) \to_{h^0} C(\RR_+;L^r(\RR^{n-1})), 1<r<\infty$, and  
	\begin{equation}\label{boundaryofsmall}
		\tbdry \op[h,x']{\ell} = -(1+\abs{K}^2)^{1/2}\td{\rho}(hD') + hR_b\,,
	\end{equation}
	for some $R_b \colon L^r(\RR^{n-1}) \to_{h^0} L^r(\RR^{n-1})$.

	Furthermore, for every open, bounded set $A\subset \RR^n_+$, 
	\begin{equation}\label{eq:ellsemiclass}
		1_{A}\op[h,x']{\ell} \colon L^r(\RR^{n-1}) \to W^{2,r}(A)\,,\quad 1<r<\infty\,.
	\end{equation}
\end{proposition}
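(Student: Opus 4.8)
The plan is to build $\ell$ as a semiclassical Poisson-type operator by solving, to leading order in $h$, the ODE in $x_n$ that the symbol of $h^2\td\Delta_\phi$ induces on the half-line, with the boundary condition dictated by $\tbdry$ at $x_n=0$ and a decay condition as $x_n\to\infty$. Concretely, freeze $x'$ and $\xi'$ and look at the full symbol $p(x',\xi)$ from \cref{eq:defofsymbolp} as a quadratic polynomial in $\xi_n$; quantizing $\xi_n\mapsto hD_{x_n}$ this is the ODE $(1+\abs{K}^2)(hD_{x_n})^2 - 2(hD_{x_n})(1_h+iK\cdot\xi') - (1-\abs{\xi'}^2)$ acting in $x_n$. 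Its two characteristic roots are exactly $\td a_\pm$ of \cref{herem0isdefined} (up to the $m_0$ correction and the spectral cutoff), and on the support of $\td\rho$ one of them, say the one attached to $\td a_-$, has the correct sign of imaginary part so that $e^{x_n \td a_-(x',\xi')/h}$ decays as $x_n\to+\infty$. So I would set, schematically,
\[
	\ell(x',x_n,\xi') \coloneqq -(1+\abs{K(x')}^2)^{1/2}\,\chi(x_n,x',\xi')\,e^{x_n\td a_-(x',\xi')/h}\,\td\rho(\xi')\,\beta(x',\xi')\,,
\]
where $\beta$ is a symbol chosen so that $\tbdry$ applied to this expression at $x_n=0$ produces exactly $-(1+\abs{K}^2)^{1/2}\td\rho(hD')$ to leading order, and where we must be careful that $\ell$, as a function of $(x',\xi')$ with $x_n$ frozen, lies in $S^{-\infty}_1(\RR^{n-1})$ — this is automatic from the cutoff $\td\rho(\xi')$ and the fact that on $\supp\td\rho$ the exponent $\mathrm{Re}(\td a_-)\le 0$ so the $x_n$-dependence does not spoil symbol bounds for $x_n\ge 0$.

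First I would record the explicit computation of $\tbdry\op[h,x']{\ell}$. Using the representation of $\tbdry$ from \cref{eq:boundarycalcwithgamma}, i.e. $(1+\abs{K}^2)^{-1/2}(hK\cdot\nabla_{x'} - (1+\abs{K}^2)h\del_{x_n} - 1)$, and evaluating at $x_n=0$: the $h\del_{x_n}$ hits the exponential and brings down $\td a_-$, the $hK\cdot\nabla_{x'}$ and the constant $-1$ contribute lower-order-in-$h$ or benign terms, and by the semiclassical symbol calculus (\cref{lem:calc}) composing the pieces produces the principal symbol $-(1+\abs{K}^2)^{1/2}\td\rho(\xi')$ plus an $hS^{-\infty}_1$ remainder, which we package as $hR_b$ with $R_b\colon L^r\to_{h^0}L^r$ by the $L^r$-boundedness of zeroth-order semiclassical $\Psi$DOs. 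This gives \cref{boundaryofsmall}. Second, for \cref{eq:remainderforsmall}, I would apply $I_+h^2\td\Delta_\phi$ and observe that by construction $e^{x_n\td a_-/h}\td\rho$ is annihilated by the leading part of the operator for each frozen $(x',\xi')$ — the residual comes precisely from (i) the difference between $\td a_\pm$ and the true roots, namely the $hm_0$-type corrections in \cref{eq:defofJQ}, (ii) the lower-order terms $\tilde A_0 - h\tilde E_1 + h^2\tilde E_0$ in the decomposition \cref{splitupofLaplace}, and (iii) the commutators between the $x'$-quantization and the $x_n$-dependent amplitude, all of which carry an explicit factor of $h$. Each of these, after one more application of the calculus, is $h$ times an operator mapping $L^r(\RR^{n-1})\to C(\RR_+;L^r(\RR^{n-1}))$ uniformly in $h$ — here the continuity in $x_n$ is clear since everything is built from $e^{x_n\td a_-/h}$ times symbols smooth in $x_n$ with uniform bounds on $x_n\ge 0$. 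That is $hR_\ell$.

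Finally, \cref{eq:ellsemiclass} follows by localizing: for a bounded open $A\subset\RR^n_+$ the factor $1_A$ restricts $x_n$ to a compact subinterval of $(0,\infty)$, on which $\mathrm{Re}(\td a_-)$ is bounded away from $0$ on $\supp\td\rho$ so that $e^{x_n\td a_-/h}$ and its $x_n$-derivatives up to order two are $S^{-\infty}_1(\RR^{n-1})$ symbols with bounds uniform in $x_n\in A$; a zeroth-order (indeed order $-\infty$) semiclassical $\Psi$DO in $x'$ is bounded $L^r(\RR^{n-1})\to L^r$, and the two spatial derivatives $\nabla_{x'}^2$ and $h$-free $\del_{x_n}^2$ (which lands on the amplitude, picking up at worst $h^{-1}\td a_-$ times a Schwartz-in-$\xi'$ symbol on $\supp\td\rho$, hence still bounded) keep us in $W^{2,r}(A)$.

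\medskip

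\emph{Main obstacle.} The delicate point is keeping careful track of what "to leading order" means at the level of the $x_n$-dependent operator family: the symbol $\ell$ is not a classical $\Psi$DO symbol on $\RR^n$ (its $\xi_n$ behavior is replaced by the frozen-$x_n$ exponential profile $e^{x_n\td a_-/h}$), so the composition $h^2\td\Delta_\phi\circ\op[h,x']{\ell}$ must be handled by an honest ODE-in-$x_n$ plus $x'$-$\Psi$DO-calculus argument rather than symbolic composition on $\RR^n$, and one has to verify that every error term genuinely carries a factor of $h$ \emph{and} remains $L^r\to C(\RR_+;L^r)$ bounded uniformly — in particular that the $e^{x_n\td a_-/h}$ factors, which are only bounded (not small) for $x_n\ge 0$, do not destroy the uniformity. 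This is exactly the "Poisson operator" bookkeeping, and mirrors the construction of $P_s$ and the treatment of traces in \cite{CT20}, so I would lean on that machinery, but it is where all the care must go.
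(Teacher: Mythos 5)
Your overall framework --- freeze $(x',\xi')$, solve the resulting ODE in $x_n$, prescribe the boundary value at $x_n=0$, and push all errors into the $x'$-calculus --- is the same as the paper's, but the specific single-branch ansatz $e^{x_n\td{a}_-/h}\td{\rho}(\xi')\beta$ has a genuine gap, and it is precisely the point the paper's construction is built to avoid. On $\supp\td{\rho}$ the discriminant $(1_h+iK\cdot\xi')^2-(1-\abs{\xi'}^2)(1+\abs{K}^2)$ vanishes and meets the branch cut of the principal square root (e.g.\ at $K\cdot\xi'=0$ and $\abs{\xi'}^2<\abs{K}^2/(1+\abs{K}^2)<c$, which lies inside the region where $\td{\rho}\equiv1$), so the exact characteristic root is not smooth in $\xi'$ there: a single exponential branch times $\td{\rho}$ is not in $S^{-\infty}_1(\RR^{n-1})$, and the cutoff cannot help because the singular set sits where $\td{\rho}=1$. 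If instead you use the regularized roots $\td{a}_\pm$ of \cref{herem0isdefined} (smooth only because $r_0$ carries the factor $1-\td{\rho}_0$), then $e^{x_n\td{a}_-/h}$ no longer solves the frozen ODE on $\supp\td{\rho}_0$: the resulting error is exactly the $\td{A}_0$ term of \cref{splitupofLaplace}, which is $O(h^0)$, not $O(h)$, and unlike in the large-frequency parametrix there is no disjoint-support gain since $\supp\td{\rho}_0\subset\{\td{\rho}=1\}$. Either way, your claim that every residual term carries an explicit factor of $h$ fails, so \cref{eq:remainderforsmall} (or the symbol property itself) is not obtained. Your worry about selecting the decaying branch is also moot: by \cref{lemm:exponentboundedbelow} both branches decay on $\supp\td{\rho}$; smoothness, not decay, is the obstruction.

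The paper's fix is to take an even combination of the two exponentials, $\ell=h^{-1}\td{\rho}\,e^{-x_n b/(2a)}\int_0^{x_n}\cosh(tq)\,\dd t$ (see \cref{defOft}, obtained by choosing $k_2=-k_1$ in \cref{eq:firstv}), so that the square root $q$ appears only inside the even entire function $\cosh$. Then $\ell$ is smooth, is exactly annihilated by the frozen operator $a\del_{x_n}^2+b\del_{x_n}+c$ (so the only errors in \cref{eq:remainderforsmall} are the $x'$-quantization commutators, which genuinely carry $h$), has vanishing Dirichlet trace while $h\del_{x_n}\ell\vert_{x_n=0}=\td{\rho}$, which produces exactly $-(1+\abs{K}^2)^{1/2}\td{\rho}(hD')$ under $\tbdry$. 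The price is the uniform-in-$x_n$ symbol estimates of \cref{lemm:exponentboundedbelow,lemm:goodderofcosh}, showing that the decaying prefactor $e^{-x_nb/(2a)}$ dominates the growth of $\cosh(tq)$ and of all its $(x',\xi')$-derivatives; this is the bookkeeping your sketch defers to ``the machinery of \cite{CT20}'' but which must be done for this specific amplitude. To salvage your route you would have to replace the single branch by such a branch-free even combination; as written, the proposal does not produce the claimed $\ell$.
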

This section will be devoted to proving this Proposition. First we begin with some preliminary results on solving for a initial value problem in ODE.

\subsubsection{ODE Setup}\label{subsec:ode}

To motivate the discussion in this section, we wish to construct approximate solutions to $h^2\td{\Delta}_\phi w = 0$ which approximates the boundary condition
\[
	\bdry w = f\,, \quad \text{at}\  x_n= 0
\]
for an arbitrary $f\in L^2(\RR^{n-1})$.

Following \cite[p.~108]{CT20}, we will take Fourier transform in the $x'$ variable and view this equation as an ODE in the $x_n$ variable. This approach is fruitful because on the support of $\td{\rho}(\xi')$, 
the symbol $p$ from \cref{eq:defofsymbolp} is elliptic. According to \cite[p.~108]{CT20} we have
\begin{align}
	h^2\tilde{\Delta}_\phi &= -(1+\abs{K}^2)h^2\del_{x_n}^2 - 2(1_h+K\cdot h\nabla_{x'})h\del_{x_n} - h^2\Delta_{x'}-1\notag\\
	&= \op[h,x']{-(1+\abs{K}^2)h^2\del_{x_n}^2 - 2(1_h+iK\cdot\xi')h\del_{x_n} -(1-\abs{\xi'}^2)}\,,\label{eq:deltatildeop}
\end{align}
which leads us to try finding a solution $v$ of
\begin{equation}
	\left[(1+\abs{K}^2)h^2\del_{x_n}^2 + 2(1_h+iK\cdot\xi')h\del_{x_n} + (1-\abs{\xi'}^2)\right]v = 0\,.\label{ODEeqn}
\end{equation}
Viewing $(x',\xi')$ as fixed parameters for the ODE in the $x_n$ variable, \cref{ODEeqn} has solutions of the form
\begin{equation}\label{eq:firstv}
	v(x',x_n,\xi') = k_1e^{-x_n\frac{b+\sqrt{b^2-4ac}}{2a}} + k_2e^{-x_n\frac{b-\sqrt{b^2-4ac}}{2a}}
\end{equation}
where
\begin{equation}\label{eq:defofabc}
	a(x',\xi') = (1+\abs{K}^2)h^2\,, \quad b(x',\xi')= 2(1_h+iK\cdot\xi')h\,,\quad c(x',\xi')= 1-\abs{\xi'}^2\,.
\end{equation}
and $k_1, k_2$ depend on $x'$ and $\xi'$ but not on $x_n$.
To ease notation, taking the principal square root, we introduce a new variable for a part of the exponent,
\begin{equation}\label{eq:defofw}
	{q} \coloneqq \frac{\sqrt{b^2 -4ac}}{2a} = h^{-1}\frac{\sqrt{(1_h+iK\cdot\xi')^2-(1-\abs{\xi'}^2)(1+\abs{K}^2)}}{(1+\abs{K}^2)}\,,
\end{equation}
which might at first seem problematic because it is not smooth, but will turn out to be benign: the expression $q$ will only be present in the argument of the holomorphic, even function $\cosh$.

Now \cref{eq:firstv} gives us a solution ansatz but we must still choose $k_1$ and $k_2$ in such a way that we generate desirable boundary conditions. 
By \cref{defofboundary} we have that the $\bdry$ operator can be written as
\begin{equation}\label{eq:symbolofboundaryconditions}
	\bdry = \op[h,x']{(1+\abs{K}^2)^{-1/2}(iK\cdot \xi' -(1+\abs{K}^2)h\del_{x_n} - 1)}\,.
\end{equation}
And note that $v$ as given in \cref{eq:firstv},
\begin{equation}\label{eq:onederivofv}
	\del_{x_n}v(x',x_n)= e^{-x_n\frac{b}{2a}}\left(-k_1\left({q}+\frac{b}{2a}\right)e^{-x_n{q}}+k_2\left({q}-\frac{b}{2a}\right)e^{x_n{q}}\right)\,.
\end{equation}

For arbitrary $f\in L^2(\RR^{n-1})$ we need to choose $k_1, k_2$ so that $v$ satisfies the boundary condition $\bdry \mathcal F_{x'}^{-1} (v) = f$ at $x_n = 0$. The expression \eqref{eq:symbolofboundaryconditions} motivates us to consider, on the level of symbols, the equation
\[ 
	(1+\abs{K}^2)^{-1/2}(iK\cdot \xi' -(1+\abs{K}^2)h\del_{x_n} - 1)v(x) = -(1+\abs{K}^2)^{1/2}\mathcal{F}_{x'}f\,.
\]
Using \cref{eq:onederivofv} this condition is equivalent to
\begin{gather}
	 -(1+\abs{K}^2)^{1/2}\mathcal{F}_{x'}f\label{eq:smallboundarywewant}\\
	= 
	(1+\abs{K}^2)^{1/2}e^{-x_n\frac{b}{2a}}\bigg[\left((1+\abs{K}^2)^{-1}(iK\cdot\xi'-1)+\frac{b}{2a}\right) (k_1e^{-x_n{q}}+k_2e^{x_n{q}}) + h{q}(k_1e^{-x_n{q}}-k_2e^{x_n{q}})\bigg]\notag\,,
\end{gather}
 at $x_n=0$.

If we choose $k_2=-k_1$, then \cref{eq:smallboundarywewant} can be rephrased as 
\[
	2k_1hq = -\mathcal{F}_{x'}f\,,
\]
for which we will choose
\[
	k_1 \coloneqq -\frac{1}{2h}{q}^{-1}\mathcal{F}_{x'}f
\]
which leads to
\begin{align}
\label{multiplier}
	v(x',x_n) &=
 h^{-1}e^{-x_n\frac{b}{2a}}\int_0^{x_n}\cosh(t {q})\dd t \mathcal{F}_{x'}f\,.
\end{align}

As we can only reasonably hope for this approach to work on small frequencies, we will introduce the cutoff $\td{\rho} \in C_c^\infty(\RR^{n-1})$ from \cref{defofrho}, and 
motivated by the multiplier of \eqref{multiplier}, we define
\begin{align}
	\ell(x,\xi') &\coloneqq  h^{-1}\td{\rho}e^{-x_n\frac{b}{2a}}\int_0^{x_n}\cosh(t {q})\dd t \label{defOft}\\
	&= h^{-1}\td{\rho}(\xi') e^{-\frac{x_n}{h}\frac{1_h+iK\cdot\xi'}{1+\abs{K}^2}}  \int_0^{x_n} \cosh\left(\frac{t}{h}\frac{\sqrt{(1_h+iK\cdot\xi')^2-(1-\abs{\xi'}^2)(1+\abs{K}^2)}}{1+\abs{K}^2}\right) \dd t\,. \notag 
\end{align}

For any $s \in [1,\infty]$ and $k\in \mathbb{N}_0$, we endow the space $C(\RR_+, W^{k,s}(\RR^{n-1}))$ with the norm
\begin{equation}\label{eq:normonC}
	\norm{u} = \sup_{x_n \geq 0} \norm{u}_{W^{k,s}(\RR^{n-1})}\,.
\end{equation}

\subsubsection{The Symbol $\ell$}
We now prove that $\ell$ defined in \eqref{defOft} is a symbol in the sense that:

\begin{proposition}
\label{prop: ell is symbol}
	Let $h>0$ be small enough. The expression $\ell$ defined in \cref{defOft} is a symbol $\ell\in S^{-\infty}_1(\RR^{n-1})$ for every fixed value of $x_n \geq 0$ and as a function, $\ell$ depends smoothly on $x_n$. Furthermore, 
	\begin{equation}\label{mappingpropsofsmallteqn}
		h^s\del_{x_n}^s\op[h,x']{\ell} \colon L^r(\RR^{n-1}) \to_{h^0} C(\RR_+; W^{k,r}(\RR^{n-1}))\,,\quad  k\in\mathbb{N},\quad 1<r<\infty,\quad s \in \{0,1,2\}\,,
	\end{equation}
	with the norm on $C(\RR_+; W^{k,r}(\RR^{n-1}))$ defined by \cref{eq:normonC}.
\end{proposition}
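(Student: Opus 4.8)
The plan is to verify the symbol estimates directly from the explicit formula \eqref{defOft}, treating $x_n \geq 0$ as a parameter and estimating the $x'$-behaviour of each factor. The key structural observation—already flagged in the text—is that the only non-smooth ingredient $q$ appears solely inside $\cosh(tq)$, which is a holomorphic even function of $tq$, hence a holomorphic function of $(tq)^2 = t^2 \bigl[(1_h+iK\cdot\xi')^2 - (1-\abs{\xi'}^2)(1+\abs{K}^2)\bigr]h^{-2}(1+\abs{K}^2)^{-2}$; the bracketed quantity is a polynomial in $\xi'$ with $C^\infty$ coefficients in $x'$, so on the compact $\xi'$-support of $\td\rho$ the composite $\xi'\mapsto\int_0^{x_n}\cosh(tq)\,\dd t$ is smooth with all $x'$- and $\xi'$-derivatives controlled. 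The prefactor $e^{-x_n(1_h+iK\cdot\xi')/(h(1+\abs{K}^2))}$ has modulus $e^{-x_n 1_h/(h(1+\abs{K}^2))} \leq 1$ for $h$ small (since $1_h = 1 + h\Delta_{x'}g/2 > 0$), which is what keeps everything bounded uniformly in $x_n$; its $x'$- and $\xi'$-derivatives pull down factors of $x_n/h$, but these are absorbed by the exponential decay $e^{-cx_n/h}$ (using $x_n^m e^{-cx_n/h}\lesssim h^m$) — I would record this as the basic mechanism for uniformity in $x_n$.

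Concretely, the first step is to show $\ell(\cdot, x_n, \cdot)\in S^{-\infty}_1(\RR^{n-1})$ for each fixed $x_n$: since $\td\rho$ is compactly supported in $\xi'$, any symbol seminorm is just a sup over a compact set, so it suffices to show $\ell$ and all its $\del_{x'}^\beta\del_{\xi'}^\alpha$ derivatives are bounded on $\supp\td\rho$ uniformly in $h$ (and in $x_n\geq 0$). This follows by the Leibniz rule from (i) the bound $\abs{e^{-x_n(1_h+iK\cdot\xi')/(h(1+\abs{K}^2))}}\leq 1$ together with derivative bounds of the form $\del_{x'}^\beta\del_{\xi'}^\alpha$ of the prefactor $= e^{-x_n b/(2a)}\cdot P(x_n/h, x')$ for a polynomial $P$ of degree $\leq\abs{\alpha}+\abs{\beta}$, and (ii) the chain-rule bound on $\int_0^{x_n}\cosh(tq)\,\dd t$: writing $\cosh(tq) = \Phi(t^2 q^2)$ with $\Phi$ entire, $\del_{x'}^\beta\del_{\xi'}^\alpha[\Phi(t^2 q^2(x',\xi'))]$ is a finite sum of $\Phi^{(k)}(t^2q^2)$ times polynomials in $t$ of degree $\leq 2k$ and smooth functions of $(x',\xi')$; integrating $t$ from $0$ to $x_n$ and using that $q$ may be purely imaginary (so $\cosh$ oscillates) or real with $\abs{q}\lesssim h^{-1}$ one gets, after the substitution $s = t q$, a factor like $q^{-1}\sinh(x_n q)$, whose product with the prefactor $e^{-x_n b/(2a)}$ is bounded — this is exactly the cancellation that makes \eqref{multiplier} well-defined, and it is the one computation worth doing carefully.

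The second step is the mapping property \eqref{mappingpropsofsmallteqn}. For $s=0$: $\op[h,x']{\ell}$ with $\ell(\cdot,x_n,\cdot)$ in a bounded subset of $S^{-\infty}_1(\RR^{n-1})$ (bounded uniformly in $h$ and $x_n$) is bounded $L^r(\RR^{n-1})\to W^{k,r}(\RR^{n-1})$ with norm uniform in $h$ and $x_n$ by the standard Calder\'on–Zygmund / semiclassical $L^r$-boundedness of pseudodifferential operators (e.g.\ the calculus cited via \cite{salo} and \cite{CT20}, and continuity in $x_n$ coming from continuity of $x_n\mapsto\ell(\cdot,x_n,\cdot)$ in symbol seminorms, which the explicit formula gives by dominated convergence). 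For $s=1,2$: compute $h\del_{x_n}\ell$ and $h^2\del_{x_n}^2\ell$ from \eqref{defOft} — differentiating the prefactor brings down $-b/(2a)\cdot h = -(1_h+iK\cdot\xi')/(1+\abs{K}^2)\in S^0_1$, differentiating the integral brings down $\cosh(x_n q)$ times the prefactor, and the product $e^{-x_n b/(2a)}\cosh(x_n q) = \tfrac12(e^{-x_n(1_h+iK\cdot\xi'+r_0)/(h(1+\abs{K}^2))} + e^{-x_n(1_h+iK\cdot\xi'-r_0)/(h(1+\abs{K}^2))})$ is again a bounded symbol in $S^0_1$ uniformly in $x_n$ (both exponents have nonnegative real part on $\supp\td\rho$, by the choice of $c<1$ in \eqref{defofrho0} and the ellipticity of $p$ there); so $h^s\del_{x_n}^s\ell$ lies in a bounded subset of $S^{-\infty}_1$ uniformly in $h, x_n$, and the $s=0$ argument applies. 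The main obstacle is organizing the $\xi'$-dependence of the $\cosh$-integral cleanly — i.e.\ proving rigorously that $\int_0^{x_n}\cosh(tq)\,\dd t$ times the exponential prefactor, and its $x_n$-derivatives, are genuinely symbols of order $-\infty$ with seminorms uniform in both $h$ and $x_n\geq0$ — since one must simultaneously exploit the compact $\xi'$-support, the holomorphy of $\cosh$ in $q^2$, the sign of $\operatorname{Re}(b/(2a)\pm q)$, and the decay $x_n^m e^{-cx_n/h}\lesssim_m h^m$; none of the individual estimates is deep, but keeping the $h$- and $x_n$-uniformity throughout is the delicate bookkeeping.
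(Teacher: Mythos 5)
Your plan is essentially the paper's proof: establish that $\{\ell(\cdot,x_n,\cdot)\colon x_n\geq 0\}$ is a bounded family in $S^{-\infty}_1(\RR^{n-1})$ by combining the compact $\xi'$-support of $\td{\rho}$, a strict lower bound on $\mathrm{Re}\bigl[1_h-\sqrt{(1_h+iK\cdot\xi')^2-(1-\abs{\xi'}^2)(1+\abs{K}^2)}\bigr]$ over $\supp\td{\rho}$ (the paper's \cref{lemm:exponentboundedbelow}, which you correctly trace back to $\supp\td{\rho}\Subset\{\abs{\xi'}<1\}$), and absorption of the polynomial factors $(x_n/h)^m$ produced by Leibniz/Fa\`a di Bruno into the residual exponential margin; then pass from uniform seminorms to uniform $L^r\to W^{k,r}$ operator norms (the paper's \cref{lemm:semi}). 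Your only structural deviations are harmless: you compute $h^2\del_{x_n}^2\ell$ directly where the paper instead invokes the ODE $(a\del_{x_n}^2+b\del_{x_n}+c)\ell=0$ to reduce $s=2$ to $s=0,1$.

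One concrete warning about the step you yourself flag as delicate. Writing $\cosh(tq)=\Phi(t^2q^2)$ with $\Phi$ entire and bounding $\del_{x'}^\gamma\del_{\xi'}^\beta\Phi$ via the chain rule requires a bound on $\Phi^{(k)}(t^2q^2)$, and the natural termwise majorization $\abs{\Phi^{(k)}(w)}\leq\Phi_+^{(k)}(\abs{w})\leq\cosh(\sqrt{\abs{w}})=\cosh(t\abs{q})$ is \emph{not} good enough: at $\xi'=0$ (and $h=0$) one has $q'=-\abs{K}^2/(1+\abs{K}^2)^2$, so $\abs{\sqrt{q'}}=\abs{K}/(1+\abs{K}^2)$, and whenever $\abs{K}=\abs{\nabla'g}>1$ the majorant $\cosh(t\abs{q})\sim e^{\frac{t}{h}\frac{\abs{K}}{1+\abs{K}^2}}$ overwhelms the prefactor's decay $e^{-\frac{t}{h}\frac{1_h}{1+\abs{K}^2}}$, even though the true value $\cosh(tq)=\cos(t\abs{q})$ is bounded. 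The paper resolves this in \cref{lemm:goodderofcosh} by a two-case argument (small versus large $\abs{\mathrm{Im}\sqrt{q'}}$), using the power-series/termwise bound only where $\abs{\sqrt{q'}}$ is genuinely controlled by $\mathrm{Re}\sqrt{q'}$, and otherwise differentiating through $\sqrt{q'}$ directly and using $\abs{\cosh^{(j)}(z)}\leq e^{\abs{\mathrm{Re}z}}$. Your route can be salvaged without the case split if you first prove the sharper uniform bound $\abs{\Phi^{(k)}(z^2)}\leq C_k e^{\abs{\mathrm{Re}z}}$ for all $z\in\CC$ (bounded sets by entirety, large $\abs{z}$ by expanding $\Phi^{(k)}(z^2)$ in terms of $\cosh^{(j)}(z)z^{j-2k}$), which then yields exactly the estimate \eqref{eq:coshgood}; but as written, the proposal's bound on $\Phi^{(k)}$ is the one genuinely missing ingredient.
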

We will prove this proposition at the end of the subsection as a consequence of the following technical Lemmas.
	\begin{lemma}\label{lemm:exponentboundedbelow}
		There exists an $\varepsilon_1 > 0$ so that for $h > 0$ small and $\xi'\in \supp\td{\rho}$,
		\begin{equation}\label{eq:theboundonexpwewant}
			\mathrm{Re}\left[1_h - \sqrt{(1_h+iK\cdot\xi')^2 - (1-\abs{\xi'}^2)(1+\abs{K}^2)}\right] > \frac{\varepsilon_1}{2}\,,
		\end{equation}
		where $\varepsilon_1$ is independent of $h,x$ and $\xi'$.
	\end{lemma}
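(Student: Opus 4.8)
The idea is to compute the real part of the square root directly. Write $z \coloneqq (1_h+iK\cdot\xi')^2 - (1-\abs{\xi'}^2)(1+\abs{K}^2)$, the quantity under the radical, and let $w \coloneqq \sqrt{z}$ be its principal square root, so that $\mathrm{Re}\, w \geq 0$. Separating real and imaginary parts, using $1_h = 1+\tfrac{h}{2}\Delta_{x'}g$,
\[
	\mathrm{Re}\, z = 1_h^2 - (K\cdot\xi')^2 - (1-\abs{\xi'}^2)(1+\abs{K}^2)\,,\qquad \mathrm{Im}\, z = 2\, 1_h\, K\cdot\xi'\,.
\]
As $h\to 0$ one has $1_h\to 1$ uniformly in $x'$ (since $g\in C_c^\infty$), so $\mathrm{Re}\, z \to 1 - (K\cdot\xi')^2 - (1-\abs{\xi'}^2)(1+\abs{K}^2) = \abs{\xi'}^2(1+\abs{K}^2) - (K\cdot\xi')^2 \geq \abs{\xi'}^2$ by Cauchy--Schwarz, which is in particular nonnegative. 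Hence for $h$ small $\mathrm{Re}\, z \geq -\eta$ for any prescribed small $\eta>0$ uniformly in $x',\xi'\in\supp\td\rho$, and $\abs{z}$ is bounded above uniformly (as $\supp\td\rho$ is compact and $K$ is bounded).

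From $\mathrm{Re}\, w \geq 0$ and $w^2 = z$ one gets $(\mathrm{Re}\, w)^2 = \tfrac12(\abs{z} + \mathrm{Re}\, z)$. The key point is then to bound $\mathrm{Re}\, w$ strictly below $1$: since $\abs{z} \geq \mathrm{Re}\, z$ always and $\mathrm{Re}\, z$ is, up to $O(h)$, at most $1_h^2 - (1-\abs{\xi'}^2)(1+\abs{K}^2) + \ldots$, I need to control $\abs{z}$. Using $\abs{z}^2 = (\mathrm{Re}\, z)^2 + (\mathrm{Im}\, z)^2$ and the explicit expressions above, and the fact that on $\supp\td\rho$ we have $\abs{\xi'}^2 < 1$ hence $\tfrac{\abs{K}^2}{1+\abs{K}^2} \leq c < 1$ with $\abs{\xi'}^2$ bounded away from nothing needed — actually the cleanest route is: for $h=0$, $z = (1+iK\cdot\xi')^2 - (1-\abs{\xi'}^2)(1+\abs{K}^2)$, and a direct computation shows $\abs{z} < 1$ on $\supp\td\rho$ by the constraint $\tfrac{\abs{K\cdot\xi'}^2}{1+\abs{K}^2}\le \tfrac{\abs K^2\abs{\xi'}^2}{1+\abs K^2}< c<1$ together with $\abs{\xi'}^2<1$; one checks $\abs{z}^2 = \bigl(1 - (K\cdot\xi')^2 - (1-\abs{\xi'}^2)(1+\abs{K}^2)\bigr)^2 + 4(K\cdot\xi')^2 < 1$ after expansion. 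Therefore $(\mathrm{Re}\, w)^2 = \tfrac12(\abs z + \mathrm{Re}\, z) \leq \abs z < 1$, giving $\mathrm{Re}\, w \leq 1 - 2\delta$ for some $\delta>0$ uniform on the (compact) support, again by continuity/compactness. Adding the perturbation, $1_h - \mathrm{Re}\, w \geq 1 - 2\delta - O(h) > \delta > \varepsilon_1/2$ for $h$ small, with $\varepsilon_1 \coloneqq 2\delta$.

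The main obstacle is the strict inequality $\abs{z} < 1$ on $\supp\td\rho$ (at $h=0$): it is not immediate and requires using both defining properties of the cutoff, namely $\abs{\xi'}^2 < 1$ and $\tfrac{\abs K^2}{1+\abs K^2} < c < 1$, in the algebraic expansion of $\abs z^2$. Once that pointwise strict bound is in hand, a compactness argument (the closure of $\supp\td\rho$ is compact, $K$ is bounded, and $z$ depends continuously on all parameters including $h$ near $0$) upgrades it to a uniform bound $\abs z \leq 1 - c'$ and then to \cref{eq:theboundonexpwewant}. I would also remark that the principal branch is well-defined throughout since $z$ stays in a compact set avoiding the negative real axis — or, if it does not a priori, one notes $\mathrm{Re}\, w \geq 0$ is all that is used, and $w$ is continuous wherever $z\notin(-\infty,0)$, while on a neighbourhood of any point with $z\in(-\infty,0]$ one has $\mathrm{Re}\, z \leq 0$ which only helps make $(\mathrm{Re}\, w)^2 = \tfrac12(\abs z+\mathrm{Re}\, z)$ smaller.
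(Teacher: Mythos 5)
Your overall strategy (compute $\mathrm{Re}\sqrt{z}$ via $(\mathrm{Re}\sqrt{z})^2=\tfrac12(\abs{z}+\mathrm{Re}\,z)$ and push it strictly below $1$) is the right one and matches the paper, but the key intermediate claim you rely on --- that $\abs{z}<1$ on $\supp\td{\rho}$ at $h=0$ --- is false. Take $n=2$, $K=1$, $\xi'=1/\sqrt{2}$ (this point lies in $\{\abs{\xi'}^2\leq c\}\subset\supp\td{\rho}$ since $c>\abs{K}^2/(1+\abs{K}^2)=1/2$). Then $\mathrm{Re}\,z=1-(K\cdot\xi')^2-(1-\abs{\xi'}^2)(1+\abs{K}^2)=1-\tfrac12-1=-\tfrac12$ and $\mathrm{Im}\,z=2K\cdot\xi'=\sqrt{2}$, so $\abs{z}^2=\tfrac14+2=\tfrac94$ and $\abs{z}=\tfrac32>1$. (Note also that your simplification $\mathrm{Re}\,z\to\abs{\xi'}^2(1+\abs{K}^2)-(K\cdot\xi')^2\geq 0$ drops a $-\abs{K}^2$ term; $\mathrm{Re}\,z$ is genuinely negative in this regime.) The bound $(\mathrm{Re}\sqrt{z})^2\leq\abs{z}$ is too lossy precisely here: it discards the negative contribution of $\mathrm{Re}\,z$, which is what saves the estimate. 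In the example above $(\mathrm{Re}\sqrt{z})^2=\tfrac12(\tfrac32-\tfrac12)=\tfrac12<1$, so the lemma holds even though $\abs{z}>1$.

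The repair is to keep both terms: $(\mathrm{Re}\sqrt{z})^2<1$ iff $\abs{z}<2-\mathrm{Re}\,z$, and since the right-hand side is positive this is equivalent (after squaring) to $(\mathrm{Im}\,z)^2<4(1-\mathrm{Re}\,z)$. Substituting $\mathrm{Im}\,z=2K\cdot\xi'$ and $1-\mathrm{Re}\,z=(K\cdot\xi')^2+(1-\abs{\xi'}^2)(1+\abs{K}^2)$, the $(K\cdot\xi')^2$ terms cancel and the condition reduces to $0<4(1-\abs{\xi'}^2)(1+\abs{K}^2)$, which holds with a uniform margin since $\supp\td{\rho}\Subset\{\abs{\xi'}^2<1\}$ and $\abs{K}$ is bounded; one then quantifies the margin in $\varepsilon_1$ and perturbs in $h$ by continuity, exactly as in the paper's proof. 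Your compactness/continuity framing for the $h$-perturbation is fine; only the central inequality needs to be replaced.
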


\begin{proof}
	Note 
	that since we are taking the principal square root, we have $\mathrm{Re}\sqrt{d+is} = \sqrt{\frac{\sqrt{d^2+s^2}+d}{2}}$ for any $s,d\in\RR$. We will show that there exists some $\varepsilon_1 >0$ so that the left-hand side of \cref{eq:theboundonexpwewant} is bounded below by $\varepsilon_1$ on $\supp\td{\rho}$ if we take $h=0$. {By continuity of the real part of the principal square root}, this means that the left-hand side of \cref{eq:theboundonexpwewant} is uniformly greater than $\varepsilon_1/2$ for small $h>0$.	
	For the moment put $h=0$ so that $1_h = 1$, where $1_h$ is defined in \cref{subsec:recap}.
	Taking 
	\begin{align}
		d &= \mathrm{Re}((1_h+iK\cdot\xi')^2 - (1-\abs{\xi'}^2)(1+\abs{K}^2)) = 1-(K\cdot\xi')^2- (1-\abs{\xi'}^2)(1+\abs{K}^2)\,,\\ 
		s &= \mathrm{Im}((1_h+iK\cdot\xi')^2 - (1-\abs{\xi'}^2)(1+\abs{K}^2)) = 2K\cdot\xi'\,,
	\end{align}
	we see that 
	\begin{align}
		 \varepsilon_1 &< \mathrm{Re}\left(1_h - \sqrt{(1_h+iK\cdot\xi')^2 - (1-\abs{\xi'}^2)(1+\abs{K}^2)}\right) \notag
	\shortintertext{if and only if}
		1-\varepsilon_1 &> \sqrt{\frac{\sqrt{d^2+s^2}+d}{2}}\,, \notag
	\shortintertext{which is equivalent to}
	2(1-\varepsilon_1)^2-d &> \sqrt{d^2+s^2}\,.\label{sqrtEqn}
	\end{align}
	Since for small enough $\varepsilon_1>0$ and all $\xi' \in \supp\td{\rho}$,
	\[
	2(1-\varepsilon_1)^2-d = -4\varepsilon_1+2\varepsilon_1^2 +1+(K\cdot\xi')^2+(1-\abs{\xi'}^2)(1+\abs{K}^2) > 0\,,
	\]
	\cref{sqrtEqn} holds if and only if 
	\begin{align}
		(2(1-\varepsilon_1)^2-d)^2 &>d^2+s^2\,, \notag
	\shortintertext{which is equivalent to}
		4(1-d) -\mathcal{O}(\varepsilon_1) &> s^2\,,\label{someothereqn}
	\end{align}
	where in introducing $\mathcal{O}$ we used the fact that both $\abs{K}^2$ and $\abs{\xi'}$ are bounded on the support of $\td{\rho}$.
	
	Finally, \cref{someothereqn} is equivalent to 
	\[
		 4(1-\abs{\xi'}^2)(1+\abs{K}^2) -\mathcal{O}(\varepsilon_1) > 0\,,
	\]
	and since the support of $\td{\rho}$ was chosen in such a way that $\supp \td{\rho} \Subset\{\abs{\xi'} < 1\}$, we can be sure that such an $\varepsilon_1$ exists.
\end{proof}

	\begin{lemma}\label{lemm:goodderofcosh}
	Let $h>0$ be small enough and $\xi' \in \supp \td{\rho}$. There exist some $\varepsilon_2>0$ and $C_{\gamma,\beta} >0$ independent of $h,x,\xi'$ so that 
	\begin{equation}\label{eq:coshgood}
	\abs{\del^{\gamma}_{x'}\del_{\xi'}^\beta \cosh(tq)} 
	\leq C_{\gamma,\beta}e^{\frac{t}{h}\frac{1_h - \varepsilon_2}{1+\abs{K}^2}}\max\left\{1,(th^{-1})^{2\abs{\gamma}+2\abs{\beta}}\right\}\,
	\end{equation}
	for all multi-indices $\gamma$ and $\beta$.
	\end{lemma}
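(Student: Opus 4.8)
Throughout, $t\ge 0$ (as it will only be used for $t\in[0,x_n]$). The key to getting around the non-smoothness of $q$ is that $\cosh$ is an even entire function. Write $\Lambda(x',\xi') := (1_h+iK\cdot\xi')^2-(1-\abs{\xi'}^2)(1+\abs{K}^2)$ and $R(x',\xi') := \Lambda/(1+\abs{K}^2)^2$, so that by \cref{eq:defofw} we have $q = h^{-1}\sqrt{R}$ and hence $t^2q^2 = (t/h)^2 R$. Let $\Phi(z) := \sum_{k\ge 0} z^k/(2k)!$, an entire function with $\Phi(\zeta^2) = \cosh\zeta$; then
\[
	\cosh(tq) = \Phi\big((t/h)^2 R(x',\xi')\big)
\]
is a genuinely smooth function of $(x',\xi')$. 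Because $g\in C_c^\infty(\RR^{n-1})$, the functions $K=\nabla'g$, $1_h = 1+\frac{h}{2}\Delta_{x'}g$ and $(1+\abs{K}^2)^{-1}$ are smooth and bounded, together with all their $x'$-derivatives, uniformly in $x'$ and in small $h>0$; since moreover $\supp\td{\rho}$ is compact (\cref{defofrho}), it follows that $\sup_{x',\,\xi'\in\supp\td{\rho}}\abs{\del_{x'}^\gamma\del_{\xi'}^\beta R}\le C_{\gamma,\beta}$ uniformly in small $h$, and consequently $\abs{\del_{x'}^\gamma\del_{\xi'}^\beta\big((t/h)^2R\big)}\le C_{\gamma,\beta}(t/h)^2$ whenever $\abs{\gamma}+\abs{\beta}\ge 1$.

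Next I would apply the Fa\`a di Bruno formula to $\del_{x'}^\gamma\del_{\xi'}^\beta\Phi\big((t/h)^2R\big)$. For $\abs{\gamma}+\abs{\beta}\ge 1$ this is a finite sum of terms $\Phi^{(m)}\big((t/h)^2R\big)\prod_{j=1}^m\del_{x'}^{\gamma_j}\del_{\xi'}^{\beta_j}\big((t/h)^2R\big)$ with $\sum_j\gamma_j=\gamma$, $\sum_j\beta_j=\beta$, each $\abs{\gamma_j}+\abs{\beta_j}\ge 1$, and $1\le m\le \abs{\gamma}+\abs{\beta}$. By the bounds on $R$ each such product is $\le C(t/h)^{2m}$, and $\sum_{m=1}^{\abs{\gamma}+\abs{\beta}}(t/h)^{2m}\le(\abs{\gamma}+\abs{\beta})\max\{1,(t/h)^{2\abs{\gamma}+2\abs{\beta}}\}$. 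Hence the lemma reduces to the derivative-free estimate
\[
	\abs{\Phi^{(m)}(t^2q^2)}\le C_m\, e^{\frac{t}{h}\frac{1_h-\varepsilon_2}{1+\abs{K}^2}}\qquad\text{for all }m\ge 0,
\]
with $\varepsilon_2$ independent of $h,x,\xi'$; the case $\abs{\gamma}=\abs{\beta}=0$ is exactly $m=0$, for which $\max\{1,(t/h)^0\}=1$.

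To prove this estimate, recall from \cref{eq:defofw} that $q$ is defined with the principal square root, so $\mathrm{Re}\,q\ge 0$, while \cref{lemm:exponentboundedbelow} (with $1_h$ real) gives $\mathrm{Re}\sqrt{\Lambda}<1_h-\varepsilon_1/2$ on $\supp\td{\rho}$. Setting $\varepsilon_2:=\varepsilon_1/2$, we therefore get
\[
	0\le \mathrm{Re}(tq) = \frac{t}{h}\,\frac{\mathrm{Re}\sqrt{\Lambda}}{1+\abs{K}^2}\le \frac{t}{h}\,\frac{1_h-\varepsilon_2}{1+\abs{K}^2}.
\]
It thus suffices to prove the purely complex-analytic claim $\abs{\Phi^{(m)}(z)}\le e\,m!\,e^{\abs{\mathrm{Re}\mu}}$ for every $z\in\CC$, every $m\ge 0$, and any square root $\mu$ of $z$; applying it with $z=t^2q^2$ and $\mu=tq$ then finishes the proof. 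For the claim I would use the Cauchy estimate $\abs{\Phi^{(m)}(z)}\le m!\max_{\abs{w-z}=1}\abs{\Phi(w)}$ together with the observation that, if $\mu_w$ denotes the square root of $w$ closest to $\mu$, then $\abs{\mu_w-\mu}^2\le\abs{\mu_w-\mu}\,\abs{\mu_w+\mu}=\abs{w-z}=1$, so $\abs{\mathrm{Re}\mu_w}\le\abs{\mathrm{Re}\mu}+1$, and therefore $\abs{\Phi(w)}=\abs{\cosh\mu_w}\le\cosh(\mathrm{Re}\mu_w)\le e^{\abs{\mathrm{Re}\mu}+1}$.

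The main obstacle is exactly this derivative-free bound on $\Phi^{(m)}$. The naive power-series estimate only gives $\abs{\Phi^{(m)}(z)}\le\cosh\!\big(\sqrt{\abs{z}}\big)$, which is far too crude when $\mathrm{Im}\sqrt z$ is large --- there $\cosh\sqrt z$ stays bounded while $\cosh\sqrt{\abs z}$ blows up --- whereas what is needed, and what ultimately yields the $S^{-\infty}_1$ decay of the Poisson operator $\op[h,x']{\ell}$ after the cancelling exponential prefactor $e^{-x_n b/(2a)}$, is control by $\mathrm{Re}\sqrt z = \mathrm{Re}(tq)$. Everything else --- the Fa\`a di Bruno expansion, the uniform bounds on $R$, and the bookkeeping of powers of $t/h$ --- is routine given $g\in C_c^\infty$ and compactness of $\supp\td{\rho}$.
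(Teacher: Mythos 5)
Your proof is correct, and it takes a genuinely different route from the paper's. The paper also exploits evenness of $\cosh$ and Fa\`a di Bruno, but it then splits into two cases according to whether $\abs{\mathrm{Im}\sqrt{q'}}$ is small or large compared with $\varepsilon_1/(4(1+\abs{K}^2))$: in the small-imaginary-part case it differentiates the power series in the variable $\kappa$ (so the derivative bound is essentially $\abs{\Phi^{(m)}(z)}\le\cosh\sqrt{\abs{z}}$, which is only usable there because $\abs{\sqrt{q'}}$ is then comparable to $\mathrm{Re}\sqrt{q'}$), and in the large-imaginary-part case it differentiates through $\sqrt{q'}$ directly and controls the resulting negative powers $(q')^{1/2-\abs{\pi_2}}$ by the lower bound on $\abs{\mathrm{Im}\sqrt{q'}}$, together with $\abs{\cosh^{(m)}(z)}\le e^{\abs{\mathrm{Re}z}}$. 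You instead keep a single Fa\`a di Bruno expansion of $\Phi\bigl((t/h)^2R\bigr)$ with $\Phi(\zeta^2)=\cosh\zeta$, and replace the whole case analysis by the uniform bound $\abs{\Phi^{(m)}(z)}\le e\,m!\,e^{\abs{\mathrm{Re}\mu}}$ ($\mu^2=z$), obtained from the Cauchy estimate on a radius-one disk plus the elementary observation that the square root of a point at distance one from $z$ can be chosen within distance one of $\mu$, so its real part moves by at most one; combined with \cref{lemm:exponentboundedbelow} and $\mathrm{Re}(tq)\ge 0$ this gives exactly the exponential factor with $\varepsilon_2=\varepsilon_1/2$, and the bookkeeping of powers of $t/h$ matches the stated $\max\{1,(th^{-1})^{2\abs{\gamma}+2\abs{\beta}}\}$. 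Your version is cleaner in that it avoids the dichotomy and the negative powers of $\mathrm{Im}\sqrt{q'}$ altogether, at the price of invoking a (short) complex-analytic lemma rather than purely termwise series manipulations; both arguments give constants independent of $h$, $x$ and $\xi'$ and the same final estimate.
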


\begin{proof}
	We introduce $q'(x',\xi')$ by
	\begin{equation}\label{eq:defofqprime}
		q'(x',\xi') \coloneqq \frac{(1_h+iK\cdot\xi')^2-(1-\abs{\xi'}^2)(1+\abs{K}^2)}{(1+\abs{K}^2)^2}\in C^\infty_b(\RR^{n-1}\times\supp\td{\rho})
	\end{equation}
	so that $ tq = \frac{t}{h}\sqrt{q'}$.
Here $C_b^\infty$ is the space of functions whose derivative of every order is bounded uniformly in $h>0$ small. That every derivative of $q'$ is bounded follows from the definition $K(x') = \nabla g(x')$ where $g$ is the boundary defining function introduced in \cref{changeofVars}.

	We will find two formulas for $\del^{\gamma}_{x'}\del_{\xi'}^\beta \cosh(\frac{t}{h}\sqrt{q'})$ and use each to give a slightly different bound which we combine at the end of this proof.

	Using \cref{lemm:exponentboundedbelow}, 
	we see that for $h>0$ small, $\xi'\in\supp\td{\rho}$ there is an $\varepsilon_1 >0$ so that
	\begin{equation}\label{eq:realq}
		\mathrm{Re}\sqrt{q'} < \frac{1_h - \varepsilon_1/2}{1+\abs{K}^2}\,.
	\end{equation}
	
	\textbf{\textsc{Case 1:}} We have that $\abs{\mathrm{Im}\sqrt{q'}} < \frac{\varepsilon_1}{4(1+\abs{K}^2)}$ in which case we see by \cref{eq:realq} that 
		\begin{equation}\label{eq:absq}
			\abs{\sqrt{q'}} \leq \abs{\mathrm{Re}\sqrt{q'}} + \abs{\mathrm{Im}\sqrt{q'}} < \frac{1_h-\varepsilon_1/4}{1+\abs{K}^2}\,.
		\end{equation}
	
		Now we must do a calculation: for $\sigma,\kappa \in \RR, \omega\in\mathbb{N}_0$, 
	\begin{equation}\label{eq:quickcalc}
		\del_{\kappa}^{\omega} \cosh(\sigma\sqrt{\kappa}) = \sigma^{2\omega}\sum_{k\geq \omega} (\sigma^2\kappa)^{k-\omega}\frac{k!}{(2k)!(k-\omega)!} = \sigma^{2\omega}\sum_{k\geq 0} \frac{(\sigma\sqrt{\kappa})^{2k}}{(2k)!} \frac{(k+\omega)!(2k)!}{k!(2k+2\omega)!}\,,
	\end{equation}
	where we note that for all $k,\omega\in\mathbb{N}_0$, $\frac{(k+\omega)!(2k)!}{k!(2k+2\omega)!}\leq 1$.
	
	For every multi-index $\alpha\in\mathbb{N}_0^{2n-2}$, let
	\begin{equation}\label{eq:defPialpha}
		\Pi_\alpha \coloneqq \{\text{partitions of } \{1,\dots,\abs{\alpha}\}\}\,,
	\end{equation}
	and for every $\pi \in \Pi_\alpha$ and every $P\in \pi$, let $\alpha_P \in \mathbb{N}_0^{2n-2}$ be the multi-index with 
	\begin{equation}
		(\alpha_P)_j = \abs{P\cap \left\{1+\sum_{i=1}^{j-1} \alpha_i, \dots, \sum_{i=1}^{j}\alpha_i\right\}}\,.\label{eq:faaindex}
	\end{equation}

	Using \cref{eq:quickcalc} and Fa\`a di Bruno's formula (with notation introduced in \cref{eq:defPialpha,eq:faaindex}), we calculate that 
	\begin{align}
		\del^{\gamma}_{x'}\del_{\xi'}^\beta \cosh\left(\frac{t}{h}\sqrt{q'}\right) = \sum_{\pi_1\in\Pi_{(\gamma,\beta)}} \left(\frac{t}{h}\right)^{2\abs{\pi_1}}\sum_{k\geq 0} \frac{(th^{-1}\sqrt{q'})^{2k}}{(2k)!} \frac{(k+\abs{\pi_1})!(2k)!}{k!(2k+2\abs{\pi_1})!}
		\prod_{P_1\in\pi_1} \del^{(\gamma,\beta)_{P_1}} q'\label{eq:imqsmall}
	\end{align}

	Now, by \cref{eq:imqsmall}, we have that 
	\begin{align}\label{eq:qsmallabs}
		\abs{\del^{\gamma}_{x'}\del_{\xi'}^\beta \cosh\left(\frac{t}{h}\sqrt{q'}\right)} &\leq  \sum_{\pi_1\in\Pi_{(\gamma,\beta)}} \left(\frac{t}{h}\right)^{2\abs{\pi_1}}\sum_{k\geq 0} \frac{(th^{-1}\abs{\sqrt{q'}})^{2k}}{(2k)!} \frac{(k+\abs{\pi_1})!(2k)!}{k!(2k+2\abs{\pi_1})!}
		\prod_{P_1\in\pi_1} \abs{\del^{(\gamma,\beta)_{P_1}} q'}  \notag\\
		&\leq\sum_{\pi_1\in\Pi_{(\gamma,\eta)}} \left(\frac{t}{h}\right)^{2\abs{\pi_1}}\sum_{k\geq 0} \frac{(th^{-1}\abs{\sqrt{q'}})^{2k}}{(2k)!}
		\prod_{P_1\in\pi_1} \abs{\del^{(\gamma,\beta)_{P_1}} q'} \notag \\
		&= \sum_{\pi_1\in\Pi_{(\gamma,\beta)}} \left(\frac{t}{h}\right)^{2\abs{\pi_1}}\cosh(th^{-1}\abs{\sqrt{q'}}) \prod_{P_1\in\pi_1} \abs{\del^{(\gamma,\beta)_{P_1}} q'} \notag \\
		&\leq \sum_{\pi_1\in\Pi_{(\gamma,\beta)}} C_{\pi_1}' \left(\frac{t}{h}\right)^{2\abs{\pi_1}} e^{th^{-1}\abs{\sqrt{q'}}}
	\end{align}
	for some $C_{\pi_1}'\geq 0$ independent of $h,x$ and $\xi'$ in view of \cref{eq:defofqprime}. Using \cref{eq:absq} and \cref{eq:qsmallabs} we see that
	\begin{align}\label{eq:smallimdone}
		\abs{\del^{\gamma}_{x'}\del_{\xi'}^\beta \cosh\left(\frac{t}{h}\sqrt{q'}\right)} 
		&\leq \sum_{\pi_1\in\Pi_{(\gamma,\beta)}} C_{\pi_1}' \left(\frac{t}{h}\right)^{2\abs{\pi_1}} e^{\frac{t}{h}\frac{1_h-\varepsilon_1/4}{1+\abs{K}^2}}\,.
	\end{align}
	We suspend working in this case until the end of the proof when we make statements valid in both cases.

	\textsc{\textbf{Case 2:}} We have that $\abs{\mathrm{Im}\sqrt{q'}} \geq \frac{\varepsilon_1}{4(1+\abs{K}^2)}$. 

	Again, using Fa\`a di Bruno's formula, we can calculate that
	\begin{gather}
		\del^{\gamma}_{x'}\del_{\xi'}^\beta \cosh\left(\frac{t}{h}\sqrt{q'}\right) = \notag \\
		\sum_{\pi_1 \in\Pi_{(\gamma,\beta)}}(th^{-1})^{\abs{\pi_1}}\cosh^{(\abs{\pi_1})}\left(\frac{t}{h}\sqrt{q'}\right)\prod_{P_1\in\pi_1}\sum_{\pi_2\in \Pi_{(\gamma,\beta)_{P_1}}}(q')^{1/2-\abs{\pi_2}}(-1)^{\abs{\pi_2}+1}\prod_{s=0}^{\abs{\pi_2}-1}\left(\frac{1}{2}+s\right)\prod_{P_2\in\pi_2} \del^{((\gamma,\beta)_{P_1})_{P_2}} q' \label{eq:imqlarge}
	\end{gather} 

	Now note that for any $z \in \mathbb{C}$
	\begin{equation}\label{eq:coshandsinh}
		\abs{\sinh(z)} \leq e^{\abs{\mathrm{Re}z}}\,,\an \abs{\cosh(z)} \leq e^{\abs{\mathrm{Re}z}}\,,
	\end{equation}
	and that $\abs{\sinh(z)}, \abs{\cosh(z)}$ are monotonously increasing in $\mathrm{Re}z \geq 0$. 

	Using \cref{eq:coshandsinh}, we remark that if $(\gamma,\beta)=0$, we may bound 
	\begin{equation}
		\abs{\cosh(th^{-1}\sqrt{q'})} \leq e^{th^{-1}\mathrm{Re}\sqrt{q'}}\,. \label{eq:zerocase}
	\end{equation}
	and thus the lemma is already proven by combining \cref{eq:zerocase} and \cref{eq:realq}. Let us therefore take $(\gamma,\beta)\neq 0$ from now on.

	In view of \cref{eq:imqlarge}, \cref{eq:coshandsinh} and $\abs{\sqrt{q'}} \geq \abs{\mathrm{Im}{\sqrt{q'}}}$, for $(\gamma,\eta)\neq 0$, we have that 
	\begin{align}\label{eq:qlargeabs}
	&\abs{\del^{\gamma}_{x'}\del_{\xi'}^\eta \cosh\left(\frac{t}{h}\sqrt{q'}\right)} \notag \\ 
	&\leq \sum_{\pi_1 \in\Pi_{(\gamma,\eta)}}\left(\frac{t}{h}\right)^{\abs{\pi_1}}e^{th^{-1}\mathrm{Re}\sqrt{q'}}\prod_{P_1\in\pi_1}\sum_{\pi_2\in \Pi_{(\gamma,\eta)_{P_1}}}\abs{\mathrm{Im}\sqrt{q'}}^{1-2\abs{\pi_2}}\prod_{s=0}^{\abs{\pi_2}-1}\left(\frac{1}{2}+s\right)\prod_{P_2\in\pi_2} \abs{\del^{((\gamma,\eta)_{P_1})_{P_2}} q'} \notag \\
	&\leq \sum_{\pi_1 \in\Pi_{(\gamma,\eta)}}\left(\frac{t}{h}\right)^{\abs{\pi_1}}e^{th^{-1}\mathrm{Re}\sqrt{q'}}\prod_{P_1\in\pi_1}\sum_{\pi_2\in \Pi_{(\gamma,\eta)_{P_1}}}C_{\pi_1,\pi_2}\abs{\mathrm{Im}\sqrt{q'}}^{1-2\abs{\pi_2}} 
	\end{align}
	where $C_{\pi_1,\pi_2} \geq 0$ is independent of $h,x,\xi'$, and we used the fact that $q'$ and all of its derivatives are bounded. 
	
	We continue the calculation \cref{eq:qlargeabs} using \cref{eq:realq} and the assumption of this case:
		\begin{align}\label{eq:largeimdone}
			\abs{\del^{\gamma}_{x'}\del_{\xi'}^\beta \cosh\left(\frac{t}{h}\sqrt{q'}\right)} 
&\leq \sum_{\pi_1 \in\Pi_{(\gamma,\beta)}}\left(\frac{t}{h}\right)^{\abs{\pi_1}}e^{\frac{t}{h} \frac{1_h - \varepsilon_1/2}{1+\abs{K}^2}}\prod_{P_1\in\pi_1}\sum_{\pi_2\in \Pi_{(\gamma,\beta)_{P_1}}}C_{\pi_1,\pi_2}\abs{\frac{\varepsilon_1}{4(1+\abs{K}^2)}}^{1-2\abs{\pi_2}} \notag\\
&\leq \sum_{\pi_1 \in\Pi_{(\gamma,\beta)}}C''_{\pi_1}\left(\frac{t}{h}\right)^{\abs{\pi_1}}e^{\frac{t}{h}\frac{1_h - \varepsilon_1/2}{1+\abs{K}^2}}\,.
		\end{align}

	At this point, we begin making statements about both cases simultaneously.

	Combining \cref{eq:smallimdone} and \cref{eq:largeimdone}, there exist some $\varepsilon_2,C_{\gamma,\beta} >0$ independent of $h,x,\xi'$ with $\varepsilon_2$ also independent of $\gamma,\beta$, so that 
	\begin{equation}
	\abs{\del^{\gamma}_{x'}\del_{\xi'}^\beta \cosh\left(\frac{t}{h}\sqrt{q'}\right)} 
	\leq C_{\gamma,\beta}e^{\frac{t}{h}\frac{1_h - \varepsilon_2}{1+\abs{K}^2}}\sum_{\pi_1 \in\Pi_{(\gamma,\beta)}}\max\{(th^{-1})^{\abs{\pi_1}},(th^{-1})^{2\abs{\pi_1}}\}\,.
	\end{equation}
	Finally, recalling from \cref{eq:defofqprime} that $\frac{t}{h}\sqrt{q'} = tq$, we have 
	\begin{equation}
	\abs{\del^{\gamma}_{x'}\del_{\xi'}^\beta \cosh(tq)} 
	\leq C'_{\gamma,\beta}e^{\frac{t}{h}\frac{1_h - \varepsilon_2}{1+\abs{K}^2}}\max\{1,(th^{-1})^{2\abs{\gamma}+2\abs{\beta}}\}
	\end{equation} 
	for a possibly different $C_{\gamma,\beta}' \geq 0$ independent of $h, x$ and $\xi'$.
\end{proof}

\begin{proof}[Proof of Proposition~\ref{prop: ell is symbol}]
	Checking that $\ell(x_n, x', \xi') = h^{-1}\td{\rho}e^{-x_n\frac{b}{2a}}\int_0^{x_n}\cosh(t {q})\dd t$ is a symbol in the $(x',\xi')$ variable begins by determining that it is smooth. 
	Let $x_n \geq 0$ be fixed. Due to the fact that $\cosh$ is an even function, the square root that is contained within ${q}$ is not an issue so that $\cosh(t{q})$ is a smooth function with respect to $t,x'$ and $\xi'$ (recall that the Taylor expansion of $\cosh$ only has even exponents). 
	All other parts of $\ell$ are clearly smooth and so $\ell \in C^\infty(\RR^n\times\RR^{n-1})$.
	
	Now, with notation from \cref{eq:defPialpha,eq:faaindex}, by the Fa\`a di Bruno formula and the Leibniz rule, for multi-indices $\alpha,\beta \in \mathbb{N}_0^{n-1}$, 
	\begin{align}\label{eq:derivofell2} 
	\del^\alpha_{x'}\del^\beta_{\xi'} \ell(x,\xi') &= \sum_{\substack{\gamma\leq \alpha, \delta\leq \beta, \eta\leq \beta-\delta\\\pi\in \Pi_{(\alpha-\gamma,\beta-\delta)}}} {\alpha \choose \gamma}{\beta \choose \delta}{\beta-\delta \choose \eta} e^{-\frac{x_n}{h}\frac{1_h+iK\cdot\xi'}{1+\abs{K}^2}} \prod_{P \in \pi}\left(\frac{x_n}{h}\right)^{\abs{\pi}} \del_{(x',\xi')}^{(\alpha-\gamma,\beta-\delta)_{P}}\left(-\frac{1_h +iK\cdot\xi'}{1+\abs{K}^2}\right) \notag\\
	&\qquad\cdot \left(\del_{\xi'}^{\beta-\delta-\eta}\td{\rho}\right) h^{-1} \int_0^{x_n} \del^{\gamma}_{x'}\del_{\xi'}^\eta \cosh(tq) \dd t\,.
	\end{align}

	Applying monotonicity of the right-hand side of \cref{eq:coshgood} with respect to $t\geq 0$, for $h>0$ small and $\xi'\in\supp\td{\rho}$,	
	\begin{align}
	h^{-1}\abs{\int_0^{x_n} \del^{\gamma}_{x'}\del_{\xi'}^\eta \cosh(tq) \dd t} &\leq h^{-1} C_{\gamma,\eta}e^{\frac{x_n}{h}\frac{1_h - \varepsilon_2}{1+\abs{K}^2}}\max\{1,(x_nh^{-1})^{2\abs{\gamma}+2\abs{\eta}}\}\int_0^{x_n}1\dd t\notag \\
	&\leq C_{\gamma,\eta}e^{\frac{x_n}{h}\frac{1_h - \varepsilon_2}{1+\abs{K}^2}}\max\left\{1,\left(\frac{x_n}{h}\right)^{2\abs{\gamma}+2\abs{\eta}+1}\right\}\,.\label{eq:monoton}
	\end{align}

	Now by \cref{eq:monoton}, and the fact that $\left(\del_{\xi'}^{\beta-\delta-\eta}\td{\rho}\right)$ is supported on $\supp\td{\rho}$,
	\begin{align}
		\Bigg\vert\left(\del_{\xi'}^{\beta-\delta-\eta}\td{\rho}\right)&\left.e^{-\frac{x_n}{h}\frac{1_h+iK\cdot\xi'}{1+\abs{K}^2}} \left(\frac{x_n}{h}\right)^{\abs{\pi}}h^{-1} \int_0^{x_n} \del_{x'}^\gamma\del_{\xi'}^\delta \cosh(t{q}) \dd t\right\vert \notag
		\\
		&\leq C_{\gamma,\eta}\abs{\del_{\xi'}^{\beta-\delta-\eta}\td{\rho}}\max\left\{1,\left(\frac{x_n}{h}\right)^{2\abs{\gamma}+2\abs{\eta}+1+\abs{\pi}}\right\}e^{-\frac{x_n}{h}\frac{\varepsilon_2}{(1+\abs{K}^2)}}\,.\label{eq:thesupremumofexppart}
	\end{align}
	Furthermore, we find that 
	\begin{gather}
		\sup_{x_n\geq 0}\max\left\{1,\left(\frac{x_n}{h}\right)^{2\abs{\gamma}+2\abs{\eta}
		+1+\abs{\pi}}\right\}e^{-\frac{x_n}{h}\frac{\varepsilon_2}{(1+\abs{K}^2)}} \notag\\
		= \max\left\{1,\left(\frac{(1+\abs{K}^2)(2\abs{\gamma}+2\abs{\eta}+1+\abs{\pi})}{\varepsilon_2 e}\right)^{2\abs{\gamma}+2\abs{\eta}+1+\abs{\pi}}\right\}\,,\label{eq:supofpolyvsexp}
	\end{gather}
	and so combining \cref{eq:derivofell2}, \cref{eq:thesupremumofexppart} and \cref{eq:supofpolyvsexp},
	\begin{align}\label{eq:finalderivbound}
	\sup_{x_n\geq 0}\abs{\del^\alpha_{x'}\del^\beta_{\xi'} \ell(x,\xi')} &\leq\sum_{\substack{\gamma\leq \alpha, \delta\leq \beta, \eta\leq \beta-\delta\\\pi\in \Pi_{(\alpha-\gamma,\beta-\delta)}}} {\alpha \choose \gamma}{\beta \choose \delta}{\beta-\delta \choose \eta} \prod_{P \in \pi}\abs{\del_{(x',\xi')}^{(\alpha-\gamma,\beta-\delta)_{P}}\left(-\frac{1_h +iK\cdot\xi'}{1+\abs{K}^2}\right)} \notag \\ 
	&\cdot C_{\gamma,\eta}\max\left\{1,\left(\frac{(1+\abs{K}^2)(2\abs{\gamma}+2\abs{\eta}+1+\abs{\pi})}{\varepsilon_2 e}\right)^{2\abs{\gamma}+2\abs{\eta}+1+\abs{\pi}}\right\}\notag\\
	&\leq C_{\alpha,\beta}\max_{\eta\leq \abs{\beta}}\abs{\del_{\xi'}^{\eta}\td{\rho}}\,,
	\end{align}
	where we used the fact that $\abs{K}$ is bounded above and we consumed the last dependence in $h$ (in the form of $1_h$, see \cref{subsec:recap}) by bounding it above. The constant $C_{\alpha,\beta}\geq 0$ in \cref{eq:finalderivbound} is independent of $h, x$ and $\xi'$. Because $\td{\rho}$ is compactly supported, for every fixed $x_n \geq 0$, \cref{eq:finalderivbound} implies that $\ell \in S^{-\infty}_1(\RR^{n-1})$. 
	
	We proceed to showing \cref{mappingpropsofsmallteqn} for $s=0$. Note that \cref{eq:finalderivbound} proves that $\{\ell(\cdot,x_n,\cdot)\colon x_n\geq 0\}$ is a bounded subset of $S^{-\infty}_1(\RR^{n-1})$ each element of which satisfies \cref{eq:goodsymbol}. Here, bounded means that there exist constants $C_{\alpha,\beta}$ so that for all multi-indices $\alpha,\beta$ we have 
	\begin{equation}
		\sup_{x_n\geq 0}\abs{\del^\alpha_{x'}\del^\beta_{\xi'}\ell(x',x_n,\xi')} \leq C_{\alpha,\beta}\,.\label{eq:wahtisboundedness}
	\end{equation}
	Using \cref{lemm:semi}, \cref{eq:wahtisboundedness} directly implies that the operator norm of $\op[h,x']{\ell}$ on $L^r(\RR^{n-1})\to W^{k,r}(\RR^{n-1})$ as a function of $x_n \geq 0$ is uniformly bounded with respect to $x_n \geq 0$, which proves \cref{mappingpropsofsmallteqn} for $s=0$.

	We proceed to checking the first derivative of $\ell$. We calculate
	\begin{equation}\label{eq:ellfirstderiv}
		h\del_{x_n}\ell(x,\xi') = \td{\rho}e^{-\frac{x_n}{h}\frac{1_h +iK\cdot\xi'}{1+\abs{K}^2}}\left(\cosh(x_n {q})-h^{-1}\frac{1_h +iK\cdot\xi'}{1+\abs{K}^2}\int_0^{x_n} \cosh(t{q})\dd t\right)\,,	
	\end{equation}
	where the second term of \cref{eq:ellfirstderiv} can be rewritten as 
	\begin{equation}\label{eq:secondtermellfirstderiv}
		-\frac{1_h +iK\cdot\xi'}{1+\abs{K}^2}\ell(x,\xi')\,,
	\end{equation}
	whose seminorms in $S^{-\infty}_1(\RR^{n-1})$ can be bounded by the seminorms of $\ell$ multiplied by a constant independent of $x_n$.
	
	Let us consider the first term of \cref{eq:ellfirstderiv}. So that we do not have to rewrite the entire computation, note that for multi-indices $\alpha,\beta$, 
	\begin{align}\label{eq:ellfirstderifirstpart}
		\del_{x'}^\alpha \del_{\xi'}^\beta\left(\td{\rho}e^{-\frac{x_n}{h}\frac{1_h +iK\cdot\xi'}{1+\abs{K}^2}}\cosh(x_n {q})\right)
	\end{align}
	is equal to the expression in \cref{eq:derivofell2} where we replace 
	\[
		h^{-1}\int_0^{x_n} \del^{\gamma}_{x'}\del^{\eta}_{\xi'}\cosh(tq) \dd t\qquad \text{by}\qquad \del^{\gamma}_{x'}\del^{\eta}_{\xi'}\cosh(x_nq)\,.
	\]
	Using \cref{eq:coshgood} we come to the same conclusion that for every $\alpha,\beta$, the expression \cref{eq:ellfirstderifirstpart} is bounded uniformly in $x_n \geq 0$ by a constant $C_{\alpha,\beta}$ times a compactly supported function in $\xi'$.

	Together with the remark after \cref{eq:secondtermellfirstderiv}, we thus find that $\{h\del_{x_n}\ell(\cdot,x_n,\cdot)\colon x_n\geq 0\}$ is a bounded subset of $S^{-\infty}_1(\RR^{n-1})$ each element of which satisfies \cref{eq:goodsymbol}, which implies \cref{mappingpropsofsmallteqn} for $s=1$ if we use \cref{lemm:semi}.
	
	Finally, note that by direct computation on the symbol, $(a\del_{x_n}^2 + b\del_{x_n} + c) \ell = 0$, see \cref{eq:defofabc} for the definitions of $a,b,c$. We use this to write that
	\[
		\{ h^2\del_{x_n}^2\ell(\cdot,x_n,\cdot)\colon x_n\geq 0\} = \left\{-\frac{2(1_h+iK\cdot\xi')h\del_{x_n}\ell(\cdot,x_n,\cdot) + (1-\abs{\xi'}^2)\ell(\cdot,x_n,\cdot)}{1+\abs{K}^2}\colon x_n \geq 0\right\}
	\]
	is a bounded subset of $S^{-\infty}_1(\RR^{n-1})$, so that we have fully shown \cref{mappingpropsofsmallteqn} by a final application of \cref{lemm:semi}.
\end{proof}
\subsubsection{Proof of Proposition \ref{mappingPropsoft}}

	By the definition of semiclassical Sobolev spaces, \cref{mappingpropsofsmallteqn} implies \cref{eq:ellsemiclass}.

	Defining
	\begin{align}
		hR_{\ell,1} &\coloneqq h^2\Delta_{x'} \op[h,x']{\ell} - \mathrm{Op}_{h,x'}(-\abs{\xi'}^2\ell)\,,\\
	\shortintertext{and} 
	hR_{\ell,2} &\coloneqq hK\cdot\nabla_{x'} \op[h,x']{\ell} - \mathrm{Op}_{h,x'}(iK\cdot\xi'\ell)\,,
	\end{align}
	by the compositional calculus of semiclassical pseudo-differential operators and \cref{mappingpropsofsmallteqn} we have $R_{\ell,1},R_{\ell,2}\colon L^r(\RR^{n-1}) \to_{h^0} C(\RR_+;L^r(\RR^{n-1}))$. 

	By direct calculation $(a \del_{x_n}^2 + b\del_{x_n} +c)\ell = 0$, see \cref{eq:defofabc}, and so by \cref{eq:deltatildeop},
	\[
		h^2\td{\Delta}_\phi \op[h,x']{\ell} = \op[h,x']{\left(a \del_{x_n}^2 + b\del_{x_n} +c\right)\ell} + h(R_{\ell,1}+R_{\ell,2}) = h(R_{\ell,1}+R_{\ell,2}) \eqqcolon hR_{\ell}\,,
	\]
	where $R_{\ell}\colon L^r(\RR^{n-1}) \to_{h^0} C(\RR_+;L^r(\RR^{n-1}))$ is as desired.
	
	Now, by \cref{eq:symbolofboundaryconditions} the symbol of $\bdry$ is affine with respect to $\xi'$, so that the composition calculus gives
	\begin{equation}\label{eq:boundaryellcalc}
	\bdry\op[h,x']{\ell} = \op[h,x']{(1+\abs{K}^2)^{-1/2}\left(iK\cdot \xi' -(1+\abs{K}^2)h\del_{x_n} -1 \right)\ell} + hR_b'
	\end{equation}
	for the remainder $R_b' = \op[h,x']{(1+\abs{K}^2)^{-1/2} K \cdot \nabla_{x'}\ell}$. 
	Taking $s=0$ in \cref{mappingpropsofsmallteqn} and using the explicit representation of $R_b'$ implies $R_b' \colon L^r(\RR^{n-1}) \to_{h^0} C(\RR_+; W^{1,r}(\RR^{n-1}))$.
	Furthermore, by using \cref{eq:ellfirstderiv} and directly calculating on symbols, 
	\[ 
		\tau (1+\abs{K}^2)^{-1/2}(iK\cdot\xi'-(1+\abs{K}^2)h\del_{x_n}-1)\ell(x,\xi') = - (1+\abs{K}^2)^{1/2}\td{\rho}(\xi')
	\] 
	and so using \cref{eq:boundaryellcalc},
	\[
		\tbdry\op[h,x']{\ell} f = -(1+\abs{K}^2)^{1/2}\td{\rho}(hD') f + \tau hR_b'f\,, 
	\]
	and taking $R_b \coloneqq \tau R_b'$ concludes the proof.
\qed

\subsection{Large Frequency Parametrix}\label{sec:large}
Following \cite{CT20}, we define
\begin{equation}\label{eq:definitionofPl}
	P_l \coloneqq (1-\td{\rho}(hD'))J^+J\td{G}_\phi\,,
\end{equation}
where $\td{G}_\phi$ is as in \cref{prop43}, $\td{\rho}$ is from \cref{defofrho}, $J$ from \cref{eq:defofJQ} and 
\begin{equation}\label{eq:defofJplus}
	J^+ \coloneqq J^{-1}I_+\,,
\end{equation}
with $J^{-1}$ defined in \cref{Jinverse}.

This operator is exactly what is called the large frequency parametrix in \cite{CT20}, which approximately solves
\[
h^2\tilde\Delta_\phi P_l\approx 1 - \tilde \rho(hD')\,.
\]
This operator is insufficient for our needs as it does not allow us to control the value of $\tbdry P_l$ at $x_n = 0$. Indeed, we will see at the end of this section that
\begin{proposition}
\label{Prop: bv of Pl}
The operator $P_l$ satisfies the boundary condition
\begin{equation}\label{traceofpl}
	\tbdry P_l = -(1+\abs{K}^2)^{1/2}(1-\td{\rho}(hD'))\tau J\td{G}_\phi\,.
\end{equation}
\end{proposition}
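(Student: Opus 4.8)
The plan is to decompose $\bdry$ into its normal and tangential components and to exploit two structural features of $P_l=(1-\td{\rho}(hD'))J^+J\td{G}_\phi$ from \cref{eq:definitionofPl}: that it has vanishing trace at $\{x_n=0\}$, and that its normal derivative can be traded, via $J=h\del_{x_n}+F_+$, for $J$ minus a purely tangential operator. Using \cref{eq:boundarycalcwithgamma} I would first write
\[
	\bdry=(1+\abs{K}^2)^{-1/2}\bigl(hK\cdot\nabla_{x'}-h(1+\abs{K}^2)\del_{x_n}-1\bigr)=-(1+\abs{K}^2)^{1/2}\,h\del_{x_n}+T,
\]
where $T\coloneqq(1+\abs{K}^2)^{-1/2}(hK\cdot\nabla_{x'}-1)$ involves neither $\del_{x_n}$ nor $x_n$, so it acts only in the $x'$ variables. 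Set $z\coloneqq J^+J\td{G}_\phi v=J^{-1}I_+J\td{G}_\phi v$, so that $P_l v=(1-\td{\rho}(hD'))z$. Since $\td{G}_\phi$ gains two derivatives (\cref{prop43}) while $J$ costs only one, $J\td{G}_\phi v$ lies in a space admitting a trace at $\{x_n=0\}$; and since $I_+J\td{G}_\phi v$ is supported in $\overline{\RR^n_+}$, \cref{Jinverse} gives that $z$ lies in such a space too and that $\tau z=0$.

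Next I would dispose of the tangential part. The operator $T$ and the Fourier multiplier $1-\td{\rho}(hD')$ act only in $x'$, hence commute with the trace $\tau$, so
\[
	\tau\,T\,P_l v=T\bigl((1-\td{\rho}(hD'))\,\tau z\bigr)=0.
\]
This is the analogue for $P_l$ of the vanishing-boundary statement in \cref{lemmaOpNoBoundary}, and uses only $\tau z=0$. For the normal part, since $J^{-1}$ is a (two-sided) inverse of $J$ and $J^+=J^{-1}I_+$ (see \cref{eq:defofJplus}), we have $JJ^+=I_+$, hence $Jz=I_+J\td{G}_\phi v$; combining this with $J=h\del_{x_n}+F_+$ from \cref{splitupJintoF} gives
\[
	h\del_{x_n}z=Jz-F_+z=I_+J\td{G}_\phi v-F_+z.
\]
Now I multiply by $-(1+\abs{K}^2)^{1/2}(1-\td{\rho}(hD'))$, apply $\tau$, and use that $\tau(I_+J\td{G}_\phi v)=\tau J\td{G}_\phi v$, that $F_+$ acts in $x'$ so $\tau F_+z=F_+\tau z=0$, and that multiplication by $(1+\abs{K}^2)^{1/2}$ and the multiplier $1-\td{\rho}(hD')$ both pass through $\tau$. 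Together with the vanishing of the $T$-term this yields
\[
	\tbdry P_l v=-(1+\abs{K}^2)^{1/2}\,\tau\bigl((1-\td{\rho}(hD'))\,h\del_{x_n}z\bigr)=-(1+\abs{K}^2)^{1/2}(1-\td{\rho}(hD'))\,\tau J\td{G}_\phi v,
\]
which is exactly \cref{traceofpl}.

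The algebra above is short, so the real work — and the main obstacle — is the functional-analytic bookkeeping: one must check that each of $J\td{G}_\phi v$, $z$, $F_+z$ and $I_+J\td{G}_\phi v$ lands in a space for which the one-sided trace at $\{x_n=0\}$ is defined and the displayed identities hold, and that $\tau$ genuinely commutes with $F_+$, with $\td{\rho}(hD')$ and with multiplication by functions of $x'$. As in the proof of \cref{lemmaOpNoBoundary}, the cleanest way to be safe is to verify the whole chain of identities first for $v\in C_c^\infty(\RR^n)$ supported in $\RR^n_+$, where all manipulations are classical, and then to extend to general $v$ by density, using the operator bounds and trace continuity recorded in \cref{prop43} and \cref{Jinverse}.
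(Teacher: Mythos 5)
Your argument is correct and follows essentially the same route as the paper: both hinge on the substitution $J=h\del_{x_n}+F_+$, the support/trace property of $J^{-1}$ from \cref{Jinverse} (so that $\tau J^+ = 0$), the identity $\tau I_+ = \tau$ for the trace taken from $x_n>0$, and the commutation of $\tau$ with operators acting only in $x'$. The only cosmetic difference is that the paper isolates a $(1+\abs{K}^2)^{1/2}J$ term inside $\bdry$ and kills the resulting commutator $[\td{\rho}(hD'),F_+]$ using $\tau J^+=0$, whereas you split $\bdry$ into its normal and tangential parts and evaluate $\tau h\del_{x_n}z$ directly via $Jz-F_+z$ — the same cancellations in a slightly different order.
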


The identity in \cref{Prop: bv of Pl} suggests that we should introduce a Poisson kernel, which we will call $B_l$, to correct this boundary term. Furthermore, in order to achieve \cref{eq:boundaryG}, we will need another Poisson kernel that allows us to introduce arbitrary boundary data, which we will call $B_c$. The Poisson kernels will be annihilated (up to leading order) by $h^2\tilde\Delta_\phi$. The operator $\tbdry B_l$ will be roughly equal to the right hand side of \eqref{traceofpl} as to achieve the desired cancellation, while $\tbdry B_c \approx (1+\abs{K}^2)^{1/2}$ which can be thought of as the identity map (modulo multiplication by a uniformly positive function). 

Recall the definition of $\td{\Omega}\subset \mathbb R^n$ 
defined in \cref{changeofVars}, denote by $\itdo$ the indicator function of $\td{\Omega}$. For $v\in L^{r}(\td{\Omega})$ let $\itdo v \in L^r(\RR^n)$ be its trivial extension. Finally, recall the set $L^2_c(\RR^n) = \{v \in L^2(\RR^n) \colon \supp v\,\text{compact}\}$.

\begin{proposition}\label{propMappingOfNewPthing}
There exist bounded linear maps
\begin{equation}
	B_l \colon L^{2}_\delta(\RR^n) \to_{h^{-1}} H^1_{\delta-1}(\RR^n)\,,\ 0<\delta<1\,,\quad  B_l \colon L^{p'}(\RR^n) \to_{h^{-2}} L^{p}(\RR^n_+) \cap \left(H^1(\RR^n_+)\cap H^1_{\mathrm{loc}}(\RR^n)\right)\,, \label{mappropsofPdLpMpart}
\end{equation}
and
\begin{equation}
B_c \colon L^2(\RR^{n-1}) \to_{h^0} H^1(\RR^n)\,\label{mappropsofPdLpCpart}
	\end{equation}
such that for all $v\in L^{p'}(\td{\Omega})$ and all $f\in L^2(\RR^{n-1})$,
\begin{alignat}{2}\label{mapofPlfwithDelta1}
	&\itdo h^2\tilde{\Delta}_\phi B_l \itdo v &&= (R_{m}+hR_{m}')v\,, \\
	&\itdo h^2\tilde{\Delta}_\phi B_c f &&= hR_{c}f\,,\label{mapofPlfwithDelta2}
	\end{alignat}
	where 
	\begin{align}\label{Plfremainderprops}
		R_{m} &\colon L^2(\td{\Omega})\to_h L^2(\td{\Omega})\,, \quad R_{m} \colon L^{p'}(\td{\Omega}) \to_{h^0} L^2(\td{\Omega})\,,\notag \\
		R_{m}' &\colon L^r(\td{\Omega})\to_{h^0} L^r(\td{\Omega}),\quad 1<r<\infty\,, \\
		R_{c} &\colon L^2(\RR^{n-1}) \to_{h^0} L^2(\td{\Omega})\,.\notag
	\end{align} 
	Furthermore,
	\begin{align}\label{Plfboundaryprops}
		\tbdry B_l \itdo v &= (1+\abs{K}^2)^{-1/2}(1-\td{\rho}(hD'))\tau J\td{G}_\phi\itdo v + h(R_{bl}'+R_{bl}'')v
	\shortintertext{and} 
	\label{Plfboundarypropscpart}
	\tbdry B_cf &= (1+\abs{K}^2)^{-1/2}(1-\td{\rho}(hD'))f + hR_{bc}f
	\end{align}
	where $\td{\rho}$ is from \eqref{defofrho} and
	\begin{equation}\label{eq:bdryremainBlBc}
	\begin{aligned}
		R_{bl}' &\colon L^{2}(\td{\Omega}) \to_{h^0} L^2_{\delta-1}(\RR^{n-1})\,,\ 0<\delta<1\,&\an R_{bl}'&\colon L^{p'}(\td{\Omega}) \to_{h^0} L^2(\RR^{n-1})\,,\\ 
		R_{bl}'' &\colon L^{r}(\td{\Omega}) \to_{h^0} L^r(\RR^{n-1})\,,\quad 1<r<\infty\,,&\an R_{bc} &\colon L^2(\RR^{n-1}) \to_{h^0} L^2(\RR^{n-1})\,.
	\end{aligned}
	\end{equation}
\end{proposition}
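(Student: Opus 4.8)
### Proof Proposal for Proposition~\ref{propMappingOfNewPthing}

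\textbf{Overall strategy.} The plan is to build $B_l$ and $B_c$ as sums of two pieces: a ``bulk'' piece built from the operators of \cref{sec:parametrices} that reproduces the parametrix behaviour of $P_l$ (for $B_l$) and does nothing in the bulk (for $B_c$), plus a ``Poisson'' piece built from $\op[h,x']{\ell}$ of \cref{mappingPropsoft} that repairs the boundary value while being annihilated by $h^2\td\Delta_\phi$ up to $\mathcal O(h)$. Concretely, for $B_c$ I would simply set $B_c f \coloneqq -\op[h,x']{\ell}\bigl((1+\abs{K}^2)^{-1/2}f\bigr)$ up to composing with the cutoff $1-\td\rho(hD')$ as dictated by \cref{Plfboundarypropscpart}; then \cref{mapofPlfwithDelta2} is exactly \cref{eq:remainderforsmall} and \cref{Plfboundarypropscpart} is exactly \cref{boundaryofsmall} after dividing by $-(1+\abs{K}^2)$ and absorbing the sign and the stray $\td\rho$ factors into $R_{bc}$ and the harmless $\td\rho(hD')$-cutoff (note $\ell$ already carries a $\td\rho(\xi')$, so composing with $1-\td\rho(hD')$ only changes things by a smoothing term). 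The mapping property \cref{mappropsofPdLpCpart} is \cref{eq:ellsemiclass} with $r=2$, $A$ exhausting $\td\Omega$, combined with the $s=1$ case of \cref{mappingpropsofsmallteqn} to control the $H^1$ rather than just $W^{2,r}(A)$ norm globally; since $\ell\in S^{-\infty}_1$ uniformly in $x_n$ and decays exponentially in $x_n$ (the content of the proof of \cref{prop: ell is symbol}), the global $H^1(\RR^n)$ bound follows by integrating \cref{mappingpropsofsmallteqn} in $x_n$.

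\textbf{Construction of $B_l$.} Here I would take $B_l\itdo v \coloneqq P_l\itdo v + (\text{Poisson correction})$, where the correction is $\op[h,x']{\ell}$ applied to the boundary symbol $(1+\abs{K}^2)^{-1/2}$ times the trace appearing on the right of \cref{traceofpl}, i.e. roughly $\op[h,x']{\ell}\bigl((1+\abs{K}^2)^{-1}(1-\td\rho(hD'))\tau J\td G_\phi\itdo v\bigr)$, with the precise normalizing constant chosen so that, by \cref{boundaryofsmall}, $\tbdry$ of the correction cancels $\tbdry P_l$ modulo $\mathcal O(h)$ and leaves precisely the right-hand side of \cref{Plfboundaryprops}. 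The bulk identity \cref{mapofPlfwithDelta1} then splits as: $\itdo h^2\td\Delta_\phi P_l\itdo v = (1-\td\rho(hD'))\itdo v + (\text{remainder})$ — this is the $P_l$ parametrix identity from \cite{CT20}, which I would cite in the form already used there, tracking that the remainder has the claimed $L^2\to_h L^2$ and $L^{p'}\to_{h^0}L^2$ mapping via \cref{lem:EGestimate} and \cref{prop43} — plus $\itdo h^2\td\Delta_\phi(\text{Poisson correction}) = hR_\ell(\dots)$ from \cref{eq:remainderforsmall}. The $\td\rho(hD')\itdo v$ term is itself $\mathcal O(h^\infty)$-smoothing composed with the frequency-localized $v$, so it can be folded into $R_m$ (this uses that $\td\rho$ is supported in low frequencies and $\itdo v$ is compactly supported — standard pseudodifferential disjoint-support/smoothing bounds, as in \cref{lem:Jsandwich}). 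The mapping properties \cref{mappropsofPdLpMpart} for the $P_l$ part come directly from \cref{prop43,Jinverse} (and \cref{lem:Jsandwich} for the $J^+J$ composition), while for the Poisson part they follow from \cref{eq:ellsemiclass} and \cref{mappingpropsofsmallteqn} combined with the boundedness of $\tau J\td G_\phi$ on the relevant spaces (trace theorem plus \cref{prop43}).

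\textbf{Bookkeeping of remainders.} The bulk of the work is checking that every error term lands in the precise space claimed in \cref{Plfremainderprops} and \cref{eq:bdryremainBlBc}. The key points: (i) $R_m$ must map $L^{p'}(\td\Omega)\to_{h^0}L^2(\td\Omega)$ — this is the crucial gain that later feeds the CGO construction, and it comes from \cref{lem:EGestimate}'s statement that $(\td E_1\td G_\phi)^c\colon L^{p'}\to_{h^{-1}}H^k$, combined with the extra $h$ from the $h\td E_1$ prefactor in \cref{splitupofLaplace}; (ii) the traces of the correction-operator remainders $R_b'$ from \cref{mappingPropsoft} give $R_{bl}''$ and $R_{bc}$ on $L^r(\RR^{n-1})$, while the $\tau J\td G_\phi$-induced remainder $R_{bl}'$ inherits the $L^2_{\delta-1}$ vs.\ $L^2$ dichotomy from the two mapping regimes of $\td G_\phi$ in \cref{prop43} via the trace theorem on weighted spaces. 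The main obstacle I anticipate is precisely this last weighted-space trace bookkeeping for $B_l$ on $L^{p'}$: $\td G_\phi$ maps $L^{p'}(\RR^n)\to L^p(\RR^n)$ with no weight and no $H^1$ control globally, so $\tau J\td G_\phi\itdo v$ only makes sense after using the $\td G_\phi^c$/($\td G_\phi - \td G_\phi^c$) splitting of \cref{prop43} — the smoothing part $\td G_\phi^c$ gives a genuine $W^{k,p}$ function with a good trace, and the pseudodifferential part $\td G_\phi-\td G_\phi^c\in \op{S^{-2}_1}$ must be handled by composing it with $J$ (order $1$) and the low-frequency-excluding cutoff $1-\td\rho(hD')$ and arguing, as in \cref{lem:Jsandwich}, that $(1-\td\rho(hD'))J(\td G_\phi - \td G_\phi^c)$ is $\op{S^{-1}_1}$-smoothing enough to have a trace in $L^2(\RR^{n-1})$ with the stated $h$-power; getting the $h$-powers to match the statement exactly is the delicate part, and I would do it by carefully reproducing the corresponding estimate chain from \cite[\S4--5]{CT20} with the boundary terms inserted.
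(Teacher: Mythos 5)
Your construction of the Poisson pieces out of $\op[h,x']{\ell}$ cannot work, and this is the central gap. The symbol $\ell$ from \cref{defOft} carries the cutoff $\td{\rho}(\xi')$ by construction, and this cutoff is not removable: the symbol bounds of \cref{prop: ell is symbol} rest on \cref{lemm:exponentboundedbelow}, which uses $\supp\td{\rho}\Subset\{\abs{\xi'}<1\}$ to make the exponent decay; for $\abs{\xi'}\geq 1$ the factor $e^{-x_n b/(2a)}\cosh(x_n q)$ contains the growing root and blows up like $e^{x_n\abs{\xi'}/h}$. Consequently $\tbdry\op[h,x']{\ell}=-(1+\abs{K}^2)^{1/2}\td{\rho}(hD')+hR_b$ produces only the \emph{small}-frequency part of any boundary datum, whereas \cref{Plfboundarypropscpart} and \cref{Plfboundaryprops} require $B_c$ and $B_l$ to produce the \emph{large}-frequency parts $(1-\td{\rho}(hD'))f$ and $(1-\td{\rho}(hD'))\tau J\td{G}_\phi\itdo v$. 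Precomposing your candidate with $1-\td{\rho}(hD')$ does not fix this: since these are Fourier multipliers, you obtain $\td{\rho}(hD')(1-\td{\rho}(hD'))$, which is neither $(1-\td{\rho}(hD'))$ nor $\mathcal{O}(h)$, so the boundary trace of $P_l$ is left uncancelled exactly on the frequencies where it lives. The paper's whole point in \cref{sec:large} is that a \emph{large-frequency} Poisson kernel needs a different mechanism: it rewrites $\bdry$ via $h\del_{x_n}=J-F_+$, introduces $Z$ in \cref{eq:defofZ}, inverts it (\cref{lemm:Zinverse}), and sets $B_l=(\id-J^+J)Z^{-1}(1-\td{\rho}(hD'))J\td{G}_\phi$, $B_c=(\id-J^+J)Z^{-1}(1-\td{\rho}(hD'))\mathcal{E}$, with \cref{defnewJ} supplying the boundary identity and the factorization \cref{splitupofLaplace} plus $J(\id-J^+J)=(\id-I_+)J$ supplying the bulk estimates. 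None of this machinery appears in your proposal, and $\ell$ is not a substitute for it.

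A second, independent error is the remainder bookkeeping for $B_l$. The statement \cref{mapofPlfwithDelta1} requires $\itdo h^2\td{\Delta}_\phi B_l\itdo v=(R_m+hR_m')v$ with $R_m\colon L^2(\td{\Omega})\to_h L^2(\td{\Omega})$, i.e.\ $B_l$ is annihilated up to operators of norm $\mathcal{O}(h)$ on $L^2$. Your $B_l$ contains $P_l$, whose bulk output has the main term $(1-\td{\rho}(hD'))\itdo v$, an $\mathcal{O}(1)$ quantity; your claim that the low-frequency term can be ``folded into $R_m$'' because $\td{\rho}(hD')$ is smoothing confuses regularity gain with smallness: $\itdo\td{\rho}(hD')\itdo$ has operator norm $\mathcal{O}(1)$ on $L^2$, not $\mathcal{O}(h)$, and there is no disjoint-support gain available (the spatial cutoff $\itdo$ does not separate frequency supports, so \cref{lem:Jsandwich} does not apply). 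In the paper $P_l$ is kept separate from $B_l$, and the identity term is only recovered later, in the combined parametrix \cref{lemm:deltahittingparas}, by adding $P_l+P_s+B_l$; conflating the two makes \cref{mapofPlfwithDelta1} false as stated for your operator.
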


The proof of \cref{propMappingOfNewPthing} will be delayed until \cref{sec:proofofPthing}. However, we are already in position to prove Proposition~\ref{Prop: bv of Pl}.
\begin{proof}[Proof of Proposition~\ref{Prop: bv of Pl}]
	Let $v\in W^{1,r}(\RR^n), 1<r<\infty$. We begin by proving that for $\td{\rho}$ from \cref{defofrho}, and $J^+$ from \cref{eq:defofJplus}, we have
	\begin{equation}\label{boundaryPropsofJplus}
		\tbdry (1-\td{\rho}(hD'))J^+v = -(1+\abs{K}^2)^{1/2}(1-\td{\rho}(hD'))\tau v\,.
	\end{equation}
To show  \cref{boundaryPropsofJplus}, we first observe that by \cref{defofboundary} 	
	\[
		\bdry = (1+\abs{K}^2)^{-1/2}(hK\cdot\nabla' - (1+\abs{K}^2)h\del_{x_n} - 1)\,.
	\]
	As we would like the operator $J$ to appear in \eqref{traceofpl}, we will use  \cref{splitupJintoF} to substitute $h\partial_{x_n} = J - F_+$ in the above expression:
	\[
		\bdry = (1+\abs{K}^2)^{1/2}J + (1+\abs{K}^2)^{-1/2}(hK\cdot\nabla'+F_+-1)\,.
	\]
 
	Now we may calculate that
	\begin{equation}\label{eq:bdrcalc}
	\begin{aligned}
		\tbdry (1-\td{\rho}(hD'))J^+v &= \tau (1+\abs{K}^2)^{1/2} J(1-\td{\rho}(hD'))J^+v \\
		&+ \tau (1+\abs{K}^2)^{-1/2}(hK\cdot \nabla'+F_+-1)(1-\td{\rho}(hD'))J^+v\,.
	\end{aligned}
\end{equation}
	
	Now recall that by \cref{Jinverse} and the definition of $J^+$ in \cref{eq:defofJplus}, 
	\begin{equation}\label{eq:traceproperty}
		\tau J^+v =0\,.
	\end{equation}
	Since $\tau$ and $\nabla'$ commute when applied to an object with trace $0$, with \cref{eq:traceproperty} we have 
	\[
		\tau (1+\abs{K}^2)^{-1/2}(hK\cdot \nabla'+F_+-1)(1-\td{\rho}(hD'))J^+v = 0\,,
	\]
	which takes care of the second term in \cref{eq:bdrcalc}.

	We observe that
	\begin{equation}
		J(1-\td{\rho}(hD')) = (1-\td{\rho}(hD'))J + [\td{\rho}(hD'),J] = (1-\td{\rho}(hD'))J + [\td{\rho}(hD'),F_+] \label{eq:commuteJrho}
	\end{equation}
	due to the fact that the first term of $J = h\del_{x_n}+F_+$ commutes with the operator $\td{\rho}(hD')$ which only depends on $x'$.

	Therefore, continuing the calculation from \cref{eq:bdrcalc},
	\begin{align*}
		\tbdry (1-\td{\rho}(hD'))J^+v &= (1+\abs{K}^2)^{1/2}(1-\td{\rho}(hD'))\tau JJ^+ v + (1+\abs{K}^2)^{1/2} [\td{\rho}(hD'),F_+] \tau J^+v \\
		&= (1+\abs{K}^2)^{1/2}(1-\td{\rho}(hD'))\tau v\,,
	\end{align*}
	after invoking \cref{eq:traceproperty} and the fact that $\tau J^+J v = \tau I_+v = \tau v$ since we are taking the trace from the $x_n >0$ direction.
	Thus, \cref{boundaryPropsofJplus} has been shown.		

	Finally, the result follows by directly computing $\tbdry P_l$ using the definition of $P_l$ in \cref{eq:definitionofPl} and applying \cref{boundaryPropsofJplus}.
\end{proof}

\subsubsection{Setup}

This section is devoted to preparatory lemmas required to construct the Poisson kernels $B_l$ and $B_c$. The first objective we will pursue is to find an operator with good boundary control. Thus we begin by inverting a part of the operator $\bdry$.

Let us first introduce $\td{\rho}_1 \in C_c^\infty(\RR^{n-1})$ with
\begin{equation}
	\td{\rho}_1 \equiv 1 \text{ on } \supp\td{\rho}_0\,, \quad \text{ and } \quad \supp \td{\rho}_1 \Subset \{\td{\rho} = 1\}\,, \label{defofrho4}
\end{equation}
where $\td{\rho}_0,\td{\rho}$ are from \cref{defofrho0,defofrho}.

Make use of \cref{defofboundary} to write
\[
	\bdry = (1+\abs{K}^2)^{-1/2}(hK\cdot\nabla' - (1+\abs{K}^2)h\del_{x_n} - 1)\,.
\]
As we have good control over the operator $J$, we will use \cref{splitupJintoF} to substitute $h\partial_{x_n} = J - F_+$ in the above expression:
\begin{align}
	\bdry &= (1+\abs{K}^2)^{-1/2}(hK\cdot\nabla'-(1+\abs{K}^2)(J-F_+)-1) \notag \\ 
	&= (1+\abs{K}^2)^{-1/2}({Z}-(1+\abs{K}^2)J-(1+\abs{K}^2)\td{\rho}_1(hD'))\,, \label{boundaryinJlemma}
\end{align}
where
\begin{equation}\label{eq:defofZ}
	{Z} \coloneqq  (1+\abs{K}^2)\td{\rho}_1(hD')+ hK \cdot\nabla'+ (1+\abs{K}^2)F_+ - 1 \in \op[h,x']{S^{1}_1(\RR^{n-1})}\,.
\end{equation}

Using the definition of $F_+$ in \eqref{splitupJintoF}, we can write $Z = \op[h,x']{z_0}$ where
\begin{align}
	{z}_0 &\coloneqq (1+\abs{K}^2)\td{\rho}_1 + 1_h-1 + 2iK\cdot\xi' + r_0 - (1+\abs{K}^2)ihm_0 \in S^1_1(\RR^{n-1}) \label{eq:defofzsymbol}
\end{align}
is its symbol, $r_0$ is given in \cref{eq:defofr}, and $m_0$ in \cref{herem0isdefined}.

We are now in position to invert the operator $Z$.
\begin{lemma}\label{lemm:Zinverse}
	For any $1<r<\infty, \delta\in\RR$ and any $s,k\in\mathbb{N}_0$ there is some $h_0>0$ so that if $0<h<h_0$, there are operators $M$ and $Z^{-1}$ which satisfy
	\begin{equation}\label{eq:definitionofZinverse}
		M, Z^{-1}\colon W^{s,r}(\RR^{n-1}) \to_{h^0} W^{s+1,r}(\RR^{n-1})\,,\an M, {Z}^{-1} \colon H^s_\delta(\RR^{n-1}) \to_{h^0} H^{s+1}_\delta(\RR^{n-1})\,.
	\end{equation}
	Furthermore, 	
	\begin{equation}\label{eq:Zneumann}
		Z^{-1} = \op[h,x']{S^{-1}_1(\RR^{n-1})} + h^kM\,,
	\end{equation}
	and $Z^{-1}$ is the right-inverse of the operator ${Z}$ from \cref{eq:defofZ}. 
\end{lemma}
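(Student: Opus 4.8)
The plan is to build $Z^{-1}$ as a standard semiclassical elliptic right parametrix for $Z=\op[h,x']{z_0}$; the only step particular to the present setting is verifying that the symbol $z_0$ of \cref{eq:defofzsymbol} is uniformly elliptic, and this is the step I expect to be the main obstacle. Everything afterwards is the usual iterative parametrix construction followed by a Neumann series.

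\textbf{Step 1 (uniform ellipticity of $z_0$).} I would first prove that there is $c_0>0$, independent of $(x',\xi')$ and of $h>0$ small, with $\abs{z_0(x',\xi')}\ge c_0\langle\xi'\rangle$. Starting from $z_0 = (1+\abs{K}^2)\td{\rho}_1 + 2iK\cdot\xi' + (1_h-1) + r_0 - (1+\abs{K}^2)ihm_0$, recall that $1_h-1=\tfrac h2\Delta_{x'}g$ and $m_0\in S^0_1(\RR^{n-1})$, so the last two summands are $\mathcal O(h)$ uniformly, while $\mathrm{Re}\, r_0\ge 0$ everywhere because of the principal branch in \cref{eq:defofr}. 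On the bounded frequency set $\{\td{\rho}_1=1\}$ one has $\mathrm{Re}\, z_0\ge (1+\abs{K}^2)-Ch\ge \tfrac12$ for $h$ small, which gives $\abs{z_0}\gtrsim\langle\xi'\rangle$ there since $\langle\xi'\rangle\lesssim 1$. Off $\supp\td{\rho}_0$ — where $\td{\rho}_0=0$, hence $\abs{\xi'}^2>c$ and $r_0=\sqrt{w}$ with $w=(1_h+iK\cdot\xi')^2-(1-\abs{\xi'}^2)(1+\abs{K}^2)$ — I would use $\mathrm{Re}\, w = 1_h^2-(1+\abs{K}^2)+\bigl(\abs{\xi'}^2(1+\abs{K}^2)-(K\cdot\xi')^2\bigr)$, the Cauchy--Schwarz bound $\abs{\xi'}^2(1+\abs{K}^2)-(K\cdot\xi')^2\ge\abs{\xi'}^2$, and $\abs{\mathrm{Im}\, w}=2\cdot 1_h\abs{K\cdot\xi'}=\mathcal O(\abs{\xi'})$ to get $\mathrm{Re}\,\sqrt{w}=\sqrt{\tfrac12(\abs{w}+\mathrm{Re}\, w)}\ge\sqrt{\mathrm{Re}\, w}\gtrsim\langle\xi'\rangle$ once $\abs{\xi'}$ is large. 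The remaining band $c<\abs{\xi'}^2\lesssim 1$, on which $\td{\rho}_1$ may be strictly between $0$ and $1$, is dispatched by a real-part-versus-imaginary-part dichotomy exactly as in the proof of \cref{lemm:exponentboundedbelow}; it is here that the choice of $c$ with $\abs{K(x')}^2/(1+\abs{K(x')}^2)<c$ for all $x'$ (hence also for $\sup_{x'}\abs{K}^2$, which is finite since $g\in C_c^\infty$) is exactly what is needed to keep $\abs{z_0}$ bounded away from $0$.

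\textbf{Step 2 (symbolic parametrix).} By ellipticity, $z_0^{-1}\in S^{-1}_1(\RR^{n-1})$ with seminorms uniform in $h$. Fix $k,s$. I would then construct symbols $z_{(j)}\in S^{-1-j}_1(\RR^{n-1})$, $z_{(0)}=z_0^{-1}$, recursively so that $b_k\coloneqq\sum_{j=0}^{k-1}h^jz_{(j)}\in S^{-1}_1(\RR^{n-1})$ satisfies $Z\op[h,x']{b_k}=\id+h^kR$ with $R$ bounded on every $W^{s,r}(\RR^{n-1})$ and on every $H^s_\delta(\RR^{n-1})$, uniformly in $h$. The tool at each stage is the composition formula of \cref{lem:calc} (its last paragraph covering the weighted spaces): $\op[h,x']{z_0}\op[h,x']{z_0^{-1}\sigma}=\op[h,x']{\sigma}+h\,(\text{symbol of order}\le\mathrm{ord}\,\sigma-1)(x',hD)+h^2(\text{bounded operator})$, so taking $z_{(j)}=-z_0^{-1}\cdot(\text{the }h^j\text{-error symbol produced so far})$ cancels the newly created $h^j$-term and raises the order of the error.

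\textbf{Step 3 (Neumann series and conclusion).} For $0<h<h_0$ with $h_0=h_0(k,r,s)$, resp. $h_0(k,\delta,s)$, small enough, $\id+h^kR$ is invertible on $W^{s,r}(\RR^{n-1})$, resp. $H^s_\delta(\RR^{n-1})$, by the Neumann series, with inverse $\id+h^kR'$ and $R'$ bounded uniformly in $h$; the threshold $h_0$ is precisely the convergence radius of this series together with the finite number of iterations in Step 2, which is why it depends on $k$ and $s$. Setting $Z^{-1}\coloneqq\op[h,x']{b_k}(\id+h^kR)^{-1}$ yields $ZZ^{-1}=\id$, i.e. $Z^{-1}$ is a right inverse of $Z$, and $Z^{-1}=\op[h,x']{b_k}+h^kM$ with $M\coloneqq\op[h,x']{b_k}R'$. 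Since $b_k\in S^{-1}_1(\RR^{n-1})$, the standard semiclassical calculus gives $\op[h,x']{b_k}\colon W^{s,r}(\RR^{n-1})\to W^{s+1,r}(\RR^{n-1})$ and $H^s_\delta(\RR^{n-1})\to H^{s+1}_\delta(\RR^{n-1})$ with $h$-uniform norm, hence the same for $M$ and for $Z^{-1}$, which establishes \cref{eq:definitionofZinverse} and \cref{eq:Zneumann}.
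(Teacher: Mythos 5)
Your overall strategy is the same as the paper's: prove uniform ellipticity of $z_0$, invert symbolically, and finish with a Neumann series. Your Step 1 is essentially the paper's argument — it handles small frequencies via the $(1+\abs{K}^2)\td{\rho}_1$ term, large frequencies via the same Cauchy--Schwarz bound on $\mathrm{Re}\,r_0$, and your real-versus-imaginary dichotomy on the intermediate band is a slightly more explicit version of the paper's observation (using $\abs{K}^2/(1+\abs{K}^2)<c$) that $\mathrm{Re}\,r_0$ is uniformly positive off $\supp\td{\rho}_0$; note that no cancellation between $2K\cdot\xi'$ and $\mathrm{Im}\,r_0$ can occur since the principal branch gives them the same sign, so the dichotomy indeed closes. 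Step 3 is also structurally the paper's. The route differs only in Step 2: the paper inverts the order-zero elliptic symbol of $Z\langle hD'\rangle^{-1}$ once, obtaining $Z\langle hD'\rangle^{-1}\op[h,x']{w}=1+hR$ with $R\in\op[h,x']{S^0_1(\RR^{n-1})}$, sets $Z^{-1}=\langle hD'\rangle^{-1}\op[h,x']{w}(1+hR)^{-1}$, and produces the decomposition $\op[h,x']{S^{-1}_1}+h^kM$ for arbitrary $k$ simply by truncating the Neumann series of $(1+hR)^{-1}$ after $k$ terms; all the $k$-dependence lives in that truncation, not in the symbolic construction.

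The one genuine gap is in your Step 2 as justified. \cref{lem:calc} only provides a two-term expansion whose remainder is $h^2$ times a bounded operator, \emph{not} a pseudodifferential operator with a symbol. In your iteration this non-symbolic remainder already appears at stage $j=0$ with the fixed power $h^2$ (from composing $Z$ with $z_{(0)}=z_0^{-1}$), and it cannot be cancelled by any later corrector $z_{(j)}$, so the claimed identity $Z\op[h,x']{b_k}=\id+h^kR$ with $R$ uniformly bounded does not follow from \cref{lem:calc} once $k\geq 3$ — the error is stuck at $O(h^2)$. The fix is easy but must be said: here all symbols ($z_0$, $z_0^{-1}$, the correctors) are ordinary semiclassical symbols on $\RR^{n-1}$, not elements of the mixed classes for which \cref{lem:calc} was designed, so you may invoke the full composition calculus in which the $h$-remainder at every order is again a symbol with uniform seminorm bounds; this is also what the paper implicitly uses when it asserts that the remainder $R$ in $Z\langle hD'\rangle^{-1}\op[h,x']{w}=1+hR$ lies in $\op[h,x']{S^0_1(\RR^{n-1})}$. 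Alternatively, you can avoid the $k$-fold symbolic iteration altogether and, as in the paper, generate the $h^k$ gain purely from the partial Neumann expansion of $(1+hR)^{-1}$, which only requires the first-order composition with symbolic remainder a single time.
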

\begin{proof}
	If, for simplicity, we put $h=0$ in the expression for ${z}_0$ from \cref{eq:defofzsymbol}, we see that
	\[
		\mathrm{Re}({z}_0) = (1+\abs{K}^2)\td{\rho}_1 + \mathrm{Re}r_0\,,
	\]
	and let us write
	\begin{equation}\label{eq:defofr1r2}
		r_0 = (1-\td{\rho}_0)\sqrt{r_1 + ir_2}\,,\quad\text{where}\quad r_1,r_2\quad\text{are real}\,. 
	\end{equation}
	As $r_2 = 2K\cdot\xi'$, the branch cut of the square root is only met when $K\cdot\xi' = 0$ and $r_1 \leq 0$. Now when $K\cdot\xi'=0$, we have $r_1 = -\abs{K}^2 + \abs{\xi'}^2(1+\abs{K}^2)$, which is uniformly positively away from zero on the support of $1-\td{\rho}_0$, see \cref{defofrho0}, which also means that $r_0$ and so $z_0$ are smooth. 
	
	In particular, we have that on the support of $1-\td{\rho}_0$ we have $\mathrm{Re}r_0 > 0$, and due to the definition of $\td{\rho}_1$, on the complement of the support of $1-\td{\rho}_0$, we have $\td{\rho}_1 = 1$. This means that $\mathrm{Re}{z}_0$, and especially ${z}_0$, are uniformly bounded away from $0$ for small $h$.

	We now perform some rough estimates to show that $\frac{{z}_0}{\langle \xi'\rangle}$ is bounded uniformly away from $0$ for large $\xi'$ and small $h>0$.
	
	Take again $h=0$ for the moment, and note that by Cauchy-Schwartz, for $r_1,r_2$ from \cref{eq:defofr1r2},
	\[
		r_1 = 1-(K\cdot\xi')^2-(1-\abs{\xi'}^2)(1+\abs{K}^2) \geq \abs{\xi'}^2- \abs{K}^2\,,
	\]
	where we can choose $\abs{\xi'}$ large enough and $C>0$ so that 
	\begin{equation}\label{eq:somecauchyschwatry}
		\abs{\xi'}^2- \abs{K}^2 > 2C(1+\abs{\xi'}^2)\,.
	\end{equation}
	Since we are taking the principal square root in $r_0$, we have that 
	\[
		\mathrm{Re}\sqrt{r_1+ir_2} = \sqrt{\frac{\sqrt{r_1^2+r_2^2}+r_1}{2}}\,,
	\]
	and so for large $\abs{\xi'}^2$ and $h=0$, \cref{eq:somecauchyschwatry} gives us
	\begin{equation}\label{eq:r0estimateeps}
		(\mathrm{Re} r_0)^2 \geq r_1 > C(1+\abs{\xi'}^2)\,.
	\end{equation}
	Taking square roots on both sides of \cref{eq:r0estimateeps} and using continuity in $h$, we have shown that for small $h>0$ and large $\abs{\xi'}$, the expression $\frac{(1+\abs{K}^2)\td{\rho}_1+\mathrm{Re}r_0}{\langle\xi'\rangle}$ is bounded uniformly away from $0$.
	
	In particular, this means that the symbol of ${Z}\langle hD'\rangle^{-1}$ is of order $0$ and uniformly bounded away from $0$ and has an inverse we denote by ${w} \in S^0_1(\RR^{n-1})$.
	
	Writing this out means that 
	\begin{equation}\label{Z parametrix}
		{Z}\langle hD'\rangle^{-1}\op[h,x']{{w}} = 1+ hR\,,
	\end{equation}
	for some $R\in \op[h,x']{S^0_1(\RR^{n-1})}$, and thus the object $\op[h,x']{{w}}(1+hR)^{-1}$ is an inverse of ${Z}\langle hD'\rangle^{-1}$. 
	
	Thus, defining  
\begin{equation}\label{eq:defofZinv}
{Z}^{-1}\coloneqq \langle hD'\rangle^{-1}\op[h,x']{w}(1+hR)^{-1}
\end{equation}
we have by \cref{Z parametrix}
\[
	{Z}{Z}^{-1} = {Z}\langle hD'\rangle^{-1}\langle hD'\rangle {Z}^{-1} = \id\,.
\]
Finally, for any fixed $k\in\mathbb{N}$, we can write out part of the Neumann series of $(1+hR)^{-1}$ to get \cref{eq:Zneumann}, which concludes the proof.
\end{proof}
\begin{corollary}\label{cor:sandwich}
	Let $\chi_1, \chi_2 \in C_c^\infty(\RR^{n-1})$ be cut-offs so that $\supp \chi_1 \cap \supp (1-\chi_2) = \emptyset$. For all $s,k \in \mathbb{N}_0, 1<r<\infty$, and $\delta\in\RR$,
	\begin{equation}\label{eq:sandwichZ}
	\begin{aligned}
		\chi_1(hD') Z^{-1} (1-\chi_2(hD')) &\colon W^{s,r}(\RR^{n-1}) \to_{h^k} W^{s+k,r}(\RR^{n-1})\,, \\
		\chi_1(hD') Z^{-1} (1-\chi_2(hD')) &\colon L^2_\delta(\RR^{n-1}) \to_{h^k} H^k_\delta(\RR^{n-1})\,.
	\end{aligned}
	\end{equation}
\end{corollary}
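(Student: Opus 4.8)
The plan is to build on the explicit construction of $Z^{-1}$ inside the proof of \cref{lemm:Zinverse} rather than on the bare decomposition \cref{eq:Zneumann}: the remainder $h^kM$ there carries only one order of smoothing, which is too weak to exhibit the $k$-fold gain in \cref{eq:sandwichZ}. By \cref{eq:defofZinv} we have $Z^{-1} = \langle hD'\rangle^{-1}\op[h,x']{w}(1+hR)^{-1}$ with $w\in S^0_1(\RR^{n-1})$ and $R\in\op[h,x']{S^0_1(\RR^{n-1})}$, and for $h$ small $(1+hR)^{-1}$ is bounded on every $W^{s,r}(\RR^{n-1})$ and $H^s_\delta(\RR^{n-1})$ uniformly in $h$. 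Fixing $k,s\in\mathbb{N}_0$, $1<r<\infty$, $\delta\in\RR$, I would expand $(1+hR)^{-1} = \sum_{j=0}^{k-1}(-h)^jR^j + (-h)^kR^k(1+hR)^{-1}$ and split $\chi_1(hD')Z^{-1}(1-\chi_2(hD')) = T + T'$, where
\[
	T = \sum_{j=0}^{k-1}(-h)^j\,\chi_1(hD')\langle hD'\rangle^{-1}\op[h,x']{w}R^j\,(1-\chi_2(hD'))
\]
and
\[
	T' = (-h)^k\,\chi_1(hD')\langle hD'\rangle^{-1}\op[h,x']{w}R^k(1+hR)^{-1}\,(1-\chi_2(hD'))\,,
\]
and handle the two pieces by genuinely different arguments.

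The finite sum $T$ is controlled by disjoint frequency supports. Each $\langle hD'\rangle^{-1}\op[h,x']{w}R^j$ is, by the standard semiclassical pseudo-differential calculus (see e.g.\ \cite{zworski}, and \cite[Prop.~2.2]{salo} for the weighted estimates), a pseudo-differential operator with symbol in $S^{-1}_1(\RR^{n-1})$; composing on the right with the Fourier multiplier $1-\chi_2(hD')$ produces an operator whose full symbol is supported in frequency in $\supp(1-\chi_2)$, and composing on the left with $\chi_1(hD')$ then kills every term of the resulting symbol expansion, since $\supp\chi_1\cap\supp(1-\chi_2)=\emptyset$. Hence, exactly as in the proof of \cref{lem:Jsandwich}, each summand — and therefore $T$ — is $\mathcal{O}(h^\infty)$-smoothing: it maps $W^{s,r}\to_{h^N}W^{s+N,r}$ and $H^s_\delta\to_{h^N}H^{s+N}_\delta$ for every $N$, so in particular it satisfies both lines of \cref{eq:sandwichZ}.

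For the remainder $T'$, the key point is that the compactly supported cut-off $\chi_1$ upgrades the single factor $\langle hD'\rangle^{-1}$ into a gain of $k$ derivatives with $h$-uniform bounds. Writing $\chi_1(hD')\langle hD'\rangle^{-1} = \Lambda_k\langle hD'\rangle^{-k}$ with $\Lambda_k := \chi_1(hD')\langle hD'\rangle^{k-1}$, the operator $\Lambda_k$ is a Fourier multiplier whose symbol $\chi_1(h\xi')\langle h\xi'\rangle^{k-1}$ is smooth and compactly supported, hence bounded on $W^{s,r}$ and on $H^s_\delta$ uniformly in $h$, while $\langle hD'\rangle^{-k}\colon W^{s,r}\to W^{s+k,r}$ and $\langle hD'\rangle^{-k}\colon H^s_\delta\to H^{s+k}_\delta$ are bounded with norm uniform in $h$; thus $\chi_1(hD')\langle hD'\rangle^{-1}\colon W^{s,r}\to_{h^0}W^{s+k,r}$ and $\colon H^s_\delta\to_{h^0}H^{s+k}_\delta$. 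Since the remaining factor $\op[h,x']{w}R^k(1+hR)^{-1}(1-\chi_2(hD'))$ is bounded on $W^{s,r}$ and on $H^s_\delta$ uniformly in $h$, the explicit prefactor $(-h)^k$ gives $T'\colon W^{s,r}\to_{h^k}W^{s+k,r}$ and $T'\colon H^s_\delta\to_{h^k}H^{s+k}_\delta$ (for $s=0$, $L^2_\delta\to_{h^k}H^k_\delta$). Adding the bounds for $T$ and $T'$ yields \cref{eq:sandwichZ}. The one genuinely delicate point — and the reason the argument cannot simply invoke \cref{lemm:Zinverse} — is precisely this book-keeping: the $k$ powers of $h$ are supplied partly by the disjoint-support cancellations in $T$ (which are in fact $\mathcal{O}(h^\infty)$, with arbitrarily many orders of smoothing to spare) and partly by the bare factor $(-h)^k$ in the Neumann remainder $T'$, whose $k$ orders of smoothing come for free from $\chi_1(hD')\langle hD'\rangle^{-1}$.
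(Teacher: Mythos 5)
Your proof is correct and, despite its framing, lands on essentially the same argument as the paper: a decomposition of $Z^{-1}$ into a pseudo-differential part of order $-1$ (killed to all orders by the disjoint frequency supports) plus an explicit $h^k$ remainder whose $k$-fold regularity gain is supplied by the compactly supported multiplier $\chi_1(hD')$. Your opening claim that \cref{eq:Zneumann} is ``too weak'' is mistaken, however: the paper's proof uses exactly that decomposition and resolves the apparent one-derivative deficit of $M$ in precisely the way you resolve it for $T'$, namely by observing that $\chi_1\in S^{-\infty}_1(\RR^{n-1})$, so composing with $\chi_1(hD')$ on the left (equivalently, your $\Lambda_k\langle hD'\rangle^{-k}$ factorization) converts mere boundedness into arbitrarily many orders of smoothing with $h$-uniform constants. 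Re-expanding the Neumann series from \cref{eq:defofZinv} to order $k$ is therefore a harmless but redundant detour; it buys nothing beyond what \cref{eq:Zneumann} already provides.
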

\begin{proof}
	Let $k \in \mathbb{N}_0$ be fixed. By \cref{eq:Zneumann}, write	
	\[
		Z^{-1} = \op[h,x']{S^{-1}_1(\RR^{n-1})} + h^k M\,, 
	\] 
	where
	\[
		M\colon W^{s,r}(\RR^{n-1}) \to_{h^0} W^{s+1,r}(\RR^{n-1})\,,\an M\colon L^2_\delta(\RR^{n-1}) \to_{h^0} H^1_\delta(\RR^{n-1})\,.
	\]
	By disjoint supports, $\chi_1(hD')\op[h,x']{S^{-1}_0(\RR^{n-1})}(1-\chi_2(hD'))$ satisfies the estimates in \cref{eq:sandwichZ} and because $\chi_1 \in S^{-\infty}_1(\RR^{n-1})$, the same is true of $h^k \chi_1(hD') M (1-\chi_2(hD'))$.
\end{proof}

From now on we will also interpret operators acting only on $n-1$ dimensions as those that can act on all of $x$ by acting only on the first $n-1$ components: if $A$ is an operator acting on $n-1$ dimensions and $u\colon \RR^n\to\RR$, put $Au \colon \RR^n \to \RR$ by considering the last component $x_n$ of $ (x_1,\dots,x_n) \in \RR^n$ to be a parameter instead of a variable. 

As a common building block for the construction of $B_l$ and $B_c$, we will now introduce the operator $(\id - J^+J)Z^{-1}$.
\begin{lemma}\label{defnewJ}
	Let $v \in W^{1,r}(\RR^n)\cup C_c^1(\RR; L^r(\RR^{n-1})), 1<r<\infty$. The operator
	\begin{align}
		v&\mapsto  (\id-J^+J){Z}^{-1}v \label{defofPb}\\
 	\shortintertext{satisfies}
		\tbdry (\id-J^+J)Z^{-1}v &= (1+\abs{K}^2)^{-1/2}(1-(1+\abs{K}^2)\td{\rho}_1(hD'){Z}^{-1})\tau v\,.\label{boundaryofPb}
	\end{align}
	Additionally, with $W^{1,1,r}$ defined as in \cref{eq:defofmixedsemiclasssobo},
	\begin{equation}\label{mappingpropsofPb}
		(\id-J^+J)Z^{-1} \colon W^{1,r}(\RR^n) \to_{h^0} W^{2,r}(\RR^n_+) \cap W^{1,1,r}\,,\quad (\id-J^+J)Z^{-1} \colon C_c^1(\RR;L^r(\RR^{n-1})) \to_{h^0} W^{1,r}(\RR^n)\,. 
	\end{equation}
\end{lemma}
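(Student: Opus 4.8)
The plan is to verify the boundary identity \cref{boundaryofPb} directly from the representation \cref{boundaryinJlemma} of $\bdry$ in terms of $Z$ and $J$, and then to read off the mapping properties \cref{mappingpropsofPb} by combining the mapping properties of $J^{-1}$ (\cref{Jinverse}), of $Z^{-1}$ (\cref{lemm:Zinverse}) and of the disjoint-support ``sandwich'' estimates (\cref{lem:Jsandwich} and \cref{cor:sandwich}). First I would compute $\tbdry (\id - J^+J)Z^{-1}v$ by substituting \cref{boundaryinJlemma}, namely $\bdry = (1+\abs{K}^2)^{-1/2}(Z - (1+\abs{K}^2)J - (1+\abs{K}^2)\td{\rho}_1(hD'))$. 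Applying this to $(\id - J^+J)Z^{-1}v$ and then the trace $\tau$: the $Z$-term produces $(1+\abs{K}^2)^{-1/2}\tau Z(\id - J^+J)Z^{-1}v$; since $Z Z^{-1} = \id$ by \cref{lemm:Zinverse} and $ZJ^+J Z^{-1}$ has a factor $J^+ = J^{-1}I_+$, the trace of the piece containing $J^+$ vanishes because $\tau J^+ w = 0$ for all $w$ by \cref{Jinverse} (after noting $Z$ commutes past the trace appropriately, or rather that $Z$ applied to something with vanishing trace still has controllable trace — more carefully, $\tau Z J^+ J Z^{-1} v$: here $Z = \op[h,x']{z_0}$ is a tangential operator, so it acts on $x'$ only, and $\tau$ commutes with tangential operators applied to functions whose trace makes sense, hence $\tau Z J^+ J Z^{-1} v = Z \tau J^+ J Z^{-1} v = 0$). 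Thus the $Z$-term contributes $(1+\abs{K}^2)^{-1/2}\tau v$. The $\td{\rho}_1(hD')$-term contributes $-(1+\abs{K}^2)^{1/2}\td{\rho}_1(hD')\tau(\id - J^+J)Z^{-1}v$; again the $J^+$-piece dies under $\tau$, and on the remaining piece $\tau J^+ J Z^{-1} v = \tau I_+ Z^{-1}v = \tau Z^{-1}v$ (trace from $x_n > 0$), so the net surviving contribution from $\id - J^+J$ against the other terms needs care. Finally, the $-(1+\abs{K}^2)J$-term: write $J = h\del_{x_n} + F_+$; since $\tau (\id - J^+J)Z^{-1}v$ is the trace of something, and $\tau J (\id - J^+J)Z^{-1}v = \tau J Z^{-1}v - \tau J J^+ J Z^{-1}v = \tau J Z^{-1} v - \tau J Z^{-1} v = 0$ using $\tau J J^+ w = \tau J J^{-1} I_+ w = \tau I_+ w = \tau w$ (trace from above), this term vanishes entirely. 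Collecting the surviving pieces yields exactly $(1+\abs{K}^2)^{-1/2}(1 - (1+\abs{K}^2)\td{\rho}_1(hD'){Z}^{-1})\tau v$, which is \cref{boundaryofPb}; I would be careful here that the $\td{\rho}_1$-term and the $\tau$ of the $J^+J$-piece combine correctly, tracking which pieces act tangentially and commute with $\tau$.

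For the mapping properties \cref{mappingpropsofPb}, I would split $\id - J^+J$ and $Z^{-1}$ into their principal pseudodifferential parts plus $h$-gain remainders, using \cref{eq:Zneumann} and the structure of $J^{-1}$ from \cref{eq:jinvform}. The operator $Z^{-1}$ maps $W^{s,r} \to W^{s+1,r}$ and its symbol lies in $S^{-1}_1(\RR^{n-1})$ modulo an $h^k$-smoothing piece, so $Z^{-1}v \in W^{2,r}$ when $v \in W^{1,r}$, and the tangential smoothing gives the $W^{1,1,r}$ membership as well. Then $J^+ J Z^{-1}v = J^{-1} I_+ J Z^{-1} v$: on $\RR^n_+$ the cutoff $I_+$ is harmless, $J$ maps $W^{2,r}(\RR^n_+) \to W^{1,r}(\RR^n_+)$ (it is first order), and $J^{-1}$ maps $L^r \to W^{1,r}$ by \cref{Jinverse}; a slightly more refined bookkeeping on the mixed space, using that $J = h\del_{x_n} + F_+$ with $F_+$ tangential of order $1$, recovers the $W^{2,r}(\RR^n_+) \cap W^{1,1,r}$ target. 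For the second mapping in \cref{mappingpropsofPb}, with $v \in C_c^1(\RR; L^r(\RR^{n-1}))$, the operator $Z^{-1}$ still maps into $C_c^1(\RR; W^{1,r}(\RR^{n-1})) \subset W^{1,r}$-type regularity in $x'$, and then $\id - J^+J$ preserves $W^{1,r}(\RR^n)$ by the same token.

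The main obstacle I anticipate is the bookkeeping in the boundary computation: making precise the repeated claims that $\tau$ commutes with the tangential operators $Z$, $Z^{-1}$, $F_+$, $\td{\rho}_1(hD')$ when applied to functions whose trace is well-defined, and that $\tau J J^+ w = \tau w$, $\tau J^+ w = 0$ with the trace consistently taken from the $x_n > 0$ side — these are exactly the subtleties that appeared already in the proof of \cref{Prop: bv of Pl} (see the discussion around \cref{eq:traceproperty} and \cref{eq:commuteJrho}), and the argument here is a direct analogue. A secondary technical point is ensuring the class $W^{1,r}(\RR^n) \cup C_c^1(\RR; L^r(\RR^{n-1}))$ is large enough that all the trace manipulations are legitimate (density arguments as in \cref{lemmaOpNoBoundary} may be needed), but no new idea beyond \cref{CT20}'s toolkit is required.
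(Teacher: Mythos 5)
Your boundary computation follows the paper's route (substitute \cref{boundaryinJlemma}, use $\tau J^{+}=0$ and $JJ^{+}=I_+$, kill the $J$-term by support considerations), but it contains a sign-of-confusion that matters: you assert ``$\tau J^{+}JZ^{-1}v=\tau I_+Z^{-1}v=\tau Z^{-1}v$''. This confuses $J^{+}J=J^{-1}I_+J$ with $JJ^{+}=I_+$; by \cref{Jinverse} one has $\tau J^{-1}w=0$ whenever $\supp w\subset\bar\RR^n_+$, so in fact $\tau J^{+}JZ^{-1}v=0$. This is not cosmetic: it is exactly what decides the $\td{\rho}_1$-term. Since $\tau(\id-J^{+}J)Z^{-1}v=\tau Z^{-1}v=Z^{-1}\tau v$, the $\td{\rho}_1(hD')$-contribution survives and produces the correction $-(1+\abs{K}^2)^{1/2}\td{\rho}_1(hD')Z^{-1}\tau v$ in \cref{boundaryofPb}; with your identity it would vanish and you would get the wrong formula. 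You state the correct end result and flag that this ``needs care'', but as written the step is wrong rather than merely delicate.

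The more serious gap is in \cref{mappingpropsofPb}. You claim ``$Z^{-1}v\in W^{2,r}$ when $v\in W^{1,r}$'': this is false, because $Z^{-1}$ is a purely tangential operator (Lemma \ref{lemm:Zinverse} is a statement on $\RR^{n-1}$), so it gains only a tangential derivative, i.e.\ $Z^{-1}\colon W^{1,r}(\RR^n)\to W^{1,1,r}$, never improving regularity in $x_n$. Moreover, even granting your premise, the chain $J^{-1}I_+J$ loses the derivative again ($J^{-1}$ only maps $L^r\to W^{1,r}$, and $I_+$ destroys normal regularity), so your bookkeeping only yields $(\id-J^{+}J)Z^{-1}\colon W^{1,r}(\RR^n)\to W^{1,1,r}$, not the half-space gain $W^{2,r}(\RR^n_+)$. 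The missing idea — which your phrase ``a slightly more refined bookkeeping'' points at but does not supply — is the algebraic cancellation $J(\id-J^{+}J)=(\id-I_+)J$, which gives $h\del_{x_n}(\id-J^{+}J)Z^{-1}v=F_+(\id-J^{+}J)Z^{-1}v$ in $\mathcal D'(\RR^n_+)$ (the $(\id-I_+)JZ^{-1}v$ piece dies on $\RR^n_+$). Since $F_+$ is tangential of order one and the output already lies in $W^{1,1,r}$, differentiating this identity once more in $x_n$ trades both normal derivatives for tangential ones and yields $h^2\del_{x_n}^2(\id-J^{+}J)Z^{-1}v\in L^r(\RR^n_+)$, hence the $W^{2,r}(\RR^n_+)$ membership. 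Without this step your plan cannot reach the stated target space. The second half of \cref{mappingpropsofPb} (the $C_c^1(\RR;L^r(\RR^{n-1}))$ case) you handle as the paper does, and that part is fine.
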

\begin{proof}
	Throughout this proof fix 
	\[
		v \in W^{1,r}(\RR^n) \cup C_c^1(\RR; L^r(\RR^{n-1}))\,,\quad\text{where}\quad 1<r<\infty\,.
	\]
	Note first that for this choice of $v$, by \cref{lemm:Zinverse}, 
	\begin{equation}\label{eq:Zinverseonv}
		{Z}^{-1}v \in W^{1,r}(\RR^n)\,,
	\end{equation}
	which means that $JZ^{-1}v \in L^r(\RR^n)$, meaning that $(\id-J^+J)Z^{-1}$ acting on $v\in W^{1,r}(\RR^n) \cup C_c^1(\RR; L^r(\RR^{n-1}))$ is well-defined. Due to the fact that $\id-J^+J \colon W^{1,r}(\RR^n) \to W^{1,r}(\RR^n)$, we already see the second half of \cref{mappingpropsofPb}. 
	
	To see the first part of \cref{mappingpropsofPb}, let us take $v\in W^{1,r}(\RR^n)$. Note that by \cref{lemm:Zinverse}, and the definition of mixed semiclassical Sobolev spaces in \cref{eq:defofmixedsemiclasssobo}, $Z^{-1}\colon W^{1,r}(\RR^n) \to W^{1,1,r}$ and $\id-J^+J \colon W^{1,1,r}\to W^{1,1,r}$ where we use the fact that $I_+$ does not alter regularity in the $x'$ direction. Thus, 
	\[
		(\id-J^+J)Z^{-1} =(\id - J^+J) Z^{-1} \colon W^{1,r}(\RR^n) \to W^{1,1,r}.
	\]
	Furthermore, since by \cref{splitupJintoF}, $h\del_{x_n} = J-F_+$, and by direct calculation $J(\id-J^+J) = (\id-I_+)J$, we have that 
	\begin{equation}\label{eq:deaddistro1}
		h\del_{x_n}(\id-J^+J){Z}^{-1}v = (\id-I_+)J{Z}^{-1}v+F_+(\id-J^+J){Z}^{-1}v\,.
	\end{equation}
	Here we note that the first term in \cref{eq:deaddistro1} vanishes in the sense of distributions on $\RR^n_+$ and that for the second term we have 
	\begin{equation}\label{eq:deaddistro2}
		F_+(\id-J^+J){Z}^{-1}: W^{1,r}(\RR^n) \xrightarrow{{Z}^{-1}} W^{1,1,r} \xrightarrow{\id-J^+J} W^{1,1,r} \xrightarrow{F_+} W^{1,r}(\RR^n)\,.
	\end{equation}
	In particular, \cref{eq:deaddistro1,eq:deaddistro2} imply that
	\begin{equation}\label{halfspaceW2r}
	h\del_{x_n} (\id-J^+J)Z^{-1} v = F_+ (\id-J^+J)Z^{-1}v\quad\quad {\text {in\ the\ sense\ of}}\ \mathcal D'(\RR^n_+)\,,
\end{equation}
	where $F_+(\id-J^+J)Z^{-1} v \in W^{1,r}(\RR^n)$. Furthermore, since
	\[
	(\id-J^+J)Z^{-1}= (\id-J^+J)	 Z^{-1}: W^{1,r}(\RR^n) \xrightarrow{{Z}^{-1}} W^{1,1,r} \xrightarrow{\id-J^+J} W^{1,1,r}\,,
	\]
	we have that $h^{\abs{\alpha}}\del^\alpha (\id-J^+J)Z^{-1} v \in L^r(\RR^n)$ for all $\alpha\in\mathbb{N}^n_0,\abs{\alpha} \leq 2$, $\alpha_n \leq 1$. Therefore, in order to show $(\id-J^+J)Z^{-1} v \in W^{2,r}(\RR^n_+)$, it suffices to show that $\del^2_{x_n}(\id-J^+J)Z^{-1} v \in L^r(\RR^n_+)$. This fact is a consequence of taking the $\del_{x_n}$ derivative of \eqref{halfspaceW2r} to see that 
	\[
		h^2\del^2_{x_n}(\id-J^+J)Z^{-1} v = h\del_{x_n}F_+(\id-J^+J)Z^{-1}v \in L^r(\RR^n_+),\ \ {\text {in\ the\ sense\ of\ }} \mathcal D'(\mathbb R^n_+)\,.
\]
This completes \cref{mappingpropsofPb}.

	To see \cref{boundaryofPb}, by 
	\cref{boundaryinJlemma}, we calculate
	\begin{align}
		&(1+\abs{K}^2)^{1/2}\tbdry (\id-J^+J)Z^{-1} v \notag \\
		&=\tau ({Z}-(1+\abs{K}^2)J-(1+\abs{K}^2)\td{\rho}_1(hD'))(\id-J^+J)Z^{-1}v \notag \\
		&= \tau ({Z}{Z}^{-1}v - (1+\abs{K}^2)\td{\rho}_1(hD')(\id-J^+J)Z^{-1}v-{Z}J^+J{Z}^{-1}v + (1+\abs{K}^2)(I_+-id)J{Z}^{-1}v) \label{eq:wantvanish3rd4th}
	\end{align}
	All terms in the final line of \cref{eq:wantvanish3rd4th} are well-defined by \cref{eq:Zinverseonv}, and the third term vanishes because $\tau$ and $Z^{-1}$ commute and $\tau J^+ v =0$ (see also \cref{eq:traceproperty}). The fourth term in \cref{eq:wantvanish3rd4th} is zero because it is only supported on $x_n \leq 0$.
	
	Thus, again using $\tau J^+ v= 0$,
	\begin{align*}
		(1+\abs{K}^2)^{1/2}\tbdry (\id-J^+J)Z^{-1} v &= \tau v - \tau (1+\abs{K}^2)\td{\rho}_1(hD')(\id-J^+J)Z^{-1} v \\
		&= (1-(1+\abs{K}^2)\td{\rho}_1(hD'){Z}^{-1})\tau v\,,
	\end{align*}
	which shows \cref{boundaryofPb}.
\end{proof}

\subsubsection{Proof of Proposition~\ref{propMappingOfNewPthing}}\label{sec:proofofPthing}

Before proceeding with the proof of Proposition \ref{propMappingOfNewPthing}, we have to address the concept of extending boundary conditions to all of $\RR^n$. Take $\eta \in C_c^\infty(\RR)$ so that $\supp \eta \subset \{\abs{x_n} \leq 2\}$ and $\eta \equiv 1$ on $\abs{x_n} \leq 1$, and define 
\begin{equation}
	\mathcal{E} \colon W^{s,r}(\RR^{n-1}) \to C^\infty_c(\RR; W^{s,r}(\RR^{n-1}))\,, \quad \mathcal{E} t(x) = \eta(x_n)t(x')\,, \label{defofExt}
\end{equation}
for all $s\in\mathbb{N}_0, 1<r<\infty$.

\begin{proof}[Proof of \cref{propMappingOfNewPthing}]	
We begin by setting
\begin{align}\label{defofPlf}
	B_l &\coloneqq (\id-J^+J)Z^{-1}(1-\td{\rho}(hD'))J\tilde{G}_\phi 
		\shortintertext{and}
B_c &\coloneqq (\id-J^+J)Z^{-1}(1-\td{\rho}(hD'))\mathcal{E}\label{def Pc}
\end{align}
where $(\id-J^+J)Z^{-1}$ is the operator in \eqref{defofPb}.

Let $v \in L^{p'}(\td{\Omega})$ and $f\in L^2(\RR^{n-1})$. To check that $B_cf$ is well-defined, note that by \cref{defofExt,mappingpropsofPb},
\begin{equation}\label{eq:extfworks}
(\id-J^+J)Z^{-1}(1-\td{\rho}(hD'))\mathcal E \colon L^2(\RR^{n-1}) \xrightarrow{(1-\td{\rho}(hD'))\mathcal{E}} C^\infty_c(\RR;L^2(\RR^{n-1})) \xrightarrow{(\id-J^+J)Z^{-1}} H^1(\RR^n)\,,
\end{equation}
which, in fact, proves \eqref{mappropsofPdLpCpart}.

In order to see that $B_l \itdo v$ is well-defined we turn to showing \cref{mappropsofPdLpMpart}. Observe that by \cref{prop43}, splitting up $\td{G}_\phi = (\td{G}_\phi - \td{G}_\phi^c) + \td{G}_\phi^c$ and using \cref{mappingpropsofPb},
	\begin{align*}
		&L^{p'}(\RR^n) \xrightarrow[h^{-2}]{\td{G}_\phi^c} W^{2,p}(\RR^n) \xrightarrow{(1-\td{\rho}(hD'))J} W^{1,p}(\RR^n) \xrightarrow{(\id-J^+J)Z^{-1}} W^{1,p}(\RR^n)\,,\\
		&L^{p'}(\RR^n) \xrightarrow{\td{G}_\phi - \td{G}_\phi^c} W^{2,p'}(\RR^n) \xrightarrow{(1-\td{\rho}(hD'))J} W^{1,p'}(\RR^n) \xrightarrow{(\id-J^+J)Z^{-1}} W^{2,p'}(\RR^n_+) \hookrightarrow_{h^{-1}} H^1(\RR^n_+) \hookrightarrow_{h^{-1}} L^p(\RR^n_+)\,.
	\end{align*}
	Omitting the last Sobolev embedding and remarking that $W^{1,p} \subset H^1_{\mathrm{loc}}$, we also have that $B_l \colon L^{p'}(\RR^n) \to H^1(\RR^n_+)\cap H^1_{\mathrm{loc}}(\RR^n)$ so that the second half of \cref{mappropsofPdLpMpart} has been shown.
	
	Furthermore,
	\begin{align*}
		&L^{2}_\delta(\RR^n) \xrightarrow[h^{-1}]{\td{G}_\phi^c} H^2_{\delta-1}(\RR^n) \xrightarrow{(1-\td{\rho}(hD'))J} H^{1}_{\delta-1}(\RR^n) \xrightarrow{(\id-J^+J)Z^{-1}} H^1_{\delta-1}(\RR^n)\,, \\
		&L^{2}_\delta(\RR^n) \xrightarrow{\td{G}_\phi - \td{G}_\phi^c} H^2_\delta(\RR^n) \xrightarrow{(1-\td{\rho}(hD'))J} H^1_\delta(\RR^n) \xrightarrow{(\id-J^+J)Z^{-1}} H^1_\delta(\RR^n)\,,
	\end{align*}
	which means that
	$B_l\colon L^2_\delta(\RR^n) \to_{h^{-1}} H^1_{\delta-1}(\RR^n)$,
	proving the first half of \cref{mappropsofPdLpMpart}.

	We now begin showing \cref{mapofPlfwithDelta1,mapofPlfwithDelta2}. Recall again that by direct calculation $J(\id-J^+J) = (\id-I_+)J$ and so due to \cref{splitupofLaplace}\,, 
	\begin{equation}\label{eq:initialsplitupodDelta}
	h^2\tilde{\Delta}_\phi(\id-J^+J) = (1+\abs{K}^2)(\tilde{A}_0 - h\tilde{E}_1+h^2\tilde{E}_0)(\id-J^+J) + (1+\abs{K}^2)(Q(\id-I_+)J)\,,
	\end{equation}
	where $\td{A}_0,\td{E}_1,\td{E}_0$ are defined in \cref{eq:defofA0E1E0}, and $Q$ is defined in \cref{eq:defofJQ}.
	Due to the fact that $Q^\ast$ does not shift support downward because with respect to $x_n$ it is a differential operator, for all $w\in C_c^\infty(\RR^n), u\in C_c^\infty(\RR^n_+)$,
	\begin{equation}\label{Qpartfallsaway}
		\langle  (1+\abs{K}^2)Q(\id-I_+)Jw, u\rangle = \langle (\id-I_+)Jw, ((1+\abs{K}^2)Q)^\ast u\rangle = 0\,,
	\end{equation}
	because the term $((1+\abs{K}^2)Q)^\ast$ also does not shift the support of $u$ downward. We now let 
	\begin{equation}\label{eq:defwlwc}
	w_l = {Z}^{-1}(1-\td{\rho}(hD'))J\tilde{G}_\phi\itdo v\in L^p(\RR^n_+)\,,\an w_c = {Z}^{-1}(1-\td{\rho}(hD'))\mathcal{E}f\in H^1(\RR^n)
\end{equation}
so that recalling the definitions of $B_l,B_c$ in \cref{defofPlf,def Pc},
	\[
	 h^2\tilde{\Delta}_\phi B_l \itdo v = h^2\tilde{\Delta}_\phi(\id-J^+J) w_l\,,\an h^2\tilde{\Delta}_\phi B_c f= h^2\tilde{\Delta}_\phi(\id-J^+J) w_c\,.
	\]
	Replacing $h^2\tilde {\Delta}_\phi$ in the above equations by its factorization \eqref{splitupofLaplace}, we get
 	\begin{align}\label{A0E1E0split1'}
			h^2\tilde{\Delta}_\phi B_l \itdo v &= (1+\abs{K}^2)Q(id-I_+)Jw_l +(1+\abs{K}^2)(\tilde{A}_0 - h\tilde{E}_1+h^2\tilde{E}_0)(\id-J^+J)w_l\,,\\
			 h^2\tilde{\Delta}_\phi B_c f&= (1+\abs{K}^2)Q(id-I_+)Jw_c + (1+\abs{K}^2)(\tilde{A}_0 - h\tilde{E}_1+h^2\tilde{E}_0)(\id-J^+J)w_c\,.\label{A0E1E0split2'}
\end{align}
Approximating $w_l$ and $w_c$ by functions in $C^\infty_c(\RR^n)$ in their respective Sobolev space topology, a density argument together with \cref{Qpartfallsaway} shows that the first terms of \eqref{A0E1E0split1'} and \eqref{A0E1E0split2'} vanish as elements of $\mathcal D'(\RR^n_+)$. So using \cref{eq:defwlwc}, \eqref{A0E1E0split1'} and \eqref{A0E1E0split2'} become
	\begin{align}\label{A0E1E0split1}
	h^2\tilde{\Delta}_\phi B_l \itdo v &=  (1+\abs{K}^2)(\tilde{A}_0 - h\tilde{E}_1+h^2\tilde{E}_0)(\id-J^+J)Z^{-1}(1-\td{\rho}(hD'))J\td{G}_\phi\itdo v\,, \\
	h^2\tilde{\Delta}_\phi B_c f&=  (1+\abs{K}^2)(\tilde{A}_0 - h\tilde{E}_1+h^2\tilde{E}_0)(\id-J^+J)Z^{-1}(1-\td{\rho}(hD'))\mathcal{E}f\,.\label{A0E1E0split2}
	\end{align}
The operators $\tilde A_0$, $\tilde E_1$ and $\tilde E_0$ are given collectively by \eqref{eq:defofA0E1E0}.
	In order to prove \cref{mapofPlfwithDelta1} and \cref{mapofPlfwithDelta2}, we employ the following lemma to give estimates for the right side of \cref{A0E1E0split1,A0E1E0split2}. We will prove the lemma at the end of the section.
	\begin{lemma}\label{largeFreqremainders}
		We may estimate the right-hand sides in \cref{A0E1E0split1} and \cref{A0E1E0split2} as follows. 
		\begin{enumerate}
			\item \label{A0largeFreq} 
			We have
			\begin{alignat}{2}\label{eq:biglemmapart1eqv}
			&(1+\abs{K}^2)\td{A}_0(\id-J^+J)Z^{-1}(1-\td{\rho}(hD'))J\td{G}_\phi\itdo v &&= (R_1+hR_2)v\,, \\
			&(1+\abs{K}^2)\td{A}_0(\id-J^+J)Z^{-1}(1-\td{\rho}(hD'))\mathcal{E}f &&= hR_{c,1}f\,.\label{eq:biglemmapart1eqf}
			\end{alignat}
			\item \label{E1largeFreq} 
			Additionally,
			\begin{alignat}{2}\label{eq:biglemmapart2eqv}
				&(1+\abs{K}^2)h\td{E}_1(\id-J^+J)Z^{-1}(1-\td{\rho}(hD'))J\td{G}_\phi\itdo v &&= (R_3+hR_4)v\,, \\
				&(1+\abs{K}^2)h\td{E}_1(\id-J^+J)Z^{-1}(1-\td{\rho}(hD'))\mathcal{E}f &&= hR_{c,2}f\,.\label{eq:biglemmapart2eqf}
			\end{alignat}
			\item \label{E0largeFreq} 
			Finally,
			\begin{alignat}{2}\label{eq:biglemmapart3eqv}
				&(1+\abs{K}^2)h^2\td{E}_0(\id-J^+J)Z^{-1}(1-\td{\rho}(hD'))J\td{G}_\phi\itdo v &&= (R_5+hR_6)v\,, \\
				&(1+\abs{K}^2)h^2\td{E}_0(\id-J^+J)Z^{-1}(1-\td{\rho}(hD'))\mathcal{E}f &&= hR_{c,3}f\,.\label{eq:biglemmapart3eqf}
			\end{alignat}
		\end{enumerate}
		Here 
		\begin{alignat*}{2}
			R_1, R_5 &\colon L^2(\td{\Omega})\to_h L^2(\RR^n)\,,  &\qquad R_1, R_5&\colon L^{p'}(\td{\Omega}) \to_{h^0} L^p(\RR^n) \\
			R_3 &\colon L^2(\td{\Omega})\to_h L^2(\RR^n)\,, &\qquad R_3&\colon L^{p'}(\td{\Omega}) \to_{h^0} L^2(\RR^n) \\
			R_2, R_4, R_6 &\colon L^r(\td{\Omega}) \to_{h^0} L^r(\RR^n)\,,&\quad &1<r<\infty \\
			R_{c,1},R_{c,2},R_{c,3} &\colon  L^2(\RR^{n-1}) \to_{h^0} L^2(\RR^n) &{} &{}
		\end{alignat*}
	are the mapping properties of the remainders.
	\end{lemma}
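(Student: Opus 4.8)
The plan is to expand each of the compositions appearing in \cref{largeFreqremainders} into operators whose mapping properties are already known — those of $\td G_\phi$ and $\td G_\phi^c$ from \cref{prop43}, of $J^{-1}$ (hence of $J^+$) from \cref{Jinverse}, of $Z^{-1}$ from \cref{lemm:Zinverse}, of the combination $\td E_1\td G_\phi$ from \cref{lem:EGestimate}, and of $B_l$ and $B_c$ from \eqref{mappropsofPdLpMpart} and \eqref{mappropsofPdLpCpart} — and then to control the resulting error terms using the calculus of \cref{lem:calc}, the disjoint-support mechanism appearing in \cref{lem:Jsandwich} and \cref{cor:sandwich}, and the Sobolev embeddings $W^{1,p'}(\RR^n)\hookrightarrow L^2(\RR^n)$ and $W^{2,p'}(\RR^n)\hookrightarrow L^p(\RR^n)$. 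As in the proof of \cref{lem:Jsandwich}, the calculus is used without explicit reference, and errors of size $\mathcal O(h)$, or even $\mathcal O(h^\infty)$ as the case may be, are absorbed into the relevant remainder.

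For part \cref{A0largeFreq} the key is a disjoint-frequency argument. By \eqref{eq:defofa0} the symbol $\td a_0\in S^{-\infty}_1(\RR^{n-1})$ is supported, in $\xi'$, inside $\supp\td\rho_0$, where $\td\rho$ and $\td\rho_1$ are identically $1$; hence $\td A_0 = \td A_0\td\rho_1(hD')+\mathcal O(h^\infty)$, one has $\td\rho_1(hD')(1-\td\rho(hD'))=0$, and $\supp\td\rho_1\Subset\{\td\rho=1\}$ so that \cref{cor:sandwich} applies to $\td\rho_1(hD')Z^{-1}(1-\td\rho(hD'))$. One then commutes $\td\rho_1(hD')$ to the right through $\id-J^+J$ and $Z^{-1}$: the multiplication $I_+$ and $h\del_{x_n}$ commute exactly with the $\xi'$-multiplier $\td\rho_1(hD')$, while the pieces $F_+$ of $J$ and $j^{-1}(x',hD),m_1(x',hD)$ of $J^{-1}$ (see \eqref{splitupJintoF}, \eqref{eq:jinvform}) commute with it up to $\mathcal O(h)$ errors whose symbols live in $\supp\nabla\td\rho_1$, hence — paired back with $\td A_0$ — vanish to infinite order; the remaining, non-pseudodifferential, parts of $Z^{-1}$ and $J^{-1}$ carry a factor $h^k$ with $k$ arbitrary. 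Combining this with \cref{cor:sandwich} and \cref{lem:Jsandwich} yields the decomposition $R_1+hR_2$ (with $R_1$ carrying the better $L^2(\td\Omega)\to L^2$ bound), and likewise $R_{c,1}$; in fact one expects the full composition to be $\mathcal O(h^\infty)$ in all three topologies, so that $R_2$ may be taken to be $0$.

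The terms \cref{E0largeFreq} and \cref{E1largeFreq} are treated differently, the gain now coming from the explicit prefactors $h^2$ and $h$. For \cref{E0largeFreq}: $\td E_0=\op[h,x']{\td e_0}$ is uniformly $L^r\to L^r$ bounded, and by \eqref{defofPlf}, \eqref{def Pc} the remaining factor is exactly $B_l\itdo$ (resp. $B_c$), so \eqref{mappropsofPdLpMpart} gives $h^2\cdot h^{-2}=h^0$ on $L^{p'}(\td\Omega)\to L^p$ and $h^2\cdot h^{-1}=h$ on $L^2(\td\Omega)\to L^2$ — the bounds for $R_5$ — while the general-$L^r$ bound for $R_6$ follows the same way from the $\op{S^{-2}_1}$-part of $\td G_\phi$, and \eqref{mappropsofPdLpCpart} gives $R_{c,3}$. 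For \cref{E1largeFreq}, since $\td E_1$ is not adjacent to $\td G_\phi$, one first commutes it rightward through $(\id-J^+J)Z^{-1}(1-\td\rho(hD'))J$; the commutators are $\mathcal O(h)$ and contribute to $hR_4$ (resp. $hR_{c,2}$). Then \cref{lem:EGestimate} replaces $\td E_1\td G_\phi$ by $(\td E_1\td G_\phi)^c+\op{S^1_1S^{-2}_1}+h\op{S^0_1(\RR^n)}\td G_\phi$: with the explicit $h$, the first summand gives $R_3$, using $(\td E_1\td G_\phi)^c\colon L^{p'}(\td\Omega)\to_{h^{-1}}H^k$ and $L^2(\td\Omega)\to_{h^0}H^k$ so that the bounded remaining factors land the output in $L^2(\RR^n)$ with the powers $h^0$ and $h$ demanded of $R_3$; the order-$(-1)$ summand contributes to $R_3$ via $W^{1,p'}\hookrightarrow L^2$; and the last, with total prefactor $h^2$, contributes to $hR_4$. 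The $f$-identities \eqref{eq:biglemmapart1eqf}, \eqref{eq:biglemmapart2eqf}, \eqref{eq:biglemmapart3eqf} are obtained identically, now using $\mathcal E\colon L^2(\RR^{n-1})\to C_c^\infty(\RR;L^2(\RR^{n-1}))$ from \eqref{defofExt} together with \eqref{mappropsofPdLpCpart}.

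The main obstacle is the commutator bookkeeping caused by the operator $\id-J^+J$: since $J^+=J^{-1}I_+$ is not a pseudodifferential operator — it places the rough multiplication $I_+$ between the non-smoothing inverse $J^{-1}$ and the first-order operator $J$ — one has to verify carefully that commuting the frequency cut-offs past it (in \cref{A0largeFreq}) or commuting $\td E_1$ past it (in \cref{E1largeFreq}) really costs only $\mathcal O(h)$ per step and does not spoil the disjoint-support gains; this relies on $I_+$ and $h\del_{x_n}$ commuting exactly with $x'$-Fourier multipliers, together with $[F_+,\cdot]$ and $[j^{-1}(x',hD),\cdot]$ obeying \cref{lem:calc}. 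A secondary difficulty is matching the $L^{p'}\to L^p$ and $L^2\to L^2$ scales at the same time, which forces one to split each summand according to the $\td G_\phi^c$ versus $\td G_\phi-\td G_\phi^c$ decomposition of \cref{prop43} and to invoke the appropriate Sobolev embedding in each case.
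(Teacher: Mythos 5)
Your outline tracks the paper's own proof: the same splitting $J=h\del_{x_n}+F_+$ to move $I_+$ and $h\del_{x_n}$ past $x'$-operators, the disjoint-support gain between $\td{A}_0$ and $(1-\td{\rho}(hD'))$ through $Z^{-1}$ and $J^{-1}$ via \cref{lem:Jsandwich} and \cref{cor:sandwich}, the commutation of $\td{E}_1$ onto $\td{G}_\phi$ followed by \cref{lem:EGestimate}, and the systematic $\td{G}_\phi^c$ versus $\td{G}_\phi-\td{G}_\phi^c$ splitting with Sobolev embeddings. Two specific claims need correcting.

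First, in part \cref{A0largeFreq} the composition is not $\mathcal{O}(h^\infty)$ and $R_2$ cannot be taken to be $0$: the non-pseudodifferential remainder of $Z^{-1}$ does carry $h^k$ for arbitrary $k$ (\cref{lemm:Zinverse}), but the corresponding remainder of $J^{-1}$ is $h^2M$ with $M$ merely bounded and carrying no frequency localization (see \cref{eq:jinvform}), so the sandwich gains exactly $h^2$ and no more. This is harmless, since $h^2$ is precisely what is needed against the $h^{-2}$ loss of $\td{G}_\phi^c$, but the piece hit by $\td{G}_\phi-\td{G}_\phi^c$ is then a genuine nonzero $hR_2$. Second, and more substantively, in part \cref{E1largeFreq} you cannot send all the commutator terms into $hR_4$: the piece of, say, $h(\id-J^+J)[\td{E}_1,Z^{-1}](1-\td{\rho}(hD'))J$ that lands on $\td{G}_\phi^c$ is only $\mathcal{O}(h^0)$ as a map $L^{p'}(\td{\Omega})\to L^p$, because the two factors of $h$ (the explicit prefactor and the $\mathcal{O}(h)$ commutator) are consumed by $\td{G}_\phi^c\colon L^{p'}\to_{h^{-2}}W^{2,p}$; writing this as $hR_4$ would force $R_4\colon L^{p'}\to_{h^{-1}}L^{p'}$, contradicting the required $R_4\colon L^r\to_{h^0}L^r$. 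Those commutator-times-$\td{G}_\phi^c$ terms must be sorted into $R_3$, whose target space $L^2$ (via $L^p\hookrightarrow L^2$ on the bounded set) and weaker $h$-powers accommodate them, exactly as the paper does; only the commutators paired with $\td{G}_\phi-\td{G}_\phi^c$ belong in $hR_4$. Your closing remark about splitting each summand by $\td{G}_\phi^c$ shows you have the right mechanism in mind, but the assignment as written would fail.
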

Since $\td{\Omega}$ is bounded and $p>2$, we have that $L^p(\td{\Omega}) \hookrightarrow L^2(\td{\Omega})$. So applying the estimates \eqref{eq:biglemmapart1eqv}, \eqref{eq:biglemmapart2eqv}, and \eqref{eq:biglemmapart3eqv} to the three terms involving $\tilde A_0$, $\tilde E_0$, and $\tilde E_1$ in equation \eqref{A0E1E0split1} we obtain \eqref{mapofPlfwithDelta1} with the estimates \cref{Plfremainderprops}. Similarly, by applying  \eqref{eq:biglemmapart1eqf}, \eqref{eq:biglemmapart2eqf}, and \eqref{eq:biglemmapart3eqf} to the terms on the right side of \eqref{A0E1E0split2}, we get \eqref{mapofPlfwithDelta2} with the estimates \cref{Plfremainderprops}.

We move on to show \cref{Plfboundaryprops}, \cref{Plfboundarypropscpart} and \cref{eq:bdryremainBlBc}. Note that \cref{boundaryofPb} gives
	\begin{align*}
		\tbdry B_l \itdo v &= \tbdry (\id-J^+J)Z^{-1} (1-\td{\rho}(hD'))J\td{G}_\phi\itdo v \\
		&= (1+\abs{K}^2)^{-1/2}(1-(1+\abs{K}^2)\td{\rho}_1(hD'){Z}^{-1})\tau (1-\td{\rho}(hD'))J\td{G}_\phi\itdo v \\
		&= (1+\abs{K}^2)^{-1/2}(1-\td{\rho}(hD'))\tau J\td{G}_\phi\itdo v + hR_{bl} v\,,
	\end{align*}
	where $R_{bl} = -h^{-1}(1+\abs{K}^2)^{1/2}\td{\rho}_1(hD'){Z}^{-1}(1-\td{\rho}(hD'))\tau J\td{G}_\phi\itdo$. 
	Observe that by \cref{prop43}, splitting up $\td{G}_\phi = (\td{G}_\phi - \td{G}_\phi^c) + \td{G}_\phi^c$, and invoking \cref{cor:sandwich}, we have for any $0<\delta<1$,
	\begin{align*}
		&L^{p'}(\td{\Omega}) \xrightarrow{\itdo} L^{p'}(\RR^n) \xrightarrow[h^{-2}]{\td{G}_\phi^c} W^{2,p}(\RR^n) \xrightarrow{\tau J} L^p(\RR^{n-1}) \xrightarrow[h^2]{h^{-1}(1+\abs{K}^2)^{1/2}\td{\rho}_1(hD'){Z}^{-1}(1-\td{\rho}(hD'))} L^p(\RR^{n-1})\,,\\
		&L^2(\td{\Omega}) \xrightarrow{\itdo} L^{2}_{\delta}(\RR^n) \xrightarrow[h^{-1}]{\td{G}_\phi^c} H^2_{\delta-1}(\RR^n) \xrightarrow{\tau J} L^2_{\delta-1}(\RR^{n-1}) \xrightarrow[h]{h^{-1}(1+\abs{K}^2)^{1/2}\td{\rho}_1(hD'){Z}^{-1}(1-\td{\rho}(hD'))} L^2_{\delta-1}(\RR^{n-1})\,, 
	\end{align*}
	which we now take to be $R_{bl}'$.
	Furthermore, for $1<r<\infty$,
	\begin{align*}
		&L^r(\td{\Omega}) \xrightarrow{\itdo} L^{r}(\RR^n) \xrightarrow{\td{G}_\phi - \td{G}_\phi^c} W^{2,r}(\RR^n) \xrightarrow{\tau J} L^{r}(\RR^{n-1}) \xrightarrow{h^{-1}(1+\abs{K}^2)^{1/2}\td{\rho}_1(hD'){Z}^{-1}(1-\td{\rho}(hD'))} L^{r}(\RR^{n-1})\,,
	\end{align*}
	which we call $R_{bl}''$. 
	Now $R_{bl} = R_{bl}'+R_{bl}''$ where $R_{bl}', R_{bl}''$ satisfy \cref{eq:bdryremainBlBc} proving \cref{Plfboundaryprops}.

	Using \cref{boundaryofPb} again, we get
	\begin{align*}
		\tbdry B_cf &= (1+\abs{K}^2)^{-1/2}(1-\td{\rho}_1(hD'){Z}^{-1})\tau (1-\td{\rho}(hD'))\mathcal{E}f \\
		&= (1+\abs{K}^2)^{-1/2}(1-\td{\rho}(hD'))f + hR_{bc}f\,.
	\end{align*}
with $R_{bc} = -h^{-1}\td{\rho}_1(hD'){Z}^{-1}(1-\td{\rho}(hD'))$. A final application of \cref{cor:sandwich} on $R_{bc}$ proves \cref{Plfboundarypropscpart} and \cref{eq:bdryremainBlBc}.
\end{proof}

\begin{proof}[Proof of \cref{A0largeFreq} in \cref{largeFreqremainders}]
	Let $1<r<\infty$ be arbitrary. By \cref{splitupJintoF} we have $J = h\del_{x_n} + F_+$ so that
	\begin{align}\label{splitupA0thing}
		\td{A}_0(\id-J^+J){Z}^{-1}(1-\td{\rho}(hD')) &=\td{A}_0{Z}^{-1}(1-\td{\rho}(hD')) - \td{A}_0 J^{-1}{Z}^{-1}(1-\td{\rho}(hD'))I_+h\del_{x_n} \\
		&-\td{A}_0J^{-1}F_+{Z}^{-1}(1-\td{\rho}(hD'))I_+ \notag
	\end{align}
	where we used the fact that operators in the $x'$ direction commute with $I_+$ and $\del_{x_n}$.
	
	Now, taking a look at the definition of $\td{A}_0$ in \cref{eq:defofA0E1E0}, the symbol of $\td{A}_0$ is supported on $\supp\td{\rho}_0$, which is disjoint from the set $\supp(1-\td{\rho})$. Thus we will want to use disjoint support properties of pseudo-differential operators to get estimates in suitable powers of $h$ and regularity. 

	Repeating the proof of \cref{cor:sandwich} with $\td{A}_0$ in place of $\chi_1(hD')$ and $\td{\rho}$ in place of $\chi_2$, using disjoint supports, we have that
	{\small\begin{equation}\label{eq:1}
		\td{A}_0{Z}^{-1}(1-\td{\rho}(hD')) \colon W^{1,r}(\RR^{n-1}) \to_{h^2} W^{1,r}(\RR^{n-1})\,,\an \td{A}_0{Z}^{-1}(1-\td{\rho}(hD'))\colon H^1_\delta(\RR^{n-1}) \to_{h^2} H^1_\delta(\RR^{n-1})\,.
\end{equation}}

For the two terms in \cref{splitupA0thing} containing $J^{-1}$, we want to make use of \cref{lem:Jsandwich}. To do so, we use \cref{eq:Zneumann} to write 
	\[
		Z^{-1} = (Z^{-1})'+ h^k M\,, 
	\] 
	where $(Z^{-1})' \in \op[h,x']{S^{-1}_1(\RR^{n-1})}$ and
	\[
		M\colon L^r(\RR^{n-1}) \to_{h^0} W^{1,r}(\RR^{n-1})\,,\an M\colon L^2_\delta(\RR^{n-1}) \to_{h^0} H^{1}_\delta(\RR^{n-1})\,.
	\]
	
	Now we invoke \cref{lem:Jsandwich} to see that
	\begin{alignat}{3}
		&&\td{A}_0 J^{-1}({Z}^{-1})'(1-\td{\rho}(hD')) &\colon L^r(\RR^n)\to_{h^2} W^{1,r}(\RR^n)\,,&&\an L^2_\delta(\RR^n) \to_{h^2} H^1_\delta(\RR^n)\,, 
		\label{eq:21}\\
		&&\td{A}_0J^{-1}F_+({Z}^{-1})'(1-\td{\rho}(hD')) &\colon L^{r}(\RR^n)\to_{h^2} W^{1,r}(\RR^n)\,,&&\an L^2_\delta(\RR^n)\to_{h^2} H^1_\delta(\RR^n)\,. 
		\label{eq:31}
	\end{alignat}
	Furthermore, by the usual $\Psi$DO mapping properties and \cref{Jinverse},
	\begin{alignat}{3}
		&&h^2\td{A}_0 J^{-1}M(1-\td{\rho}(hD')) &\colon L^r(\RR^n)\to_{h^2} W^{1,r}(\RR^n)\,,&&\an L^2_\delta(\RR^n) \to_{h^2} H^1_\delta(\RR^n)\,, 
		\label{eq:22}\\
		&&h^2\td{A}_0J^{-1}F_+M(1-\td{\rho}(hD')) &\colon L^{r}(\RR^n)\to_{h^2} W^{1,r}(\RR^n)\,,&&\an L^2_\delta(\RR^n)\to_{h^2} H^1_\delta(\RR^n)\,. 
		\label{eq:32}
	\end{alignat}

	Note that the operators above appear in \cref{splitupA0thing} with additional operators precomposed to them. We address this now. By \cref{eq:21,eq:22},
	\begin{equation}\label{eq:4}
	\begin{aligned}
	&W^{1,r}(\RR^n) \xrightarrow{I_+h\del_{x_n}} L^r(\RR^n) \xrightarrow[h^2]{\td{A}_0 J^{-1}{Z}^{-1}(1-\td{\rho}(hD'))} W^{1,r}(\RR^n)\,,\\
	&H^1_\delta(\RR^n) \xrightarrow{I_+h\del_{x_n}} L^2_\delta(\RR^n) \xrightarrow[h^2]{\td{A}_0 J^{-1}{Z}^{-1}(1-\td{\rho}(hD'))} H^1_\delta(\RR^n)\,,
	\end{aligned}
\end{equation}
	and 
	\begin{equation}\label{eq:4second}
		C^\infty_c(\RR;L^2(\RR^{n-1})) \xrightarrow{I_+h\del_{x_n}} C^\infty_c(\RR;L^2(\RR^{n-1}))\subset L^2(\RR^n) \xrightarrow[h^2]{\td{A}_0 J^{-1}{Z}^{-1}(1-\td{\rho}(hD'))} H^1(\RR^n)\,.
	\end{equation}
	Now by \cref{eq:31,eq:32},
	\begin{equation}\label{eq:5}
	\begin{aligned}
	&W^{1,r}(\RR^n) \xrightarrow{I_+} W^{1,0,r} \xrightarrow[h^2]{\td{A}_0J^{-1}F_+{Z}^{-1}(1-\td{\rho}(hD'))} W^{1,r}(\RR^n)\,,\\
	&H^1_\delta(\RR^n) \xrightarrow{I_+} L^2_\delta(\RR^n) \xrightarrow[h^2]{\td{A}_0J^{-1}F_+{Z}^{-1}(1-\td{\rho}(hD'))} H^1_\delta(\RR^n)\,,
	\end{aligned}
\end{equation}
	and 
	\begin{equation}\label{eq:5second}
	C^\infty_c(\RR;L^2(\RR^{n-1})) \xrightarrow{I_+} C^\infty_c(\RR;L^2(\RR^{n-1})) \subset L^2(\RR^n) \xrightarrow[h^2]{\td{A}_0J^{-1}F_+{Z}^{-1}(1-\td{\rho}(hD'))} H^{1}(\RR^n)\,.
	\end{equation}
	Plugging \cref{eq:1,eq:4,eq:5} into \cref{splitupA0thing}, we get that   
	\begin{equation}\label{eq:A0isnice}
	\begin{aligned}
&\td{A}_0(\id-J^+J){Z}^{-1}(1-\td{\rho}(hD')) \colon W^{1,r}(\RR^n) \to_{h^2} W^{1,r}(\RR^n)\,,\\
&\td{A}_0(\id-J^+J){Z}^{-1}(1-\td{\rho}(hD'))\colon H^1_\delta(\RR^n)\to_{h^2} H^1_\delta(\RR^n)\,,
\end{aligned}
	\end{equation}
	and using \cref{eq:1,eq:4second,eq:5second} we have 
	\begin{equation}\label{eq:extenda0part}
\td{A}_0(\id-J^+J){Z}^{-1}(1-\td{\rho}(hD'))\colon C^\infty_c(\RR;L^2(\RR^{n-1}))\to_{h^2} L^2(\RR^n)\,.
	\end{equation}
	We will now define the remainders $R_1$ and $R_2$ in \cref{eq:biglemmapart1eqv}. Splitting up $\td{G}_\phi$ as in \cref{prop43}, by \cref{eq:A0isnice} we have
	\[
		 L^{p'}(\RR^n) \xrightarrow[h^{-2}]{\td{G}_\phi^c} W^{2,p}(\RR^n) \xrightarrow{J} W^{1,p}(\RR^n) \xrightarrow[h^2]{A_0(\id-J^+J){Z}^{-1}(1-\td{\rho}(hD'))} L^p(\RR^n)\,,
	\]
	and 
	\[
		L^{2}_\delta(\RR^n) \xrightarrow[h^{-1}]{\td{G}_\phi^c} H^{2}_{\delta-1}(\RR^n) \xrightarrow{J} H^{1}_{\delta-1}(\RR^n) \xrightarrow[h^2]{A_0(\id-J^+J){Z}^{-1}(1-\td{\rho}(hD'))} L^2_{\delta-1}(\RR^n)\,,
	\]
	so that we put $R_1\coloneqq A_0(\id-J^+J){Z}^{-1}(1-\td{\rho}(hD'))J\td{G}_\phi^c\itdo$,
	and
	\[
		L^r(\td{\Omega}) \xrightarrow{\itdo} L^{r}(\RR^n) \xrightarrow{\td{G}_\phi -\td{G}_\phi^c} W^{2,r}(\RR^n) \xrightarrow{J} W^{1,r}(\RR^n) \xrightarrow[h^2]{A_0(\id-J^+J){Z}^{-1}(1-\td{\rho}(hD'))} L^r(\RR^n)\,,
	\]
	which we take to be $R_2$.
	
	Furthermore, to take care of \cref{eq:biglemmapart1eqf}, notice that by \cref{eq:extenda0part,defofExt}, 
	\[
		\tilde{A}_0(\id-J^+J){Z}^{-1}(1-\td{\rho}(hD'))\mathcal{E} \colon L^2(\RR^{n-1}) \to_{h^2} L^2(\RR^n)\,,
	\]
	which we define to be $hR_{c,1}$.
\end{proof}

\begin{proof}[Proof of \cref{E1largeFreq} in \cref{largeFreqremainders}]
	Let $1<r<\infty$ be arbitrary. For the estimate \cref{eq:biglemmapart2eqf} we recall $\td{E}_1$ from \cref{eq:defofA0E1E0}. Now using \cref{defofExt}, and \cref{mappingpropsofPb} we may deal with $h\td{E}_1(\id-J^+J){Z}^{-1}(1-\td{\rho}(hD'))\mathcal{E}$ by
	\[
		L^2(\RR^{n-1}) \xrightarrow{(1-\td{\rho}(hD'))\mathcal{E}} C_c^\infty(\RR;L^2(\RR^{n-1})) \xrightarrow{(\id-J^+J){Z}^{-1}} H^1(\RR^n) \xrightarrow[h]{h\tilde{E}_1} L^2(\RR^n)\,,
	\]
	which we then call $hR_{c,2}$.
	
	We turn to the estimate \cref{eq:biglemmapart2eqv}. Let us commute $\tilde{E}_1$ so that it is adjacent to $\tilde{G}_\phi$:
	\begin{align}
		&h\tilde{E}_1(\id-J^+J){Z}^{-1}(1-\td{\rho}(hD'))J\tilde{G}_\phi \notag \\
		&=  h(\id-J^+J){Z}^{-1}(1-\td{\rho}(hD'))J\td{E}_1\td{G}_\phi -h\left([\td{E}_1,J^{-1}]I_+J+J^{-1}I_+[\td{E}_1,J]\right){Z}^{-1}(1-\td{\rho}(hD'))J\td{G}_\phi \notag\\
		&+ h(\id-J^+J)\Bigl([\td{E}_1,{Z}^{-1}](1-\td{\rho}(hD'))J+{Z}^{-1}[\td{E}_1,(1-\td{\rho}(hD'))J]\Bigr)\td{G}_\phi\,.  \label{E1commutatorSplit}
	\end{align}
	We first focus our attention on $h(\id-J^+J){Z}^{-1}(1-\td{\rho}(hD'))J\tilde{E}_1\tilde{G}_\phi$, and intend to distribute this term into $R_3$ and $R_4$. We use \cref{lem:EGestimate} to write
	\begin{equation}\label{eq:somesplitupfromlemma}
	\tilde{E}_1\tilde{G}_\phi = (\tilde{E}_1\tilde{G}_\phi)^c + \mathrm{Op}_h(S^1_1S^{-2}_1) + h\mathrm{Op}_h(S^0_1(\RR^{n-1}))\tilde{G}_\phi\,,
	\end{equation}
	where 
	\begin{equation}\label{eq:e1gc}
	(\tilde{E}_1\tilde{G}_\phi)^c \colon L^2(\RR^n) \to H^2(\RR^n)\,,\an (\tilde{E}_1\tilde{G}_\phi)^c \colon L^{p'}(\RR^n) \to_{h^{-1}} H^2(\RR^n)\,.
	\end{equation}
	Now, by \cref{mappingpropsofPb},
	\begin{equation}\label{eq:e1gcJ}
		h(\id-J^+J){Z}^{-1}(1-\td{\rho}(hD'))J \colon W^{2,r}(\RR^n) \xrightarrow{(1-\td{\rho}(hD'))J} W^{1,r}(\RR^n) \xrightarrow[h]{h(\id-J^+J){Z}^{-1}} W^{1,r}(\RR^n)\,,
	\end{equation}	
	so that by \cref{eq:e1gc,eq:e1gcJ},
	\begin{align*}
	&h(\id-J^+J){Z}^{-1}(1-\td{\rho}(hD'))J(\tilde{E}_1\tilde{G}_\phi)^c \colon L^{p'}(\RR^n) \to H^1(\RR^n)\,,\\
	&h(\id-J^+J){Z}^{-1}(1-\td{\rho}(hD'))J(\tilde{E}_1\tilde{G}_\phi)^c\colon L^2(\RR^n)\to_h H^1(\RR^n)\,,
	\end{align*}
	which then belongs to $R_3$.
	
	We turn to the second term in \cref{eq:somesplitupfromlemma}. First note that 
	by \cref{lem:calc}
	\[
		(1-\td{\rho}(hD'))J\mathrm{Op}_h(S^1_1S^{-2}_1) \in \op{S^1_1 S^{-1}_1}\,,
	\]
	so that with \cref{eq:definitionofZinverse} and \cref{lem:calc}
	\[
		Z^{-1}(1-\td{\rho}(hD'))J\mathrm{Op}_h(S^1_1S^{-2}_1) \colon L^r(\RR^n) \to W^{1,r}(\RR^n)\,.
	\]
	Therefore,
	\begin{align}
		&L^r(\RR^n) \xrightarrow{{Z}^{-1}(1-\td{\rho}(hD'))J\mathrm{Op}_h(S^1_1S^{-2}_1)} W^{1,r}(\RR^n) \xrightarrow[h]{h(\id-J^+J)} W^{1,r}(\RR^n)\,,\label{eq:a1}
	\end{align}
	which means that $h(\id-J^+J){Z}^{-1}(1-\td{\rho}(hD'))J\mathrm{Op}_h(S^1_1S^{-2}_1)$ falls into $R_4$.
	
	Finally, we turn to the term corresponding to $h\mathrm{Op}_h(S^0_1(\RR^{n-1}))\tilde{G}_\phi$ from \cref{eq:somesplitupfromlemma}. Splitting up $\td{G}_\phi$ with \cref{prop43} we note that
	\begin{equation}\label{eq:gphimaps}
		\tilde{G}_\phi - \tilde{G}_\phi^c \colon L^r(\RR^n) \to W^{2,r}(\RR^n)\,,\quad \tilde{G}_\phi^c \colon L^2_\delta(\RR^n) \underset{h^{-1}}{\to} H^2_{\delta-1}(\RR^n) \,, \an \td{G}_\phi^c\colon L^{p'}(\RR^n) \underset{h^{-2}}{\to} W^{2,p}(\RR^n)\,.
	\end{equation}
	Now using \cref{eq:e1gcJ}, we see 
	\[
		L^r(\RR^n) \xrightarrow{\td{G}_\phi-\td{G}_\phi^c} W^{2,r}(\RR^n) \xrightarrow[h]{h\op[h,x']{S^0_1(\RR^{n-1})}} W^{2,r}(\RR^n) \xrightarrow[h]{h(\id-J^+J){Z}^{-1}(1-\td{\rho}(hD'))J} W^{1,r}(\RR^n)\,,
	\]
	and thus $h(\id-J^+J){Z}^{-1}(1-\td{\rho}(hD'))Jh\mathrm{Op}_h(S^0_1(\RR^{n-1}))(\tilde{G}_\phi-\tilde{G}_\phi^c)$ is part of $R_4$, and
	\begin{align*}
		&L^{p'}(\RR^n) \xrightarrow[h^{-2}]{\td{G}_\phi^c} W^{2,p}(\RR^n) \xrightarrow[h]{h\op[h,x']{S^0_1(\RR^{n-1})}} W^{2,p}(\RR^n) \xrightarrow[h]{h(\id-J^+J){Z}^{-1}(1-\td{\rho}(hD'))J} W^{1,p}(\RR^n)\,,\\
		&L^{2}_\delta(\RR^n) \xrightarrow[h^{-1}]{\td{G}_\phi^c} H^{2}_\delta(\RR^n) \xrightarrow[h]{h\op[h,x']{S^0_1(\RR^{n-1})}} H^{2}_\delta(\RR^n) \xrightarrow[h]{h(\id-J^+J){Z}^{-1}(1-\td{\rho}(hD'))J} H^{1}_\delta(\RR^n)\,,
	\end{align*}
	so that $h(id-J^+J){Z}^{-1}(1-\td{\rho}(hD'))Jh\mathrm{Op}_h(S^0_1(\RR^{n-1}))\tilde{G}_\phi^c$ is part of $R_3$. 
	This concludes categorizing the first term in \cref{E1commutatorSplit} into $R_3$ and $R_4$.

	Now let us consider the terms with commutators in \cref{E1commutatorSplit}. 
	Recalling \cref{eq:Zneumann,eq:definitionofZinverse}, we have
	\begin{equation}\label{eq:zt}
		{Z}^{-1} = \op[h,x']{S^{-1}_1(\RR^{n-1})}+hM\,,
	\end{equation}
	where
	\begin{equation}\label{eq:ztm}
		M\colon W^{s,r}(\RR^{n-1}) \to W^{s+1,r}(\RR^{n-1})\,,\quad M \colon H^s_\delta(\RR^{n-1}) \to H^{s+1}_\delta(\RR^{n-1})\,,s\in\{0,1,2\}\,,
	\end{equation}
	so that \cref{eq:zt}, \cref{eq:ztm} and \cref{eq:defofA0E1E0} imply
	\begin{alignat}{3}
		[\td{E}_1,{Z}^{-1}] &\colon W^{1,r}(\RR^n) \to_h W^{1,r}(\RR^n)\,,&&\an [\td{E}_1,{Z}^{-1}] \colon H^1_\delta(\RR^n) \to_h H^1_\delta(\RR^n) \,. \label{eq:comme11}
	\shortintertext{Furthermore, by \cite[p.~116]{CT20},}
		[\td{E}_1,J^{-1}] &\colon L^r(\RR^n) \to_h L^r(\RR^n)\,,&&\an  [\td{E}_1,J^{-1}] \colon L^2_\delta(\RR^n) \to_h L^2_\delta(\RR^n) \label{eq:comme12}
	\shortintertext{and by the pseudo-differential calculus,} 
		[\td{E}_1,(1-\td{\rho}(hD'))J]&\colon W^{2,r}(\RR^n) \to_h W^{1,r}(\RR^n)\,,&&\an [\td{E}_1,(1-\td{\rho}(hD'))J] \colon H^2_\delta(\RR^n) \to_h H^1_\delta(\RR^n)\,, \label{eq:comme13}\\
		[\td{E}_1,J] &\colon W^{1,r}(\RR^n) \to_h L^r(\RR^n)\,,&&\an [\td{E}_1,J] \colon H^1_\delta(\RR^n) \to_h L^2_\delta(\RR^n)\,.
		\label{eq:comme14}
	\end{alignat}
	We perform the explicit estimate for the operator in \cref{E1commutatorSplit} that corresponds to the term in \cref{eq:comme13}: by \cref{eq:gphimaps} and \cref{mappingpropsofPb} we see that 
	\[
		L^r(\RR^n) \xrightarrow{\td{G}_\phi-\td{G}_\phi^c} W^{2,r}(\RR^n) \xrightarrow[h]{[\td{E}_1,(1-\td{\rho}(hD'))J]}  W^{1,r}(\RR^n) \xrightarrow[h]{h(\id-J^+J)Z^{-1}} W^{1,r}(\RR^n)\,,
	\]
	which can be sorted into the $R_4$ bucket,
	and 
	\[
		L^2_\delta(\RR^n) \xrightarrow[h^{-1}]{\td{G}_\phi^c} H^2_{\delta-1}(\RR^n) \xrightarrow[h]{[\td{E}_1,(1-\td{\rho}(hD'))J]}  H^1_{\delta-1}(\RR^n) \xrightarrow[h]{h(\id-J^+J)Z^{-1}} H^1_{\delta-1}(\RR^n)\,,
	\]
	and 
	\[
		L^{p'}(\RR^n) \xrightarrow[h^{-2}]{\td{G}_\phi^c} W^{2,p}(\RR^n) \xrightarrow[h]{[\td{E}_1,(1-\td{\rho}(hD'))J]}  W^{1,p}(\RR^n) \xrightarrow[h]{h(\id-J^+J)Z^{-1}}W^{1,p}(\RR^n)\,,	
	\]
	which we consider part of $R_3$.
	
	Proceeding in a similar fashion for \cref{eq:comme11,eq:comme12,eq:comme14} we see that applying any of the terms containing a commutator in \cref{E1commutatorSplit} to $\tilde{G}_\phi - \tilde{G}_\phi^c$ can be sorted into the $R_4$ bucket and applied to $\td{G}_\phi^c$ land these operators in $R_3$.
	\end{proof}

\begin{proof}[Proof of \cref{E0largeFreq} in \cref{largeFreqremainders}]
	Finally, we turn to the $h^2\tilde{E}_0$ term in \cref{eq:biglemmapart3eqv,eq:biglemmapart3eqf}. Recalling \cref{eq:defofA0E1E0} and \cref{mappingpropsofPb}, and splitting up $\td{G}_\phi$ as in \cref{prop43}, we have
	\[
		L^r(\RR^n) \xrightarrow{\tilde{G}_\phi-\tilde{G}_\phi^c} W^{2,r}(\RR^n) \xrightarrow{(1-\td{\rho}(hD'))J} W^{1,r}(\RR^n) \xrightarrow{(\id-J^+J)Z^{-1}} W^{1,r}(\RR^n) \xrightarrow[h^2]{h^2\tilde{E}_0} W^{1,r}(\RR^n)\,,
	\]
	so that we may take $R_6=h^2\tilde{E}_0(\id-J^+J)Z^{-1}(1-\td{\rho}(hD'))J(\tilde{G}_\phi-\tilde{G}_\phi^c)$.
	
	On the other hand, in a similar manner we see that
	\begin{align*}
&h^2\tilde{E}_0(\id-J^+J)Z^{-1}(1-\td{\rho}(hD'))J\tilde{G}_\phi^c\colon L^{p'}(\RR^n) \to W^{1,p}(\RR^n)\,,\\
&h^2\tilde{E}_0(\id-J^+J)Z^{-1}(1-\td{\rho}(hD'))J\tilde{G}_\phi^c\colon L^2_{\delta}(\RR^n)\to_{h} H^1_{\delta-1}(\RR^n)\,,
	\end{align*} 
	which we call $R_5$.
	
	Furthermore, using reasoning similar to that of \cref{eq:extfworks},
	\[
		h^2\tilde{E}_0(\id-J^+J)Z^{-1}(1-\td{\rho}(hD'))\mathcal{E} \colon L^2(\RR^{n-1}) \to_{h^2} H^1(\RR^n)\,,
	\]
	which becomes $hR_{c,3}$.
\end{proof}

\subsection{Combined Parametrix}\label{sec:combine}

Recall the definition of $\td{\Omega}$ and $\td{\Gamma}\subset \{x_n = 0\}$ from \cref{sec:setup}, $P_l$ from \cref{eq:definitionofPl}, and $P_s$ from \cref{eq:defofPs} and the notation that $\itdo v \in L^{p'}(\RR^n)$ is the trivial extension of any $v\in L^{p'}(\td{\Omega})$.

\begin{proposition}\label{lemm:deltahittingparas}
	Let $v\in L^{p'}(\td{\Omega})$ and $f\in L^2(\td{\Gamma})$. 
	
	If we define the operator
	\begin{equation}\label{eq:defofProp}
		P_r(v,f) \coloneqq (P_l+P_s+B_l)\itdo v + \left(B_c (1+\abs{K}^2)^{1/2} - \op[h,x']{\ell}(1+\abs{K}^2)^{-1/2}\right)f\,,
	\end{equation}
	then in the sense of distributions on $\td{\Omega}$ in the first component,
	\begin{equation}\label{eq:PrinvertsT}
		\begin{pmatrix}
			\itdo h^2\td{\Delta}_\phi \\
			\tbdry[\td{\Gamma}]
		\end{pmatrix}P_r (v,f) = (\id + R_r + hR_r')(v,f)  
	\end{equation}	
	where $R_r = (R_{r,1}, R_{r,2})$ and $R_r' = (R_{r,1}', R_{r,2}')$, and
		\begin{align}
			\notag R_{r,1}&\colon L^{p'}(\td{\Omega})\times L^2(\td{\Gamma})\to_{h^0} L^2(\td{\Omega})\,,\quad &R_{r,2}&\colon L^{p'}(\td{\Omega})\times L^2(\td{\Gamma})\to_{h^0} L^2(\td{\Gamma})\,,\\
			\label{eq:R_rs}R_{r,1}&\colon L^2(\td{\Omega})\times L^2(\td{\Gamma})\to_{h} L^2(\td{\Omega})\,,\quad &R_{r,2}&\colon L^2(\td{\Omega})\times L^2(\td{\Gamma})\to_{h} L^2(\td{\Gamma})\,,\\
			\notag R_{r,1}'&\colon L^s(\td{\Omega})\times L^2(\td{\Gamma})\to_{h^0} L^s(\td{\Omega})\,,\quad &R_{r,2}'&\colon L^s(\td{\Omega})\times L^2(\td{\Gamma})\to_{h^0} L^2(\td{\Gamma})\,,\quad  s\in \{p',2\}\,.
	\end{align}

	Furthermore,
	\[
		\itdo P_r \colon L^{p'}(\td{\Omega}) \times L^2(\td{\Gamma}) \to_{h^{-2}} L^{p}(\td{\Omega}) \cap H^1(\td{\Omega})\,,\an \itdo P_r \colon L^{2}(\td{\Omega}) \times L^2(\td{\Gamma}) \to_{h^{-1}} L^{p}(\td{\Omega}) \cap H^1(\td{\Omega})
	\]
	with bounds
	\begin{alignat}{5}
	&\norm{\itdo P_r(v,f)}_{L^p}&&\lesssim h^{-2}\norm{v}_{L^{p'}} &+& h^{-1}&&\norm{f}_{L^2} \label{Prmapprops1}\\
	&\norm{\itdo P_r(v,f)}_{H^1}&&\lesssim h^{-1}\norm{v}_{L^{2}} &+& &&\norm{f}_{L^2}\label{Prmapprops2}\,.
	\end{alignat}
\end{proposition}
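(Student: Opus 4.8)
The plan is to apply the column operator $\begin{pmatrix}\itdo h^2\td{\Delta}_\phi \\ \tbdry[\td{\Gamma}]\end{pmatrix}$ to the definition \eqref{eq:defofProp} of $P_r$ summand by summand, read off the leading behaviour of each summand from the results already proved, collect every contribution other than the identity into $R_r$ (the $O(1)$ errors) and $hR_r'$ (the $O(h)$ errors), and finally assemble \eqref{Prmapprops1}--\eqref{Prmapprops2} from the mapping properties of the same building blocks. Everywhere I use that $\td\Omega,\td\Gamma$ are bounded, so that there the weighted spaces $L^2_\delta,H^1_{\delta-1}$ reduce to $L^2,H^1$ and $L^p\hookrightarrow L^2\hookrightarrow L^{p'}$, and that $\itdo=\itdo I_+$ since $\td\Omega\subset\RR^n_+$.

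\emph{First row.} By construction $P_l+P_s$ is the interior parametrix of \cite{CT20}: combining \cref{lemmaOpNoBoundary}, which gives $\itdo h^2\td\Delta_\phi P_s\itdo v=\itdo\td\rho(hD')\itdo v+h\itdo R_s\itdo v$, with the large--frequency identity $\itdo h^2\td\Delta_\phi P_l\itdo v=\itdo(1-\td\rho(hD'))\itdo v+(\mathrm{error})v$ from \cite{CT20}, and using $\itdo\itdo v=v$ in $\mathcal D'(\td\Omega)$, yields $\itdo h^2\td\Delta_\phi(P_l+P_s)\itdo v=v+(\mathrm{error})v$; the $O(1)$ part of the error ($L^{p'}(\td\Omega)\to L^2(\td\Omega)$, $L^2(\td\Omega)\to_h L^2(\td\Omega)$) goes to $R_{r,1}$, its $O(h)$ part ($L^s\to_{h^0}L^s$) to $R_{r,1}'$. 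The $B_l$ term contributes $(R_m+hR_m')v$ by \eqref{mapofPlfwithDelta1}, distributed between $R_{r,1}$ and $R_{r,1}'$ via \eqref{Plfremainderprops}. Applying \eqref{mapofPlfwithDelta2} to $(1+\abs{K}^2)^{1/2}f$ in place of $f$ gives $hR_c(1+\abs{K}^2)^{1/2}f$, and \eqref{eq:remainderforsmall} gives $\itdo h^2\td\Delta_\phi\op[h,x']{\ell}(1+\abs{K}^2)^{-1/2}f=h\itdo R_\ell(1+\abs{K}^2)^{-1/2}f$; both are $O(h)$ and $f$--dependent, so after restricting $C(\RR_+;L^s(\RR^{n-1}))$ to $\td\Omega$ they land in $R_{r,1}'$ as well.

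\emph{Second row (the main difficulty).} By \cref{lemmaOpNoBoundary}, $\tbdry P_s\itdo v=0$. The key point is that, by design, the leading boundary term of $P_l$ in \eqref{traceofpl} is cancelled by the leading boundary term of $B_l$ in \eqref{Plfboundaryprops}, so that $\tbdry(P_l+B_l)\itdo v=h(R_{bl}'+R_{bl}'')v$, which by \eqref{eq:bdryremainBlBc} -- using that $\td\rho_1$ and $1-\td\rho$ have disjoint supports, so the trace remainders gain enough regularity to restrict to $\td\Gamma$ within $L^2(\td\Gamma)$ -- is an $O(h)$ contribution to $R_{r,2}'$. For the data, \eqref{Plfboundarypropscpart} and \eqref{boundaryofsmall} give
\[
\tbdry\bigl(B_c(1+\abs{K}^2)^{1/2}-\op[h,x']{\ell}(1+\abs{K}^2)^{-1/2}\bigr)f=(1+\abs{K}^2)^{-1/2}(1-\td\rho(hD'))(1+\abs{K}^2)^{1/2}f+(1+\abs{K}^2)^{1/2}\td\rho(hD')(1+\abs{K}^2)^{-1/2}f+h(\cdots)f,
\]
and commuting $\td\rho(hD')$ past $(1+\abs{K}^2)^{\pm1/2}$ modulo $O(h)$ (semiclassical calculus) collapses the two leading terms to $(\id-\td\rho(hD'))f+\td\rho(hD')f=f$. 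Restricting to $\td\Gamma$ yields $f$ plus an $O(h)$ term in $R_{r,2}'$, so \eqref{eq:PrinvertsT} holds with the remainder properties \eqref{eq:R_rs}.

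\emph{Mapping bounds.} Here $\itdo P_l\itdo$ maps $L^{p'}(\td\Omega)\to_{h^{-2}}L^p(\td\Omega)$ and $L^2(\td\Omega)\to_{h^{-1}}H^1(\td\Omega)$ by \cref{prop43}; $\itdo P_s\itdo$ carries $L^{p'}(\td\Omega)$ into $W^{2,p'}(\td\Omega)$ and $L^2(\td\Omega)$ into $W^{2,2}(\td\Omega)$ (its symbol $\td\rho/p$ is of order $-2$, $p$ being elliptic on $\supp\td\rho$), hence into $L^p(\td\Omega)$ and $H^1(\td\Omega)$; $B_l$ has the bounds \eqref{mappropsofPdLpMpart}; $B_c(1+\abs{K}^2)^{1/2}\colon L^2(\RR^{n-1})\to_{h^0}H^1(\RR^n)$ by \eqref{mappropsofPdLpCpart}; and $\op[h,x']{\ell}(1+\abs{K}^2)^{-1/2}$ carries $L^2(\RR^{n-1})$ into $W^{2,p'}(\td\Omega)\cap H^1(\td\Omega)$ by \eqref{eq:ellsemiclass}. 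The $L^p$--bounds follow from these via the (semiclassical) Sobolev embedding $H^1\hookrightarrow L^p$, valid for $n\ge3$ and costing at most a factor $h^{-1}$, which is exactly what is permitted by the $h^{-2}\norm{v}_{L^{p'}}$ and $h^{-1}\norm{f}_{L^2}$ in \eqref{Prmapprops1}, while the $H^1$--bounds add up directly to give \eqref{Prmapprops2}. The main obstacle throughout is the second--row cancellation: one must check that the multiplicative prefactors $(1+\abs{K}^2)^{\pm1/2}$ in \eqref{traceofpl}, \eqref{Plfboundaryprops}, \eqref{Plfboundarypropscpart}, \eqref{boundaryofsmall} line up so that the leading traces of $P_l$ and $B_l$ exactly annihilate and those of $B_c$ and $\op[h,x']{\ell}$ combine to the identity modulo $O(h)$, which rests on \eqref{boundaryofPb}, the disjoint--support estimate \cref{cor:sandwich}, and commutator bounds from the semiclassical calculus; the rest is bookkeeping -- routing each error into the correct one of $R_{r,1},R_{r,1}',R_{r,2},R_{r,2}'$ according to whether its input lies in $L^2$ or $L^{p'}$ and whether a factor of $h$ is available.
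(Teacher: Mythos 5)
Your proposal is correct and follows essentially the same route as the paper: apply the two rows of the operator term by term, invoke \cref{eq:Psprops}, \cref{traceofpl}, \cref{propMappingOfNewPthing}, \cref{mappingPropsoft} and the $P_l,P_s$ results of \cite{CT20}, exploit the designed cancellation of the leading boundary traces of $P_l$ and $B_l$ and the recombination $(1-\td\rho(hD'))+\td\rho(hD')=\id$ for the data terms, and then assemble the mapping bounds from the same building blocks. Your explicit remark that $\td\rho(hD')$ must be commuted past $(1+\abs{K}^2)^{\pm1/2}$ at an $O(h)$ cost is a step the paper absorbs silently, and your slightly different routing of the $B_l$ trace remainder entirely into $R_{r,2}'$ is an equally valid bookkeeping choice.
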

\begin{proof}
	Fix $v \in L^{p'}(\td{\Omega})$, and $f\in L^2(\td{\Gamma})$ where we interpret $f\in L^2(\RR^{n-1})$ by trivial extension. We begin by proving \cref{eq:PrinvertsT} with the estimates stated in \eqref{eq:R_rs}.

	According to \cite[Prop.~5.2]{CT20}, we have
	\begin{equation}\label{eq:Rlprops}
		{\small\begin{aligned}
			\itdo h^2 \td{\Delta}_\phi P_l = (1-\td{\rho}(hD')) + R_l + hR_l' \quad \text{where}\quad R_l &\colon L^2(\td{\Omega}) \to_h L^2(\td{\Omega})\,,\quad R_l \colon L^{p'}(\td{\Omega}) \to_{h^0} L^2(\td{\Omega})\,,\\
		R_l'&\colon L^r(\td{\Omega}) \to_{h^0} L^r(\td{\Omega})\,,\quad 1<r<\infty\,,
\end{aligned}}
\end{equation}
	Furthermore, \cite[Prop.~5.6]{CT20} states that
	\begin{equation}\label{eq:Rsprops}
		\itdo h^2 \td{\Delta}_\phi P_s = \td{\rho}(hD') + hR_s\,,\quad\text{where}\quad R_s \colon L^r(\td{\Omega}) \to_{h^0} L^r(\td{\Omega})\,,\quad 1<r<\infty\,.
	\end{equation}
By \cref{mapofPlfwithDelta1} we have that 
\begin{equation}\label{eq: Delta of Bl}
	\begin{aligned}
\itdo h^2\tilde{\Delta}_\phi B_l = R_{m}+hR_{m}'\quad\text{with}\quad R_{m} &\colon L^2(\td{\Omega})\to_h L^2(\td{\Omega})\,, \quad R_{m} \colon L^{p'}(\td{\Omega}) \to_{h^0} L^2(\td{\Omega})\,, \\
		R_{m}' &\colon L^r(\td{\Omega})\to_{h^0} L^r(\td{\Omega}) ,\quad 1<r<\infty\,.
	\end{aligned}
\end{equation}
	
	Adding \eqref{eq: Delta of Bl}, \eqref{eq:Rsprops}, and \eqref{eq:Rlprops}, we see that, as an element of $\mathcal D(\tilde \Omega)$,
	\begin{equation}
		\itdo h^2\td{\Delta}_\phi\left[P_l+P_s+B_l\right]\itdo v =(1+R_l+hR_l'+hR_s+R_m+hR_m')v\,,\label{eq:deltaonpint}
	\end{equation}
	and using \cref{mapofPlfwithDelta2} and \cref{eq:remainderforsmall},
	\begin{equation}
	\itdo h^2\td{\Delta}_\phi\left[B_c(1+\abs{K}^2)^{1/2} - \op[h,x']{\ell}(1+\abs{K}^2)^{-1/2}\right]f =h(R_c(1+\abs{K}^2)^{1/2}-\itdo R_{\ell}(1+\abs{K}^2)^{-1})f\,.\label{eq:deltaonpbdry}
	\end{equation}
	
	Now adding \eqref{eq:deltaonpint} and \eqref{eq:deltaonpbdry} we get 
	\begin{equation}\label{eq:deltapr}
		\itdo h^2\td{\Delta}_\phi P_r(v,f) = v + R_{r,1}(v,f) + hR_{r,1}'(v,f)\,.
	\end{equation}
	where 
	\begin{equation*}
		R_{r,1}(v,f)\coloneqq (R_l+R_m)v\,,\quad R_{r,1}'(v,f)\coloneqq (R_l'+R_m'+R_s)v+ (R_c(1+\abs{K}^2)^{1/2}-\itdo R_\ell(1+\abs{K}^2)^{-1/2})f\,,
	\end{equation*}
	satisfy the estimates stated in \eqref{eq:R_rs}.

Now we compute the action of $\tbdry[\td{\Gamma}]$ on each of the five terms appearing in $P_r$. We observe that by \cref{traceofpl}, \cref{Plfboundaryprops}, and \cref{eq:Psprops} respectively
\begin{alignat*}{2}
	&\tbdry[\td{\Gamma}] P_lv& &= -(1+\abs{K}^2)^{1/2}(1-\td{\rho}(hD'))\tau_{\td{\Gamma}} J\td{G}_\phi v\\
	&\tbdry[\td{\Gamma}] B_lv& &= (1+\abs{K}^2)^{1/2}(1-\td{\rho}(hD'))\tau_{\td{\Gamma}} J\td{G}_\phi v + h(R_{bl}'+R_{bl}'')v \\
	&\tbdry[\td{\Gamma}] P_sv &&= 0\,,
\end{alignat*}
where in the last line we used that $\supp v \subset \RR^n_+$. 
Combining these equations, we get, denoting by $1_{\td{\Gamma}}$ the indicator of $\td{\Gamma}$,
	\begin{align}
	\label{bigmess}
	\tbdry[\td{\Gamma}]\left(P_l+P_s+B_l\right)v = h1_{\td{\Gamma}}(R_{bl}'+R_{bl}'')v\,.
	\end{align}
where $R_{bl}'$ and $R_{bl}''$ satsifies the estimates given in \eqref{eq:bdryremainBlBc}. Finally, we invoke \cref{boundaryofsmall} and \cref{Plfboundarypropscpart} so that
	\begin{align*}
		\tbdry[\td{\Gamma}] B_c (1+\abs{K}^2)^{1/2} &= 1_{\td{\Gamma}}(1-\td{\rho}(hD')) + h 1_{\td{\Gamma}}R_{bc}(1+\abs{K}^2)^{1/2}\,,\\
		\tbdry[\td{\Gamma}] \op[h,x']{\ell}(1+\abs{K}^2)^{-1/2} &= -1_{\td{\Gamma}}\td{\rho}(hD')+ h1_{\td{\Gamma}}R_b(1+\abs{K}^2)^{-1/2}\,
	\end{align*}
for  
\begin{equation}
\label{eq:Rb and Rbc estimates}
\begin{aligned}
	R_b &\colon L^r(\RR^{n-1}) \to_{h^0} L^r(\RR^{n-1})\quad\text{for}\quad 1<r<\infty\,,\\
	R_{bc} &\colon L^2(\RR^{n-1}) \to_{h^0} L^2(\RR^{n-1})\,.
\end{aligned}
\end{equation}
Adding both of the above equations to \eqref{bigmess} we obtain
\begin{equation}\label{eq:bdrypr}
		\tbdry[\td{\Gamma}] P_r(v,f) = f + R_{r,2}(v,f) + hR_{r,2}'(v,f)\,.
\end{equation}
where
	\[
		R_{r,2}(v,f) = h1_{\td{\Gamma}}R_{bl}'v\,,\an R_{r,2}'(v,f) = 1_{\td{\Gamma}}R_{bl}''v + (1_{\td{\Gamma}}R_{bc}(1+\abs{K}^2)^{1/2} - 1_{\td{\Gamma}}R_b(1+\abs{K}^2)^{-1/2})f.
	\]

	Using \eqref{eq:bdryremainBlBc} to estimate $1_{\td{\Gamma}}R_{bl}'v$ and $1_{\td{\Gamma}}R_{bl}''v$ (where $1_{\td{\Gamma}}R_{bl}' \colon L^2(\td{\Omega}) \to L^2(\td{\Gamma})$ since $\td{\Gamma}$ is bounded) and using \eqref{eq:Rb and Rbc estimates} to estimate $R_b$ and $R_{bc}$ we obtain the estimates stated in \eqref{eq:R_rs}. So combining \eqref{eq:bdrypr} with \eqref{eq:deltapr} we arrive at \eqref{eq:PrinvertsT} with the appropriate remainder estimates for $R_r$ and $R_r'$.

We begin now with the proof of \cref{Prmapprops1,Prmapprops2}. According to \cite[Prop.~5.1]{CT20} we have
	\begin{equation}\label{eq:mapprop1}
		\itdo P_l \colon L^{p'}(\RR^n) \to_{h^{-2}} L^p(\td{\Omega}) \cap H^1(\td{\Omega})\,,\an\itdo P_l \colon L^2(\RR^n) \to_{h^{-1}} H^1(\td{\Omega}) \hookrightarrow_{h^{-1}} L^p(\td{\Omega})\,,
	\end{equation}
	where these latter estimates are unweighted due to the fact they are only considered on $\td{\Omega}$. Furthermore, \cite[Prop.~5.6]{CT20} gives 
	\begin{equation}
	\begin{aligned}\label{eq:mapprop2}
		\itdo P_s &\colon L^{p'}(\RR^n) \to W^{2,p'}(\td{\Omega}) \hookrightarrow_{h^{-1}} H^1(\td{\Omega}) \hookrightarrow_{h^{-1}} L^p(\td{\Omega})\,, \\
		\itdo P_s &\colon L^2(\RR^n) \to H^2(\td{\Omega}) \hookrightarrow_{h^{-1}} L^p(\td{\Omega})\,.
	\end{aligned}
\end{equation}
Combining \cref{eq:mapprop1}, \cref{eq:mapprop2}, and \cref{mappropsofPdLpMpart}, 
	\begin{align*}
&\itdo\left(P_l+P_s+B_l\right)\itdo \colon L^{p'}(\td{\Omega}) \to_{h^{-2}} L^p(\td{\Omega}) \cap H^1(\td{\Omega})\,,\quad \itdo\left(P_l+P_s+B_l\right)\itdo\colon L^2(\td{\Omega}) \to_{h^{-1}} H^1(\td{\Omega})\,.
	\end{align*}
	Now due to \cref{mappropsofPdLpCpart} and \cref{eq:ellsemiclass}, we have
	\[
		\itdo B_c \colon L^{2}(\RR^{n-1}) \to H^1(\td{\Omega}) \hookrightarrow_{h^{-1}} L^p(\td{\Omega})\,,\quad \itdo\op{\ell} \colon L^{2}(\RR^{n-1}) \to H^1(\td{\Omega}) \hookrightarrow_{h^{-1}} L^p(\td{\Omega})\,,
	\]
	which concludes the proof of \cref{Prmapprops1,Prmapprops2}.
\end{proof}

\section{Green's Function}\label{sec:nonlinearR}

\numberwithin{equation}{section}

Recall the definitions of $\Omega,\Gamma$ and $\td{\Gamma}$ from \cref{sec:intro,changeofVars}, and let $\gamma^\ast$ and $(\gamma^{-1})^\ast$ denote the pull-backs by the change of variable $\gamma$ from \cref{eq:defofgamma} and its inverse. Let $\tau_{\Gamma}$ and $\tau_{\td{\Gamma}}$ be the traces onto $\Gamma$ and $\td{\Gamma}$ respectively.

In this section we will turn the parametrix constructed in \cref{lemm:deltahittingparas} into an exact Green's function $G_\Gamma$ for the boundary value problem 
\[
	h^2\Delta_\phi G_\Gamma(v,f) = v\,,\an \tau_{\Gamma}(\del_\nu + \del_\nu\phi) G_\Gamma(v,f) = -f\,,
\]
which is the content of the following proposition, the proof of which follows \cite[Prop.~6.1]{CT20}. As a consequence we will also prove \cref{thm:G}.

\begin{proposition}\label{GnoGlue}
	There is a linear map $G_{\Gamma}: L^{p'}(\Omega)\times L^2(\Gamma) \to L^p(\Omega) \cap H^1(\Omega)$ satisfying,
	\begin{align}\label{Ggammaisinverse}
		\langle u, h^2\Delta_\phi G_{\Gamma} (v,f)\rangle &= \langle u,v\rangle \qquad \forall u\in C_c^\infty(\Omega)\,,
		\shortintertext{and} 
		\label{Ggammaboundary}
		\tau_{\Gamma}(\del_\nu + \del_\nu\phi) G_{\Gamma}(v,f) &= -f\,.
	\end{align}
	Furthermore, 
	\[
		G_\Gamma\colon L^{p'}(\Omega)\times L^2(\Gamma) \to_{h^{-2}} L^p(\Omega) \cap H^1(\Omega)\,,\quad\text{and}\quad G_\Gamma\colon L^{2}(\Omega)\times L^2(\Gamma) \to_{h^{-1}} H^1(\Omega)\,,
	\]
	with bounds
	\begin{alignat}{5}
	&\norm{G_{\Gamma}(v,f)}_{L^p}&&\lesssim h^{-2}\norm{v}_{L^{p'}} &+& h^{-1}&&\norm{f}_{L^2} \label{Ggammamapprops1}\\
	&\norm{G_{\Gamma}(v,f)}_{H^1}&&\lesssim h^{-1}\norm{v}_{L^{2}} &+& &&\norm{f}_{L^2}\label{Ggammamapprops2}\,.
	\end{alignat}
	Finally, 
	\begin{align}
		(\gamma^{-1})^\ast &\circ G_{\Gamma}(v,f) = P_r(\id+R)(\td{v},-\td{f})\,,\label{Ggammatransform}
	\end{align}
	where $P_r$ is the operator defined in \cref{eq:defofProp} and
	\begin{alignat}{2}
		R &\colon L^2(\td{\Omega})\times L^2(\td{\Gamma}) \to_{h} L^2(\td{\Omega})\times L^2(\td{\Gamma})\,,&\an R&\colon L^{p'}(\td{\Omega})\times L^2(\td{\Gamma}) \to_{h^0} L^{2}(\td{\Omega})\times L^2(\td{\Gamma})\,.\label{GgammaRprops}
	\end{alignat}
\end{proposition}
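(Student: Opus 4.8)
The plan is to upgrade the parametrix $P_r$ from \cref{lemm:deltahittingparas} into an exact Green's function by a Neumann-series argument, and then transport everything back to $\Omega$ via the change of variables $\gamma$. First I would work in the flattened picture on $\td{\Omega}$. By \cref{eq:PrinvertsT} the operator
\[
	\begin{pmatrix}
		\itdo h^2\td{\Delta}_\phi \\ \tbdry[\td{\Gamma}]
	\end{pmatrix} P_r = \id + R_r + hR_r'
\]
where, by the estimates \eqref{eq:R_rs}, the operator $R_r + hR_r'$ maps $L^{p'}(\td{\Omega})\times L^2(\td{\Gamma})$ boundedly (uniformly in $h$) into $L^2(\td{\Omega})\times L^2(\td{\Gamma})$, and maps $L^2(\td{\Omega})\times L^2(\td{\Gamma})$ into itself with norm $\mathcal{O}(h)$. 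Hence for $h$ small enough $\id + R_r + hR_r'$ is invertible on $L^2(\td{\Omega})\times L^2(\td{\Gamma})$ with inverse $\id + R$, $R := -(R_r+hR_r')(\id+R_r+hR_r')^{-1}$, and $R$ satisfies precisely \eqref{GgammaRprops}: it is $\mathcal{O}(h)$ on $L^2\times L^2$, and since $R_r+hR_r'$ already maps $L^{p'}\times L^2 \to L^2\times L^2$ boundedly while $(\id+R_r+hR_r')^{-1}$ is bounded on $L^2\times L^2$, the composite is bounded $L^{p'}\times L^2 \to L^2\times L^2$ uniformly in $h$. Then I set, for $(\td v,\td f)\in L^{p'}(\td{\Omega})\times L^2(\td{\Gamma})$,
\[
	\td{G}_{\td{\Gamma}}(\td v, \td f) := P_r(\id+R)(\td v, \td f)\,,
\]
which by \eqref{Prmapprops1}, \eqref{Prmapprops2} together with the mapping properties of $\id + R$ lands in $L^p(\td{\Omega})\cap H^1(\td{\Omega})$ with exactly the bounds \eqref{Ggammamapprops1}, \eqref{Ggammamapprops2} (noting $L^p\hookrightarrow L^2$ on the bounded set $\td{\Omega}$, so feeding an $L^{p'}$ input through $\id+R$ produces an $L^2$-input to $P_r$, which is the reason the $h^{-2}$ and $h^{-1}$ powers come out right).

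Next I would transfer back to $\Omega$. Define $G_\Gamma(v,f) := \gamma^\ast \circ \td{G}_{\td{\Gamma}}((\gamma^{-1})^\ast v, -(\gamma^{-1})^\ast f) = \gamma^\ast \circ P_r(\id+R)(\td v, -\td f)$, which is \eqref{Ggammatransform}. The identity \eqref{Ggammaisinverse} follows because $\itdo h^2\td{\Delta}_\phi \td{G}_{\td{\Gamma}}(\td v,\td f) = \td v$ as a distribution on $\td{\Omega}$ by construction, and then unwinding the conjugation identities from \cref{changeofVars}: for $u\in C_c^\infty(\Omega)$, $\langle u, h^2\Delta_\phi G_\Gamma(v,f)\rangle$ pulls back to $\langle \td u\,|\det D\gamma^{-1}|, h^2\td{\Delta}_\phi \td{G}_{\td{\Gamma}}(\td v,-\td f)\rangle$ up to the Jacobian bookkeeping built into the definition $\td{\Delta}_\phi = (\gamma^{-1})^\ast\circ\Delta_\phi\circ\gamma^\ast$; since $\det D\gamma = 1$ here (the map $\gamma$ is a shear, $(y',y_n)\mapsto(y',y_n-g(y'))$), the Jacobian is trivial and we land on $\langle \td u, \td v\rangle = \langle u, v\rangle$. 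For the boundary identity \eqref{Ggammaboundary}, I use \eqref{eq:bdrypr}: $\tbdry[\td{\Gamma}] P_r(w) = w_2 + (R_{r,2}+hR_{r,2}')(w)$ in the second slot, so $\tbdry[\td{\Gamma}]\td{G}_{\td{\Gamma}}(\td v,-\td f) = \tbdry[\td{\Gamma}] P_r(\id+R)(\td v,-\td f)$; writing $(\id+R)(\td v,-\td f) = (\td v, -\td f) + R(\dots)$ and using that $(\id + R_r+hR_r')(\id+R) = \id$ so that the full operator $\begin{pmatrix}\itdo h^2\td\Delta_\phi\\\tbdry[\td\Gamma]\end{pmatrix}\td G_{\td\Gamma}$ is exactly the identity, the second component gives $\tbdry[\td{\Gamma}]\td{G}_{\td{\Gamma}}(\td v,-\td f) = -\td f$; finally \cref{eq:boundarycalcwithgamma} translates $\tbdry[\td{\Gamma}]$ back to $\tau_\Gamma(h\del_\nu + \del_\nu\phi)$ on $\Omega$, yielding \eqref{Ggammaboundary}.

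The main obstacle is keeping the two different function-space scales consistent through the Neumann inversion: the remainders $R_r, hR_r'$ are only genuinely small ($\mathcal{O}(h)$) on $L^2(\td{\Omega})\times L^2(\td{\Gamma})$, not on $L^{p'}\times L^2$, so one cannot simply invert on $L^{p'}\times L^2$. The fix, which I would spell out carefully, is to invert $\id+R_r+hR_r'$ on $L^2\times L^2$ only, then observe that because $R_r+hR_r'$ is bounded from $L^{p'}\times L^2$ into $L^2\times L^2$ (by \eqref{eq:R_rs}), the operator $R = -(R_r+hR_r')(\id+R_r+hR_r')^{-1}$ inherits that mixed-scale boundedness; composing with $P_r$, whose $L^{p'}\to L^p$ bound is $h^{-2}$ while its $L^2\to L^p$ and $L^2\to H^1$ bounds are $h^{-1}$, then reproduces the asymmetric bounds in the statement. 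A secondary point requiring care is the passage of the distributional identity $\itdo h^2\td{\Delta}_\phi \td{G}_{\td\Gamma} = \id$ and the trace identity through the change of variables — these are routine given the explicit conjugation formulas \cref{eq:boundarycalcwithgamma} and the fact that $\gamma$ is volume-preserving, but one should check that $P_r(\id+R)$ actually produces an element of $H^\sharp_\Delta$-type regularity ($H^1$ with $\Delta$ applied landing in $L^{p'}$) so that the traces in \eqref{Ggammaboundary} and the pairing in \eqref{Ggammaisinverse} make sense; this follows from the $L^p\cap H^1$ mapping property already established together with $\itdo h^2\td{\Delta}_\phi \td{G}_{\td\Gamma}(\td v,-\td f) = \td v \in L^{p'}$.
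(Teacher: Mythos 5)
Your overall route is the same as the paper's (Neumann-invert the parametrix error from \cref{lemm:deltahittingparas}, define $G_\Gamma$ by pulling $P_r(\id+R)$ back through $\gamma$, and read off the identities and bounds), and the change-of-variables, trace, and mapping-bound bookkeeping at the end is fine. The genuine gap is in the construction of $R$ itself, i.e.\ exactly at the mixed-scale point you flag as ``the main obstacle''. Your fix does not work as written, for two reasons. First, the claim that $R_r+hR_r'$ maps $L^{p'}(\td\Omega)\times L^2(\td\Gamma)\to L^2(\td\Omega)\times L^2(\td\Gamma)$ is not what \cref{eq:R_rs} gives: $R_{r,1}'$ applied to $L^{p'}$-data lands only in $L^{p'}(\td\Omega)$, so $hR_r'$ preserves the $L^{p'}\times L^2$ scale rather than improving it, and one application of the full error does not put you on the $L^2$ scale. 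Second, and independently, your formula $R=-(R_r+hR_r')(\id+R_r+hR_r')^{-1}$ has the inverse acting \emph{first}; since you only constructed $(\id+R_r+hR_r')^{-1}$ on $L^2\times L^2$ (where the error is $\mathcal O(h)$), the composite is simply not defined on the data $(\td v,-\td f)\in L^{p'}(\td\Omega)\times L^2(\td\Gamma)$ that you must feed it in \cref{Ggammatransform}, and reversing the factors does not help because of the first point. So as written you have neither an inverse of $\id+R_r+hR_r'$ on the $L^{p'}$ scale nor the mapping property $R\colon L^{p'}\times L^2\to_{h^0}L^2\times L^2$ required by \cref{GgammaRprops}.

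The repair is the paper's two-step inversion. Since $R_r'$ is bounded uniformly in $h$ on both $L^{p'}(\td\Omega)\times L^2(\td\Gamma)$ and $L^2(\td\Omega)\times L^2(\td\Gamma)$, the operator $S\coloneqq(\id+hR_r')^{-1}$ exists and is $\mathcal O(1)$ on both scales for small $h$ (\cref{eq:defofS}). Then $(\id+R_r+hR_r')S=\id+R_rS$, where $R_rS\colon L^{p'}\times L^2\to_{h^0}L^2\times L^2$ and $R_rS\colon L^2\times L^2\to_h L^2\times L^2$; the Neumann series for $(\id+R_rS)^{-1}$ now converges when applied to $L^{p'}\times L^2$ data because the first application of $R_rS$ lands in $L^2\times L^2$ and every further application contracts by $\mathcal O(h)$ (\cref{neumanninversetwostep}). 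Setting $R\coloneqq S(\id+R_rS)^{-1}-\id$ then gives precisely \cref{GgammaRprops}, and the rest of your argument goes through. (Equivalently, one can note that $(R_r+hR_r')^2$ is $\mathcal O(h)$ on $L^{p'}\times L^2$ and pair the terms of the Neumann series, but some such two-scale argument must be supplied; the single $L^2$-based inversion is not enough.)
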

\begin{proof}
	Let $v \in L^{p'}(\Omega)$ and $f\in L^2(\Gamma)$ be fixed. Put $\td{v} \coloneqq (\gamma^{-1})^\ast v \in L^{p'}(\td{\Omega})$ and $\td{f} \coloneqq 1_{\td{\Gamma}} (\gamma^{-1})^\ast f \in L^2(\td{\Gamma})$.

	Note that according to \cref{eq:PrinvertsT} we have
	\begin{equation}\label{eq:deltahittingpr}
		 \begin{pmatrix}
			\itdo h^2\td{\Delta}_\phi \\
			\tbdry[\td{\Gamma}]
		\end{pmatrix} P_r(\td{v},\td{f}) = (\id + R_r + hR_r')(\td{v},\td{f}),	
	\end{equation}
	with $R_r, R_r'$ satisfying the estimates given in \cref{eq:R_rs}, which in particular means that
	\begin{equation}\label{eq:Rr}
			R_{r}\colon L^{p'}(\td{\Omega})\times L^2(\td{\Gamma})\to_{h^0} L^2(\td{\Omega})\times L^2(\td{\Gamma})\,,\quad R_{r}\colon L^2(\td{\Omega})\times L^2(\td{\Gamma})\to_{h} L^2(\td{\Omega})\times L^2(\td{\Gamma})\,.
	\end{equation}
	We begin by removing the remainder $R_r'$ in \cref{eq:deltahittingpr}. Let 
	\begin{equation}\label{eq:defofS}
		S \coloneqq (\id+hR_r')^{-1}\colon L^s(\td{\Omega})\times L^2(\td{\Gamma})\to_{h^0} L^s(\td{\Omega})\times L^2(\td{\Gamma})\,,\quad s\in\{p',2\}
	\end{equation}
	so that 
	\begin{equation}\label{eq:righthasinverse}
		(\id+R_r+hR_r')S(\td{v},\td{f}) = (\id+R_rS)(\td{v},\td{f})
	\end{equation}
	and the right-hand side of \cref{eq:righthasinverse} has the inverse 
	\begin{equation}\label{neumanninversetwostep}
		(\id+R_rS)^{-1}(\td{v},\td{f}) = (\td{v},\td{f}) - \sum_{k\geq 0}(-R_rS)^kR_rS(\td{v},\td{f}) \in L^{p'}(\td{\Omega})\times L^2(\td{\Gamma})\,,
	\end{equation}
	which is well-defined because of \cref{eq:Rr}. This means that
	\begin{equation}\label{eq:sinvert}
		(\id + R_r + hR_r')S(\id+R_rS)^{-1}(\td{v},\td{f})= (\td{v},\td{f})\,.
	\end{equation}
	and from \cref{eq:deltahittingpr} we have that 
	\begin{equation}\label{eq:inverse of matrix laplace'}\begin{pmatrix}
			\itdo h^2\td{\Delta}_\phi \\
			\tbdry[\td{\Gamma}]
		\end{pmatrix}P_rS(\id+R_rS)^{-1}  = \id\,.
	\end{equation}

	Now by \cref{eq:defofS} and \cref{eq:Rr}, we have the estimates
	\begin{align*}
		&\sum_{k\geq 0}(-R_rS)^kR_rS\colon L^{p'}(\td{\Omega})\times L^2(\td{\Gamma}) \to_{h^0} L^2(\td{\Omega})\times L^2(\td{\Gamma})\,, \\
		&\sum_{k\geq 0}(-R_rS)^kR_rS\colon L^2(\td{\Omega})\times L^2(\td{\Gamma}) \to_h L^2(\td{\Omega})\times L^2(\td{\Gamma})\,.
	\end{align*}
Using \eqref{neumanninversetwostep} and \eqref{eq:defofS} we see that
	\begin{equation}\label{eq:splitupforR}
		R\coloneqq S(\id+R_rS)^{-1} - \id = \sum_{k\geq 1}(-hR_r)^k - S\sum_{k\geq 0}(-R_rS)^kR_rS\,.
	\end{equation}
	So
	\[
		R \colon L^2(\td{\Omega})\times L^2(\td{\Gamma}) \to_h L^2(\td{\Omega})\times L^2(\td{\Gamma})\,,\an R\colon L^{p'}(\td{\Omega})\times L^2(\td{\Gamma}) \to_{h^0} L^2(\td{\Omega})\times L^2(\td{\Gamma})\,,
	\]
	 which proves \cref{GgammaRprops}.
	 In view of \eqref{eq:inverse of matrix laplace'}, and \cref{eq:splitupforR}, we have 
	\begin{equation}\label{eq:prinvert}
		\langle h^2\td{\Delta}_\phi P_{r}(\id+R)(\td{v}, \td{f}),\td{u}\rangle =\langle \td{v},\td{u}\rangle\,,\quad \forall \td{u} \in C_c^\infty(\RR^n_+)\,,\an \tbdry P_r(\id+R)(\td{v},\td{f}) = \td{f}\,.
\end{equation}
	Now we define 
	\begin{equation}\label{eq:defofGgamma}
		G_{\Gamma}(v,f) \coloneqq \gamma^\ast \circ P_r(\id+R)((\gamma^{-1})^\ast v,-1_{\td{\Gamma}}(\gamma^{-1})^\ast f) = \gamma^\ast \circ P_r(\id+R)(\td{v},-\td{f})\,,
	\end{equation}
	so that \cref{Ggammatransform} is satisfied. 

	Furthermore, by \cref{eq:prinvert}, for all $u \in C_c^\infty(\Omega)$,
	\begin{align*}
		\langle u, h^2\Delta_\phi G_\Gamma(v,f)\rangle_{\Omega} &= \langle u, \gamma^\ast \circ h^2\td{\Delta}_\phi P_r(\id+R)((\gamma^{-1})^\ast v,\td{f}) \rangle_{\Omega} \\
		&= \langle \mathrm{det}\abs{(\gamma^{-1})'}(\gamma^{-1})^\ast u , h^2\td{\Delta}_\phi P_r(\id+R)((\gamma^{-1})^\ast v,\td{f}) \rangle_{\td{\Omega}} \\
		&= \langle \mathrm{det}\abs{(\gamma^{-1})'} (\gamma^{-1})^\ast u, (\gamma^{-1})^\ast v \rangle_{\td{\Omega}} \\
		&= \langle u,v\rangle_{\Omega}\,,
	\end{align*}
	which is to say that in the distributional calculus on $\Omega$, $G_\Gamma$ is the inverse of $h^2\Delta_\phi$, proving \cref{Ggammaisinverse}.
	Furthermore, by \cref{eq:boundarycalcwithgamma,eq:prinvert},
	\[
		\tau_\Gamma (h\del_\nu +\del_\nu\phi) G_{\Gamma} (v,f) = \gamma^\ast \circ \tbdry[\td{\Gamma}] P_r(\td{v},-\td{f}) = -\gamma^\ast \circ (\gamma^{-1})^\ast f = -f\,,
	\]
	which shows \cref{Ggammaboundary}.

	Note that by \cref{GgammaRprops},
	\[
		\id+R \colon L^2(\td{\Omega})\times L^2(\td{\Gamma})\to_{h^0} L^2(\td{\Omega})\times L^2(\td{\Gamma})\,,\quad \id+R\colon L^{p'}(\td{\Omega})\times L^2(\td{\Gamma})\to_{h^0} L^{p'}(\td{\Omega})\times L^2(\td{\Gamma})
	\]
	so that after the change of variables with $\gamma$, the properties \cref{Prmapprops1,Prmapprops2} from $P_r$ are passed on verbatim to $G_\Gamma$ via \cref{eq:defofGgamma}, proving \cref{Ggammamapprops1,Ggammamapprops2}.
\end{proof} 

\subsection{Proof of Theorem \ref{thm:G}}\label{gluingsection}

The only thing left is to glue together different parts of the boundary and apply \cref{GnoGlue} to each part. 
This proof follows that of \cite[Thm.~1.3]{CT20} but will be reproduced nevertheless.

\begin{proof}[Proof of \cref{thm:G}]
	Take $\Xi = \del\Omega\setminus B$. Refer to \cref{picofGamma} for the following definitions. Referring to \cref{sec:intro}, by assumption on $\Xi$, we have $\bigcup_{j=1}^N \Gamma_j = \Xi$ where each $\Gamma_j$ is defined by some function $g_j \in C_c^\infty(\RR^{n-1})$, refer to \cref{eq:gdescribesboundary}, and the $\Gamma_j$ are pairwise positively separated. 

	Now fix $1 \leq l \leq N$ and put $\Gamma = \Gamma_l$, $g = g_l$.
	Without loss of generality we may assume that there exists some open neighborhood $\Gamma \Subset \Gamma' \subset \partial\Omega$ so that $y_n \geq g(y')$ for all $y\in \Gamma'$, and $\Gamma'$ is positively separated from all $\Gamma_j\neq\Gamma, 1\leq j\leq N$. Were this not the case, we could replace $g$ by some function $g' \in C_c^\infty(\RR^{n-1})$ so that the boundary of $\Omega$ is also defined by $g'$ in a neighborhood of $\Gamma$ and this $\Gamma'$ exists.
	
	The existence of $\Gamma'$ implies there is some open neighborhood $W\subset \RR^n$ of $\Gamma$ so that $W\cap\Omega \subset \{y\in W\colon y_n > g(y')\}$ and $W\cap \partial\Omega = \Gamma'$. 

	Let $\chi \in C_c^\infty(\RR^n)$ so that $\chi$ is identically one in a neighborhood of $\{y\in W\cap\partial\Omega\colon y_n=g(y')\}$ and $\supp \chi \Subset W$. These properties imply that there is some $\varepsilon > 0$ so that 
	\begin{equation}\label{eq:epsilonforchi}
		\supp(1_\Omega \nabla\chi) \subset \{y_n \geq g(y')+\varepsilon\}\,.
	\end{equation}
	In fact, we choose the support of $\chi$ so that \cref{eq:epsilonforchi} holds and $\supp \chi$ is positively separated from every $\Gamma_j\neq\Gamma,1\leq j\leq N$.
	
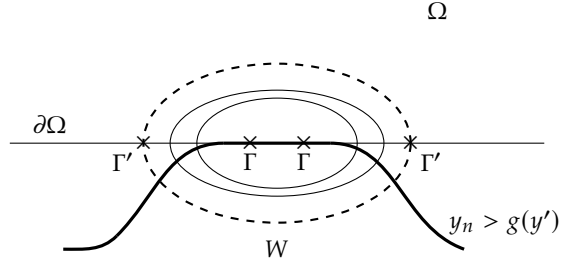
\begin{figure}
	\centering
\begin{tikzpicture}
	\draw (-100pt,0) -- (100pt,0);

	\node[ellipse, dashed, line width={0.8pt}, draw, minimum width=100pt,minimum height=60pt] (e) at (0,0) {};
	\node[ellipse, draw, minimum width=80pt,minimum height=40pt] (e) at (0,0) {};
	\node[ellipse, draw, minimum width=60pt,minimum height=34pt] (e) at (0,0) {};

	\node[below] at (10pt,0) {\footnotesize$\Gamma$};
	\node[below] at (-10pt,0) {\footnotesize$\Gamma$};
	\node[below left] at (-50pt,0) {\footnotesize$\Gamma'$};
	\node[below right] at (50pt,0) {\footnotesize$\Gamma'$};
	\node at (50pt,0) {\footnotesize$\times$};
	\node at (-50pt,0) {\footnotesize$\times$};
	\node at (-10pt,0) {\footnotesize$\times$};
	\node at (10pt,0) {\footnotesize$\times$};
	\node at (85pt,-30pt) {\footnotesize $y_n > g(y')$};
	\node at (60pt,50pt) {\footnotesize $\Omega$};
	\node at (0, -40pt) {\footnotesize $W$};
	\node[above] at (-85pt,0) {\footnotesize $\del\Omega$};
	\draw[very thick] (-80pt,-40pt) to [out=0,in=225] (-60pt,-35pt);
	\draw[very thick] (-60pt,-35pt) to [out=45,in=180] (-20pt,0);
	\draw[very thick] (-20pt,0pt) to [out=0,in=180] (20pt,0pt);
	\draw[very thick] (20pt,0pt) to [out=0,in=160] (70pt,-40pt);
	\end{tikzpicture}
	\caption{To simplify sketching it, we consider the boundary $\del\Omega$ to be flat, where $\Omega$ sits above the horizontal line. The  sets $\Gamma$ and $\Gamma'$ are denoted by their endpoints
	The interior of the inner-most ellipse is the set in which $\chi = 1$, the area inside the next smallest ellipse is the support of $\chi$. The set inside the ellipse marked by the dashed line denotes $W$. Finally, the area above the thick line is where $y_n > g(y')$.}
	\label{picofGamma}
\end{figure}

Fix $v\in L^{p'}(\Omega)$ and $f\in L^2(\Xi)$, and let $1_{\Gamma}f \in L^2(\Gamma)$ be the restriction of $f$ onto $\Gamma$. 

Let $G_{\Gamma}$ be the Green's function for $\Omega$ from \cref{GnoGlue}, and define 
\[
	\Pi_{\Gamma}(v,f) \coloneqq \chi 1_{\Omega}(G_\phi 1_\Omega v-G_{\Gamma}(v,1_{\Gamma}f))\,,
\]
where $G_\phi$ is the inverse of $h^2\Delta_\phi$ from \cref{prop43}.

Note that by \cref{prop43}, $\Pi_{\Gamma}$ inherits the same mapping properties as $G_{\Gamma}$ (see \cref{Ggammamapprops1,Ggammamapprops2}), namely that
\[
\Pi_\Gamma\colon L^{p'}(\Omega)\times L^2(\td{\Gamma}) \to_{h^{-2}} L^p(\Omega)\,,\an \Pi_\Gamma\colon L^{2}(\Omega)\times L^2(\td{\Gamma}) \to_{h^{-1}} H^1(\Omega)\,,
\]
with bounds
\begin{alignat}{5}
	&\norm{\Pi_{\Gamma}(v,f)}_{L^p}&&\lesssim h^{-2}\norm{v}_{L^{p'}} &+& h^{-1}&&\norm{f}_{L^2} \label{eq:Pi1}\\
	&\norm{\Pi_{\Gamma}(v,f)}_{H^1}&&\lesssim h^{-1}\norm{v}_{L^{2}} &+& &&\norm{f}_{L^2}\,. \label{eq:Pi2}
\end{alignat}

Furthermore, by \cref{Ggammaboundary} and due to the fact that $\chi \equiv 1$ on $\Gamma$, 
\[
	\tau_{\Gamma}(h\del_\nu + \del_\nu\phi)\Pi_{\Gamma} = \tau_{\Gamma}(h\del_\nu+\del_\nu\phi)G_\phi + 1_{\Gamma}f \,.
\]

At the end of this section we will show the following lemma.
\begin{lemma} \label{lemma62}
	The operator 
	\begin{equation}\label{eq:defofRfgamma}
		R_{f,\Gamma} \coloneqq h^2\Delta_\phi 1_\Omega\Pi_{\Gamma}(\cdot, f)
	\end{equation}
	satisfies the mapping property
	\begin{equation}\label{eq:Rfgammaprops}
		R_{f,\Gamma}\colon L^{p'}(\Omega)\to_{h^0} L^2(\Omega)\,,\an R_{f,\Gamma}\colon L^2(\Omega) \to_h L^2(\Omega)\,.
	\end{equation}
\end{lemma}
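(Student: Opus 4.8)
The plan is to reduce $R_{f,\Gamma}$ to a commutator acting on a solution of the homogeneous equation whose output is supported in a compact subset of the interior of $\Omega$, and then to extract from the explicit structure of $G_\phi$ and of the parametrix $P_r$ one power of $h$ more than the global bounds afford. First I would show that, as an element of $\mathcal{D}'(\Omega)$, $R_{f,\Gamma}(v)=[h^2\Delta_\phi,\chi]w$ with $w\coloneqq G_\phi 1_\Omega v-G_\Gamma(v,1_\Gamma f)$: by $h^2\Delta_\phi G_\phi=\id$ (\cref{prop43}) and \cref{Ggammaisinverse} we have $h^2\Delta_\phi w=0$ in $\mathcal{D}'(\Omega)$, hence $w\in C^\infty(\Omega)$, and since $1_\Omega\Pi_\Gamma(v,f)=\chi1_\Omega w$ is its own trivial extension, locality of $h^2\Delta_\phi$ together with $h^2\Delta_\phi w=0$ gives $R_{f,\Gamma}(v)=\chi\,h^2\Delta_\phi w+[h^2\Delta_\phi,\chi]w=[h^2\Delta_\phi,\chi]w$. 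Using $h^2\Delta_\phi=h^2\Delta+2h\del_{y_n}+1$ one has $[h^2\Delta_\phi,\chi]=2h(\nabla\chi)\cdot(h\nabla)+h^2(\Delta\chi)+2h(\del_{y_n}\chi)$, a first-order semiclassical differential operator with coefficients supported in $K\coloneqq\supp(1_\Omega\nabla\chi)$; by \cref{eq:epsilonforchi}, $\supp\chi\Subset W$ and $W\cap\del\Omega=\Gamma'$, the set $K$ is compact in $\Omega$ and lies at a positive distance $\delta_0>0$ from $\del\Omega$.

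Next I would fix $K\Subset K'\Subset\Omega$ and use a Caccioppoli estimate: multiplying $h^2\Delta_\phi w=0$ by $\zeta^2\bar w$ for a cut-off $\zeta\equiv1$ on $K$, $\supp\zeta\Subset K'$, and integrating by parts gives $\norm{h\nabla w}_{L^2(K)}\lesssim\norm{w}_{L^2(K')}$, whence $\norm{R_{f,\Gamma}(v)}_{L^2}\lesssim h\norm{w}_{L^2(K')}$. Splitting $w=w_v+w_f$ into its $v$- and $f$-dependent parts (legitimate since $G_\Gamma$ is linear), for $w_f=-G_\Gamma(0,1_\Gamma f)$ the bound \cref{Ggammamapprops2} already gives $\norm{w_f}_{L^2(K')}\lesssim\norm{f}_{L^2}$, so that part of $R_{f,\Gamma}$ satisfies both required estimates. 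It therefore remains to prove the interior gain $\norm{w_v}_{L^2(K')}\lesssim h^{-1}\norm{v}_{L^{p'}}$ and $\norm{w_v}_{L^2(K')}\lesssim\norm{v}_{L^2}$, one power of $h$ sharper than \cref{Ggammamapprops1,Ggammamapprops2}.

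For the interior gain I would flatten the boundary near $\Gamma$ and use \cref{Ggammatransform,eq:defofProp}. Writing $u\coloneqq\td G_\phi1_{\td\Omega}\td v$ and using $P_l=(1-\td\rho(hD'))J^{-1}I_+J\td G_\phi$ (so $J^+J=J^{-1}I_+J$), the leading term of $(\gamma^{-1})^\ast w_v$ cancels against $P_l$: $u-P_l1_{\td\Omega}\td v=\td\rho(hD')u+(1-\td\rho(hD'))J^{-1}(1-I_+)Ju$. On $\supp\td\rho$ the symbol $p$ of $h^2\td\Delta_\phi$ is elliptic with $\abs p\gtrsim\langle\xi_n\rangle^2$ (see \cref{sec:small}), so $\td\rho(hD')\td G_\phi$ is, modulo lower order, a semiclassical pseudodifferential operator with symbol in $S^{-2}(\RR^n)$ compactly supported in $\xi'$; a Bernstein estimate in $x'$ and Young's inequality in $x_n$ then give $\td\rho(hD')u\colon L^{p'}\to_{h^{-1}}L^2$ and $L^2\to_{h^0}L^2$, exactly the gain, and $P_s1_{\td\Omega}\td v=\op{\td\rho/p}1_{\td\Omega}\td v$ is handled identically. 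The remaining terms are, on $K'\subset\{x_n\ge\delta_0\}$, exponentially small: $(1-\td\rho(hD'))J^{-1}(1-I_+)Ju$ and $B_l1_{\td\Omega}\td v=J^{-1}(1-I_+)JZ^{-1}(1-\td\rho(hD'))J\td G_\phi 1_{\td\Omega}\td v$ because $J^{-1}$ applied to something supported in $\{x_n\le0\}$ decays like $e^{-cx_n/h}$ for $x_n>0$ (by \cref{Jinverse} and its construction), $B_c\td f$ likewise, and $\op[h,x']{\ell}\td f$ because $\ell$ carries a factor decaying like $e^{-\varepsilon_2x_n/(h(1+\abs K^2))}$ (\cref{lemm:exponentboundedbelow}, \cref{prop: ell is symbol}). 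Finally $P_rR(\td v,-\td f)$ gains because $R$ improves integrability and decay by \cref{GgammaRprops} ($R\colon L^{p'}(\td\Omega)\times L^2\to_{h^0}L^2(\td\Omega)\times L^2$ and $L^2(\td\Omega)\times L^2\to_hL^2(\td\Omega)\times L^2$) composed with $P_r\colon L^2(\td\Omega)\times L^2\to_{h^{-1}}L^p(\td\Omega)\cap H^1(\td\Omega)$ from \cref{Prmapprops2}. Adding these contributions on $K'$ gives the interior gain and completes the proof.

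The main obstacle is the last step: no single constituent of $P_r$ or of $G_\phi$ gains a power of $h$ on its own, and the gain must be assembled from the cancellation of the leading $\td G_\phi$ against $P_l$, the ellipticity of $p$ on $\supp\td\rho$ (which yields the Bernstein/Young improvement), the exponential smallness in $K'$ of the boundary-layer terms — requiring the fine support and decay properties of $J^{-1}$ and of the symbol $\ell$ — and the improved mapping properties of the remainder $R$. Getting the powers of $h$ and the Sobolev bookkeeping to line up across all of these terms is the delicate part.
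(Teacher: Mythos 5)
Your reduction of $R_{f,\Gamma}v$ to the commutator $[h^2\Delta_\phi,\chi]w$ with $w=G_\phi 1_\Omega v-G_\Gamma(v,1_\Gamma f)$ is exactly the paper's first step, and your subsequent term-by-term analysis of $P_r$ (cancellation of $\td{G}_\phi$ against $P_l$, smoothing of the small-frequency part, disjoint-support gains for the $J^{-1}(1-I_+)$ pieces, and the improved mapping of $R$) is in substance the paper's argument. The genuine problem is the Caccioppoli step. You assert that $K=\supp(1_\Omega\nabla\chi)$ is compact in $\Omega$ at positive distance from $\del\Omega$, but the construction only guarantees $\supp(1_\Omega\nabla\chi)\subset\{y_n\geq g(y')+\eps\}$ (\cref{eq:epsilonforchi}), i.e.\ positive distance from the graph $\{y_n=g(y')\}$ --- equivalently from $\{x_n=0\}$ after flattening --- not from $\del\Omega$. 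Since $\chi\equiv 1$ only near $\{y\in W\cap\del\Omega\colon y_n=g(y')\}$ while $\supp\chi\cap\del\Omega$ reaches into $\Gamma'\setminus\Gamma$, the set $K$ in general touches $\del\Omega$ (this is visible in \cref{picofGamma}: the annulus where $\nabla\chi\neq 0$ meets the boundary between the endpoints of $\Gamma$ and those of $\Gamma'$). There is then no cut-off $\zeta\in C_c^\infty(\Omega)$ with $\zeta\equiv 1$ on $K$, the equation $h^2\Delta_\phi w=0$ holds only on one side of $\del\Omega$ near those points, and no boundary condition for $w$ is available on $\Gamma'\setminus\Gamma$, so the interior Caccioppoli inequality $\norm{h\nabla w}_{L^2(K)}\lesssim\norm{w}_{L^2(K')}$ cannot be established as stated. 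Since every subsequent estimate in your proposal is an $L^2(K')$ bound on $w$ rather than a bound on the commutator applied to each parametrix term, the gap propagates through the rest of the proof.

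The repair is to drop Caccioppoli altogether, as the paper does: $[h^2\td{\Delta}_\phi,\td{\chi}]$ is $h$ times a first-order semiclassical operator, so it suffices to control each piece of $P_r(\id+R)$ in $H^1$ (resp.\ $W^{1,r}$) with the appropriate powers of $h$, and these bounds are exactly what \cref{prop43}, \cref{mappingpropsofPb}, \cref{eq:ellsemiclass} and \cite[Lem.~3.4]{CT20} provide; the only separation needed is from $\{x_n=0\}$, which \cref{eq:epsilonforchi} does give. Two smaller points: your claim of exponential decay $e^{-cx_n/h}$ for $J^{-1}1_{\RR^n_-}$ is stronger than what is cited or needed --- the paper uses only the polynomial gain $\itdo[h^2\td{\Delta}_\phi,\td{\chi}]J^{-1}1_{\RR^n_-}\colon L^r\to_{h^2}L^r$ --- and your ``Bernstein plus Young'' treatment of $\td{\rho}(hD')\td{G}_\phi$ is vaguer than the decomposition of \cite[Lemm.~4.4]{CT20} actually used, though both are in the right spirit.
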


Recall that $\Xi = \bigcup_{j=1}^N \Gamma_j$, and each $\Gamma_j$ is defined by $g_j \in C^\infty_c(\RR^{n-1})$. Construct $\Pi_{\Gamma_j}$ as above for each $\Gamma_j$ with $\chi_j$ that has disjoint support from every other $\chi_s$ for $j\neq s$. Defining
\[
	\Pi(v,f) \coloneqq G_\phi v - \sum_{j=1}^N \Pi_{\Gamma_j}(v,f)\,,
\]
we have 
\begin{equation}\label{eq:boundaryPi}
	\tau_{\Xi} (h\del_\nu + \del_\nu\phi)\Pi (v,f) = -f \,,\an
	h^2\Delta_\phi 1_\Omega \Pi(v,f) = (\id + R_f)v\,,
\end{equation}
where for $R_{f,\Gamma_j}$ from \cref{eq:defofRfgamma} and by \cref{eq:Rfgammaprops},
\[
	R_f \colon L^2(\Omega)\to_h L^2(\Omega)\,,\an R_f\colon L^{p'}(\Omega) \to L^2(\Omega)\,,\an R_f = \sum_{j=1}^N R_{f,\Gamma_j}\,.
\]
Inverting by Neumann series similar to \cref{neumanninversetwostep} we set 
\begin{equation}\label{eq:defofG}
	G_{\Xi}(v,f) \coloneqq \Pi ((\id+R_f)^{-1}v,f)\,.
\end{equation}
so that \cref{eq:Pi1,eq:Pi2} imply \cref{eq:hG1,eq:hG2}. Furthermore, \cref{eq:defofG,eq:boundaryPi} imply \cref{eq:boundaryG,eq:inverseG}, and \cref{eq:Gshorth} is implied by \cref{eq:hG1,eq:hG2}.
\end{proof}

\begin{proof}[Proof of \cref{lemma62}]
In this proof we will act like $f$ is a fixed variable and the mapping identities we state will only be about the first variable $v$. This will result in us finding a remainder $R_f$ that depends on and may be considered a function of $f$.

Note that in the calculus of distributions on $\Omega$, $1_\Omega$ commutes with $h^2\Delta_\phi$ so that
\[
	\chi h^2\Delta_\phi 1_\Omega G_{\Gamma} = \chi 1_\Omega h^2\Delta_\phi G_{\Gamma} = \chi
\]
as distributions on $\Omega$ by \cref{Ggammaisinverse}. 
Additionally, by \cref{prop43}, $G_\phi$ inverts $h^2\Delta_\phi$ so that as distributions on $\Omega$, we have $\chi h^2\Delta_\phi 1_\Omega G_\phi = \chi$. Combining this yields, distributionally on $\Omega$,
\begin{equation}\label{eq:beforecoordswap}
	h^2\Delta_\phi \Pi_{\Gamma}(v,f) = [h^2\Delta_\phi,\chi]1_{\Omega}(G_\phi 1_{\Omega}v - G_{\Gamma}(v,1_{\Gamma}f))\,.
\end{equation}
Changing coordinates, and noting that $[h^2\td{\Delta}_\phi,\td{\chi}]$ and $\itdo$ commute in distribution, \cref{eq:beforecoordswap,Ggammatransform} give us
\begin{equation}\label{thetermsinlemma62}
	h^2\Delta_\phi \Pi_{\Gamma}(v,f) = \gamma^\ast \circ 1_{\td{\Omega}}[h^2\td{\Delta}_\phi,\td{\chi}](\td{G}_\phi \itdo \td{v} - P_r(\id+R)(\td{v},-\td{f}))
\end{equation}
where $P_r$ is defined in \cref{eq:defofProp}, $R$ satisfies \cref{GgammaRprops}, $\td{v} = (\gamma^{-1})^\ast v$ and $\td{f} = 1_{\td{\Gamma}}(\gamma^{-1})^\ast (1_\Gamma f)$, and we remark that $h^{-1}[h^2\td{\Delta}_\phi,\td{\chi}]$ is a pseudo-differential operator of order $1$ on $\RR^n$. 

Notice that
\begin{equation}\label{jointlylinearPr}
	1_{\td{\Omega}}[h^2\td{\Delta}_\phi,\td{\chi}]P_r(id+R)(\td{v},-\td{f}) = 1_{\td{\Omega}}[h^2\td{\Delta}_\phi,\td{\chi}]\left(P_rR(\td{v},-\td{f}) + P_r(\td{v},-\td{f})\right)\,,
\end{equation}
where due to \cref{GgammaRprops} the first term of \cref{jointlylinearPr} maps
\begin{align*}
	&L^{p'}(\td{\Omega}) \xrightarrow{R(\cdot,-\td{f})} L^2(\td{\Omega})\times L^2(\td{\Gamma}) \xrightarrow[h^{-1}]{P_r} H^1(\td{\Omega}) \xrightarrow[h]{1_{\td{\Omega}}[h^2\td{\Delta}_\phi,\td{\chi}]} L^2(\td{\Omega})\,,\\
	&L^{2}(\td{\Omega}) \xrightarrow[h]{R(\cdot,-\td{f})} L^2(\td{\Omega})\times L^2(\td{\Gamma})\xrightarrow[h^{-1}]{P_r} H^1(\td{\Omega}) \xrightarrow[h]{\itdo [h^2\td{\Delta}_\phi,\td{\chi}]} L^2(\td{\Omega})\,,
\end{align*}
which, after applying $\gamma^\ast$, can be sorted into $R_{\Gamma,f}$.

Thus, the only term in \cref{thetermsinlemma62} we still have to estimate according to \cref{eq:Rfgammaprops} is 
\[
	1_{\td{\Omega}}[h^2\td{\Delta}_\phi,\td{\chi}](\td{G}_\phi \itdo \td{v} - P_r(\td{v},-\td{f}))\,.
\]

Note that 
\begin{align}\label{splitPfagain}
	P_r(\td{v},\td{f}) &= \left(P_l+P_s+B_l\right)\itdo\td{v} + \left(B_c(1+\abs{K}^2)^{1/2} - \op[h,x']{\ell}(1+\abs{K}^2)^{-1/2}\right)\td{f}\,,
\end{align}
where by \cite[Prop.~5.6]{CT20}, for the second term in \cref{splitPfagain},
\[
	P_s\colon L^2(\RR^n) \to_{h^0} H^2(\RR^n)\,,\an P_s\colon L^{p'}(\RR^n)\to W^{2,p'}(\RR^n) \hookrightarrow_{h^{-1}} H^1(\RR^n)\,,
\]
and thus applying $[h^2\td{\Delta}_\phi,\td{\chi}]$ to this output grants another $h$ and costs one regularity, which fits the profile of $R_{f,\Gamma}$.

For the fifth term in  \cref{splitPfagain} we argue that by \cref{eq:ellsemiclass}, 
\[
	\itdo\op[h,x']{\ell}\colon L^2(\RR^{n-1}) \to_{h^0} H^2(\td{\Omega})\,,
\]
so that $\itdo [h^2\td{\Delta}_\phi,\td{\chi}]\op[h,x']{\ell} (1+\abs{K}^2)^{-1/2}f \in hL^2(\td{\Omega})$, which we consider part of $R_{f,\Gamma}$.

We consider the third term in \cref{splitPfagain}. 
Since $J^{-1}J = \id$ we have $\id- J^+J = J^{-1}(\id -I_+)J = J^{-1}1_{\RR^n_-}J$. Due to the fact that only terms containing derivatives of $\td{\chi}$ survive in $[h^2\td{\Delta}_\phi, \td{\chi}]$ and \cref{eq:epsilonforchi}, the support of $\itdo [h^2\td{\Delta}_\phi, \td{\chi}]$ lies in $x_n > \varepsilon > 0$. Now \cite[Lem.~3.4]{CT20} gives
\begin{equation}\label{chih2}
	\itdo [h^2\td{\Delta}_\phi,\td{\chi}]J^{-1}1_{\RR^n_-} \colon L^r(\RR^n) \to_{h^2} L^r(\RR^n)\,, \qquad 1<r<\infty\,,
\end{equation}
and $B_l = (id-J^+J){Z}^{-1}(1-\td{\rho}(hD'))J\td{G}_\phi = J^{-1}1_{\RR^n_-}J{Z}^{-1}(1-\td{\rho}(hD'))J\td{G}_\phi$, so using \cref{chih2} and \cref{prop43}, we have
\[
	L^2(\td{\Omega}) \xrightarrow[h^{-1}]{J{Z}^{-1}(1-\td{\rho}(hD'))J\td{G}_\phi^c} L^2(\RR^n) \xrightarrow[h^2]{\itdo[h^2\td{\Delta}_\phi,\td{\chi}]J^{-1}1_{\RR^n_-}} L^2(\td{\Omega})\,,
\]
and
\[
	L^{p'}(\td{\Omega}) \xrightarrow[h^{-2}]{J{Z}^{-1}(1-\td{\rho}(hD'))J\td{G}_\phi^c} L^p(\RR^n) \xrightarrow[h^2]{\itdo [h^2\td{\Delta}_\phi,\td{\chi}]J^{-1}1_{\RR^n_-}} L^p(\td{\Omega})\subset L^2(\td{\Omega}) \,,
\]
which fits \cref{eq:Rfgammaprops} for $R_{f,\Gamma}$.

Note that $[h^2\td{\Delta}_\phi,\td{\chi}]$ does not shift support with respect to $x_n$, and that by \cref{mappingpropsofPb}, $(\id-J^+J)Z^{-1} \colon W^{1,r}(\RR^n) \to W^{2,r}(\RR^n_+)$, so
\[
L^{p'}(\RR^n) \xrightarrow{\td{G}_\phi -\td{G}_\phi^c} W^{2,p'}(\RR^n) \xrightarrow{(1-\td{\rho}(hD'))J} W^{1,p'}(\RR^n) \xrightarrow{(\id-J^+J)Z^{-1}} W^{2,p'}(\RR^n_+) \hookrightarrow_{h^{-1}} H^1(\RR^n_+) \xrightarrow[h]{[h^2\td{\Delta}_\phi,\td{\chi}]} L^2(\RR^n_+)\,,
\]
and
\[
	L^{2}(\RR^n) \xrightarrow{\td{G}_\phi -\td{G}_\phi^c} H^2(\RR^n) \xrightarrow{(1-\td{\rho}(hD'))J} H^1(\RR^n) \xrightarrow{(\id-J^+J)Z^{-1}} W^{2,2}(\RR^n_+) \hookrightarrow_{h^{-1}} W^{1,2}(\RR^n_+) \xrightarrow[h]{[h^2\td{\Delta}_\phi,\td{\chi}]} L^2(\RR^n_+)\,,
\]
which means we have taken care of the third term in \cref{splitPfagain} if we consider these maps to be on $\td{\Omega}$ by pre- and post-composing with $\itdo$.

In dealing with the fourth term in \cref{splitPfagain} we can carry on as above: by 
\cref{splitupJintoF}, we have $J = h\del_{x_n} + F_+$ so that
\[
	JZ^{-1}(1-\td{\rho}(hD'))\mathcal{E} = Z^{-1}(1-\td{\rho}(hD'))h\del_{x_n}\mathcal{E} + F_+Z^{-1}(1-\td{\rho}(hD'))\mathcal{E} \colon L^2(\RR^{n-1}) \to L^2(\RR^n)\,,
\]
and then
\[
	L^2(\RR^{n-1}) \xrightarrow{JZ^{-1}(1-\td{\rho}(hD'))\mathcal{E}} L^2(\RR^n) \xrightarrow[h^2]{\itdo[h^2\td{\Delta}_\phi,\td{\chi}]J^{-1}1_{\RR^n_-}} L^2(\td{\Omega})\,,
\]
which yields that $\itdo [h^2\td{\Delta}_\phi,\td{\chi}]B_c(1+\abs{K}^2)^{1/2}f \in hL^2(\td{\Omega})$, which we consider part of $R_{f,\Gamma}$.

Finally, we turn to the first term of \cref{splitPfagain}, which is to say that we want to consider the expression $1_{\td{\Omega}}[h^2\td{\Delta}_\phi,\td{\chi}](\td{G}_\phi - P_l)\itdo\td{v}$. The identical estimate is handled in \cite[Lemm.~6.2]{CT20}, but for completeness, we perform the estimates here as well:
\begin{equation}\label{eq:lastpart}
	\td{G}_\phi - P_l = \td{\rho}(hD')\td{G}_\phi + (1-\td{\rho}(hD'))(id -J^+J)\td{G}_\phi\,,
\end{equation}
where the last term of \cref{eq:lastpart} can be dealt with using \cref{chih2} exactly as above. 

For the term $\td{\rho}(hD')\td{G}_\phi$ in \cref{eq:lastpart}, we use \cite[Lemm.~4.4]{CT20} so that $\td{\rho}(hD')\tilde{G}_\phi^c = (\td{\rho}(hD')\tilde{G}_\phi^c)_1 + (\td{\rho}(hD')\tilde{G}_\phi^c)_2$ where
\[
	 (\td{\rho}(hD')\tilde{G}_\phi^c)_1 \in \mathrm{Op}_h(S^{-\infty}_1(\RR^n)) \,, \an (\td{\rho}(hD')\tilde{G}_\phi^c)_2 = h \mathrm{Op}_h(S^{-\infty}_1(\RR^n))\tilde{G}_\phi\,.
\]
The first term maps $L^2(\RR^n) \to H^k(\RR^n)$ and $L^{p'}(\RR^n) \to W^{k,p'}(\RR^n) \hookrightarrow_{h^{-1}} H^{k-1}(\RR^n)$, whereas the second term maps $L^2(\RR^n) \to H^k(\RR^n)$ and $L^{p'}(\RR^n) \to_{h^{-1}} W^{k,p}(\RR^n) \hookrightarrow H^{k}(\RR^n)$ for all $k\in\mathbb{N}$. Applying the commutator $[h^2\td{\Delta}_\phi,\td{\chi}]$ provides an $h$ and removes one order of regularity.
\end{proof}

\section{CGO Solutions}\label{sec:CGO}

Take $\omega\in S^{n-1}$ and $\Xi = \del\Omega\setminus B$ as they are defined in \cref{sec:intro}. The goal of this section is to prove \cref{cgosolsprop} for which we will need a preperatory lemma.
The only differnce between the following proof and its version in \cite{CT20} lies in the boundary conditions, and thus the proof will be kept short.

\begin{lemma}[{\cite[Prop.~7.1]{CT20}}]\label{getr0lemm}
	For any $f\in L^2(\Xi)$ with $\norm{f}_{L^2} \leq Ch$, any $L \in L^2(\Omega)$ so that $\norm{L}_{L^2} \leq Ch^2$, any $q\in L^{n/2}(\Omega)$ and any $a = a_h \in L^\infty(\Omega)$ with $\norm{a_h}_{L^\infty} \leq C$ and any $\delta>0$ there is a solution $r \in H^1(\Omega)$ of 
	\[
		h^2(\Delta_\phi+q)r = h^2qa+L\,,\an \tau_\Xi(h\del_\nu+\del_\nu\phi)r=-f\,,
	\]
	and $\lim_{h\to 0}\norm{r}_{L^2} \leq \delta$ and $\norm{r}_{L^p} \leq C$ for $h$ small enough.
\end{lemma}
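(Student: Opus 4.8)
The plan is to solve the boundary value problem for $r$ by a fixed-point/Neumann-series argument built on the Green's function $G_\Xi$ from \cref{thm:G}. The point is that the equation $h^2(\Delta_\phi+q)r = h^2qa+L$ with $\tau_\Xi(h\del_\nu+\del_\nu\phi)r = -f$ is, via \cref{eq:inverseG} and \cref{eq:boundaryG}, equivalent to the integral equation
\[
	r = G_\Xi\!\left(h^{-2}(h^2qa+L) - qr,\; h^{-1}f\right) = G_\Xi(qa + h^{-2}L - qr, h^{-1}f),
\]
where we interpret $qr$ and $qa$ as elements of $L^{p'}(\Omega)$ via the Sobolev embedding $H^1(\Omega)\hookrightarrow L^p(\Omega)$ and Hölder's inequality: since $q\in L^{n/2}$ and $r,a\in L^p$ with $\frac1{p'} = \frac1p + \frac2n$, we get $qr, qa\in L^{p'}(\Omega)$. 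So I would first set up the affine map $T\colon L^p(\Omega)\cap H^1(\Omega)\to L^p(\Omega)\cap H^1(\Omega)$, $Tr = G_\Xi(qa+h^{-2}L, h^{-1}f) - G_\Xi(qr, 0)$, and seek a fixed point.

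The key estimates come directly from \cref{eq:hG1,eq:hG2}. For the linear part $r\mapsto -G_\Xi(qr,0)$: by \cref{eq:hG2}, $\norm{G_\Xi(qr,0)}_{H^1}\lesssim h^{-1}\norm{qr}_{L^2}$, and one needs $\norm{qr}_{L^2}$ small relative to $h$; by \cref{eq:hG1}, $\norm{G_\Xi(qr,0)}_{L^p}\lesssim h^{-2}\norm{qr}_{L^{p'}}\lesssim h^{-2}\norm{q}_{L^{n/2}}\norm{r}_{L^p}$. The factor $h^{-2}\norm{q}_{L^{n/2}}$ is of course \emph{not} small, so the standard trick (as in \cite{CT20}) is needed: split $q = q_1 + q_2$ with $q_1\in L^\infty$ compactly supported in $\Omega$ and $\norm{q_2}_{L^{n/2}}\leq \eps'$ for $\eps'$ as small as we like; then $\norm{G_\Xi(q_2 r,0)}_{L^p}\lesssim h^{-2}\eps'\norm{r}_{L^p}$, which can still be a problem. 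The resolution is to use the $H^1\to_{h^{-1}}$ estimate together with the mismatch in powers of $h$: writing $q_1 r\in L^2$ (since $q_1\in L^\infty$, $r\in L^2$) one gets $\norm{G_\Xi(q_1 r, 0)}_{L^p}\lesssim \norm{G_\Xi(q_1r,0)}_{H^1}\lesssim h^{-1}\norm{q_1}_{L^\infty}\norm{r}_{L^2}$; and for the $L^{n/2}$-small piece one interpolates, bounding $\norm{q_2 r}_{L^{p'}}$ by $\norm{q_2}_{L^{n/2}}^\theta \norm{q_2}_{L^{?}}^{1-\theta}$-type splittings, or more simply by absorbing: choose $h$ small and $\eps'$ small so that the operator norm of $r\mapsto G_\Xi(q_2 r,0)$ on $L^p$ is $<1/2$. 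I would mirror the precise bookkeeping of \cite[Prop.~7.1]{CT20} here.

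Concretely, the scheme is: (i) fix $\delta>0$; choose the splitting $q = q_1+q_2$ with $\norm{q_2}_{L^{n/2}}$ small enough (depending on $\delta$ and the implied constants) that $r\mapsto G_\Xi(q_2 r, 0)$ is a contraction on $L^p(\Omega)$ with norm $\leq 1/2$ uniformly in small $h$; (ii) handle the $q_1$ term by the gain of one power of $h$ from the $H^1$-estimate, so that on the ball $\{\norm{r}_{L^2}\leq \delta,\ \norm{r}_{L^p}\leq C\}$ the map $r\mapsto G_\Xi(q_1 r,0)$ contributes something $O(h)$; (iii) estimate the inhomogeneous term $G_\Xi(qa + h^{-2}L, h^{-1}f)$: split $qa$ the same way, use $\norm{h^{-2}L}_{L^2}\leq C$, $\norm{h^{-1}f}_{L^2}\leq C$, so by \cref{eq:hG2} its $H^1$-norm is $O(1)$ and in particular its $L^2$-norm is $O(1)$ — wait, we need $\lim_{h\to0}\norm{r}_{L^2}\leq\delta$, so more care: by \cref{eq:hG2}, $\norm{G_\Xi(\cdot,\cdot)}_{L^2}\leq\norm{\cdot}_{H^1}\lesssim h^{-1}\norm{qa+h^{-2}L}_{L^2} + \norm{h^{-1}f}_{L^2}$. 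Here $\norm{h^{-2}L}_{L^2}\leq C$ gives $h^{-1}\cdot C$ which blows up, so one must instead feed $h^{-2}L$ through the $L^{p'}$–$L^p$ estimate only where it is genuinely small, and exploit $\norm{L}_{L^2}\leq Ch^2$ meaning $\norm{h^{-2}L}_{L^{p'}}\leq Ch^{-2}\cdot h^2 = C$ after using $L^2\hookrightarrow L^{p'}$ on the bounded domain — hmm, that still only gives $O(1)$ in $L^p$ via $h^{-2}\cdot$, not smallness in $L^2$. The honest route, following \cite{CT20}, is: the \emph{smallness} $\lim_{h\to0}\norm{r}_{L^2}\leq\delta$ is obtained not from the inhomogeneity being small but from a separate argument — after constructing $r$ bounded in $L^p$ and $H^1$, one shows $r\rightharpoonup 0$ weakly and then uses compactness (Rellich) to upgrade to $\norm{r}_{L^2}\to0$, or one notes the relevant source terms carry explicit powers of $h$ that beat the $h^{-1}$ loss. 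I would follow \cite[Prop.~7.1]{CT20} for this last delicate point.

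The main obstacle, then, is exactly this: reconciling the two-power loss $h^{-2}$ in the $L^p$-estimate for $G_\Xi$ with the need for a contraction and with the required $L^2$-smallness of $r$. The $L^\infty$/$L^{n/2}$-small splitting of $q$ combined with the fact that $G_\Xi$ only loses one power of $h$ when mapping into $H^1$ (hence, by Sobolev embedding, into $L^p$ with a \emph{one}-power loss on the $L^\infty$ piece) is what makes it work; the $L^{n/2}$-small remainder is absorbed by choosing $\norm{q_2}_{L^{n/2}}$ small. Once the fixed point $r$ exists and is bounded in $L^p\cap H^1$ uniformly in $h$, reading off $h^2(\Delta_\phi+q)r = h^2qa+L$ and $\tau_\Xi(h\del_\nu+\del_\nu\phi)r=-f$ is immediate from \cref{eq:inverseG} and \cref{eq:boundaryG}, and $\lim_{h\to0}\norm{r}_{L^2}\leq\delta$ follows as in \cite[Prop.~7.1]{CT20} by tracking the explicit $h$-dependence of the source terms through the one-power-loss $H^1$ bound.
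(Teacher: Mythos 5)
Your overall strategy --- invert with $G_\Xi$ from \cref{thm:G} and run a fixed-point/Neumann argument, splitting $q$ into an $L^\infty$ piece that gains a power of $h$ through \cref{eq:hG2} and an $L^{n/2}$-small piece absorbed by the choice of splitting --- is the right one and is morally the paper's argument. But as written the proposal has two concrete defects. First, the integral equation is mis-normalized: since $h^2\Delta_\phi G_\Xi(v,g)=v$ and $(h\del_\nu+\del_\nu\phi)G_\Xi(v,g)\vert_\Xi=-g$, the correct fixed-point equation is $r=G_\Xi(h^2qa+L-h^2qr,\,f)$, not $r=G_\Xi(qa+h^{-2}L-qr,\,h^{-1}f)$; $G_\Xi$ is linear in each slot separately, so you cannot rescale the two arguments independently and recover the same $r$. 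The ``$h^{-2}\norm{q}_{L^{n/2}}$ is of course not small'' obstruction you then wrestle with is largely an artifact of this: in the correct normalization the linear part is $r\mapsto -h^2G_\Xi(qr,0)$, for which \cref{eq:hG1} gives $\norm{h^2G_\Xi(q_2r,0)}_{L^p}\lesssim\norm{q_2}_{L^{n/2}}\norm{r}_{L^p}$ (small by the splitting) and \cref{eq:hG2} gives $O(h)\norm{r}_{L^2}$ for the $L^\infty$ piece. Second --- and this is the genuine gap --- the conclusion $\lim_{h\to0}\norm{r}_{L^2}\leq\delta$ is never established: you oscillate between a weak-convergence/Rellich argument (a non sequitur here; no weak limit or compactness of the family $r_h$ is available or proven) and ``tracking explicit powers of $h$,'' which you defer entirely to \cite{CT20}. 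That tracking is the heart of the lemma and cannot be outsourced.

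For comparison, the paper's mechanism is a symmetric factorization $q=e^{i\theta}\sqrt{\abs{q}}\cdot\sqrt{\abs{q}}$: it inverts $1+h^2e^{i\theta}\sqrt{\abs{q}}\,G_\Xi(\sqrt{\abs{q}}\,\cdot\,,0)$ on $L^2(\Omega)$, splitting each factor $\sqrt{\abs{q}}$ into an $L^\infty$ part and an $L^n$-small part so that the four resulting compositions are $O(h)$ or $O(\eps)$, and thereby produces an auxiliary $w$ with $\norm{w}_{L^2}\leq Ch^2$; then $r=G_\Xi(\sqrt{\abs{q}}\,w+L,f)$. The $h^2$ gain on $w$ is exactly what compensates the $h^{-1}$ (resp.\ $h^{-2}$) losses in \cref{eq:hG2} (resp.\ \cref{eq:hG1}) and yields $\norm{r}_{L^2}\leq Ch+C_\eps h+C\eps$, whence the $\delta$-bound. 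Your one-sided splitting of $q$ can be made to close along the same lines (estimate $G_\Xi(h^2q^\flat a,0)$, $G_\Xi(h^2q^\flat r,0)$, $G_\Xi(L,f)$ in $H^1$ to get $O(h)$ in $L^2$, and route the $q^\sharp$ terms through $L^{p'}\to L^p\hookrightarrow L^2$ to get $O(\eps)$), but that bookkeeping has to appear in the proof rather than be cited.
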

\begin{proof}
	In this proof $C>0$ is a generic constant independent of $h$ and may change from one equation to the next. Let $\varepsilon > 0$ be arbitrary and write 
	\begin{equation}\label{eq:splitofq}
		\sqrt{\abs{q}} = \sqrt{\abs{q}}^\flat + \sqrt{\abs{q}}^\sharp\quad\text{where}\quad\norm{\sqrt{\abs{q}}^\flat}_{L^\infty} \leq C_\varepsilon\,,\an\norm{\sqrt{\abs{q}}^\sharp}_{L^{n/2}} \leq \varepsilon\,.
\end{equation}
Furthermore, let $\theta \colon \Omega \to \RR$ so that $e^{i\theta}\abs{q}=q$. 

We choose the ansatz 
\begin{equation}
	r = G_{\Xi}(\sqrt{\abs{q}}w+L,f)\quad\text{where}\quad (1+h^2e^{i\theta}\sqrt{\abs{q}}G_{\Xi}(\sqrt{\abs{q}}\cdot,0))w = h^2e^{i\theta}\sqrt{\abs{q}}(a-G_{\Xi}(L,f))\,.\label{eq:defofrandw}
\end{equation}
Assuming such a $w$ exists, we see from \cref{thm:G} that
\begin{equation}
	h^2(\Delta_\phi +q)r = \sqrt{\abs{q}}\left(1+h^2e^{i\theta}\sqrt{\abs{q}}G_\Xi(\sqrt{\abs{q}}\cdot,0)\right)w+h^2qG_\Xi(L,f)+L = h^2qa+L\,,
\end{equation}
and $\tau_\Xi(h\del_\nu+\del_\nu\phi)r = -f$.

We first construct $w$ in \cref{eq:defofrandw} and show that $\norm{w}_{L^2} \leq Ch^2$. This begins by proving that the norm of $h^2e^{i\theta}\sqrt{\abs{q}}G_{\Xi}(\sqrt{\abs{q}}\cdot,0) \colon L^2(\Omega) \to L^2(\Omega)$ can be made arbitrarily small. Splitting $\sqrt{\abs{q}}$ as in \cref{eq:splitofq} and using \cref{eq:hG1,eq:hG2}, we have
\begin{align*}
	\sqrt{\abs{q}}^\flat G_{\Xi}(\sqrt{\abs{q}}^\flat\cdot,0) &\colon L^2(\Omega)\xrightarrow{\sqrt{\abs{q}}^\flat} L^2(\Omega) \xrightarrow[h^{-1}]{G_\Xi(\cdot,0)} L^{2}(\Omega) \xrightarrow{\sqrt{\abs{q}}^\flat} L^2(\Omega)\\
	\sqrt{\abs{q}}^\sharp G_{\Xi}(\sqrt{\abs{q}}^\flat\cdot,0) &\colon L^2(\Omega)\xrightarrow{\sqrt{\abs{q}}^\flat} L^2(\Omega) \hookrightarrow L^{p'}(\Omega) \xrightarrow[h^{-2}]{G_\Xi(\cdot,0)} L^{p}(\Omega) \xrightarrow[\varepsilon]{\sqrt{\abs{q}}^\sharp} L^2(\Omega)\\
	\sqrt{\abs{q}}^\flat G_{\Xi}(\sqrt{\abs{q}}^\sharp\cdot,0) &\colon L^2(\Omega)\xrightarrow[\varepsilon]{\sqrt{\abs{q}}^\sharp} L^{p'}(\Omega) \xrightarrow[h^{-2}]{G_\Xi(\cdot,0)} L^{p}(\Omega) \xrightarrow{\sqrt{\abs{q}}^\flat} L^p(\Omega) \hookrightarrow L^2(\Omega)\\
	\sqrt{\abs{q}}^\sharp G_{\Xi}(\sqrt{\abs{q}}^\sharp\cdot,0) &\colon L^2(\Omega)\xrightarrow[\varepsilon]{\sqrt{\abs{q}}^\sharp} L^{p'}(\Omega) \xrightarrow[h^{-2}]{G_\Xi(\cdot,0)} L^{p}(\Omega) \xrightarrow[\varepsilon]{\sqrt{\abs{q}}^\sharp} L^2(\Omega)\,.
\end{align*}
We see that each of the four terms is bounded either by $Ch^{-1}$ or $Ch^{-2}\varepsilon$, which shows that we may choose $\varepsilon>0$ and $h>0$ small enough so that $\norm{h^2e^{i\theta}\sqrt{\abs{q}}G_{\Xi}(\sqrt{\abs{q}}\cdot,0)}_{L^2\to L^2} < 1$. Thus, an application of Neumann inversion proves that there exists a $w \in L^2(\Omega)$ solving the right equation in \cref{eq:defofrandw}. Due to the fact that $\norm{L}_{L^{p'}} \leq \norm{L}_{L^2} \leq Ch^2$ and $\norm{f}_{L^2}\leq Ch$, we may use \cref{eq:hG1} to see that $\norm{G_{\Xi}(L,f)}_{L^p}\leq C$. Now $\sqrt{\abs{q}} \in L^{n}$ and $\norm{a}_{L^\infty}<C$ so that $\norm{h^2e^{i\theta}\sqrt{\abs{q}}(a-G_{\Xi}(L,f))}_{L^2} \leq Ch^2$. Therefore, this solution $w$ admits $\norm{w}_{L^2} \leq Ch^2$.

The only thing left to prove is that $r$ has the properties we stated. By \cref{eq:hG1,eq:hG2}, the facts that $\norm{L}_{L^2}\leq Ch^2$, $\norm{f}_{L^2}\leq Ch$ and $\norm{w}_{L^2}\leq Ch^2$, we have
\begin{align*}
	\norm{r}_{L^p} &\leq \norm{G_\Xi(L,f)}_{L^p} + \norm{G_\Xi(\sqrt{\abs{q}}^\flat w,0)}_{L^p} + \norm{G_{\Xi}(\sqrt{\abs{q}}^\sharp w,0)}_{L^p} \leq C+C+C_\varepsilon\,.
	\shortintertext{Furthermore,}
	\norm{r}_{L^2} &\leq \norm{G_\Xi(L,f)}_{L^2} + \norm{G_\Xi(\sqrt{\abs{q}}^\flat w,0)}_{L^2} + \norm{G_{\Xi}(\sqrt{\abs{q}}^\sharp w,0)}_{L^2} \leq Ch+C_\varepsilon h+C\varepsilon\,,
\end{align*}
which can be made arbitrarily small (in particular smaller than $\delta>0$) by first choosing $\varepsilon>0$ appropriately and then $h>0$ small.
\end{proof}

\subsection{Proof of Proposition~\ref{cgosolsprop}}
The next proof follows that of \cite[Prop.~7.2]{CT20} and \cite[Prop.~1.4]{chungpartial}, but we restate it here for convenience of the reader.

\begin{proof}[Proof of \cref{cgosolsprop}]
Take $\Xi = \del\Omega\setminus F$. Let $\phi(y),\psi(y)$ be linear functions with $\nabla (\phi+i\psi) \cdot \nabla (\phi+i\psi) = 0$ and $\nabla \phi \cdot \nu(y) \leq -\varepsilon < 0$ for some $\varepsilon>0$ and all $y\in \Xi$. If instead $\Xi = \del\Omega\setminus B$, the sign of $\varepsilon$ would change, but the remainder of the proof would stay the same.

Take $\xi \in \RR^n$ orthogonal to $\nabla \phi$ and $\nabla \psi$, and put $\psi_h(y)=(\xi-\omega_h')\cdot y$, where
\begin{equation}\label{eq:defofomega}
	\omega_h' = \frac{1-\sqrt{1-h^2\abs{\xi}^2\abs{\nabla\psi}^{-2}}}{h}\nabla\psi\,,
\end{equation}
which by the mean value theorem has length $\mathcal{O}(h)$.

By direct calculation we see that 
\begin{equation}\label{eq:fallsaway}
	\nabla (\phi+i\psi + ih\psi_h)\cdot\nabla (\phi +i\psi+ih\psi_h) = 0\,,\quad\text{and so}\quad
h^2\Delta_\phi e^{\frac{i\psi+ih\psi_h}{h}} = 0\,.
\end{equation}

The proof of the following lemma is delegated to the appendix.
\begin{lemma}\label{lem:constructlb}
	\qquad
	\begin{enumerate} 
		\item\label{constructl} There is a function $t\in C^\infty(\RR^n)$ so that 
	\begin{equation}\label{eq:constructlprops}
	\nabla t \cdot \nabla t = d(y,\Xi)^\infty\,,\an \tau_\Xi t = \tau_\Xi(\phi + i\psi)\,,\an \tau_\Xi \del_\nu t = -\tau_\Xi \partial_\nu (\phi+i\psi)\,.
	\end{equation}
	In addition, 
	\begin{equation}\label{eq:distfort}
		-(\mathrm{Re}t-\phi) \simeq d(y,\Xi)
	\end{equation}
	for all $y$ in some neighborhood $U\subset \RR^n$ of $\Xi$.
	\item \label{constructb}
		There is a function $b\in C^\infty(\RR^n)$ supported in $U$ so that
		\begin{equation}\label{eq:bdefiningeqn}
			e^{-t/h}h^2\Delta(e^{t/h}e^{i\psi_h}b) = d(y,\Xi)^\infty + O_{L^\infty}(h^2)\,, \an \tau_\Xi b = 1\,,
		\end{equation}
		and $\tau_\Xi\del_\nu b = \mathcal{O}_{L^\infty}(1)$.
	\end{enumerate}
\end{lemma}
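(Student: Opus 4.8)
The plan is to solve both parts by formal power series expansions in the distance to $\Xi$ followed by a Borel summation, exploiting that the limiting Carleman weight identity $\nabla(\phi+i\psi)\cdot\nabla(\phi+i\psi)=0$ already holds, so that only the Cauchy data needs to be adjusted. Throughout, work in a tubular neighborhood of $\Xi$ with boundary normal coordinates $(y',\rho)$, where $y'\in\Xi$ and $\rho=d(y,\Xi)\ge 0$ is measured into $\Omega$; then $|\nabla\rho|\equiv 1$, $\nabla\rho$ is orthogonal to the $y'$-level sets, and $\partial_\rho=-\partial_\nu$ at $\rho=0$.

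For part~\ref{constructl}, seek $t=\sum_{k\ge 0}t_k(y')\rho^k/k!$ and impose $\nabla t\cdot\nabla t=0$ order by order in $\rho$; this has the schematic form $(\partial_\rho t)^2+\nabla_{y'}t\cdot\nabla_{y'}t=0$ with the complex-bilinear tangential contraction. Since $(\partial_\rho(\phi+i\psi))^2+\nabla_{y'}(\phi+i\psi)\cdot\nabla_{y'}(\phi+i\psi)=0$ on $\Xi$ by the eikonal identity, the choice $t_0=\tau_\Xi(\phi+i\psi)$ and $t_1=-\partial_\rho(\phi+i\psi)|_\Xi$ solves the $\rho^0$-equation and encodes the required Neumann data, since $\tau_\Xi\partial_\nu t=-t_1=-\tau_\Xi\partial_\nu(\phi+i\psi)$. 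Because $\mathrm{Re}\,t_1=-\partial_\rho\phi|_\Xi=\partial_\nu\phi|_\Xi\le -\varepsilon<0$, in particular $t_1\ne 0$, the $\rho^{k-1}$-equation for $k\ge 2$ reads $2t_1t_k=(\text{polynomial in }t_0,\dots,t_{k-1}\text{ and their tangential derivatives})$ and determines $t_k\in C^\infty(\Xi)$ uniquely. By Borel's lemma there is $t\in C^\infty$ near $\Xi$ with $\partial_\rho^k t|_{\rho=0}=t_k$; then $\nabla t\cdot\nabla t$ vanishes to infinite order at $\Xi$, i.e. equals $d(y,\Xi)^\infty$, and the Cauchy conditions in \cref{eq:constructlprops} hold (extend $t$ arbitrarily to all of $\RR^n$). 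Finally $t-(\phi+i\psi)=-2(\partial_\rho\phi+i\partial_\rho\psi)|_\Xi\,\rho+O(\rho^2)$, so $-(\mathrm{Re}\,t-\phi)=2\partial_\rho\phi|_\Xi\,\rho+O(\rho^2)\ge 2\varepsilon\rho+O(\rho^2)$, which is $\simeq d(y,\Xi)$ on a small enough neighborhood $U$ of $\Xi$; this is \cref{eq:distfort}.

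For part~\ref{constructb}, recall $\psi_h(y)=(\xi-\omega_h')\cdot y$ with $\xi$ real, $\xi\perp\nabla\phi,\nabla\psi$, and $|\omega_h'|=O(h)$. With $g=e^{i\psi_h}b$ one computes
\[
	e^{-t/h}h^2\Delta(e^{t/h}g)=e^{i\psi_h}\Bigl[(\nabla t\cdot\nabla t)b+h\bigl(2\nabla t\cdot\nabla b+(\Delta t+2i\xi\cdot\nabla t)b\bigr)+O_{L^\infty}(h^2)\Bigr]\,,
\]
where boundedness of $b$ and its derivatives, together with $|\omega_h'|=O(h)$, has been used to push into the $O_{L^\infty}(h^2)$ remainder both the genuine $h^2$-terms and the $h\cdot O(h)$-terms from replacing $\xi-\omega_h'$ by $\xi$. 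The term $(\nabla t\cdot\nabla t)b$ is $d(y,\Xi)^\infty$ since a function vanishing to infinite order at $\Xi$ times a smooth function still does. It thus remains to solve the transport equation
\[
	2\nabla t\cdot\nabla b+(\Delta t+2i\xi\cdot\nabla t)b=d(y,\Xi)^\infty\,,\qquad \tau_\Xi b=1\,,
\]
which, since $t_1\ne 0$, is solved near $\Xi$ by a formal expansion $b=\sum b_k(y')\rho^k/k!$ with $b_0=1$ and $2t_1b_{k+1}=(\text{terms in }b_0,\dots,b_k)$, followed by Borel summation. Multiplying by a cutoff $\chi\in C_c^\infty(U)$ with $\chi\equiv 1$ near $\Xi$ keeps $\tau_\Xi b=1$ and $\tau_\Xi\partial_\nu b=-b_1=\mathcal O_{L^\infty}(1)$ (note $b$ is $h$-independent), while $[\Delta,\chi]b$ and the transport residual outside $\{\chi\equiv 1\}$ are supported away from $\Xi$, hence again of the form $d(y,\Xi)^\infty$; as $|e^{i\psi_h}|=1$ this gives \cref{eq:bdefiningeqn}.

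The main obstacle is part~\ref{constructl}: organizing the boundary-normal-coordinate computation so that the correct root of the eikonal equation is selected (the one flipping the sign of the normal derivative while preserving the Dirichlet data), checking that the non-characteristic condition $t_1\ne 0$ — which is precisely where $\nabla\phi\cdot\nu\le-\varepsilon<0$ on $\Xi$ is used — makes the recursion solvable to all orders, and extracting the first-order estimate \cref{eq:distfort} with the right sign. Once $t$ is available, part~\ref{constructb} is a routine linear transport problem handled by the same Borel-summation device.
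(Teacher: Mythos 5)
Your proposal is correct and follows essentially the same route as the paper: formal Taylor expansion in the normal variable, recursive determination of the coefficients using $\nabla(\phi+i\psi)\cdot\nabla(\phi+i\psi)=0$ and the non-degeneracy $\abs{\partial_\nu\phi}\geq\varepsilon$ on $\Xi$, followed by Borel summation (which the paper carries out explicitly via the $\eta(z_n\varepsilon_j)z_n^j/j!$ cutoffs rather than citing Borel's lemma). Your derivation of \cref{eq:distfort} from the first-order Taylor expansion and the sign of $\partial_\nu\phi$ is in fact more explicit than the paper's one-line remark.
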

The first part of \cref{eq:bdefiningeqn} is equivalent to
\begin{equation}\label{eq:someintermediaryb}
	h^2\Delta e^{t/h}e^{i\psi_h}b = e^{\phi/h} e^{(t-\phi)/h}(d(y,\Xi)^\infty+\mathcal{O}_{L^\infty}(h^2))\,.
\end{equation}
Due to \cref{eq:distfort} and the support property of $b$, if $d(y,\Xi) < h^{1/2}$ then the right hand side of \cref{eq:someintermediaryb} is bounded by $e^{\phi/h}\mathcal{O}_{L^\infty}(h^2)$, whereas if $d(y,\Xi) \geq h^{1/2}$, then $e^{\frac{t-\phi}{h}} \leq h^2$, which results in the same bound. Thus we may rephrase \cref{eq:someintermediaryb} into
\begin{equation}\label{eq:defofL}
	h^2\Delta_\phi e^{(t-\phi)/h}e^{i\psi_h}b \eqqcolon L \in \mathcal{O}_{L^\infty}(h^2)\,.
\end{equation}

Put
\begin{equation}\label{eq:defofah}
	a_h \coloneqq e^{\frac{t-\phi-i\psi}{h}}b \in C^\infty(\RR^n)\,, \quad\text{so that}\quad \tau_\Xi a_h = \tau_\Xi b = 1\,,
\end{equation}
and because by \cref{eq:fallsaway}, $h^2\Delta_\phi e^{\frac{i\psi+ih\psi_h}{h}}1 = 0$,
\begin{equation}\label{eq:Lpart}
	h^2\Delta_\phi e^{\frac{i\psi+ih\psi_h}{h}}(1+a_h) = h^2\Delta_\phi e^{\frac{i\psi+ih\psi_h}{h}}a_h = h^2\Delta_\phi e^{\frac{t-\phi+ih\psi_h}{h}}b = L\,.
\end{equation}

Furthermore, we may calculate
\begin{align}
	\tau_\Xi e^{-\phi/h}h\del_\nu e^{\frac{\phi+i\psi+ih\psi_h}{h}}(1+a_h) 
	&= \tau_\Xi e^{\frac{i\psi+ih\psi_h}{h}}(2ih\del_\nu\psi_h+h\del_\nu b) \eqqcolon f\,,\label{eq:fpart}
\end{align}
for some $f\in L^\infty(\Xi)\subset L^2(\Xi)$ with norm $\mathcal{O}_{L^\infty}(h)$, since $\del_\nu\psi_h = \mathcal{O}_{L^\infty}(1)$ and $\del_\nu b = \mathcal{O}_{L^\infty}(1)$.

Note again that on the support of $b$ and when $d(y,\Xi) \geq h^{1/2}$, then $\abs{a_h} \leq \mathcal{O}(e^{\frac{t-\phi}{h}}) \leq \mathcal{O}(h^2)$, which vanishes as $h\to 0$, and $\lim_{h\to 0} \{y \in \Omega\colon d(y,\Xi) \geq h^{1/2}\} = \Omega$. This is to say that $\lim_{h\to 0}a_h = 0$ pointwise in $\Omega$.

Using \cref{getr0lemm}, find the $r_0\in H^1(\Omega)$ so that
\begin{equation}\label{eq:foundr0}
	h^2(\Delta_\phi + q)r_0 = -h^2qe^{\frac{i\psi+ih\psi_h}{h}}a_h - L\,,\an \tau_\Xi(h\del_\nu + \del_\nu\phi) r_0 = -f\,,
\end{equation}
and put 
\begin{equation}\label{eq:defofu}
r_1 \coloneqq e^{\frac{-i\psi - ih\psi_h}{h}}r_0 \in H^1(\Omega)\,,\an u \coloneqq e^{\frac{\phi+i\psi+ih\psi_h}{h}}(1+a_h+r_1) \in H^1(\Omega)\,.
\end{equation}
Calculating with \cref{eq:foundr0,eq:Lpart,eq:defofu}, we see that 
\begin{align*}
	h^2(\Delta + q)u &= (h^2e^{\phi/h}\Delta_\phi e^{-\phi/h} + h^2q)u = 0\,,
\end{align*}
and with \cref{eq:fpart},
\begin{align*}
	\tau_\Xi e^{-\phi/h}h\del_\nu u &= \tau_\Xi e^{-\phi/h}h\del_\nu e^{\frac{\phi+i\psi+ih\psi_h}{h}}(1+a_h) + \tau_\Xi e^{-\phi/h}h\del_\nu e^{\phi/h}r_0 \\
	&= f + \tau_\Xi(h\del_\nu + \del_\nu\phi)r_0 = 0\,.
\end{align*}

Taking $\phi(y) = \omega \cdot y = e_n \cdot y = y_n$ and $\psi(y) = \omega'\cdot y$ the proof is complete.
\end{proof}

\section{Uniqueness of Coefficients}\label{sec:uniq}

Let $\omega$ be as in \cref{sec:intro}, and let $\omega'\in \mathbb{S}^{n}$ with $\omega'\perp\omega$. This proof closely follows \cite[Thm.~1.1]{CT20}, but is included for convenience.

\begin{proof}[Proof of \cref{thm:uniqueness}]
	While \cref{cgosolsprop} is stated only for $\Xi = \del\Omega\setminus B$, exchanging $\omega$ with $-\omega$, and thus also $\phi$ with $-\phi$, it is not difficult to see that all results leading up to and including \cref{cgosolsprop} can be stated analogously where $\Xi = \del\Omega\setminus F$.

	Thus, we may construct
\begin{equation}\label{eq:defofupm}
	u_{\pm} = e^{\frac{\pm\omega \pm i\omega'+ih\psi_h^{\pm}}{h}}(1+a_h^\pm + r_\pm) \in H^1(\Omega)
\end{equation}
so that 
\begin{align}\label{eqgotuplusuminus}
	(\Delta+q_\pm)u_\pm &= 0 \text{ in } \Omega\,,\notag\\
	\del_\nu u_+ &= 0 \text{ on } \partial\Omega\setminus F \eqqcolon \Xi_+\,, \\
	\del_\nu u_- &= 0 \text{ on } \partial\Omega\setminus B \eqqcolon \Xi_-\,.\notag
\end{align}
and where $\psi_h^\pm(y) = (\xi \mp \omega_h')\cdot y$, with $\omega_h'$ from \cref{eq:defofomega}.

We follow an argument in \cite[Lem.~3.1]{recentProg}.

By the assumptions on this theorem we have that there exists some $\td{u}_- \in H^\sharp_\Delta(\Omega)$ with $(\Delta+q_-)\td{u}_-=0$ in $\Omega$, $\supp(\del_\nu \td{u}_-) \subset F$, and 
\[
	(u_+\vert_B, \del_\nu u_+\vert_F) = (\td{u}_-\vert_B, \del_\nu \td{u}_-\vert_F)
\]
which, by \cref{eqgotuplusuminus}, means that 
\begin{equation}\label{eq:assumptionsgetB}
	u_+|_B = \tilde{u}_-|_B\,,\an \del_\nu u_+\vert_{\del\Omega} = \del_\nu \td{u}_-\vert_{\del\Omega}\,.
\end{equation}

Before we compute, let us state the following result, proved in the appendix.
\begin{lemma}\label{lemm:hsharpprops}
	The map $C^\infty(\bar\Omega) \ni u \mapsto \del_\nu u\vert_{\partial\Omega} \in H^{-1/2}(\del\Omega)$ has an extension that is continuous on $H^\sharp_\Delta(\Omega)$.
\end{lemma}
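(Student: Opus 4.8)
\emph{Plan.} I would define the extension directly by the weak (Green's) formula, identifying $H^{-1/2}(\del\Omega)$ with the dual of $H^{1/2}(\del\Omega)$. Fix once and for all a bounded right inverse $E\colon H^{1/2}(\del\Omega)\to H^1(\Omega)$ of the trace map $v\mapsto v\vert_{\del\Omega}$ (available since $\del\Omega$ is smooth). For $u\in H^\sharp_\Delta(\Omega)$ and $g\in H^{1/2}(\del\Omega)$ I would set
\[
	\langle \Lambda u, g\rangle_{\del\Omega} \coloneqq \int_\Omega (\Delta u)\, Eg \,\dd y + \int_\Omega \nabla u \cdot \nabla(Eg)\,\dd y\,,
\]
where both integrals make sense: the second because $\nabla u,\nabla(Eg)\in L^2(\Omega)$, and the first because $\Delta u\in L^{p'}(\Omega)$ while $Eg\in H^1(\Omega)\hookrightarrow L^p(\Omega)$ by Sobolev embedding on the bounded smooth domain $\Omega$ (recall $p'=\frac{2n}{n+2}$ and $p=\frac{2n}{n-2}$ are conjugate exponents).

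The first thing to check is that $\langle\Lambda u,\cdot\rangle_{\del\Omega}$ depends only on $g$, not on the choice of extension $Eg$: if $v_1,v_2\in H^1(\Omega)$ both restrict to $g$ then $w\coloneqq v_1-v_2\in H^1_0(\Omega)$, so it suffices to verify $\int_\Omega (\Delta u)w + \int_\Omega \nabla u\cdot\nabla w = 0$ for all $w\in H^1_0(\Omega)$. For $w\in C_c^\infty(\Omega)$ this is exactly the definition of the distributional Laplacian; and both terms are continuous in $w\in H^1_0(\Omega)$ — the second obviously, the first because $H^1_0(\Omega)\hookrightarrow L^p(\Omega)$ dualizes to $L^{p'}(\Omega)\hookrightarrow H^{-1}(\Omega)$ — so density of $C_c^\infty(\Omega)$ in $H^1_0(\Omega)$ closes the argument. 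Continuity then follows by Hölder's inequality, the Sobolev embedding and boundedness of $E$:
\[
	\abs{\langle\Lambda u, g\rangle_{\del\Omega}} \le \norm{\Delta u}_{L^{p'}}\norm{Eg}_{L^p} + \norm{\nabla u}_{L^2}\norm{\nabla(Eg)}_{L^2} \lesssim \left(\norm{\Delta u}_{L^{p'}}+\norm{u}_{H^1}\right)\norm{g}_{H^{1/2}(\del\Omega)}\,,
\]
i.e. $\norm{\Lambda u}_{H^{-1/2}(\del\Omega)}\lesssim \norm{u}_{H^\sharp_\Delta}$, so $\Lambda\colon H^\sharp_\Delta(\Omega)\to H^{-1/2}(\del\Omega)$ is bounded and linear. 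Finally, for $u\in C^\infty(\bar\Omega)$ the classical Green identity $\int_{\del\Omega}(\del_\nu u)\,v\vert_{\del\Omega}\,\dd S = \int_\Omega(\Delta u)v + \int_\Omega\nabla u\cdot\nabla v$ holds for all $v\in H^1(\Omega)$ (by density of $C^\infty(\bar\Omega)$ in $H^1(\Omega)$), so $\Lambda u = \del_\nu u\vert_{\del\Omega}$, which identifies $\Lambda$ as the desired continuous extension.

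The only genuinely non-formal ingredient is the well-definedness of the pairing, and it rests entirely on the duality embedding $L^{p'}(\Omega)\hookrightarrow H^{-1}(\Omega)$ — precisely the critical Sobolev relation that also explains the appearance of $p'$ in the definition of $H^\sharp_\Delta(\Omega)$; everything else is a direct estimate together with the standard existence of a bounded trace right inverse on a smooth bounded domain.
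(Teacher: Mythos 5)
Your proof is correct, and while it rests on the same core identity and estimates as the paper's (Green's formula, H\"older, the Sobolev embedding $H^1(\Omega)\hookrightarrow L^p(\Omega)$, and a bounded right inverse of the trace), its logical structure is different in a way worth noting. The paper defines the boundary functional only for $u\in C^\infty(\bar\Omega)$, proves the bound $\abs{w(v)}\lesssim\norm{u}_{H^\sharp_\Delta}\norm{v}_{H^{1/2}(\del\Omega)}$ there, and then extends by invoking the density of $C^\infty(\bar\Omega)$ in $H^\sharp_\Delta(\Omega)$ — a fact it asserts without proof and which, for the graph-norm space $\{u\in H^1:\Delta u\in L^{p'}\}$, is true but not entirely trivial. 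You instead define $\Lambda u$ directly for every $u\in H^\sharp_\Delta(\Omega)$ via the weak formula, and the only density you need is that of $C_c^\infty(\Omega)$ in $H^1_0(\Omega)$ (to show independence of the choice of $H^1$-extension of the boundary datum), together with the duality $L^{p'}(\Omega)\hookrightarrow H^{-1}(\Omega)$; agreement with $\del_\nu u\vert_{\del\Omega}$ for smooth $u$ then comes from the classical Green identity. So your route buys a more self-contained argument that sidesteps the unproved density claim, at the cost of the (routine) well-definedness check; the paper's route is shorter on paper but leaves that density statement as an implicit ingredient. Both yield the same operator on $C^\infty(\bar\Omega)$, and your continuity estimate $\norm{\Lambda u}_{H^{-1/2}(\del\Omega)}\lesssim\norm{u}_{H^\sharp_\Delta}$ matches the paper's.
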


Note that by Sobolev embedding $u_\pm,\td{u}_- \in L^p(\Omega)$ and so $-\Delta u_\pm = q_\pm u_\pm \in L^{p'}(\Omega)$ and $q_\pm u_+u_- \in L^1(\Omega)$. Using \cref{eqgotuplusuminus}, we calculate that
\begin{align}\label{eq:pairingsvanish}
	\int_\Omega (q_+-q_-)u_+u_- \dd y &= \int_\Omega q_+u_+u_- - u_+q_-u_- \dd y= -\int_\Omega \Delta u_+ u_- - u_+\Delta u_-\dd y \notag\\
	\shortintertext{and due to the fact that $\Delta \tilde{u}_- = -q_- \tilde{u}_-$,}
	&= -\int_\Omega \Delta(u_+-\tilde{u}_-) u_- - (u_+-\tilde{u}_-)\Delta u_- \dd y \notag\\
	&= -\int_{\partial\Omega}\del_\nu (u_+-\tilde{u}_-)  u_- - (u_+-\tilde{u}_-) \del_\nu u_- \dd S\,,
\end{align}
where the penultimate line makes sense because $\Delta(u_+-\tilde{u}_-),\Delta u_- \in L^{p'}(\Omega)$, and $u_-, (u_+-\tilde{u}_-) \in L^p(\Omega)$, and the final line is meant as pairings between functions in $H^{1/2}(\del\Omega)$ and $H^{-1/2}(\del\Omega)$. 

Now \cref{eq:pairingsvanish} vanishes because by \cref{eqgotuplusuminus,eq:assumptionsgetB}, $\del_\nu (u_+ - \td{u}_-)=0$ on $\partial\Omega$, and $(u_+-\td{u}_-) = 0$ on $B$ and $\del_\nu u_- = 0$ on $\partial\Omega\setminus B$.

From here on, the proof follows \cite[Thm~1.1]{CT20}.

Put $q = q_+-q_-$, and plugging in the expressions for $u_+$ and $u_-$ from \cref{eq:defofupm} into the outcome of \cref{eq:pairingsvanish}, we have that 
\begin{equation}\label{eq:zeroallofthese}
	0=\int_\Omega e^{2i\xi\cdot y}q(1+a_h^-a_h^++a_h^-+a_h^++a_h^-r_++a_h^+r_-+r_-+r_++r_+r_-)\dd y\,.
\end{equation}

Note that by the construction from \cref{cgosolsprop}, $\lim_{h\to 0} a_h^\pm (y) =0$ pointwise for all $y\in\Omega$, and $\norm{a_h^\pm}_{L^\infty} \leq C$, and $q\in L^{n/2}(\Omega) \subset L^1(\Omega)$, so that by the dominated convergence theorem,
\[
	\lim_{h\to 0} \int_\Omega e^{2i\xi\cdot y} q (a_h^+a_h^-+a_h^-+a_h^+) \dd y = \int_\Omega e^{2i\xi\cdot y} q \lim_{h\to 0}(a_h^+a_h^- + a_h^- + a_h^+) \dd y =0\,, 
\]
which takes care of the second, third and fourth term in \cref{eq:zeroallofthese}.

Since for small $h$, $\norm{r_\pm}_{L^p} \leq C$ and $q\in L^{n/2}(\Omega)$, we have that $qr_\pm \in L^{p'}(\Omega) \subset L^1(\Omega)$, we may use dominated convergence again to see that 
\[
	\lim_{h\to 0} \int_\Omega e^{2i\xi\cdot y} qr_\pm a_h^\mp \dd y = 0\,,
\]
which deals with the fifth and sixth term in \cref{eq:zeroallofthese}.

For the remaining terms, decompose $q = q^\flat+q^\sharp$ with $q^\flat \in L^\infty$ and $\norm{q^\sharp}_{L^{n/2}} < \varepsilon$. Remember also that $r_\pm$ can be chosen so that $\lim_{h\to 0} \norm{r_\pm}_{L^2}\leq \varepsilon$ for this arbitrary choice of $\varepsilon$, which means that
\begin{equation}\label{eq:smth}
	\lim_{h\to 0} \abs{\int_\Omega e^{2i\xi\cdot y} qr_\pm \dd y} \leq\lim_{h\to 0}  \left(\norm{q^\flat}_{L^\infty}\norm{r_\pm}_{L^2} + \norm{q^\sharp}_{L^{n/2}}\norm{r_\pm}_{L^p}\right) \leq C\varepsilon
\end{equation}
for every $\varepsilon >0$, wherefore the LHS of \cref{eq:smth} vanishes. Thus the seventh and eigth term in \cref{eq:zeroallofthese} vanish.

Similarly, 
\[
	\lim_{h\to 0}\abs{\int_\Omega e^{2i\xi \cdot y} qr_+r_- \dd y} \leq\lim_{h\to 0}  \left( \norm{q^\flat}_{L^\infty}\norm{r_+}_{L^2}\norm{r_-}_{L^2} + \norm{q^\sharp}_{L^{n/2}}\norm{r_+}_{L^p}\norm{r_-}_{L^p}\right) \leq C\varepsilon\,,
\]
which then must also be zero, taking care of the final term in \cref{eq:zeroallofthese}.

Since the only term surviving in \cref{eq:zeroallofthese} is the first, we may conclude that 
\[
	\mathcal{F}(q)(-2\xi) = \int_\Omega e^{2i\xi \cdot y} q \dd y = 0\,.
\]

This calculation is true for all $\xi$ with $\xi \perp \omega$. If we perturb $\omega$ by small amounts, $\Xi_+$ will still be a subset of $\{y\in \del\Omega\colon \omega\cdot \nu(y) < 0\}$. This means perturbing $\omega$ slightly, we get an open set of $\xi$ in which $\mathcal{F}(q)(-2\xi) = 0$, which by the analyticity of $\mathcal{F}$ means that $q = q_+-q_- = 0$.
\end{proof}

As a direct consequence we are able to prove the uniqueness of conductivies as well. 
\begin{proof}[Proof of \cref{cor:calderon}]
	This proof follows an argument in \cite{zbMATH04015323}. For $q_\pm \coloneqq - \gamma_\pm^{-1/2}\Delta(\gamma^{1/2}_\pm) \in L^{n/2}(\Omega)$ let $u_\pm$ satisfy 
	\[
		(\Delta+q_\pm)u_\pm=0 \text{ in } \Omega\,,\, u_\pm\in H_{\Delta}^\sharp(\Omega)\,,\, \supp(\del_\nu u_\pm\vert_{\partial\Omega}) \subset F\,.
	\]
	Defining $w_\pm \coloneqq \gamma^{-1/2}_\pm u_\pm$, a calculation shows 
	\[
		\mathrm{div}(\gamma_\pm\nabla w_\pm)=0 \text{ in } \Omega\,,\, w_\pm\in H^1(\Omega)\,,\, \supp(\del_\nu w_\pm\vert_{\partial\Omega}) \subset F\,.
	\]
	Due to $\del_\nu \gamma_+\vert_{\del\Omega} = \del_\nu \gamma_-\vert_{\del\Omega} = 0$, we have
	\[
		(u_\pm\vert_B, \del_\nu u_\pm\vert_{\del\Omega}) = (\gamma_\pm^{1/2}w_\pm\vert_B, \gamma^{1/2}_\pm\del_\nu w_\pm\vert_{\del\Omega})\,,
	\]
	and defining the map $M_{\gamma_\pm} \colon (v_1,v_2) \mapsto (\gamma_\pm^{1/2} v_1, \gamma_\pm^{-1/2} v_2)$, we see
	\[
		C_{q_\pm}^{B,F} \subset M_{\gamma_\pm} C_{\gamma_\pm}^{B,F}\,.
	\]
	Together with a Sobolev embedding, a similar argument will lead to the reverse inclusion, so that given the assumptions $C_{\gamma_+}^{B,F} = C_{\gamma_-}^{B,F}$ and $\gamma_+\vert_{\del\Omega} = \gamma_-\vert_{\del\Omega}$, we may conclude 
	\[
		C_{q_+}^{B,F} = M_{\gamma_+}C_{\gamma_+}^{B,F} = M_{\gamma_-}C_{\gamma_-}^{B,F} = C_{q_-}^{B,F}\,,
	\]
	and an application of \cref{thm:uniqueness} gives $q_+ = q_-$. 

	Writing out the definitions of $q_+, q_-$, we thus have
	\[
		0 = 2\gamma_+^{-1/2}\Delta(\gamma_+^{1/2})-2\gamma_-^{-1/2}\Delta(\gamma_-^{1/2})= \Delta(\log \gamma_+ - \log \gamma_-) + \frac{1}{2} \nabla(\log \gamma_+ + \log \gamma_-) \cdot \nabla (\log \gamma_+ - \log \gamma_-)\,.
	\]
	Putting $\gamma \coloneqq \log\gamma_+ - \log\gamma_- \in W^{2,n/2}(\Omega) \subset W^{1,2}(\Omega)$, we have $\gamma\vert_{\del\Omega} = 0$ and we may apply the weak maximum principle (see \cite[Cor.~8.2]{zbMATH03561752}) to conclude that $\gamma_+ = \gamma_-$.
\end{proof}

\section{Appendix}
\appendix

\renewcommand{\thesubsection}{\Alph{subsection}}
\renewcommand{\theequation}{\thesubsection.\arabic{equation}}
\renewcommand{\thelemma}{\thesubsection.\arabic{lemma}}

\subsection{Borel Lemma Constructions}

\begin{proof}[Proof of \cref{constructl} in \cref{lem:constructlb}]
	We follow \cite[p.~16]{hoermander1}, and \cite[Prop.~9.2]{chungmagnetic}. Let $\varepsilon_j, j \in\mathbb{N}_0$ be a positive increasing sequence with $\varepsilon_0 > 1$ which we will define later. Take $\eta \in C_c^\infty(\RR)$ so that $\eta$ is identically $1$ on $(-1/4,1/4)$ and identically $0$ outside of $(-1/2,1/2)$. Let $f_j \in C^\infty(\RR^{n-1})$ be a sequence of functions with bounded derivative of every order $\geq 0$ that we have yet to define. Change coordinates in such a way that $z'$ is the tangential part to $\Xi$ and $z_n$ is the normal part. That is to say we work in coordinates $(z',z_n)$ where $\del_\nu = \del_{z_n}$ in the new coordinates. 
	
	Consider $\eta_j(z_n) \coloneqq \eta(z_n\varepsilon_j) \frac{z_n^j}{j!}$. Let $\beta \in \mathbb{N}_0^{n-1}, \alpha \in \mathbb{N}_0$ be multi-indices with $\alpha + \abs{\beta} < j$, then 
	\begin{align}
		\del^\beta_{z'}\del_{z_n}^\alpha \eta_j(z_n)f_j({z'}) &= \sum_{\gamma \leq \alpha}{\alpha \choose \gamma} \varepsilon_j^\gamma \eta^{(\gamma)}(z_n\varepsilon_j) \frac{z_n^{j-\alpha+\gamma}}{(j-\alpha+\gamma)!}\del_{z'}^\beta f_j({z'}) \notag \\
		&=  \sum_{\gamma \leq \alpha}{\alpha \choose \gamma} \eta^{(\gamma)}(z_n\varepsilon_j) \frac{(\varepsilon_j z_n)^{j-\alpha+\gamma}}{(j-\alpha+\gamma)!}\varepsilon_j^{\alpha-j}\del_{z'}^\beta f_j({z'})\,. \label{estimatel}
	\end{align}
	Note that the support properties of $\eta$ imply that $\abs{\eta^{(\gamma)}(z_n\varepsilon_j)(z_n\varepsilon_j)^{j-\alpha+\gamma}} < 2^{-(j-\alpha+\gamma)}$, and if we choose $\varepsilon_j$ (depending on $f_j$) in such a way that 
	\[
	\abs{\sum_{\gamma\leq \alpha}{\alpha\choose\gamma} \varepsilon_j^{\alpha-j}\del_{z'}^\beta f_j({z'})} \leq 2^{-\alpha+\gamma}\,,
	\]
	then $\abs{\del_{z'}^\beta\del_{z_n}^\alpha \eta_j({z_n})f_j({z'})} \leq 2^{-j}$ for all $\alpha + \abs{\beta} < j$.
	
	This means that if we define
	\[
	t(z',{z_n}) \coloneqq \sum_{j\geq 0}\eta_j({z_n})f_j({z'})\,,
	\]
	then 
	\[
	\abs{\del_{z'}^\beta\del_{z_n}^\alpha l({z'},{z_n})} \leq \sum_{j\leq \alpha +\abs{\beta}}\abs{\del_{z'}^\beta\del_{z_n}^\alpha\eta_j({z_n})f_j({z'})} + \sum_{\alpha + \abs{\beta} < j}\abs{\del_{z'}^\beta\del_{z_n}^\alpha\eta_j({z_n})f_j({z'})}
	\]
	where the first is a finite sum and the second is bounded by $2$.
	
	In conclusion this means that the series defining $t$ is absolutely convergent for every derivative. 
	
	Furthermore, from \cref{estimatel} we can calculate to see that 
	\[
	\del_{z_n}^j \eta_j(0) f_j({z'}) = f_j({z'}) \qquad \forall j\in\mathbb{N}\,.
	\]
	The conditions we desire for $t$ imply we take $f_0({z'}) = \tau_\Xi(\phi+i\psi)$, and $f_1({z'}) =-\tau_\Xi \del_{z_n}(\phi+i\psi)$. 
	Manually calculating, we have that 
	\begin{equation}\label{eq:nablatnablat}
	\begin{aligned}		
		\nabla t \cdot \nabla t &= \sum_{w\geq 0} \sum_{j+k=w}\frac{1}{j!k!}\left[\eta({z_n}\varepsilon_j)\eta(\varepsilon_k)\nabla_{z'} f_j({z'})\nabla_{z'} f_k({z'})\right. \\
		&+ \left.\left(\varepsilon_j\eta'({z_n}\varepsilon_j)f_j({z'})+\eta({z_n}\varepsilon_{j+1})f_{j+1}({z'})\right)\left(\varepsilon_k\eta'({z_n}\varepsilon_k)f_k({z'})+\eta({z_n}\varepsilon_{k+1})f_{k+1}({z'})\right)\right] 
	\end{aligned}
\end{equation}
	which we want to be equal to $0$ approximately.
	
	For $w = 0$ and $\abs{{z_n}} \leq (4\varepsilon_1)^{-1}$ the part of \cref{eq:nablatnablat} pertaining to $w=0$ is equal to
	\[
	\nabla_{z'} f_0 \nabla_{z'} f_0 + f_1({z'})f_1({z'}) = \nabla(\phi+i\psi)\cdot\nabla(\phi+i\psi) = 0\,.
	\]
	For $w\geq 1$ and $\abs{{z_n}} \leq (4\varepsilon_{w+1})^{-1}$ and $j+k = w$, we want to solve for 
	\[
	0 = \sum_{j+k=w}\frac{1}{j!k!}\left(\nabla_{z'} f_j({z'})\nabla_{z'} f_k({z'}) + f_{j+1}({z'})f_{k+1}({z'})\right) = \frac{1}{w!}f_1({z'})f_w({z'}) + R_{j,k < w}
	\]
	where by $R_{j,k< w}$ we denote some term that only depends on $f_j$ with $j < w$. Thus if the $f_j$ have been defined recursively we must only solve for $f_w$, which we may do because the coefficient in front of it, being $f_1({z'})$, is bounded away from $0$. 
	
	This means that for any $w\in\mathbb{N}$ and $\abs{{z_n}} \leq (4\varepsilon_w)^{-1}$ we have that $t = \mathcal{O}(z_n^{w+1})$. Furthermore, \cref{eq:distfort} is implied by \cref{eq:constructlprops} which concludes the proof.
\end{proof}

\begin{proof}[Proof of \cref{constructb} in \cref{lem:constructlb}]
	By direct computation we have that 
	\begin{align*}
		e^{-t/h}h^2\Delta e^{\frac{t+ih\psi_h}{h}}b &= -e^{i\psi_h} \left(\nabla (t +ih\psi_h)\nabla(t+i\psi_h)b - h(\Delta (t+ih\psi_h))b+2 h\nabla(t+ih\psi_h)\nabla b - h^2\Delta b\right) \\
		\shortintertext{where $\abs{\omega'_h} \leq Ch$ (see \cref{eq:defofomega}), and $\nabla t \cdot \nabla t = d(y,\Xi)^\infty$ by \cref{eq:constructlprops},  which means that}
		e^{-t/h}h^2\Delta e^{\frac{t+ih\psi_h}{h}}b &= -e^{i\psi_h}\left(hb (\Delta t + 2i \xi\cdot \nabla t) + 2h\nabla b\cdot \nabla t + \mathcal{O}(h^2) + d(y,\Xi)^\infty\right)
	\end{align*}
	and so rephrasing \cref{eq:bdefiningeqn}, we want to approximately solve 
	\begin{equation}\label{eq:wantforb}
	b(\Delta t + 2i\xi\cdot \nabla t) + 2\nabla b\cdot \nabla t = 0\,, \an \tau_\Xi b = 1\,.
	\end{equation}
	
	We will use the same recipe as in the above proof. If, after change of variables as above, we take $b({z'},{z_n}) = \sum_{j\geq 0}\eta_j({z_n})f_j({z'})$, we will want $f_0 = 1$ by the boundary condition we impose on $b$ and the first part of \cref{eq:wantforb} is rephrased as
	{\small\[
		0 = \sum_{j\geq 0} \frac{z_n^j}{j!}\left[\eta({z_n}\varepsilon_j)(\Delta t+2i\xi\cdot\nabla t)f_j({z'}) + (\varepsilon_j\eta'({z_n}\varepsilon_j)f_j({z'})+\eta({z_n}\varepsilon_{j+1})f_{j+1}({z'}))\del_{z_n} t + \eta({z_n}\varepsilon_j)\nabla_{z'} f_j({z'})\nabla_{z'} t\right].
	\]}
	For $\abs{{z_n}} \leq (4\varepsilon_1)^{-1}$ and $j=0$ we have the equation
	\[
	(\Delta t + 2i\xi \nabla t)f_0 + f_1({z'})\del_{z_n} t + \nabla_{z'} f_0 \nabla_{z'} t = 0\,,
	\]
	where $\del_{z_n} t({z'},0) = -\tau_\Xi\del_\nu(\phi+i\psi)$. Thus, for ${z_n}$ near $0$ this is bounded away from $0$ and so we may divide by it to get some $f_1 \in \mathcal{O}_{L^\infty}(1)$ with respect to $h$. 
	
	For every other $j \geq 1$, and having solved recursively for $f_1\dots,f_{j}$, we must solve an equation of the form
	\[
	(\Delta t + 2i\xi \nabla t)f_j + f_{j+1}({z'})\del_{z_n} t + \nabla_{z'} f_j \nabla_{z'} t = 0\,.
	\]
	This is possible because for small enough $z_n$, $\del_{z_n} t$ is non-zero. This recursive definition done, for $z_n$ small enough, $b$ is equal to $\mathcal{O}(z_n^{w+1})$, and to get the support properties of $b$, we cut it off with a smooth function.
\end{proof}

\subsection{Properties of The Partial Cauchy Data Set}

The proof here is standard and follows \cite{recoveringCauchydata}.
\begin{proof}[Proof of \cref{lemm:hsharpprops}]
	Fix $v\in H^{1/2}(\del\Omega)$ and assume $u \in C^\infty(\bar\Omega)$. By the standard extension theorem there is some $e_v \in H^1(\Omega)$ so that $e_v\vert_{\partial\Omega} = v$ and 
	\begin{equation}\label{eq:extensionbounded}
		\norm{e_v}_{H^1(\Omega)} \leq C\norm{v}_{H^{1/2}(\partial\Omega)}\,.
	\end{equation}
	
	Define
	\[
		w(v) \coloneqq \int_{\del\Omega} \bar v\del_\nu u\dd S\,,
	\]
	where $S$ is the surface measure and remark that by partial integration,
	\[
		w(v) = \int_\Omega \nabla u \nabla \bar e_v - \bar e_v \Delta u\dd x
	\]
	so that by Sobolev embedding $H^1(\Omega) \subset L^p(\Omega)$ and H\"older's inequality, 
	\begin{equation}\label{eq:boundedw}
		\abs{w(v)} \leq C\norm{\nabla u}_{L^2}\norm{\nabla e_v}_{L^2} + \norm{e_v}_{L^p}\norm{\Delta u}_{L^{p'}} \leq C\norm{u}_{H^\sharp_\Delta}\norm{v}_{H^{1/2}(\del\Omega)}\,,
	\end{equation}
	where the second inequality in \cref{eq:boundedw} used Poincar\'e's inequality and \cref{eq:extensionbounded}. Note also that the definition of $w$ is independent of the choice of $e_v$.
	
	Thus \cref{eq:boundedw} lets us conclude that $w \in H^{-1/2}(\del\Omega)$ and $C^\infty(\bar\Omega) \ni u \mapsto w \in H^{-1/2}(\del\Omega)$ is bounded in the $H^\sharp_\Delta(\Omega)$ norm. Using the density of $C^\infty(\bar \Omega)$ in $H^\sharp_\Delta(\Omega)$ concludes the proof. 
\end{proof}

\subsection{Seminorm Bound on Symbols}

Let $j \in S^{-\infty}_1(\RR^{n-1})$ be a symbol satisfying 
\begin{equation}\label{eq:goodsymbol}
	\abs{\del^{\alpha}_{x'}\del^\beta_{\xi'}j(x',\xi')} \leq C_{\alpha,\beta}\,,\an \supp j \subset \RR^{n-1}\times K\,,
\end{equation}
for some compact $K\subset \RR^{n-1}$ and some $C_{\alpha,\beta}\geq 0$.

\begin{lemma}\label{lemm:semi}
	Let $a \in S^{-\infty}_1(\RR^{n-1})$ be as in \cref{eq:goodsymbol}.
	We have $a(x',hD) \colon L^r(\RR^{n-1}) \to W^{k,r}(\RR^{n-1})$ for all $k\in\mathbb{N}_0$, $1<r<\infty$ with the bound 
	\begin{equation*}
		\norm{a(x',hD) u}_{W^{k,r}} \leq C_nC_K\max_{\abs{\beta}\leq 2n} C_{0,\beta} \norm{u}_{L^r}
	\end{equation*}
	where $C_{\alpha,\beta}$ are the seminorm bounds for the symbol $\langle \xi'\rangle^k a(x',\xi')$, $C_n$ depends only on the dimension $n$ and $C_K$ depends only on the measure of $K$.
\end{lemma}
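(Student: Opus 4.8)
The plan is to reduce the $W^{k,r}$ estimate to the case $k=0$, and to handle that case by writing $a(x',hD')$ as an integral operator whose kernel decays fast enough for Schur's test.

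For the reduction I would pass to the equivalent norm $\norm{u}_{W^{k,r}}\simeq\max_{|\alpha|\le k}\norm{(hD')^\alpha u}_{L^r}$ (with constants depending only on $n,k$, uniform in $h$), together with the exact Leibniz identity
\[
	(hD')^\alpha a(x',hD')u=\sum_{\beta\le\alpha}\binom{\alpha}{\beta}h^{|\beta|}\bigl(\xi'^{\alpha-\beta}\,D_{x'}^\beta a\bigr)(x',hD')u ;
\]
equivalently one can expand $\langle hD'\rangle^k a(x',hD')$ using the composition calculus of \cref{lem:calc}. Each symbol $\xi'^{\alpha-\beta}D_{x'}^\beta a$ again satisfies \eqref{eq:goodsymbol}, is supported in $\RR^{n-1}\times K$, and — since $\langle\xi'\rangle^{-k}$ is bounded with bounded derivatives on $K$ — has seminorms controlled by those of $\langle\xi'\rangle^k a$ up to a constant depending only on $K$ and $k$. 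Thus, using $h^{|\beta|}\le 1$ for $h$ small, everything follows once we prove, for any $b$ obeying \eqref{eq:goodsymbol} with support in $\RR^{n-1}\times K$,
\[
	\norm{b(x',hD')u}_{L^r(\RR^{n-1})}\le C_n\,\abs{K}\,\Bigl(\max_{|\gamma|\le 2n}\norm{\partial_{\xi'}^\gamma b}_{L^\infty}\Bigr)\norm{u}_{L^r(\RR^{n-1})},\qquad 1<r<\infty,
\]
uniformly in $h>0$.

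For this bound I would write, for $u\in\Cc{\RR^{n-1}}$,
\[
	b(x',hD')u(x')=\int_{\RR^{n-1}}K_h(x',y')u(y')\,\dd y',\qquad K_h(x',y')=h^{-(n-1)}\,\mr b\Bigl(x',\tfrac{x'-y'}{h}\Bigr),
\]
with $\mr b(x',z)=(2\pi)^{-(n-1)}\int_{\RR^{n-1}}e^{iz\cdot\xi'}b(x',\xi')\,\dd\xi'$ (the interchange of integrals being legitimate because $b(x',\cdot)$ is compactly supported). Integrating by parts $n$ times in $\xi'$ via $(1-\Delta_{\xi'})^ne^{iz\cdot\xi'}=\langle z\rangle^{2n}e^{iz\cdot\xi'}$, and using $\supp_{\xi'}b\subset K$, one gets $\langle z\rangle^{2n}\abs{\mr b(x',z)}\le C_n\abs{K}\max_{|\gamma|\le 2n}\norm{\partial_{\xi'}^\gamma b}_{L^\infty}$, hence $\abs{K_h(x',y')}\le C_n\abs{K}\bigl(\max_{|\gamma|\le 2n}\norm{\partial_{\xi'}^\gamma b}_{L^\infty}\bigr)h^{-(n-1)}\langle(x'-y')/h\rangle^{-2n}$. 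Since $2n>n-1$ the function $\langle w\rangle^{-2n}$ is integrable on $\RR^{n-1}$, and after the substitution $w=(x'-y')/h$ (resp.\ $w=(y'-x')/h$) the powers of $h$ cancel, so that both $\sup_{x'}\int\abs{K_h(x',y')}\,\dd y'$ and $\sup_{y'}\int\abs{K_h(x',y')}\,\dd x'$ are at most $C_n\abs{K}\max_{|\gamma|\le2n}\norm{\partial_{\xi'}^\gamma b}_{L^\infty}$. Schur's test then gives the claimed $L^r$-bound for every $1<r<\infty$ with $h$-independent constant, and density of $\Cc{\RR^{n-1}}$ in $L^r$ extends it.

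Putting the two steps together proves the lemma, the constant $C_n$ coming only from the binomial coefficients, the constant in $(1-\Delta_{\xi'})^n$, and $\int\langle w\rangle^{-2n}\,\dd w$, and $C_K$ from $\abs K$. None of this is deep; the only point requiring attention is the bookkeeping in the first step — checking that passing $\langle hD'\rangle^k$ through $a(x',hD')$ keeps us in the class \eqref{eq:goodsymbol} with the same compact $\xi'$-support, so that the $k=0$ estimate applies term by term and the final constant depends solely on $n$, $k$, $\abs K$ and finitely many seminorms of $\langle\xi'\rangle^k a$, the leading contribution being the $\xi'$-seminorms $C_{0,\beta}$, $|\beta|\le 2n$, recorded in the statement.
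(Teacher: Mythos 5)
Your proof is correct and follows essentially the same route as the paper: write the operator as an integral operator, integrate by parts in $\xi'$ using the compact $\xi'$-support to get an integrable, $h$-uniform kernel bound, apply Schur's test, and handle $k>0$ by absorbing $\langle\xi'\rangle^k$ into the symbol. Your version is in fact slightly more careful than the paper's on two points — the semiclassical rescaling $w=(x'-y')/h$ that makes the kernel bounds $h$-independent, and the Leibniz bookkeeping justifying the reduction from $W^{k,r}$ to $L^r$ — so no changes are needed.
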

\begin{proof}
	Assume that $u\in \Ss(\RR^{n-1})$ and fix $1<r<\infty, k\in \mathbb{N}_0.$

	Remark that by partial integration $a(x',hD')$ has the integral kernel
	\begin{equation}\label{eq:kernel}
		k_a(x',y') = (2\pi)^{-n+1}\int_{K} e^{i(x'-y')\cdot\xi'}(1+\Delta_{\xi'})^n a(x',h\xi')\dd\xi' (1+\abs{x'-y'}^2)^{-n}
	\end{equation}
	which satisfies the bounds 
	\begin{equation}\label{eq:boundonkernel}
		\abs{k_a(x',y')} \leq \max_{\abs{\beta}\leq 2n} C_{0,\beta} C_n C_K(1+\abs{x'-y'}^2)^{-n}\,,
	\end{equation}
	where $C_K = \abs{K}$ is the measure of $K$, and $C_n = (2\pi)^{-n+1}$.

	Using \cref{eq:boundonkernel}, we find that 
	\begin{equation}\label{eq:constskernel}
		\int_{\RR^{n-1}} \abs{k_a(x,y)} \dd x' \leq C_n' C_K \max_{\abs{\beta}\leq 2n} C_{0,\beta}\,,\an \int_{\RR^{n-1}} \abs{k_a(x,y)} \dd y' \leq C_n' C_K \max_{\abs{\beta}\leq 2n} C_{0,\beta}
	\end{equation}
	where $C_n' = C_n \int_{\RR^{n-1}} (1+\abs{x'-y'}^2)^{-n}\dd x'$.

	By \cite[Prop.~5.1]{zbMATH05771952} and \cref{eq:constskernel} we have 
	\begin{equation}
		\norm{a(x',hD')u(x')}_{L^r} \leq C_n' C_K \max_{\abs{\beta}\leq 2n} C_{0,\beta} \norm{u}_{L^r}\,.
	\end{equation}

	Finally, if we replace the symbol $a$ by $\langle \xi'\rangle^{k} a(x',\xi')$, then the same arguments with different seminorm bounds $C_{\alpha,\beta}'$ will give 
	\begin{equation}
		\norm{a(x',hD')u(x')}_{W^{k,r}} = \norm{\langle hD'\rangle^{k} a(x',hD')u(x')}_{L^r} \leq C_n' C_K \max_{\abs{\beta}\leq 2n} C_{0,\beta}' \norm{u}_{L^r}\,.
	\end{equation}
	where we used the known fact that Bessel potential spaces correspond to Sobolev spaces for integer regularity exponents. Using the density of $\Ss$ in $L^r$ concludes the proof.
\end{proof}

\raggedright
\printbibliography[title=Bibliography] 

\end{document}